\def\YEAR{\year}\newcount\VOL\VOL=\YEAR\advance\VOL by-1995
\def\firstpage{1}\def\lastpage{1000}
\def\received{}\def\revised{}
\def\communicated{}
\def\magnification{\afterassignment\m@g\count@}
\def\m@g{\mag=\count@\hsize6.5truein\vsize8.9truein\dimen\footins8truein}
\font\eightrm=cmr8
\font\caps=cmcsc10                    % Theorem, Lemma etc
\font\Caps=cmcsc10 scaled \magstep1   % Title
\renewcommand{\@evenhead}{%
    \ifnum\thepage>\lastpage\rlap{\thepage}\hfill%
    \else\rlap{\thepage}\slshape\leftmark\hfill{\caps\SAuthor}\hfill\fi}%
\def\TSkip{\bigskip}
\newbox\TheTitle{\obeylines\gdef\GetTitle #1
\ShortTitle  #2
\SubTitle    #3
\Author      #4
\ShortAuthor #5
\EndTitle
{\setbox\TheTitle=\vbox{\baselineskip=20pt\let\par=\cr\obeylines%
\halign{\centerline{\Caps##}\cr\noalign{\medskip}\cr#1\cr}}%
	\copy\TheTitle\TSkip\TSkip%
\def\next{#2}\ifx\next\empty\gdef\STitle{#1}\else\gdef\STitle{#2}\fi%
\def\next{#3}\ifx\next\empty%
    \else\setbox\TheTitle=\vbox{\baselineskip=20pt\let\par=\cr\obeylines%
    \halign{\centerline{\caps##} #3\cr}}\copy\TheTitle\TSkip\TSkip\fi%
%\setbox\TheTitle=\vbox{\let\par=\cr\obeylines%
%\halign{\centerline{\caps##} #4\cr}}\copy\TheTitle\TSkip\TSkip%
\centerline{\caps #4}\TSkip\TSkip%
\def\next{#5}\ifx\next\empty\gdef\SAuthor{#4}\else\gdef\SAuthor{#5}\fi%
\ifx\received\empty\relax
    \else\centerline{\eightrm Received: \received}\fi%
\ifx\revised\empty\TSkip%
    \else\centerline{\eightrm Revised: \revised}\TSkip\fi%
\ifx\communicated\empty\relax
    \else\centerline{\eightrm Communicated by \communicated}\fi\TSkip\TSkip%
\catcode'015=5}}\def\Title{\obeylines\GetTitle}
\def\Abstract{\begingroup\narrower
    \parskip=\medskipamount\parindent=0pt{\caps Abstract. }}
\def\EndAbstract{\par\endgroup\TSkip}
\long\def\MSC#1\EndMSC{\def\arg{#1}\ifx\arg\empty\relax\else
     {\par\narrower\noindent%
     2020 Mathematics Subject Classification: #1\par}\fi}
\long\def\KEY#1\EndKEY{\def\arg{#1}\ifx\arg\empty\relax\else
	{\par\narrower\noindent Keywords and Phrases: #1\par}\fi\TSkip}
\newbox\TheAdd\def\Addresses{\vfill\copy\TheAdd\vfill
    \ifodd\number\lastpage\vfill\eject\phantom{.}\vfill\eject\fi}
{\obeylines\gdef\GetAddress #1
\Address #2 
\Address #3
\Address #4
\EndAddress
{\def\xs{4.3truecm}\parindent=0pt
\setbox0=\vtop{{\obeylines\hsize=\xs#1\par}}\def\next{#2}
\ifx\next\empty % 1 address
     \setbox\TheAdd=\hbox to\hsize{\hfill\copy0\hfill}
\else\setbox1=\vtop{{\obeylines\hsize=\xs#2\par}}\def\next{#3}
\ifx\next\empty % 2 addresses
     \setbox\TheAdd=\hbox to\hsize{\hfill\copy0\hfill\copy1\hfill}
\else\setbox2=\vtop{{\obeylines\hsize=\xs#3\par}}\def\next{#4}
\ifx\next\empty\ % 3 addresses
     \setbox\TheAdd=\vtop{\hbox to\hsize{\hfill\copy0\hfill\copy1\hfill}
                \vskip20pt\hbox to\hsize{\hfill\copy2\hfill}}
\else\setbox3=\vtop{{\obeylines\hsize=\xs#4\par}}
     \setbox\TheAdd=\vtop{\hbox to\hsize{\hfill\copy0\hfill\copy1\hfill}
	        \vskip20pt\hbox to\hsize{\hfill\copy2\hfill\copy3\hfill}}
\fi\fi\fi\catcode'015=5}}\gdef\Address{\obeylines\GetAddress}
\def\LOCAL{\jobname.files}
\newtheorem{definition}{Definition}[section]
\newtheorem{theorem}[definition]{Theorem}
\newtheorem{coro}[definition]{Corollary}
\newtheorem{proposition}[definition]{Proposition}
\newtheorem{remark}[definition]{Remark}
\begin{document}
%%%%% ------------- fill in your data below this line  -------------------
%%%%%    The following lines \Title ... \EndAddress must ALL be present
%%%%%    and in the given order.
\Title Projective Bundle Theorem
in MW-Motivic Cohomology
%%%%%    Put here the title. Line breaks will be recognized. 
\ShortTitle
%%%%%    Running title for odd numbered pages, ONE line, please. 
%%%%%    If none is given, \Title will be used instead.          
\SubTitle   
%%%%%    A possible subtitle goes here.
\Author Nanjun Yang
%%%%%    Put here name(s) of authors. Line breaks will be recognized.  
\ShortAuthor 
%%%%%%   Running title for even numbered pages, ONE line, please. 
%%%%%%   If none is given, \Author will be used instead.          
\EndTitle
\Abstract We present a version of projective bundle theorem in MW-motives (resp. Chow-Witt rings), which says that \(\widetilde{CH}^*(\mathbb{P}(E))\) is determined by \(\widetilde{CH}^*(X)\), \(\widetilde{CH}^*(X,det(E)^{\vee})\), \(CH^*(X)\) and \(Sq^2\) for smooth quasi-projective schemes \(X\) and vector bundles \(E\) over \(X\) with \(e(E^{\vee})=0\in H^n(X,W(det(E)))\), provided that \(_2CH^*(X)=0\).

As an application, we compute the MW-motives of blow-ups with smooth centers. Moreover, we discuss the invariance of Chow-Witt cycles of projective bundles under automorphisms of vector bundles.
%%%%%    Put here the abstract of your manuscript.
\EndAbstract
\MSC Primary: 11E81, 14F42
%%%%%    2020 Mathematics Subject Classification: 
\EndMSC
\KEY MW-motivic cohomology, Chow-Witt ring, Projective bundle theorem
%%%%%    Keywords and Phrases:     
\EndKEY
%%%%%    All 4 \Address lines below must be present. To center the last
%%%%%    entry, no empty lines must be between the following \Address
%%%%%    and \EndAddress lines.
\Address Nanjun Yang\\Yau Math. Sci. Center\\Jing Zhai\\Tsinghua University\\Hai Dian District\\Beijing China\\ynj.t.g@126.com
%%%%%    Address of first Author here
\Address
%%%%%    Address of second Author here etc.
\Address
\Address
\EndAddress
%\title{Projective Bundle Theorem in MW-Motivic Cohomology}
%\author{Nanjun Yang}
%\address{Yau Mathematical Sciences Center\\ Jing Zhai\\ Tsinghua University\\ Hai Dian District\\ Beijing China}
%\email{ynj.t.g@126.com}
%
%
%\keywords{MW-motivic cohomology, Chow-Witt ring, Projective bundle theorem}
%\subjclass[2010]{Primary: 11E81, 14F42}
%
%
%\begin{abstract}
%We present a version of projective bundle theorem in MW-motives (resp. Chow-Witt rings), which says that \(\widetilde{CH}^*(\mathbb{P}(E))\) is determined by \(\widetilde{CH}^*(X)\) and \(\widetilde{CH}^*(X\times\mathbb{P}^2)\) for smooth quasi-projective schemes \(X\) and vector bundles \(E\) over \(X\) with odd rank. If the rank of \(E\) is even, the theorem is still true under a new kind of orientability, called by projective orientability.
%
%As an application, we compute the MW-motives of blow-ups over smooth centers.
%\end{abstract}
%
%\maketitle
%
%\tableofcontents

\section{Introduction}
MW-motivic cohomology is the Chow-Witt counterpart of the ordinary motivic cohomology defined by V. Voevodsky, which was developed by B. Calm\`es, F. D\'eglise and J. Fasel. It is well-known that the usual projective bundle theorem (see \cite[Theorem 15.12]{MVW}) does not hold for MW-motives (see \cite[Remark 5.6]{Y}) so the computation of \(\widetilde{CH}^*(\mathbb{P}(E))\) for vector bundles \(E\) becomes nontrivial. In \cite[Theorem 9.1, 9.2, 9.4 and Corollary 11.8]{F}, J. Fasel computed the cohomology groups \(H^i(\mathbb{P}(E),K_j^W,\mathscr{L})\) and \(H^i(\mathbb{P}^n,{K}_j^{MW},\mathscr{L})\), leaving the question of Chow-Witt groups of projective bundles open.

This paper gives a possible approach to this question, by splitting the MW-motives of projective bundles. Suppose \(S\in Sm/k\), \(pt=Spec(k)\) and \(R\) is a commutative ring. Denote by \(\widetilde{DM}^{eff}(S,R)\) (resp. \(\widetilde{DM}(S,R)\)) the category of effective (resp. stable) MW-motives over \(S\) with coefficients in \(R\) (see Section \ref{Four Motivic Theories}) and by \(R(X)\) the motive of \(X\in Sm/S\) with coefficient in \(R\). The first step is to split that of projective spaces, which is the following (see Theorem \ref{splitting1}):
\begin{theorem}
Suppose \(n\in\mathbb{N}\) and \(p:\mathbb{P}^n\longrightarrow pt\) is the structure map.
\begin{enumerate}
\item If \(n\) is odd, the morphism
\[\Theta:R(\mathbb{P}^n)\longrightarrow R\oplus\bigoplus_{i=1}^{\frac{n-1}{2}}R/{\eta}(2i-1)[4i-2]\oplus R(n)[2n]\]
is an isomorphism in \(\widetilde{DM}^{eff}(pt,R)\) where
\[\Theta=(p,c^{2i-1}_n\otimes R,{\tau}_{n+1}\otimes R).\]
\item If \(n\) is even, the morphism
\[\Theta:R(\mathbb{P}^n)\longrightarrow R\oplus\bigoplus_{i=1}^{\frac{n}{2}}R/{\eta}(2i-1)[4i-2]\]
is an isomorphism in \(\widetilde{DM}^{eff}(pt,R)\) where
\[\Theta=(p,c^{2i-1}_n\otimes R).\]
\end{enumerate}
\end{theorem}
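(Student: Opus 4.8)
The plan is to argue by induction on $n$, the cases $n=0$ ($\mathbb{P}^0=\mathrm{Spec}\,k$) and $n=1$ ($\mathbb{P}^1\simeq S^{2,1}$ splits off the base point, so $R(\mathbb{P}^1)=R\oplus R(1)[2]$) being immediate. For the inductive step, fix a rational point $P\in\mathbb{P}^n$; its complement is the total space of $\mathcal{O}(-1)$ over a hyperplane $\mathbb{P}^{n-1}$, hence $\mathbb{A}^1$-equivalent to $\mathbb{P}^{n-1}$, while homotopy purity identifies $\mathbb{P}^n/(\mathbb{P}^n\setminus P)$ with $\mathrm{Th}(\mathbb{A}^n)=R(n)[2n]$. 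This produces a distinguished triangle in $\widetilde{DM}^{eff}(pt,R)$
\[
R(\mathbb{P}^{n-1})\xrightarrow{\ j\ }R(\mathbb{P}^n)\xrightarrow{\ q_n\ }R(n)[2n]\xrightarrow{\ \partial_n\ }R(\mathbb{P}^{n-1})[1],
\]
in which $q_n$ is the collapse of the lower skeleton (for $n$ odd this will be identified with $\tau_{n+1}$, up to a unit). By naturality of the structure map and of the powers of $c_1(\mathcal{O}(1))$ along $j$ one has $p_n\circ j=p_{n-1}$ and $c^{2i-1}_n\circ j$ is the corresponding class on $\mathbb{P}^{n-1}$, so that $\Theta_n\circ j$ is controlled by the inductive splitting $\Theta_{n-1}$.

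Everything now hinges on the connecting morphism $\partial_n\in\mathrm{Hom}_{\widetilde{DM}^{eff}(pt,R)}\big(R(n)[2n],R(\mathbb{P}^{n-1})[1]\big)$. Decomposing $R(\mathbb{P}^{n-1})$ by the inductive hypothesis reduces this to the groups $\mathrm{Hom}(R(n)[2n],R(a)[b])$ and $\mathrm{Hom}(R(n)[2n],R/\eta(a)[b])$, which I read off from Morel's and Calm\`es--Fasel's computation of the MW-motivic cohomology of $\mathrm{Spec}\,k$: in particular $\mathrm{Hom}(R(1)[1],R)\cong W(k)\otimes R$ is the free module of rank one on $\eta$, the groups $\widetilde{H}^{p,q}(\mathrm{Spec}\,k,R)$ vanish in the relevant off-diagonal and negative-weight ranges, and $\eta$ acts invertibly in non-positive weight. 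The outcome is: for $n$ odd every component of $\partial_n$ lies in a vanishing group (the $\eta$-invertibility is what kills the component landing in the top $R/\eta$-block of $R(\mathbb{P}^{n-1})[1]$), so the triangle splits and $R(\mathbb{P}^n)=R(\mathbb{P}^{n-1})\oplus R(n)[2n]$; for $n$ even the only surviving component lands in the split top summand $R(n-1)[2n-1]$ of $R(\mathbb{P}^{n-1})[1]$ and equals $u\cdot\eta$ for some $u\in W(k)$, whence taking cofibres gives $R(\mathbb{P}^n)=Q\oplus\mathrm{cofib}(u\eta)$ with $Q$ the part coming from $R(\mathbb{P}^{n-1})$. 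The genuinely delicate — and main — obstacle is that $u$ is a \emph{unit} of $W(k)$, so that $\mathrm{cofib}(u\eta)\cong R/\eta(n-1)[2n-2]$; I would settle this by reducing modulo $\eta$ (so that the mod-$2$ reduction of $\partial_n$ is nonzero, matching $Sq^2$ on $H^*(\mathbb{P}^n;\mathbb{Z}/2)$) and then pinning the rank of $u$ from Fasel's explicit computation of $H^*(\mathbb{P}^n,\mathbf{K}^{MW}_*,\mathscr{L})$ in \cite{F}, or self-containedly from the case $n=2$, where $\widetilde{R}(\mathbb{P}^2)$ is literally the cone of the Hopf map, together with multiplicativity of Chern classes. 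Feeding this into the inductive bookkeeping yields exactly the stated decomposition on the nose.

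It then remains to identify the abstract splitting produced above with the stated map $\Theta_n$. Since source and target are compact and lie in the triangulated subcategory generated by Tate twists, it is enough to check that $\Theta_n$ induces an isomorphism on MW-motivic cohomology; equivalently, one compares the exact triangle above with the (split, for odd $n$) triangle relating the decomposition of $\mathbb{P}^{n-1}$ to that of $\mathbb{P}^n$, the map being $\Theta_n$, and applies the five lemma using $\Theta_n\circ j$ from Step 1. This reduces the whole verification to the ``new'' summand: for $n$ odd, to the fact that $\tau_{n+1}$ vanishes on $j$ (as $\widetilde{CH}^n(\mathbb{P}^{n-1})=0$) and is a $GW(k)$-generator on the top cell; for $n$ even, to the fact that $c^{n-1}_n$ restricts to an isomorphism onto $R/\eta(n-1)[2n-2]$, which amounts (again by the five lemma on the defining triangles) to checking that $c_1(\mathcal{O}(1))^{n-1}$ has rank $\pm1$ on the top cell — true because it generates the free part $\mathbb{Z}$ of $\widetilde{CH}^{n-1}(\mathbb{P}^{n-1})$ and $\mathrm{Hom}(R,R/\eta)\cong GW(k)/I(k)\cong\mathbb{Z}$ is the rank map. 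All the outer steps are thus formal or immediate from the known MW-cohomology of a point; the single substantial computation is that of $\partial_n$, and above all the assertion $u\in W(k)^\times$.
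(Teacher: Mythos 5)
Your strategy (one-step induction via the point-complement Gysin triangle for every $n$) diverges from the paper's exactly at the place you yourself flag as the crux, and that crux is not closed. For $n$ even you must show that the component of $\partial_n$ in $[\mathbb{Z}(n)[2n],\mathbb{Z}(n-1)[2n-1]]_{MW}\cong W(k)$ is a \emph{unit} multiple of $\eta$, and none of your three proposed resolutions establishes this. Reducing modulo $\eta$ (equivalently, matching against $Sq^2$ on mod-$2$ Chow groups) only controls the image of $u$ in $W(k)/I(k)\cong\mathbb{Z}/2$, and an element of $W(k)$ of odd rank need not be a unit: already $\langle 1\rangle+\langle 1\rangle+\langle 1\rangle\in W(\mathbb{Q})$ has nonzero image in $W/I$ but dies in $W(\mathbb{Q})\to\mathbb{Z}\to\mathbb{Z}/3$, so it is not invertible. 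Hence ``the mod-$2$ reduction of $\partial_n$ is nonzero'' does not give $\mathrm{cofib}(u\eta)\cong R/\eta(n-1)[2n-2]$. Extracting $u$ from Fasel's groups is possible in principle but needs an actual argument (one must show that the values of $\widetilde{CH}^{n-1}(\mathbb{P}^n)$ and $\widetilde{CH}^{n}(\mathbb{P}^n)$ force multiplication by $u$ to be bijective on the relevant ideals), and is somewhat backwards since the paper obtains Fasel's computation as a corollary of this theorem. The appeal to ``the case $n=2$ plus multiplicativity of Chern classes'' would require comparing the attaching map of the top cell of $\mathbb{P}^n$ with that of $\mathbb{P}^2$, i.e.\ essentially the equivalence $\mathbb{P}^n/\mathbb{P}^{n-2}\simeq\Sigma^{n-2}_{\mathbb{P}^1}\mathbb{P}^2$, which you do not prove.

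The paper sidesteps all of this by changing the induction in the even case: it uses the Gysin triangle of the codimension-two linear subspace $\mathbb{P}^{n-2}\subset\mathbb{P}^n$, whose complement retracts onto $\mathbb{P}^1$, so the cofiber is $Th(N_{\mathbb{P}^1/\mathbb{P}^n})\cong Th(O_{\mathbb{P}^1}(1))(n-2)[2n-4]$; specializing this same triangle to $n=2$ identifies $Th(O_{\mathbb{P}^1}(1))$ with $\mathbb{Z}/\eta(1)[2]$, and the connecting map then vanishes by the same orthogonality computations (Proposition \ref{ortho}) you invoke in the odd case. No element of $W(k)$ ever has to be evaluated. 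Your odd case is essentially the paper's. For the identification of the abstract splitting with $\Theta$, note also that the paper's check on the $R/\eta$-summands is more than the rank computation you describe: one needs $c^{2i-1}_n$ to be a \emph{free generator} of $\eta^{2i-1}_{MW}(\mathbb{P}^n)$ over $\mathrm{End}_{\widetilde{DM}^{eff}(pt)}(\mathbb{Z}/\eta)\cong\mathbb{Z}[x]/(x^2-2x)$, which is settled by the index/cokernel analysis of $\gamma'^*$ inside $\mathbb{Z}\oplus\mathbb{Z}$ in Proposition \ref{gen}, combined with \cite[Lemma 5.3]{Y}. I would rework your even case along the paper's lines, or else supply a genuine computation of $u$.
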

Here \({\tau}_{n+1}=i_*(1)\) for some rational point \(i:pt\longrightarrow\mathbb{P}^n\) and \(\eta:\mathbb{G}_m\longrightarrow pt\) is the Hopf map (see Definition \ref{cone}), where we set \(A/\eta=A\otimes C(\eta)\) for every motive \(A\). The morphism \(c^{2i-1}_n\) is the unique one which corresponds to \((c_1(O_{\mathbb{P}^n}(1))^{2i-1}, c_1(O_{\mathbb{P}^n}(1))^{2i})\) as in usual case (see Proposition \ref{gen}, (2) and \cite[2.10]{D}). This recovers \cite[Corollary 11.8]{F}:
\begin{coro}
Suppose \(0\leq i\leq n\), we have
\[\widetilde{CH}^{i}(\mathbb{P}^n)=\begin{cases}GW(k)&\text{if \(i=0\) or \(i=n\) and \(n\) is odd}\\\mathbb{Z}&\text{if \(i>0\) is even}\\2\mathbb{Z}&\text{else}.\end{cases}\]
\end{coro}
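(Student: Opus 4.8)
The plan is to read $\widetilde{CH}^i(\mathbb{P}^n)$ off the splitting theorem (applied with $R=\mathbb{Z}$) via the identification $\widetilde{CH}^i(\mathbb{P}^n)=H^{2i,i}_{MW}(\mathbb{P}^n)=\mathrm{Hom}_{\widetilde{DM}^{eff}(pt,\mathbb{Z})}(\mathbb{Z}(\mathbb{P}^n),\mathbb{Z}(i)[2i])$. The splitting writes $\mathbb{Z}(\mathbb{P}^n)$ as $\mathbb{Z}\oplus\bigoplus_{j=1}^{\lfloor n/2\rfloor}\mathbb{Z}/\eta(2j-1)[4j-2]$, together with an extra summand $\mathbb{Z}(n)[2n]$ when $n$ is odd; hence $\widetilde{CH}^i(\mathbb{P}^n)$ is the direct sum of $\mathrm{Hom}(-,\mathbb{Z}(i)[2i])$ applied to these summands, and it suffices to compute each type.

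Throughout I would use the cancellation theorem for MW-motives (so $\mathbb{Z}(1)$ is invertible in $\widetilde{DM}$ and $\widetilde{DM}^{eff}\hookrightarrow\widetilde{DM}$ is fully faithful) and the following standard facts about the MW-motivic cohomology of the field $k$: $H^{q,q}_{MW}(pt)=K^{MW}_q(k)$ for all $q\in\mathbb{Z}$ (so $GW(k)$ for $q=0$ and $W(k)$ for $q<0$); $H^{p,q}_{MW}(pt)=0$ for $p>q$ (in particular $\widetilde{CH}^q(pt)=H^{2q,q}_{MW}(pt)=0$ for $q\ge 1$, the Rost--Schmid complex of a field sitting in degree $0$); and $H^{p,q}_{MW}(pt)=0$ for $p<q\le 0$. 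With these, the two Tate summands are immediate: $\mathrm{Hom}(\mathbb{Z},\mathbb{Z}(i)[2i])=\widetilde{CH}^i(pt)$ is $GW(k)$ for $i=0$ and $0$ for $i\ge1$, and $\mathrm{Hom}(\mathbb{Z}(n)[2n],\mathbb{Z}(i)[2i])=H^{2(i-n),i-n}_{MW}(pt)$ is $GW(k)$ for $i=n$ and $0$ for $i<n$.

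The essential computation is the summand $\mathbb{Z}/\eta(2j-1)[4j-2]$. Cancelling $2j-1$ Tate twists gives $\mathrm{Hom}(\mathbb{Z}/\eta(2j-1)[4j-2],\mathbb{Z}(i)[2i])\cong\mathrm{Hom}_{\widetilde{DM}}(\mathbb{Z}/\eta,\mathbb{Z}(m)[2m])$ with $m:=i-2j+1$. Applying $\mathrm{Hom}(-,\mathbb{Z}(m)[2m])$ to the defining triangle $\mathbb{Z}(1)[1]\xrightarrow{\eta}\mathbb{Z}\to\mathbb{Z}/\eta\to\mathbb{Z}(1)[2]$ exhibits this group as an extension of $\ker\bigl(H^{2m,m}_{MW}(pt)\xrightarrow{\eta}H^{2m-1,m-1}_{MW}(pt)\bigr)$ by $\mathrm{coker}\bigl(H^{2m-1,m}_{MW}(pt)\xrightarrow{\eta}H^{2m-2,m-1}_{MW}(pt)\bigr)$, the maps being multiplication by $\eta\in K^{MW}_{-1}(k)$ under the identification of these $\mathrm{Hom}$-groups with shifted Milnor--Witt K-theory. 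By the vanishing facts above, all four groups are $0$ unless $m\in\{0,1\}$. For $m=1$ the answer is $\mathrm{coker}\bigl(\eta\colon K^{MW}_1(k)\to K^{MW}_0(k)\bigr)=K^M_0(k)=\mathbb{Z}$; for $m=0$ it is $\ker\bigl(\eta\colon K^{MW}_0(k)\to K^{MW}_{-1}(k)\bigr)=\ker\bigl(GW(k)\twoheadrightarrow W(k)\bigr)=\mathbb{Z}\langle1,-1\rangle$, which is infinite cyclic and has image $2\mathbb{Z}$ under the rank map $GW(k)\to\mathbb{Z}$ (equivalently under $\widetilde{CH}^{2j-1}(\mathbb{P}^n)\to CH^{2j-1}(\mathbb{P}^n)$); for every other $m$ it is $0$.

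To finish I would reassemble. Going back through $m=i-2j+1$, the $j$-th summand contributes $\mathbb{Z}\langle1,-1\rangle\cong 2\mathbb{Z}$ in codimension $i=2j-1$, a copy of $\mathbb{Z}$ in codimension $i=2j$, and nothing else. The pairs $\{2j-1,2j\}$ for $1\le j\le\lfloor n/2\rfloor$ tile $\{1,\dots,n\}$ when $n$ is even and $\{1,\dots,n-1\}$ when $n$ is odd, with a $2\mathbb{Z}$ over each odd index and a $\mathbb{Z}$ over each even positive index; adding the $GW(k)$ at $i=0$ from the $\mathbb{Z}$ summand and, for $n$ odd, the $GW(k)$ at $i=n$ from $\mathbb{Z}(n)[2n]$ reproduces exactly the stated case distinction. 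I expect the main obstacle to be the third paragraph: pinning down the connecting homomorphisms as $\eta$-multiplication on Milnor--Witt K-theory and identifying the two nontrivial (co)kernels; in particular the identification $\ker(GW(k)\twoheadrightarrow W(k))=\mathbb{Z}\langle1,-1\rangle$ is precisely what produces the ``$2\mathbb{Z}$'', while the remaining steps are bookkeeping with the splitting and with the known MW-motivic cohomology of $k$.
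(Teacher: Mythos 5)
Your proposal is correct and follows essentially the same route as the paper: apply the splitting of $\mathbb{Z}(\mathbb{P}^n)$ (Theorem \ref{splitting}/\ref{splitting1}) and compute $[\mathbb{Z}/\eta(2j-1)[4j-2],\mathbb{Z}(i)[2i]]_{MW}$ via the long exact sequence of the $\eta$-triangle together with the vanishing/identification of $[\mathbb{Z}(q)[p],\mathbb{Z}]_{MW}$, which is exactly the content of Propositions \ref{negative} and \ref{orthogonality} (the $a=1,b=0$ case there gives your $\mathrm{coker}(\eta\colon K_1^{MW}\to K_0^{MW})=\mathbb{Z}$ and $\ker(GW(k)\to W(k))=\mathbb{Z}\cdot h\cong 2\mathbb{Z}$). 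The bookkeeping in your final paragraph matches the paper's case distinction.
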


To obtain the projective bundle theorem, the second step is to identify the group
\[\eta_{MW}^i(X):=Hom_{\widetilde{DM}^{eff}(pt)}(\mathbb{Z}(X),\mathbb{Z}/{\eta}(i)[2i])\]
by the structure theorem (see \cite[Theorem 17]{B}) of the spectrum \(H\widetilde{\mathbb{Z}}\) representing the MW-motivic cohomology in the motivic stable homotopy category \(\mathcal{SH}(pt)\) (see Section \ref{Motivic Stable Homotopy Category and Slice Filtrations}). Our result is the following (see Theorem \ref{eta} and Corollary \ref{etaq}):
\begin{theorem}
If \(_2CH^{i+1}(X)=0\), we have a natural Cartesian square
\[
	\xymatrix
	{
		\eta^i_{MW}(X)\ar[r]\ar[d]		&CH^{i+1}(X)\ar[d]\\
		CH^i(X)\ar[r]^-{Sq^2\circ\pi}	&CH^{i+1}(X)/2
	}
\]
where \(\pi\) is the modulo \(2\) map. Under \(\mathbb{Z}[\frac{1}{2}]\)-coefficients, we have a natural isomorphism
\[CH^i(X)[\frac{1}{2}]\oplus CH^{i+1}(X)[\frac{1}{2}]\cong\eta_{MW}^i(X,\mathbb{Z}[\frac{1}{2}])\]
for every \(X\in Sm/k\).
\end{theorem}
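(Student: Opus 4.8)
The plan is to pass to $\mathcal{SH}(pt)$ and feed in the structure theorem for $H\widetilde{\mathbb{Z}}$. First I would use the adjunction between $\widetilde{DM}^{eff}(pt)$ and $\mathcal{SH}(pt)$ (under which $\Sigma^\infty X_+$ corresponds to $\mathbb{Z}(X)$ and the unit to $H\widetilde{\mathbb{Z}}$), together with exactness of $\gamma_*$, to rewrite
\[\eta^i_{MW}(X)=Hom_{\mathcal{SH}(pt)}(\Sigma^\infty X_+,(H\widetilde{\mathbb{Z}}/\eta)(i)[2i]),\]
where $H\widetilde{\mathbb{Z}}/\eta=C(\eta\colon H\widetilde{\mathbb{Z}}(1)[1]\to H\widetilde{\mathbb{Z}})$. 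Then I would invoke \cite[Theorem 17]{B}, which builds $H\widetilde{\mathbb{Z}}$ out of $H\mathbb{Z}$, the $\eta$-inverted spectrum $H_W\mathbb{Z}=H\widetilde{\mathbb{Z}}[\eta^{-1}]$ and $H\mathbb{Z}/2$, with gluing data governed by (the integral lift of) the motivic Steenrod square $Sq^2$ and by reduction mod $2$. Taking cofibres of $\eta$ collapses this picture: $\eta$ is invertible on $H_W\mathbb{Z}$, so that contribution disappears, while $\eta$ acts by $0$ on $H\mathbb{Z}$ and on $H\mathbb{Z}/2$ (it factors through $H^{-1,-1}(pt,\mathbb{Z})=0$, likewise mod $2$), so $H\mathbb{Z}/\eta\simeq H\mathbb{Z}\oplus H\mathbb{Z}(1)[2]$ and $(H\mathbb{Z}/2)/\eta\simeq H\mathbb{Z}/2\oplus H\mathbb{Z}/2(1)[2]$. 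The upshot should be a homotopy cartesian square
\[\xymatrix{H\widetilde{\mathbb{Z}}/\eta\ar[r]\ar[d] & H\mathbb{Z}(1)[2]\ar[d]^-{\pi}\\ H\mathbb{Z}\ar[r]^-{Sq^2\circ\pi} & H\mathbb{Z}/2(1)[2]}\]
in $\mathcal{SH}(pt)$, at least in the bidegrees seen by $(i)[2i]$.

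Next I would apply $Hom_{\mathcal{SH}(pt)}(\Sigma^\infty X_+,-(i)[2i])$ to this square and chase the associated Mayer--Vietoris sequence
\[H^{2i-1,i}(X,\mathbb{Z})\oplus H^{2i+1,i+1}(X,\mathbb{Z})\xrightarrow{\ \psi\ }H^{2i+1,i+1}(X,\mathbb{Z}/2)\to\eta^i_{MW}(X)\to CH^i(X)\oplus CH^{i+1}(X)\xrightarrow{\ \phi\ }CH^{i+1}(X)/2,\]
in which $\phi(a,b)=Sq^2(\pi a)-\pi(b)$ and the restriction of $\psi$ to $H^{2i+1,i+1}(X,\mathbb{Z})$ is, up to sign, the reduction map $\pi$; here one uses $H^{p,q}(X,\mathbb{Z})=0$ for $p>2q$ to identify $H^{2j,j}(X,\mathbb{Z}/2)=CH^j(X)/2$. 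This is where the hypothesis ${}_2CH^{i+1}(X)=0$ enters: in the integral Bockstein sequence the map $\beta\colon H^{2i+1,i+1}(X,\mathbb{Z}/2)\to{}_2H^{2i+2,i+1}(X,\mathbb{Z})={}_2CH^{i+1}(X)$ then vanishes, so $\pi\colon H^{2i+1,i+1}(X,\mathbb{Z})\to H^{2i+1,i+1}(X,\mathbb{Z}/2)$ is onto, hence $\psi$ is onto, hence $H^{2i+1,i+1}(X,\mathbb{Z}/2)\to\eta^i_{MW}(X)$ is zero. The sequence then collapses to $0\to\eta^i_{MW}(X)\to CH^i(X)\oplus CH^{i+1}(X)\xrightarrow{\phi}CH^{i+1}(X)/2$, i.e.\ $\eta^i_{MW}(X)=\ker\phi=CH^i(X)\times_{CH^{i+1}(X)/2}CH^{i+1}(X)$ with the stated maps, which is exactly the claimed cartesian square; naturality in $X$ is automatic since every step is functorial.

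For the $\mathbb{Z}[\frac{1}{2}]$-statement I would instead use the $+/-$ decomposition $\widetilde{DM}(pt,\mathbb{Z}[\frac{1}{2}])\simeq DM(pt,\mathbb{Z}[\frac{1}{2}])\times DM_W(pt,\mathbb{Z}[\frac{1}{2}])$ (equivalently, that $H\mathbb{Z}/2$ dies after inverting $2$, so the square above degenerates): $\eta$ is invertible on the $-$-part and $0$ on the $+$-part, so $\mathbb{Z}[\frac{1}{2}]/\eta$ is the $+$-part of $C(0\colon\mathbb{Z}[\frac{1}{2}](1)[1]\to\mathbb{Z}[\frac{1}{2}])$, namely $\mathbb{Z}[\frac{1}{2}]\oplus\mathbb{Z}[\frac{1}{2}](1)[2]$ in $DM(pt,\mathbb{Z}[\frac{1}{2}])$; mapping $\mathbb{Z}[\frac{1}{2}](X)$ into it yields $CH^i(X)[\frac{1}{2}]\oplus CH^{i+1}(X)[\frac{1}{2}]$, naturally.

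The hard part will be the second step: extracting from \cite[Theorem 17]{B} exactly the cartesian square displayed above — that the horizontal gluing map is genuinely $Sq^2\circ\pi$ and not some other operation of the same bidegree (e.g.\ one modified by $\tau$ or $\rho$), that the vertical map is plain mod-$2$ reduction, and that the remaining data of the structure theorem contribute nothing to $Hom_{\mathcal{SH}(pt)}(\Sigma^\infty X_+,-(i)[2i])$ in total bidegree $(2i,i)$. Once this square is pinned down the remainder is a routine diagram chase, in which ${}_2CH^{i+1}(X)=0$ is precisely what annihilates the Bockstein obstruction; one should also spend a line on the compatibility between the effective category $\widetilde{DM}^{eff}(pt)$, where $\mathbb{Z}/\eta$ lives, and $\mathcal{SH}(pt)$, where the structure theorem is proved.
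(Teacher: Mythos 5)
Your overall strategy---pass to $\mathcal{SH}(pt)$, feed in Bachmann's structure theorem for $H\widetilde{\mathbb{Z}}$, kill $\eta$, and read off a four-term exact sequence whose error term is annihilated by a Bockstein argument using ${}_2CH^{i+1}(X)=0$---is the same strategy the paper follows, and your final diagram chase and the $\mathbb{Z}[\frac{1}{2}]$ argument via the $+/-$ decomposition are fine. But the step you yourself flag as the hard part contains a genuine error, not merely a gap. In the structure theorem (\cite[Theorem 17]{B}, and as used in Proposition \ref{triangles}) the Witt-theoretic corner is $H_W\mathbb{Z}=f_0(K_*^W)$, the \emph{effective cover}, on which $\eta$ is not invertible: $\pi_0(H_W\mathbb{Z})_n=I^n$ and $\eta$ is the inclusion $I^{n+1}\subset I^n$. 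Consequently $H_W\mathbb{Z}/\eta\simeq H_{\mu}\mathbb{Z}/2\oplus H_{\mu}\mathbb{Z}/2[2]$ is nonzero (Proposition \ref{triangles}), so your claim that ``$\eta$ is invertible on $H_W\mathbb{Z}$, so that contribution disappears'' deletes exactly the corner that produces the Steenrod square; if it really disappeared, $H\widetilde{\mathbb{Z}}/\eta$ would be a retract of $H_{\mu}\mathbb{Z}/\eta$ and no $Sq^2$ would appear at all. One can also check directly that your displayed square is not homotopy cartesian: the fibre $F$ of $H_{\mu}\mathbb{Z}\oplus H_{\mu}\mathbb{Z}(1)[2]\to H_{\mu}\mathbb{Z}/2(1)[2]$ has $\pi_1(F)_0$ containing $\mu_2=\pi_2(H_{\mu}\mathbb{Z}/2(1)[2])_0$ as a subsheaf (the preceding term of the long exact sequence vanishes), whereas $\pi_1(H\widetilde{\mathbb{Z}}/\eta)_0=2K_1^M$, which is torsion-free over, say, $k=\mathbb{Q}$. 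The hedge ``in the bidegrees seen by $(i)[2i]$'' does not repair this: since $F\not\simeq H\widetilde{\mathbb{Z}}/\eta$ and you exhibit no comparison map between them, the coincidence of the outer terms of your Mayer--Vietoris sequence with the correct ones says nothing about the middle term $\eta^i_{MW}(X)$.

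The working version of your square is the slice-filtration triangle $H_{\mu}\mathbb{Z}\wedge\mathbb{P}^1\to H\widetilde{\mathbb{Z}}/\eta\to H_{\mu}\mathbb{Z}\oplus H_{\mu}\mathbb{Z}/2[2]$ of Proposition \ref{slice}, together with the comparison maps $h:H_{\mu}\mathbb{Z}/\eta\to H\widetilde{\mathbb{Z}}/\eta$ and $p:H\widetilde{\mathbb{Z}}/\eta\to H_{\mu}\mathbb{Z}/\eta$ with $p\circ h=2$ and $C(p)\simeq C(\tau[1])$; it is the triangle $H\widetilde{\mathbb{Z}}/\eta\xrightarrow{p}H_{\mu}\mathbb{Z}/\eta\to C(\tau[1])$, not a pullback against $H_{\mu}\mathbb{Z}/2(1)[2]$, that yields the exact sequence you want, and the relation $p\circ h=2$ is what makes $p$ injective when ${}_2CH^{i+1}(X)=0$. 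Your second acknowledged gap---that the gluing map really is $Sq^2\circ\pi$ rather than some other operation of that bidegree---also needs a genuine argument: it comes from identifying the boundary of $0\to K_{*+1}^W\to K_*^W\to K_*^M/2\to 0$ with the Bockstein of \cite[2.3]{HW} and then invoking \cite[Remark 10.5]{F} to recognize reduction composed with this Bockstein as $Sq^2$. With those two repairs your skeleton becomes the paper's proof.
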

This allows us to define a global version \(c^{2i-1}_{rk(E)}(E)\) of \(c^{2i-1}_{rk(E)}\) for vector bundles \(E\) (see Proposition \ref{global}).

Our main theorem is the following (see Corollary \ref{pbtint}):
\begin{theorem}
Let \(S\in Sm/k\), \(X\in Sm/S\) be quasi-projective and \(E\) be a vector bundle of rank \(n\) over \(X\).
\begin{enumerate}
\item We have isomorphisms
\[R(\mathbb{P}(E))/\eta\cong\bigoplus_{i=0}^{n-1}R(X)/\eta(i)[2i]\]
\[Th(E)/\eta\cong R(X)/\eta(n)[2n]\]
in \(\widetilde{DM}(S,R)\).
\item If \(n\) is odd, we have an isomorphism
\[R(\mathbb{P}(E))\cong R(X)\oplus\bigoplus_{i=1}^{\frac{n-1}{2}}R(X)/{\eta}(2i-1)[4i-2]\]
in \(\widetilde{DM}^{eff}(S,R)\).

In particular, we have (\(k=min\{\lfloor\frac{i+1}{2}\rfloor,\frac{n-1}{2}\}\))
\[\widetilde{CH}^{i}(\mathbb{P}(E))=\widetilde{CH}^i(X)\oplus\bigoplus_{j=1}^{k}\eta_{MW}^{i-2j}(X).\]
\item If \(n\) is even, we have a distinguished triangle
\begin{footnotesize}
\[Th(det(E)^{\vee})(n-2)[2n-4]\to R(\mathbb{P}(E))\longrightarrow R(X)\oplus\bigoplus_{i=1}^{\frac{n}{2}-1}R(X)/{\eta}(2i-1)[4i-2]\xrightarrow{\mu}\cdots[1]\]
\end{footnotesize}
in \(\widetilde{DM}(S,R)\) where
\(\mu=0\) if \(e(E^{\vee})=0\in H^n(X,W(det(E)))\). In this case, we have
\[\widetilde{CH}^{i}(\mathbb{P}(E))=\widetilde{CH}^i(X)\oplus\bigoplus_{j=1}^{\frac{n}{2}-1}\eta_{MW}^{i-2j}(X)\oplus\widetilde{CH}^{i-n+1}(X,det(E)^{\vee}).\]
\end{enumerate}
\end{theorem}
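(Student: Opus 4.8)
The plan is to bootstrap all three statements from the point-level splitting of Theorem~\ref{splitting1}, the global Chern classes of Proposition~\ref{global}, and the computation of $\eta_{MW}^*$ in Theorem~\ref{eta}. First I reduce to the case $S=X$: if $f\colon X\to S$ is the structure map then $f_\#R_X(\mathbb{P}(E))=R_S(\mathbb{P}(E))$, and by the projection formula $f_\#$ sends the twisted Tate motives $R_X/\eta(i)[2i]$ and the Thom motives $Th_X(L)$ to their over-$S$ analogues (the object $C(\eta)$ and the line-bundle twists being pulled back from $S$); hence it suffices to decompose $R_X(\mathbb{P}(E))\in\widetilde{DM}(X,R)$. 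Part~(1) is the oriented shadow of the problem: since $-/\eta=-\otimes C(\eta)$ identifies the relevant part of $\widetilde{DM}(X,R)$ with ordinary Voevodsky motives (the $\eta$-killed MW-motivic cohomology spectrum being $H\mathbb{Z}$), the two isomorphisms of~(1) are exactly the classical projective bundle theorem \cite[Theorem 15.12]{MVW} and the motivic Thom isomorphism in $DM(X,R)$, transported back through $R\to C(\eta)$.

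For~(2) I produce the splitting morphism globally and verify it Zariski-locally on $X$. Set $\Theta=(p,c^1_n(E),c^3_n(E),\dots,c^{n-2}_n(E))\colon R_X(\mathbb{P}(E))\to R_X\oplus\bigoplus_{i=1}^{(n-1)/2}R_X/\eta(2i-1)[4i-2]$ using the structure map $p\colon\mathbb{P}(E)\to X$ and the global Chern classes of Proposition~\ref{global}; these are all manufactured from the relative line bundle $O_{\mathbb{P}(E)}(1)$ and hence commute with restriction to any Zariski open $U\subseteq X$. Choose a finite Zariski cover $\{U_\alpha\}$ of $X$ trivialising $E$ (possible as $X$ is quasi-projective, hence quasi-compact). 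Over each $U_\alpha$ one has $\mathbb{P}(E)|_{U_\alpha}\cong\mathbb{P}^{n-1}\times U_\alpha$ with $O(1)$ the pullback of $O_{\mathbb{P}^{n-1}}(1)$, so $\Theta|_{U_\alpha}$ is the base change to $U_\alpha$ of the morphism $\Theta$ of Theorem~\ref{splitting1}(2) for $\mathbb{P}^{n-1}$ (note that $n-1$ is even), which is an isomorphism. Thus the cone of $\Theta$ restricts to $0$ on every $U_\alpha$ and on every finite intersection (which still trivialises $E$), and Nisnevich (hence Zariski) descent in $\widetilde{DM}(X,R)$ forces this cone to vanish; this is~(2).

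For~(3), where $n$ is even and the fibres of $\mathbb{P}(E)\to X$ are odd-dimensional, the same cover shows that the analogous morphism $\Theta'$ onto $R_X\oplus\bigoplus_{i=1}^{n/2-1}R_X/\eta(2i-1)[4i-2]$ restricts on each $U_\alpha$ to the projection off the top cell in Theorem~\ref{splitting1}(1); it is therefore locally split surjective, with its fibre restricting to $R_{U_\alpha}(n-1)[2n-2]$. To identify the fibre of $\Theta'$ globally I combine~(2) applied to the odd-rank bundle $E\oplus O$ with the Gysin triangles for the closed immersion $\mathbb{P}(E)\hookrightarrow\mathbb{P}(E\oplus O)$ (open complement the total space of $E$, normal bundle $O_{\mathbb{P}(E)}(1)$) and for the zero section $X\hookrightarrow E$: the first splits $R_X(\mathbb{P}(E\oplus O))\cong R_X\oplus Th_{\mathbb{P}(E)}(O(1))$, hence identifies $Th_{\mathbb{P}(E)}(O(1))$ with $\bigoplus_{i=1}^{n/2}R_X/\eta(2i-1)[4i-2]$, while the line-bundle triangle for $O(1)$ on $\mathbb{P}(E)$ together with $O(1)^\times\cong O(-1)^\times=E\setminus X$ over $X$, the zero-section triangle $R_X(E\setminus X)\to R_X\to Th_X(E)\to[1]$, and the Thom isomorphism $Th_X(E)\cong Th_X(det(E))(n-1)[2n-2]$ in $\widetilde{DM}$, assemble into the desired distinguished triangle with missing term $Th_X(det(E)^{\vee})(n-2)[2n-4]$. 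The connecting map of the zero-section triangle is the Euler class $e(E^{\vee})$, so tracing it through these identifications exhibits $\mu$ as (the image in $H^n(X,W(det(E)))$ of) $e(E^{\vee})$; in particular $e(E^{\vee})=0$ gives $\mu=0$ and the triangle splits.

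The Chow--Witt formulas then follow by applying $Hom_{\widetilde{DM}(X,R)}(-,R_X(i)[2i])$ to these decompositions: $Hom(R_X,R_X(i)[2i])=\widetilde{CH}^i(X)$; $Hom(R_X/\eta(2j-1)[4j-2],R_X(i)[2i])=\eta_{MW}^{i-2j}(X)$ after using the duality $C(\eta)^{\vee}\cong C(\eta)(-1)[-2]$ and the adjunction $f_\#\dashv f^*$ over $Spec(k)$; and $Hom(Th_X(det(E)^{\vee})(n-2)[2n-4],R_X(i)[2i])=\widetilde{CH}^{i-n+1}(X,det(E)^{\vee})$ by the Thom isomorphism, while the range $k=\min\{\lfloor(i+1)/2\rfloor,(n-1)/2\}$ is forced by the vanishing of $\eta_{MW}^m(X)$ for $m\le-2$ (immediate from the Cartesian square of Theorem~\ref{eta}); rewriting the $\eta_{MW}^*$-terms through $CH^*(X)$ and $Sq^2$ invokes that square, which is where the hypothesis that $CH^*(X)$ has no $2$-torsion enters. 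The main obstacle is the even case: local vanishing of the cofibre of $\Theta'$ neither produces the global triangle nor computes $\mu$ (cohomology classes are not detected Zariski-locally), so one really has to run the $\mathbb{P}(E\oplus O)$/Gysin bookkeeping to pin down the twisted top cell $Th(det(E)^{\vee})(n-2)[2n-4]$ and, above all, to show that the whole obstruction $\mu$ is captured by the single class $e(E^{\vee})$ — which is precisely where the assumption $e(E^{\vee})=0\in H^n(X,W(det(E)))$ is used.
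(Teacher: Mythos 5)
There is a genuine gap, and it is the central one: your argument never constructs the morphism $\Theta$ for a general quasi-projective $X$. The global classes $c^{2i-1}_n(E)$ of Proposition~\ref{global} are only defined under the hypotheses that $X$ admits an \emph{acyclic} covering and that $_2CH^*(X)=0$; both are needed because the construction of $\sigma(E)$ as the pair $(a\,c_1(O_E(1))^i,\,b\,c_1(O_E(1))^{i+1})$ rests on the Cartesian-square description of $\eta^i_{MW}$ in Theorem~\ref{eta}, which is only available (and only natural) when $CH^{*}$ has no $2$-torsion. You choose merely a trivializing cover of a general quasi-projective $X$ and then invoke Proposition~\ref{global}; at that point the classes $c^{2i-1}_n(E)$, hence $\Theta$, do not exist. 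The paper's proof has an extra, essential reduction that your proposal omits: it first proves the theorem under the hypotheses ``$_2CH^*(X)=0$ and $X$ has an acyclic covering'' (Theorem~\ref{pbt}), notes that Grassmannians satisfy these, writes $\mathbb{P}(E)=\mathbb{P}(E\otimes L)$ with $L$ an ample square so that $E\otimes L$ is globally generated and pulled back from a Grassmannian, and then applies $q_\#f^*$ to transport the decomposition. Without this step your local-to-global argument in (2) has nothing to glue, and (3) inherits the same problem through its use of (2) for $E\oplus O$.

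Your treatment of part (1) is also incorrect as stated: killing $\eta$ does \emph{not} identify the relevant part of $\widetilde{DM}$ with Voevodsky motives. By Proposition~\ref{slice} and Remark~\ref{non-split}, $H\widetilde{\mathbb{Z}}/\eta$ sits in a non-split triangle with $\mathbb{P}^1\wedge H_\mu\mathbb{Z}$ and $H_\mu\mathbb{Z}\oplus H_\mu\mathbb{Z}/2[2]$, and $[\mathbb{Z}(Y)/\eta,\mathbb{Z}/\eta(i)[2i]]_{MW}=\eta^{i-1}_{MW}(Y)\oplus\eta^{i}_{MW}(Y)$ is a nontrivial fiber product involving $Sq^2$, not $CH^{i-1}\oplus 2CH^i\oplus\cdots$; the functor $\gamma'^*\colon\widetilde{DM}\to DM$ is neither full nor conservative, so an isomorphism in $DM$ cannot be ``transported back.'' One must still construct the splitting morphism in $\widetilde{DM}$ by producing the classes $\tau(E)$ — again via Proposition~\ref{global} and hence again via the Grassmannian reduction. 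Your outline of part (3) (projective completion $\mathbb{P}(E\oplus O)$, Gysin triangles, identification of the connecting map with $e(E^\vee)$ landing in $H^n(X,W(\det E))$ via the $\eta$-multiplication on $K^{MW}_*$) does match the paper's strategy, but it cannot be carried out until the construction of $\Theta$ is repaired as above.
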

Note that for any vector bundle \(E\) of odd rank \(n\), we have \(e(E^{\vee})=0\in H^n(X,W(det(E)))\) by \cite[Proposition 3.4]{L}. So the theorem above just says that there is a projective bundle theorem for any \(E\) if its Euler class vanishes in Witt group.

This in turn yields a computation of the MW-motives (resp. Chow-Witt ring) of blow-ups with smooth centers, as the following (see Theorem \ref{blowup}):
\begin{theorem}
Suppose \(S\in Sm/k\) and \(X, Z\in Sm/S\) with \(Z\) being closed in \(X\). If \(n:=codim_X(Z)\) is odd and \(Z\) is quasi-projective, we have
\[R(Bl_Z(X))\cong R(X)\oplus\bigoplus_{i=1}^{\frac{n-1}{2}}R(Z)/{\eta}(2i-1)[4i-2]\]
in \(\widetilde{DM}^{eff}(S,R)\).

In particular, we have (\(k=min\{\lfloor\frac{i+1}{2}\rfloor,\frac{n-1}{2}\}\))
\[\widetilde{CH}^{i}(Bl_Z(X))=\widetilde{CH}^i(X)\oplus\bigoplus_{j=1}^{k}\eta_{MW}^{i-2j}(Z).\]
\end{theorem}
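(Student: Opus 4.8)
The plan is to deduce the blow-up computation from the projective bundle theorem (the preceding main theorem) together with the standard blow-up distinguished triangle in $\widetilde{DM}^{eff}(S,R)$. Recall that for a closed immersion $Z\hookrightarrow X$ of codimension $n$ with normal bundle $N$, there is a Gysin/blow-up triangle relating $R(Bl_Z X)$, $R(X)$, $R(\mathbb{P}(N))$ and $R(Z)$: more precisely, writing $E$ for the exceptional divisor $\mathbb{P}(N)\hookrightarrow Bl_Z X$, one has a distinguished triangle
\[
R(E)\longrightarrow R(Bl_Z X)\oplus R(Z)\longrightarrow R(X)\xrightarrow{\ +1\ }
\]
(or the Mayer--Vietoris-type variant obtained from the deformation to the normal cone). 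First I would establish, or cite from the companion results, the precise form of this triangle in MW-motivic cohomology — this is the analogue of \cite[Theorem 15.12]{MVW} / the classical blow-up formula, and it should follow from the localization and homotopy purity properties of $\widetilde{DM}^{eff}$ used earlier in the paper, applied to the square $(Bl_Z X, X, E, Z)$.

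The second step is to plug in the projective bundle theorem for $\mathbb{P}(N)\to Z$. Since $n=\operatorname{codim}_X(Z)$ is odd, the rank of $N$ is odd, so by the remark following the main theorem $e(N^{\vee})=0$ automatically and part (2) of that theorem applies: $R(\mathbb{P}(N))\cong R(Z)\oplus\bigoplus_{i=1}^{(n-1)/2}R(Z)/\eta(2i-1)[4i-2]$. Substituting this into the blow-up triangle, the copy of $R(Z)$ coming from the projective bundle decomposition cancels against the $R(Z)$ summand in the triangle (via the splitting $p:\mathbb{P}(N)\to Z$, which is compatible with the structure map $E\to Z$ and the retraction used in the triangle), and the map $R(Z)\to R(X)$ is the Gysin pushforward, whose cone is exactly $R(Bl_Z X)$ modulo the remaining summands. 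One has to check that the composite of $\Theta$ with the relevant arrows is split on the nose so that the triangle degenerates into a short split exact sequence of motives; this is where I expect the main work to lie.

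Concretely, the key lemma will be that the map $R(Bl_Z X)\to R(X)$ admits a section after we quotient by the "new" summands $\bigoplus_{i=1}^{(n-1)/2}R(Z)/\eta(2i-1)[4i-2]$, equivalently that the blow-up triangle splits. In the classical setting this splitting comes from the fact that $CH^*(Bl_Z X)$ is a free $CH^*(X)$-module on the exceptional classes; here the analogous statement must be extracted from the idempotent decomposition provided by $\Theta$ in part (2) of the main theorem, pulled back along $Bl_Z X\to X$ and pushed along $E\to Bl_Z X$. I would phrase this as: the morphism $R(Z)/\eta(2i-1)[4i-2]\to R(\mathbb{P}(N))\to R(E)\to R(Bl_Z X)$ (the last map being the Gysin pushforward along the divisor $E$, suitably twisted so that the degrees match $(2i-1)+1$ and $(4i-2)+2$) gives the desired summands, and that together with $R(X)\to R(Bl_Z X)$ (pullback) these assemble to an isomorphism. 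Checking this assembled map is an isomorphism reduces, by the triangle, to the already-proven cancellation, so the only genuinely new input is the compatibility of the pushforward along $E$ with the $\eta$-inverted decomposition.

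The main obstacle, then, is bookkeeping the Tate twists and the $\det$-twisted pieces correctly through the Gysin maps, and verifying that no $\widetilde{CH}^{i-n+1}(Z,\det(N)^{\vee})$-type term survives — which is exactly why the hypothesis that $n$ is odd is essential: it kills the even-rank "extra" summand $Th(\det(E)^\vee)(n-2)[2n-4]$ appearing in part (3) of the main theorem, so that $R(\mathbb{P}(N))$ is genuinely a direct sum and the blow-up triangle splits cleanly. Once the isomorphism of motives is established, the stated formula for $\widetilde{CH}^i(Bl_Z X)$ follows by applying $\operatorname{Hom}_{\widetilde{DM}^{eff}(S,R)}(-,R(i)[2i])$ (with $R=\mathbb{Z}$, $S=\operatorname{Spec} k$) to the decomposition and using $\operatorname{Hom}(R(Z)/\eta(2j-1)[4j-2], \mathbb{Z}(i)[2i]) = \eta_{MW}^{i-2j}(Z)$ together with the range constraint $1\le j\le \min\{\lfloor (i+1)/2\rfloor, (n-1)/2\}$ coming from the vanishing of negative-degree $\eta_{MW}$ groups.
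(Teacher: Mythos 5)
Your overall skeleton matches the paper's: start from the blow-up distinguished triangle
\(R(Z')\to R(Bl_ZX)\oplus R(Z)\to R(X)\to R(Z')[1]\) (this is Proposition \ref{blowuptri}, quoted from Morel--Voevodsky), feed in the odd-rank case of the projective bundle theorem for \(Z'=\mathbb{P}(N_{Z/X})\) to cancel the \(R(Z)\) summand, and reduce to a triangle
\[\bigoplus_{i=1}^{\frac{n-1}{2}}R(Z)/\eta(2i-1)[4i-2]\longrightarrow R(Bl_ZX)\xrightarrow{f}R(X)\xrightarrow{\partial}\cdots[1].\]
However, there is a genuine gap at the decisive step: you need this triangle to split, i.e.\ you need a section of \(f\) (or \(\partial=0\)), and you do not construct one. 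Your phrase ``together with \(R(X)\to R(Bl_ZX)\) (pullback)'' presupposes a wrong-way map that does not exist: motives are covariant in the scheme, so the only natural morphism is \(f:R(Bl_ZX)\to R(X)\), and a section of it is precisely what must be produced. It also does not ``follow from the idempotent decomposition provided by \(\Theta\)'': that decomposition lives on \(\mathbb{P}(N)\), not on \(Bl_ZX\), and the boundary \(\partial\in[R(X),\bigoplus_iR(Z)/\eta(2i-1)[4i-1]]\) has no reason to vanish for formal/degree reasons for general \(X,Z\). The paper supplies the missing input by deformation to the normal cone: set \(X''=Bl_{Z\times 0}(X\times\mathbb{A}^2)\); the section of \(X\times\mathbb{A}^2\to X\) at height \(1\) misses the center \(Z\times 0\), hence lifts to \(X''\), and is \(\mathbb{A}^1\)-homotopic to the zero section, so the deformed triangle splits; the splitting is then transported back to the original triangle using the retraction \(R(Z'')\to R(Z')\) furnished by Corollary \ref{pbtint} (here \(N\oplus O^2\) again has odd rank). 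Without this, or an equivalent construction of a transfer \(R(X)\to R(Bl_ZX)\), your argument does not close.

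Two smaller inaccuracies: the map \(R(Z')\to R(Bl_ZX)\) used to produce the new summands is simply the morphism induced by the closed immersion of the exceptional divisor, with no Gysin twist --- your proposed shift ``\((2i-1)+1\) and \((4i-2)+2\)'' is inconsistent with the statement, whose summands carry exactly the twists \((2i-1)[4i-2]\) already present in \(R(\mathbb{P}(N))\). And the worry about a surviving \(\widetilde{CH}^{\,i-n+1}(Z,\det(N)^{\vee})\) term is moot from the outset, since \(n\) odd puts you directly in part (2) of the projective bundle theorem rather than part (3). The final passage from the motivic decomposition to the formula for \(\widetilde{CH}^i\) is fine.
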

Last but not the least, given an automorphism of \(E\), we study its action on the MW-motive of \(\mathbb{P}(E)\) (see Theorem \ref{oriinv2}).
\begin{theorem}
Let \(S\in Sm/k\), \(X\in Sm/S\) be quasi-projective with \(_2CH^*(X)=0\), \(E\) be a vector bundle of rank \(n\) over \(X\) and \(\varphi\in Aut_{O_X}(E)\).
\begin{enumerate}
\item If \(n\) is odd, the morphism \(\mathbb{P}(\varphi)=Id\) in \(\widetilde{DM}^{eff}(S,R)\).
\item If \(n\geq dim(X)+2\) is even and \(e(E^{\vee})=0\in H^n(X,W(det(E)))\), the morphism \(\mathbb{P}(\varphi)=Id\) in \(\widetilde{DM}(S,R)\) if \(\varphi\) is \(SL^c\) (see Definition \ref{slc}).
\end{enumerate}
\end{theorem}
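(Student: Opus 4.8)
The plan is to read both statements off the splittings of $R(\mathbb{P}(E))$ in Corollary \ref{pbtint}, exploiting that $\mathbb{P}(\varphi)$ sees almost none of $\varphi$. The structural input is the canonical isomorphism $\mathbb{P}(\varphi)^{*}O_{\mathbb{P}(E)}(1)\cong O_{\mathbb{P}(E)}(1)$ induced by $\varphi$ (the morphism $\mathbb{P}(\varphi)$ is defined so this holds) together with the fact that $\mathbb{P}(\varphi)$ is an $X$-automorphism, so $\pi\circ\mathbb{P}(\varphi)=\pi$ and $\mathbb{P}(\varphi)^{*}$ is $\widetilde{CH}^{*}(X)$-linear. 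Hence $\mathbb{P}(\varphi)^{*}$ fixes $c_{1}(O(1))$, and therefore fixes the structure morphism and each global class $c^{2i-1}_{n}(E)$, the latter because by Proposition \ref{gen}, (2) such a class is determined by $c_{1}(O(1))$.

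For (1): the classes $c^{2i-1}_{n}(E)$ together with $\pi$ are the components of the isomorphism $\Theta\colon R(\mathbb{P}(E))\xrightarrow{\sim}R(X)\oplus\bigoplus_{i=1}^{(n-1)/2}R(X)/\eta(2i-1)[4i-2]$ of Corollary \ref{pbtint}, (2). Since precomposition with $R(\mathbb{P}(\varphi))$ changes no component, $\Theta\circ R(\mathbb{P}(\varphi))=\Theta$, and $\Theta$ being invertible gives $R(\mathbb{P}(\varphi))=Id$ in $\widetilde{DM}^{eff}(S,R)$.

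For (2): since $n>\dim(X)$ one has $H^{n}(X,W(\det E))=0$, hence $e(E^{\vee})=0$ and the triangle of Corollary \ref{pbtint}, (3) splits; write $R(\mathbb{P}(E))=A\oplus C$ with $A=Th(\det(E)^{\vee})(n-2)[2n-4]$ its distinguished first term and $C=R(X)\oplus\bigoplus_{i=1}^{n/2-1}R(X)/\eta(2i-1)[4i-2]$, and let $q\colon R(\mathbb{P}(E))\to C$ (which, as in the proof of Corollary \ref{pbtint}, is given by $\pi$ and the $c^{2i-1}_{n}(E)$) and $\iota\colon A\to R(\mathbb{P}(E))$ be the canonical maps. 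As above $q\circ R(\mathbb{P}(\varphi))=q$, so $R(\mathbb{P}(\varphi))$ preserves $A$ and takes the form $\bigl(\begin{smallmatrix}\alpha&\beta\\0&Id_{C}\end{smallmatrix}\bigr)$. The endomorphism $\alpha$ of $A$ is the one induced by $\varphi$ on $\det(E)$, i.e.\ multiplication by $\det(\varphi)$ on the line bundle $\det(E)$; through the Thom isomorphism this is multiplication by the unit $\langle\det\varphi\rangle\in GW(X)$, and $\varphi$ being $SL^{c}$ (Definition \ref{slc}) forces $\langle\det\varphi\rangle=1$, so $\alpha=Id_{A}$. It then remains to show $\beta\in Hom(C,A)$ vanishes. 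I would use the fracture of $H\widetilde{\mathbb{Z}}$ (\cite[Theorem 17]{B}): applying $\widetilde{DM}(S,R)\to DM(S,R)$, the object $R(\mathbb{P}(E))$ becomes $\bigoplus_{j=0}^{n-1}R(X)(j)[2j]$ and $R(\mathbb{P}(\varphi))$ becomes $Id$ by the usual projective bundle theorem (\cite[Theorem 15.12]{MVW}), since $\mathbb{P}(\varphi)^{*}$ fixes $c_{1}(O(1))$, which generates $CH^{*}(\mathbb{P}(E))$ over $CH^{*}(X)$; so the image of $\beta$ in $DM(S,R)$ vanishes. Inverting $\eta$ kills the summands $R(X)/\eta(\cdot)$ of $C$, and $Hom_{\widetilde{DM}(S,R)[\eta^{-1}]}(R(X),A[\eta^{-1}])\cong H^{n-1}(X,W(\det(E)^{\vee}))=0$ because $n-1>\dim(X)$; so the image of $\beta$ after inverting $\eta$ vanishes as well. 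The fracture square then yields $\beta=0$ and $R(\mathbb{P}(\varphi))=Id$ in $\widetilde{DM}(S,R)$.

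The main obstacle is the even case. First, pinning down $\alpha$: identifying it with multiplication by $\langle\det\varphi\rangle$ and extracting from Definition \ref{slc} that $\langle\det\varphi\rangle=1$ is a computation with the Thom/Euler class of a line bundle under scaling by a unit, and must be carried out with care about twists. Second, the vanishing of the gluing term $\beta$: one must check that the strengthened bound $n\geq\dim(X)+2$ — and not merely $n>\dim(X)$, which already yields $e(E^{\vee})=0$ — is exactly what annihilates the $\eta$-inverted obstruction $H^{n-1}(X,W(\det(E)^{\vee}))$, and then verify that the residual mod-$2$ (Chow–Witt) contribution sitting between the two vanishing shadows in the fracture square cannot revive a nonzero $\beta$; this is where the precise structure of $H\widetilde{\mathbb{Z}}$ and the exact range in the dimension hypothesis really enter. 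The odd case, by contrast, falls out immediately from the explicit splitting.
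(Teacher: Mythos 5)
Your part (1) is correct and is essentially the paper's argument: since $\mathbb{P}(\varphi)$ preserves $O_E(1)$ and $\eta_{MW}^{2i-1}(\mathbb{P}(E))$ injects into Chow groups by Theorem \ref{eta} (using $_2CH^*(\mathbb{P}(E))=0$, which follows from $_2CH^*(X)=0$ and the ordinary projective bundle theorem), all components of $\Theta$ are fixed under precomposition with $R(\mathbb{P}(\varphi))$. Your overall scheme for part (2) — write $R(\mathbb{P}(E))=A\oplus C$ with $A=Th(\det(E)^{\vee})(n-2)[2n-4]$, observe the matrix of $R(\mathbb{P}(\varphi))$ is $\bigl(\begin{smallmatrix}\alpha&\beta\\0&Id_C\end{smallmatrix}\bigr)$, and kill $\alpha-Id$ and $\beta$ — is also the paper's scheme. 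But the two remaining steps are exactly where the content lies, and both are left as assertions.

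First, the identification of $\alpha$ with multiplication by $\langle\det\varphi\rangle\in GW(X)$ is not a formality: the splitting $A\oplus C$ comes from splitting a Gysin-type triangle for the projective completion, and even in the model case $X=pt$, $n=2$, $\varphi=\mathrm{diag}(g,1)$ the statement that $\mathbb{P}(\varphi)$ acts by $\langle g\rangle$ on the $\mathbb{Z}(1)[2]$ summand is a genuine computation (Morel). The paper supplies this step differently: Proposition \ref{oriinv} verifies $\mathbb{P}(\varphi)=Id$ for the local models (elementary matrices via Ananyevskiy, and $\mathrm{diag}(g^2,Id)$ via the computation that the squaring map on $\mathbb{G}_m$ is the hyperbolic form $h\in GW(k)$, hence vanishes in $W$); the $SL^c$ hypothesis is used precisely to get such a local factorization, and then $End_{\widetilde{DM}(X)}(A)=GW(X)$ being a Zariski sheaf upgrades the local identity to a global one. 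Without some version of this computation your claim ``$\alpha=\langle\det\varphi\rangle$'' is unsupported. Second, your fracture argument for $\beta=0$ is incomplete as you yourself note: the square $K^{MW}=K^M\times_{K^M/2}K^W$ does not automatically make $Hom_{\widetilde{DM}}(C,A)\to Hom_{DM}\times Hom_{\eta^{-1}}$ injective, and you do not rule out the mod-$2$ gluing term. The paper avoids this: the $Hom(R(X),A)$ component vanishes outright since $Hom_{\widetilde{DM}(X)}(\mathbb{Z}(X),A)=\widetilde{CH}^{n-1}(X,\det E)=0$ for $n\geq\dim(X)+2$ (no fracture needed), and the $Hom(R(X)/\eta(2i-1)[4i-2],A)\cong\eta_{MW}^{n-2i-1}(X)$ components are controlled by Theorem \ref{eta}, which under $_2CH^*(X)=0$ gives an injection into a product of Chow groups on which $\mathbb{P}(\varphi)^*$ is visibly trivial. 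Your argument can likely be completed along these lines, but as written both the diagonal and off-diagonal entries rest on unproved claims.
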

\textbf{Organization of this article. }We explain in Section \ref{Four Motivic Theories} the basic definitions and properties of the motivic categories (denoted by \(DM_{{K}}\)) corresponding to \({K}_*={K}^{MW}_*\), \({K}^M_*\), \({K}^W_*\) and \({K}^M_*/2\). Section \ref{Motivic Stable Homotopy Category and Slice Filtrations} is devoted to a brief introduction to the stable \(\mathbb{A}^1\)-homotopy category \(\mathcal{SH}(pt)\) and the slice filtrations of spectra.

We discuss independently in Section \ref{Mapping Cone of the Hopf Map} the computation of \(\eta_{MW}^n(X)\). The point is that even if \(H\widetilde{\mathbb{Z}}/\eta\) could not split into components coming from Chow motives (see Remark \ref{non-split}), \(\eta_{MW}^n(X)\) parameterizes the pair of cycles where one is the Steenrod square of the other for some \(X\), which allows us to find out `Chern classes' and thus to adopt the pattern of the original projective bundle formula to the Chow-Witt case.

We prove the projective bundle theorem in Section \ref{The Projective Bundle Theorem}. The first step is to compute the MW-motive of projective spaces. Then global formula for odd rank bundles follows from the `Chern classes' obtained from Section \ref{Mapping Cone of the Hopf Map}. While for even rank bundles, we use its projective completion and the result of odd rank bundles to obtain a distinguished triangle, which splits if its Euler class vanishes in Witt group. Furthermore, for arbitrary vector bundle, we show that there is an easy projective bundle formula after tensoring with \(\mathbb{Z}/\eta\). 

As a consequence, we compute the MW-motives of blow-ups with smooth centers if the codimension is odd. We also discover when automorphisms of vector bundles acts trivially on MW-motives of projective bundles.

\textbf{Acknowledgements. }The author would like to thank Alexey Ananyevskiy, Tom Bachmann, Jean Fasel, Yong Yang and Dingxin Zhang for helpful discussions. The careful suggestions of the referees are also greatly appreciated.

\textbf{Conventions. }We denote by \(pt=Spec(k)\) where \(k\) is an infinite perfect field with \(char(k)\neq 2\). Define \(Sm/S\) to be the category of separated and smooth schemes over \(S\) (see \cite[page 268]{Ha}). We say \(X\in Sm/S\) is quasi-projective if it is quasi-projective over \(k\). We set \(C(f)\) to be the mapping cone of \(f\). Define
\[\begin{array}{cc}Hom_{SH(pt)}(-,-)=[-,-],&Hom_{DM_{K}^{eff}(pt)}(-,-)=[-,-]_{{K}}.\end{array}\]
\section{Four Motivic Theories}\label{Four Motivic Theories}
The main references of this section are \cite{BCDFO} and \cite{Y1}. In the sequel, we denote by \({K}_*\) one of the homotopy modules \({K}^{MW}_*, {K}^M_*, {K}^W_*, {K}^M_*/2\). They are related by the Cartesian square
\[
	\xymatrix
	{
		{K}_*^{MW}\ar[r]\ar[d] 	&{K}^W_*\ar[d]\\
		{K}_*^M\ar[r]				&{K}^M_*/2
	}.
\]
In \cite{BCDFO}, they mainly developped the motivic theory when \(K_*=K_*^{MW}\), which was inherited from Voevodsky's construction, namely the case when \(K_*=K_*^M\). But the method could be generalized to \(K_*=K_*^W, K_*^{M}/2\) without significant difficulty. We refer the readers to \cite{Y1} for axiomatic approach of correspondence theory and its associated motivic theory. 

Let {\(m\in\mathbb{Z}\)}, \(F/k\) be a finitely generated field extension of the base field \(k\) and \(L\) be a one-dimensional $F$-vector space. One can define \({K}_m(F,L)\) as in \cite[Remark 2.21]{Mo}. If \(X\) is a smooth scheme, \(\mathscr{L}\) is a line bundle over \(X\) and \(y\in X\), we set 
\[
\widetilde{{K}}_m(k(y),\mathscr{L}):={K}_m(k(y),\Lambda_y^*\otimes_{k(y)}\mathscr{L}_y),
\]
where \(k(y)\) is the residue field of \(y\) and \(\Lambda_y^*\) is the top exterior power of the tangent space of \(y\). If \({K_*}=K^M_*, K^M_*/2\), \(\widetilde{{K}}_m(k(y),\mathscr{L})\) is independent of \(\mathscr{L}\) up to a canonical isomorphism. If \(T\subset X\) is a closed set and \(n\in\mathbb{N}\), define
\[C_{RS,T}^n(X;{K}_m;\mathscr{L})=\bigoplus_{y\in X^{(n)}\cap T}\widetilde{{K}}_{m-n}(k(y),\mathscr{L}),\]
where \(X^{(n)}\) means the points of codimension \(n\) in \(X\). Then \(C_{RS,T}^*(X;{K}_m;\mathscr{L})\) forms a complex (see \cite[Definition 4.11]{Mo}, \cite[Remark 4.13]{Mo}, \cite[Theorem 4.31]{Mo} and \cite[D\'efinition 10.2.11]{F1}), which is called the Rost-Schmid complex with support on \(T\). Define (see \cite[Definition 4.1.1, \S 2]{BCDFO} and \cite[Definition 5.2]{Y1})
\[{K}CH^n_T(X,\mathscr{L})=H^n(C^*_{RS,T}(X;{K}_n;\mathscr{L})).\]
Thus we see that \({K}CH=\widetilde{CH}, CH, Ch\) when \({K_*}=K^{MW}_*, K^M_*, K^M_*/2\) respectively.

Suppose \(S\in Sm/k\). For any \(X, Y\in Sm/S\), define \(\mathscr{A}_S(X,Y)\) to be the poset of closed subsets in \(X\times_SY\) such that each of its irreducible component is finite over \(X\) and of dimension \(dimX\). Suppose \(R\) is a commutative ring. Let
\[{K}Cor_S(X,Y,R):=\varinjlim_T{K}CH_T^{dimY-dimS}(X\times_SY,\omega_{X\times_SY/X})\otimes_{\mathbb{Z}}R,\]
be the finite correspondences between \(X\) and \(Y\) over \(S\) with coefficients in \(R\), where \(T\in\mathscr{A}_S(X,Y)\). Hence \({K}Cor\) just means \(\widetilde{Cor}\), \(Cor\) and \(WCor\)  (see \cite[\S 3]{BCDFO}) when \({K_*}=K^{MW}_*, K^M_*, K^W_*\) respectively. This produces an additive category \({K}Cor_S\) whose objects are the same as \(Sm/S\) and whose morphisms are defined above. There is a functor \(\alpha:Sm/S\longrightarrow {K}Cor_S\) sending a morphism to its graph (see \cite[1.1.6, \S 2]{BCDFO}, \cite[Definition 5.3]{Y1}).

We define a presheaf with \({K}\)-transfers to be a contravariant additive functor from \({K}Cor_S\) to \(Ab\). It is a sheaf with \({K}\)-transfers if it is a Nisnevich sheaf after restricting to \(Sm/S\) via \(\alpha\). For any smooth scheme $X$, let $c(X)$ be the representable presheaf with \({K}\)-transfers of \(X\).

Let  \(PSh_{{K}}(S,R)\) be the category of presheaves with \({K}\)-transfers over \(S\) and let \(Sh_{{K}}(S,R)\) be the full subcategory of sheaves with \({K}\)-transfers (see \cite[Definition 1.2.1 and Definition 1.2.4, \S 3]{BCDFO}). Both categories are abelian and have enough injectives (\cite[Proposition 1.2.11, \S 3]{BCDFO}). There is a sheafification functor (see \cite[Proposition 1.2.11, \S 3]{BCDFO})
\[a:PSh_{{K}}(S,R)\longrightarrow Sh_{{K}}(S,R)\]
and we set \(R(X)=a(c(X))\), which is called the motive of \(X\) with \(R\)-coefficients. The Tate twist is denoted by \(R(1)\). For any vector bundle \(E\) on \(X\in Sm/S\), we define \(Th(E)=R(E)/R(E^{\times})\) to be the Thom space of \(E\).

Denote by \(C(Sh_{{K}}(S,R))\) (resp. \(D_{{K}}(S,R)\)) the (resp. derived) category of cochain complexes of sheaves with \({K}\)-transfers. Define the category of \({K}\)-motives over \(S\) with coefficients in \(R\)
\[DM_{K}^{eff}(S,R)=D_{{K}}(S,R)[(R(X\times\mathbb{A}^1)\longrightarrow R(X))^{-1}]\]
for any \(X\in Sm/S\).

Denote by \(Sp(S,R)\) the category of (symmetric) \(\mathbb{G}_m\)-spectra, whose homotopy category is denoted by \(DM_{{K}}(S,R)\). There is an adjunction
\[\Sigma^{\infty}:DM_{{K}}^{eff}(S,R)\rightleftharpoons DM_{{K}}(S,R):\Omega^{\infty}\]
by \cite[Example 7.15]{CD}. We will ignore \(\Sigma^{\infty}\) if there is no ambiguity of underlying category.

We will write \(\widetilde{DM}\) (resp. \(DM\)) for \(DM_{MW}\) (resp. \(DM_M\)) and \(DM_{K}^{eff}(S)\) (resp. \(DM_{{K}}(S)\)) for \(DM_{K}^{eff}(S,\mathbb{Z})\) (resp. \(DM_{{K}}(S,\mathbb{Z})\)) accordingly.
\begin{proposition}\label{derived}
\begin{enumerate}
\item The category \(DM_{K}^{eff}(S,R)\) is symmetric monoidal with \(R(X)\otimes_SR(Y)=R(X\times_SY)\) for any \(X, Y\in Sm/S\).
\item Suppose \(f:S\longrightarrow T\) is a morphism in \(Sm/k\). There is an adjunction
\[f^*:DM_{K}^{eff}(T,R)\rightleftharpoons DM_{K}^{eff}(S,R):f_*\]
satisfying \(f^*R(Y)=R(Y\times_TS)\) for any \(Y\in Sm/T\).
\item Suppose \(f:S\longrightarrow T\) is a smooth morphism in \(Sm/k\). There is an adjunction
\[f_{\#}:DM_{K}^{eff}(S,R)\rightleftharpoons DM_{K}^{eff}(T,R):f^*\]
satisfying \(f_{\#}R(X)=R(X)\) for any \(X\in Sm/S\). Moreover, for any \(F\in DM_{K}^{eff}(T,R)\) and \(G\in DM_{K}^{eff}(S,R)\), we have
\[f_{\#}(G\otimes_Sf^*F)=(f_{\#}G)\otimes_TF.\]
\end{enumerate}
The same properties hold for \(DM_{{K}}(S,R)\).
\end{proposition}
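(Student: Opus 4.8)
The plan is to obtain this package of structural properties by transporting the analogous statements from the additive category $KCor_S$ of finite $K$-correspondences, exactly as in \cite{BCDFO} for the case $K^{MW}_*$ and in the axiomatic framework of \cite{Y1} in general. First I would record that each of the four homotopy modules $K^{MW}_*, K^M_*, K^W_*, K^M_*/2$ satisfies the axioms needed to produce a correspondence theory: strict $\mathbb{A}^1$-invariance, transfers along finite morphisms, compatibility with the Rost--Schmid differentials, and the relevant product and contraction data. For $K^{MW}_*$ and $K^M_*$ this is classical; for $K^W_*$ and $K^M_*/2$ it should follow by exploiting the Cartesian square relating the four modules, since every operation appearing in the axioms is compatible with the maps in that square. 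Granting this, $KCor_S$ is symmetric monoidal with $X\otimes Y=X\times_SY$ and unit $S$, the graph functor $\alpha:Sm/S\to KCor_S$ is symmetric monoidal, any $f:S\to T$ induces the base-change functor $Y\mapsto Y\times_TS$ on correspondences, and a smooth $f$ induces in addition the ``restrict the base'' functor $X\mapsto X$.

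For part (1), I would equip $PSh_K(S,R)$ with a closed symmetric monoidal structure by Day convolution along $\alpha$, determined on representables by $c(X)\otimes c(Y)=c(X\times_SY)$; since sheafification is monoidal this descends to $Sh_K(S,R)$ with $R(X)\otimes_SR(Y)=R(X\times_SY)$. Choosing a projective-type model structure on $C(Sh_K(S,R))$ in which bounded-above complexes of sums of representables are cofibrant makes $\otimes$ left-derived on such objects, hence produces a symmetric monoidal structure on $D_K(S,R)$. Finally $DM_K^{eff}(S,R)$ is the left Bousfield localization at the maps $R(X\times\mathbb{A}^1)\to R(X)$, and tensoring such a map with $R(Y)$ gives $R((X\times_SY)\times\mathbb{A}^1)\to R(X\times_SY)$, again of the same form; so the localization is monoidal and (1) follows, with the asserted formula on motives.

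For parts (2) and (3), I would left Kan extend the base-change functor $Y\mapsto Y\times_TS$ along $\alpha$ to get $f^*:PSh_K(T,R)\to PSh_K(S,R)$ with $f^*c(Y)=c(Y\times_TS)$ and right adjoint $f_*$; since $f^*$ commutes with sheafification and carries $R(Y\times\mathbb{A}^1)\to R(Y)$ to a map of the same type, the adjunction descends through $D_K$ to $DM_K^{eff}$, giving (2). For smooth $f$, left Kan extending $X\mapsto X$ (viewed through $X\to S\to T$) yields $f_{\#}$ with $f_{\#}c(X)=c(X)$, and its right adjoint is identified with the $f^*$ of (2) by uniqueness of adjoints together with the identity $X\times_TY=X\times_S(Y\times_TS)$. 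The projection formula I would verify on generators $G=R(X)$, $F=R(Y)$: $f_{\#}(R(X)\otimes_Sf^*R(Y))=f_{\#}R(X\times_S(Y\times_TS))=f_{\#}R(X\times_TY)=R(X\times_TY)=(f_{\#}R(X))\otimes_TR(Y)$, then extend using that both sides preserve colimits and are exact. The stable statements follow by feeding all of this into the symmetric-spectra machinery of \cite{CD}: the monoidal structure, $f^*$ and $f_{\#}$ extend to $Sp(S,R)$ (being monoidal, resp. satisfying the projection formula), $f_*$ become the right adjoints, and passing to homotopy categories gives $DM_K(S,R)$.

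The one step with genuine content is the verification that $K^W_*$ and $K^M_*/2$ fit the correspondence-theory axioms of \cite{Y1}; once that is in place, everything above is a direct transcription of the $K^{MW}_*$ case treated in \cite{BCDFO}, and I expect this compatibility check --- rather than any of the categorical bookkeeping --- to be the main obstacle.
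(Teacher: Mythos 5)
Your proposal is correct and takes essentially the same route as the paper: the paper's entire proof is a citation to \cite[Propositions 5.25, 6.44 and 6.45]{Y1} and to \cite{CD}, and your sketch (monoidal structure via Day convolution from $KCor_S$, Kan-extended $f^*$ and $f_{\#}$, descent through the $\mathbb{A}^1$-localization, and the spectra machinery of \cite{CD} for the stable case) is a faithful outline of what those references establish. You also correctly identify the only step with real content --- checking that $K^W_*$ and $K^M_*/2$ fit the correspondence-theory axioms --- which is exactly the point the paper defers to the axiomatic framework of \cite{Y1}.
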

\begin{proof}
See \cite[Proposition 5.25, Proposition 6.44 and Proposition 6.45]{Y1} and \cite{CD}.
\end{proof}
The readers refer to \cite[Corollary 2.21]{Y} for compability between Thom spaces and operations (\(f^*\), \(f_{\#}\), \(\otimes\)) above.
\begin{proposition}
Suppose \(\varphi:R\longrightarrow R'\) is a ring morphism and \(X\in Sm/S\).
\begin{enumerate}
\item We have adjunctions
\[\varphi^*:DM_{K}^{eff}(S,R)\rightleftharpoons DM_{K}^{eff}(S,R'):\varphi_*,\]\[\varphi^*:DM_{{K}}(S,R)\rightleftharpoons DM_{{K}}(S,R'):\varphi_*\]
where \(\varphi^*\) is monoidal and \(\varphi^*R(X)=R'(X)\).
\item If \(\varphi\) is flat, we have
\[Hom_{DM_{K}^{eff}(S,R')}(R'(X),\varphi^*C)=Hom_{DM_{K}^{eff}(S,R)}(R(X),C)\otimes_RR',\]
\[Hom_{DM_{{K}}(S,R')}(\Sigma^{\infty}R'(X),\varphi^*E)=Hom_{DM_{{K}}(S,R)}(\Sigma^{\infty}R(X),E)\otimes_RR'\]
for \(C\in DM_{K}^{eff}(S,R)\) and \(E\in DM_{{K}}(S,R)\).
\end{enumerate}
\end{proposition}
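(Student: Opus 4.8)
The plan is to realize $\varphi^*$ as extension of scalars at the level of finite correspondences and then transport it through the successive localizations that build $DM_{K}^{eff}(S,R)$ and $DM_{{K}}(S,R)$; this is an instance of the general change-of-coefficients formalism, so in practice I would assemble the two statements from the constructions in \cite{CD} and \cite{Y1}.

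First, at the level of correspondences: since $(-)\otimes_R R'$ commutes with the colimit over supports in the definition of $KCor$, one has $KCor_S(X,Y,R')=KCor_S(X,Y,R)\otimes_R R'$, so there is an additive functor from the category of finite $R$-correspondences to that of finite $R'$-correspondences which is the identity on objects and is $(-)\otimes_R R'$ on morphism groups. Restriction of presheaves with transfers along it gives $\varphi_*\colon PSh_{{K}}(S,R')\longrightarrow PSh_{{K}}(S,R)$, whose left adjoint is $\varphi^*=(-)\otimes_R R'$, sending a representable $c(X)$ to $c(X)$. Passing to cochain complexes and left-deriving, $\varphi^*$ takes the generating Nisnevich-local and $\mathbb{A}^1$-local equivalences (and the $\mathbb{G}_m$-stabilization maps) to their $R'$-analogues, hence descends to a left adjoint $DM_{K}^{eff}(S,R)\longrightarrow DM_{K}^{eff}(S,R')$ with $\varphi^* R(X)=R'(X)$, and likewise on $DM_{{K}}(S,R)$. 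For monoidality, $\otimes_S$ is cocontinuous in each variable and so is $\varphi^*$, so it suffices to note $\varphi^*(R(X)\otimes_S R(Y))=\varphi^* R(X\times_S Y)=R'(X\times_S Y)=\varphi^* R(X)\otimes_S\varphi^* R(Y)$ and $\varphi^* R(S)=R'(S)$; the coherence isomorphisms are then forced.

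For part (2), assume $\varphi$ flat. Using $R'(X)=\varphi^* R(X)$ and the $(\varphi^*,\varphi_*)$-adjunction,
\[Hom_{DM_{K}^{eff}(S,R')}(R'(X),\varphi^* C)=Hom_{DM_{K}^{eff}(S,R)}(R(X),\varphi_*\varphi^* C).\]
Flatness makes $\varphi^*=(-)\otimes_R R'$ exact, so $\varphi_*\varphi^* C$ is $C\otimes_R R'$ computed termwise, with no derived correction. Writing $R'=\varinjlim_i P_i$ as a filtered colimit of finite free $R$-modules (Lazard), we get $C\otimes_R R'=\varinjlim_i C\otimes_R P_i$, a filtered colimit of finite direct sums of copies of $C$. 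Since $R(X)$ is a compact object of $DM_{K}^{eff}(S,R)$, the functor $Hom_{DM_{K}^{eff}(S,R)}(R(X),-)$ commutes with direct sums and with this filtered colimit, so the right-hand side equals $\varinjlim_i Hom_{DM_{K}^{eff}(S,R)}(R(X),C)\otimes_R P_i=Hom_{DM_{K}^{eff}(S,R)}(R(X),C)\otimes_R R'$. The stable statement is proved identically, using compactness of $\Sigma^{\infty}R(X)$ in $DM_{{K}}(S,R)$.

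The hard part here is not conceptual but organizational: I must check that $\varphi^*$ genuinely descends through all three localizations (Nisnevich, $\mathbb{A}^1$, $\mathbb{G}_m$-stabilization) rather than being defined only beforehand, which amounts to verifying that it sends each class of ``trivial'' maps to trivial maps. In part (2) the one delicate point is the use of flatness, which is needed both to replace the derived base change by the naive tensor product and to run the Lazard-plus-compactness step; a careless argument would silently drop exactly this hypothesis.
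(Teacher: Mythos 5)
Your part (1) reproduces the paper's own construction: extension of scalars on correspondences and presheaves (followed by sheafification) as $\varphi^*$, restriction as $\varphi_*$, and a check that the pair descends through the Nisnevich, $\mathbb{A}^1$ and $\mathbb{G}_m$ localizations; there is nothing to add there. For part (2) you take a genuinely different route. The paper never touches the right adjoint: it proves $H^i(X,\varphi^*F)=H^i(X,F)\otimes_RR'$ for sheaves $F$ (exactness of $-\otimes_RR'$ plus preservation of flabby resolutions), feeds this into the hypercohomology spectral sequence to get the formula already in the derived category $D_{{K}}(S,R)$ of sheaves, and then observes that this formula forces $\varphi^*$ to preserve fibrant and $\mathbb{A}^1$-local objects, so the identity survives $\mathbb{A}^1$-localization and, since $\varphi^*$ also preserves $\Omega^{\infty}$-spectra, stabilization. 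Your adjunction-plus-Lazard-plus-compactness argument avoids the spectral sequence entirely, at the price of two extra inputs: compactness of $R(X)$ (resp.\ of $\Sigma^{\infty}R(X)$), which you should at least anchor in the generation statement of \cite[11.1.6]{CD1}, and the identification of filtered colimits of complexes of sheaves with homotopy colimits. Both are available here, so the strategy is viable and arguably cleaner.

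There is, however, one step whose stated justification does not address the actual difficulty. You claim $\varphi_*\varphi^*C=C\otimes_RR'$ ``with no derived correction'' because flatness makes $\varphi^*$ exact. Exactness of $\varphi^*$ controls $\mathbf{L}\varphi^*$; the functor that must be controlled in your adjunction identity is $\mathbf{R}\varphi_*$, which on $DM_{K}^{eff}(S,R')$ is computed by applying $\varphi_*$ to an $\mathbb{A}^1$-local fibrant replacement. Restriction of scalars is exact whether or not $\varphi$ is flat, so exactness is not the point; what you must show is that the underived $\varphi_*$ already computes $\mathbf{R}\varphi_*$ after $\mathbb{A}^1$-localization, i.e.\ that $\varphi_*$ preserves $\mathbb{A}^1$-local equivalences. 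This is true, but it requires the argument you omitted: $\varphi_*$ preserves all colimits (it admits a further right adjoint, coinduction), and by flatness and Lazard one has $\varphi_*R'(Y)\cong\varinjlim_iR(Y)^{\oplus n_i}$, so $\varphi_*$ carries the generating equivalences $R'(Y\times\mathbb{A}^1)\to R'(Y)$ into the localizing subcategory generated by the $R(Y\times\mathbb{A}^1)\to R(Y)$, hence preserves all $\mathbb{A}^1$-equivalences. (Flatness is also what makes the presheaf tensor $F\otimes_RR'$ already a sheaf, so that no sheafification intervenes in $\varphi^*$.) With that supplied, your proof closes; alternatively you could sidestep $\varphi_*$ altogether and argue on the left adjoint as the paper does.
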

\begin{proof}
\begin{enumerate}
\item We have by definition a morphism
\[\varphi:{K}Cor_S(X,Y,R)\longrightarrow {K}Cor_S(X,Y,R')\]
for every \(X, Y\in Sm/S\). For every \(X\in Sm/S\) and \(F\in Sh_{{K}}(S,R)\), \(F(X)\) is naturally an \(R\)-module. So we define \(a^*F\) to be the sheafification of the presheaf
\[X\longmapsto F(X)\otimes_RR',\]
which induces an adjunction 
\[a^*:Sh_{{K}}(S,R)\rightleftharpoons Sh_{{K}}(S,R'):a_*\]
where \(a_*\) is the restriction functor. The functor \(a^*\) is monoidal and satisfies
\[a^*R(X)=R'(X)\]
for any \(X\in Sm/S\). So it satisfies the conditions in \cite[2.4 and Lemma 4.8]{CD} by a similar argument as in \cite[Proposition 6.2]{Y} and gives us the adjunction between \(DM_{K}^{eff}\). The stable case follows from \cite[Proposition 5.5]{H}.
\item The functor \(\varphi^*\) is exact and takes flabby sheaves to flabby sheaves (see \cite[Proposition 2.12]{M}) hence
\[H^i(X,\varphi^*F)=H^i(X,F)\otimes_RR'\]
for every \(F\in Sh_{{K}}(S,R)\), \(i\in\mathbb{N}\) and \(X\in Sm/S\). This implies
\[Hom_{D_{{K}}(S,R')}(R'(X),\varphi^*C)=Hom_{D_{{K}}(S,R)}(R(X),C)\otimes_RR'\]
for every \(C\in D_{{K}}^+(S,R)\) by the hypercohomology spectral sequence. Then the unbounded cases follow from \(C=\varinjlim_nC^{\geq n}\) and the Five Lemma. Hence the functor \(\varphi^*\) preserves fibrant objects, \(\mathbb{A}^1\)-weak equivalences (\(\varphi^*\) is left Quillen and preserves homotopy condition) and \(\mathbb{A}^1\)-local objects. Then we obtain the first equation. The second equation follows from the first equation since \(\varphi^*\) preserves stable \(\mathbb{A}^1\)-weak equivalences and \(\Omega^{\infty}\)-spectra (see \cite[7.8]{CD}). 
\end{enumerate}
\end{proof}
\begin{proposition}\label{dmadj}
In the diagram
\[
	\xymatrix
	{
		{K}_*^{MW}\ar[r]\ar[d]\ar[rd] 	&{K}_*^W\ar[d]\\
		{K}_*^M\ar[r] 							&{K}_*^M/2
	},
\]
each arrow \(f:K_1\longrightarrow K_2\) induces adjoint pairs
\[f^*:DM_{K_1}^{eff}(S,R)\rightleftharpoons DM_{K_2}^{eff}(S,R):f_*\]
\[f^*:DM_{K_1}(S,R)\rightleftharpoons DM_{K_2}(S,R):f_*\]
between corresponding motivic categories. Moreover, \(f^*\) is monoidal and compatible with change of coefficients.
\end{proposition}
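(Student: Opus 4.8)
The plan is to construct the adjunction in four stages, following the same pattern as the proof of the change-of-coefficients proposition above; the new ingredient is a functor of correspondence categories attached to $f$. So fix an arrow $f\colon K_1\to K_2$ in the diagram, a morphism of homotopy modules. The first step is to produce a functor $f_{Cor}\colon K_1 Cor_S\to K_2 Cor_S$. Since $f$ is a morphism of homotopy modules it commutes with the residues and transfers of the Rost–Schmid complexes, so for every $X\in Sm/S$, every closed $T\subseteq X$ and every line bundle $\mathscr L$ it induces a morphism of complexes $C_{RS,T}^*(X;(K_1)_m;\mathscr L)\to C_{RS,T}^*(X;(K_2)_m;\mathscr L)$ — for the targets $K^M_*$ and $K^M_*/2$ one uses the canonical isomorphism that trivializes the $\mathscr L$-dependence — hence a map $K_1 CH_T^n(X,\mathscr L)\to K_2 CH_T^n(X,\mathscr L)$. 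Taking the colimit over $T\in\mathscr A_S(X,Y)$ and tensoring with $R$ gives $K_1 Cor_S(X,Y,R)\to K_2 Cor_S(X,Y,R)$, and these should assemble into an additive, symmetric monoidal functor $f_{Cor}$ which is the identity on objects and satisfies $f_{Cor}\circ\alpha=\alpha$. Verifying that $f_{Cor}$ respects the composition of finite correspondences — that $f$ is compatible with the pull-backs, refined intersections, push-forwards and projection formula entering the composition product — is the heart of the matter and the step I expect to be the main obstacle; it is, however, precisely the kind of compatibility the axiomatic correspondence formalism of \cite{Y1} is built to supply, so I would extract it from there rather than redo it by hand.

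The second step transports $f_{Cor}$ to sheaves. Precomposition with $f_{Cor}$ gives a restriction functor $f_*\colon PSh_{K_2}(S,R)\to PSh_{K_1}(S,R)$; since $f_{Cor}\circ\alpha=\alpha$, the underlying Nisnevich presheaf on $Sm/S$ of $f_*F$ equals that of $F$, so $f_*$ is exact, preserves sheaves, and satisfies $H^i(X,f_*F)=H^i(X,F)$. Its left adjoint $f^*$ is the sheafification of the left Kan extension along $f_{Cor}$; it is right exact, it is monoidal because $f_{Cor}$ is symmetric monoidal, and $f^*K_1(X)=K_2(X)$ on representables. Deriving and using $H^i(X,f_*F)=H^i(X,F)$ exactly as in the previous proposition yields $Hom_{D_{K_1}(S,R)}(R(X),f_*C)=Hom_{D_{K_2}(S,R)}(R(X),C)$, so $f_*$ preserves fibrant objects, $\mathbb A^1$-weak equivalences and $\mathbb A^1$-local objects; equivalently $f^*$ is left Quillen for the $\mathbb A^1$-local model structures and sends each $R(X\times\mathbb A^1)\to R(X)$ to a map of the same form, so it satisfies the conditions of \cite[2.4 and Lemma 4.8]{CD} as in \cite[Proposition 6.2]{Y}. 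Hence the adjunction descends to $f^*\colon DM_{K_1}^{eff}(S,R)\rightleftharpoons DM_{K_2}^{eff}(S,R)\colon f_*$, with $f^*$ monoidal by Proposition \ref{derived}(1).

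Finally, because $f^*$ sends the Tate object of $DM^{eff}_{K_1}$ to that of $DM^{eff}_{K_2}$ — it is a retract of $f^*K_1(\mathbb G_m)=K_2(\mathbb G_m)$ — it commutes up to isomorphism with the $\mathbb G_m$-twist, prolongs to symmetric $\mathbb G_m$-spectra, and by \cite[Proposition 5.5]{H} produces the stable adjunction $f^*\colon DM_{K_1}(S,R)\rightleftharpoons DM_{K_2}(S,R)\colon f_*$, still with $f^*$ monoidal. Compatibility with change of coefficients is then formal: the correspondence groups, the Kan extension, the sheafification and the stabilization are all natural in $R$ and commute with $-\otimes_RR'$ up to canonical isomorphism, so for any ring map $\varphi\colon R\to R'$ the square of functors relating $f^*$ and $\varphi^*$ commutes up to natural isomorphism, which is the assertion.
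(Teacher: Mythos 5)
Your proposal is correct and follows essentially the same route as the paper: the morphism of homotopy modules induces a map on the correspondence groups and hence a functor $K_1Cor_S\to K_2Cor_S$ (the compatibility with composition being exactly what the paper delegates to the axiomatic formalism of \cite{Y1} and to \cite[3.2.4 and 3.3.6.a \S 3]{BCDFO}), the sheaf-level adjunction is restriction versus sheafified left Kan extension as in \cite[Lemma 5.7]{Y1}, and the descent to $DM^{eff}_K$ and $DM_K$ goes through the same Quillen-pair argument used for the change-of-coefficients proposition. Your write-up merely supplies more detail at each of these steps than the paper's terse proof does.
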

\begin{proof}
The map \(f\) induces a map \(f:K_1Cor_S(X,Y,R)\longrightarrow K_2Cor_S(X,Y,R)\) hence a functor \(f:K_1Cor_S\longrightarrow K_2Cor_S\). Then we obtain an adjunction
\[f^*:Sh_{K_1}(S,R)\rightleftharpoons Sh_{K_2}(S,R):f_*\]
by \cite[Lemma 5.7]{Y1}, where \(f_*\) is the restricion via \(f\). They are also Quillen functors of the model structure of unbounded complexes because \(f^*\) preserves representable sheaves. Hence we obtain the adjoint pairs desired.
The method is essentially the same as \cite[3.2.4 and 3.3.6.a \S 3]{BCDFO}.
\end{proof}
\begin{proposition}\label{zarsep}
Let \(X\in Sm/k\), \(\{U_i\}\) be an open covering of \(X\) and \(f\) be a morphism in {\(DM^{eff}_{{K}}(X,R)\)} or \(DM_{{K}}(X,R)\). If \(f|_{U_i}\) is an isomorphism for every \(i\), \(f\) is an isomorphism.
\end{proposition}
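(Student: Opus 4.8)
The plan is to deduce the statement from the conservativity of the restriction functors to an open cover, which in turn rests on Zariski (indeed Nisnevich) descent for motives. First I rephrase: an arrow in a triangulated category is an isomorphism iff its mapping cone vanishes, and for each open immersion \(j_i:U_i\hookrightarrow X\) the functor \(j_i^*\) exists and is triangulated by Proposition \ref{derived} (\(j_i\) being smooth); so, setting \(E:=C(f)\), we have \(j_i^*E=C(j_i^*f)=0\) for all \(i\), and it suffices to prove: if \(E\in DM_K^{eff}(X,R)\) (resp. \(DM_K(X,R)\)) has \(E|_{U_i}=0\) for all \(i\), then \(E=0\).

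For this I use that \(DM_K^{eff}(X,R)\) is generated, as a localizing triangulated subcategory, by the motives \(R(Y)\), \(Y\in Sm/X\) (and \(DM_K(X,R)\) by their Tate twists and shifts), so it is enough to show \(Hom(R(Y)(p)[n],E)=0\) for all \(Y\in Sm/X\) and all \(p,n\). If \(Y\to X\) factors through some \(U_i\), then \(Y\in Sm/U_i\), and since \(j_{i\#}\) carries a generating family of \(DM_K^{eff}(U_i,R)\) (resp. \(DM_K(U_i,R)\)) into one of \(DM_K^{eff}(X,R)\) (resp. \(DM_K(X,R)\)) — because \(j_{i\#}R(Y)=R(Y)\) and by the projection formula of Proposition \ref{derived} — the adjunction \(j_{i\#}\dashv j_i^*\) gives
\[Hom_{DM_K^{eff}(X,R)}(R(Y)(p)[n],E)=Hom_{DM_K^{eff}(U_i,R)}(R(Y)(p)[n],j_i^*E)=0.\]
For a general \(Y\in Sm/X\) with structure map \(g\), the opens \(g^{-1}(U_i)\) cover \(Y\); passing to a finite subcover \(W_1,\dots,W_r\) (possible since \(Y\) is quasi-compact), each \(W_k\) factors through some \(U_i\), and I induct on \(r\), the inductive statement being that \(Hom(R(V)(p)[n],E)=0\) for every \(V\in Sm/X\) admitting a cover by \(r\) opens each factoring through some \(U_i\). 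The case \(r=1\) is the one just treated. For \(r>1\), write \(V=W_1\cup W'\) with \(W'=W_2\cup\cdots\cup W_r\) and apply \(Hom(-,E)\) to the Mayer--Vietoris triangle \(R(W_1\cap W')\to R(W_1)\oplus R(W')\to R(V)\to R(W_1\cap W')[1]\), which holds by the Nisnevich descent built into the definition of \(Sh_K\); since \(W_1\) factors through some \(U_i\) and both \(W'\) and \(W_1\cap W'=\bigcup_{k\geq 2}(W_1\cap W_k)\) are covered by \(r-1\) opens each factoring through some \(U_i\), the inductive hypothesis annihilates the \(Hom\)-groups out of \(R(W_1)\), \(R(W')\) and \(R(W_1\cap W')\) in all twists and shifts, hence also those out of \(R(V)\). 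Taking \(V=Y\) settles the effective case; the stable case is identical once Tate twists are kept in the generating family and one notes that the Mayer--Vietoris triangle and the identity \(j_{i\#}R(Y)(p)=R(Y)(p)\) are compatible with Tate twists by monoidality of \(j_i^*\) — alternatively it follows from the effective case applied to \(\Omega^\infty(E(-p))\), using \(j_i^*\Omega^\infty=\Omega^\infty j_i^*\).

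The argument is largely formal; the substantive inputs are the Mayer--Vietoris (Nisnevich descent) triangle for motives and the compatibility of \(j_{i\#}\), \(j_i^*\) with motives and Tate twists, both already available from Section \ref{Four Motivic Theories} and \cite{CD}. The point requiring care — and the reason one cannot simply argue on stalks — is that ``\(f|_{U_i}\) is an isomorphism'' lives in the \(\mathbb{A}^1\)-localized (and, stably, \(\mathbb{G}_m\)-stabilized) category, not in the category of complexes of sheaves with \(K\)-transfers, so one must work through the triangulated structure and a generating family. The only mildly delicate step I anticipate is verifying, in the stable setting, that \(j_{i\#}\) really does send a generating family of \(DM_K(U_i,R)\) to one of \(DM_K(X,R)\) — equivalently, that the projection formula survives \(\mathbb{G}_m\)-stabilization — which it does; alternatively, the proposition is a formal consequence of the ``localization property'' of motivic triangulated categories, for which one may cite \cite{CD}.
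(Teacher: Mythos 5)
Your argument is correct and is essentially the paper's own: the author defers the effective case to \cite[Proposition 2.16]{Y} and the stable case to the fact that \(\{R(Y)(n)\}\) is a system of generators (\cite[11.1.6]{CD1}), which is exactly the reduction you carry out explicitly via the cone, the adjunction \(j_{i\#}\dashv j_i^*\), and Mayer--Vietoris induction over a finite subcover. No gaps; you have simply written out in full what the paper cites.
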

\begin{proof}
For the effective case, the proof is the same as in \cite[Proposition 2.16]{Y}. For the stable case, we use that \(\{R(X)(n)\}\) for \(X\in Sm/k\) and \(n\in\mathbb{Z}\) is a system of generators in \(DM_K(S,R)\) (see \cite[11.1.6]{CD1}).
\end{proof}
\begin{proposition}\label{cancellation}
Suppose \({K_*}=K^{MW}_*, K^M_*\). Let \(U, V\in DM_{K}^{eff}(pt)\). The map
\[\xymatrix{[U,V]_{{K}}\ar[r]^-{\otimes\mathbb{Z}(i)}&[U(i),V(i)]_{{K}}}, i>0\]
is an isomorphism.
\end{proposition}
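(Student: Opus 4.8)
The plan is to reduce the statement, by formal triangulated-category d\'evissage in $DM^{eff}_K(pt)$, to the Cancellation Theorem for finite $K$-correspondences, which is where all the genuine content lies. First one disposes of the twist index: since $\mathbb{Z}(i)=\mathbb{Z}(1)^{\otimes i}$ and $DM^{eff}_K(pt)$ is symmetric monoidal (Proposition~\ref{derived}), the functor $-\otimes\mathbb{Z}(i)$ is the $i$-fold composite of $\tau:=-\otimes\mathbb{Z}(1)$, and a composite of fully faithful functors is fully faithful; hence it suffices to treat $i=1$, i.e. to show that
\[[U,V]_K\longrightarrow[U(1),V(1)]_K,\qquad f\longmapsto f\otimes\mathrm{id}_{\mathbb{Z}(1)},\]
is bijective for all $U,V\in DM^{eff}_K(pt)$, equivalently that $\tau$ is fully faithful.

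Next I would run the usual two-variable localizing-subcategory argument to reduce to motives of smooth schemes. One uses here that $DM^{eff}_K(pt)$ is compactly generated by $\{\mathbb{Z}(X):X\in Sm/k\}$ (cf. \cite{CD}, \cite{CD1}, \cite{Y1}); that $\tau$ is triangulated and preserves arbitrary coproducts (tensor and shift do); and that $\mathbb{Z}(X)$ is compact, as is $\mathbb{Z}(X)(1)$, since $\mathbb{Z}(X)(1)[1]$ is a direct summand of $\mathbb{Z}(X\times\mathbb{G}_m)$ via $\mathbb{Z}(\mathbb{G}_m)=\mathbb{Z}\oplus\mathbb{Z}(1)[1]$. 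Fixing $U=\mathbb{Z}(X)$, the two cohomological functors $V\mapsto[\mathbb{Z}(X),V]_K$ and $V\mapsto[\mathbb{Z}(X)(1),V(1)]_K$ both commute with coproducts, so the full subcategory of those $V$ for which the displayed map is bijective is localizing; hence it is enough to treat $V=\mathbb{Z}(Y)$, $Y\in Sm/k$. Fixing then an arbitrary $V$, the two cohomological functors $U\mapsto[U,V]_K$ and $U\mapsto[U(1),V(1)]_K$ both send coproducts to products, so the full subcategory of those $U$ for which the map is bijective is again localizing, and by the previous step it contains every $\mathbb{Z}(X)$; hence it is all of $DM^{eff}_K(pt)$. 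This reduces the proposition to the bijectivity of
\[[\mathbb{Z}(X),\mathbb{Z}(Y)]_K\longrightarrow[\mathbb{Z}(X)(1),\mathbb{Z}(Y)(1)]_K,\qquad X,Y\in Sm/k.\]

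This last assertion is precisely the Cancellation Theorem for $K$-correspondences: for $K_*=K^M_*$ it is Voevodsky's cancellation theorem \cite{MVW}, and for $K_*=K^{MW}_*$ it is the Milnor--Witt cancellation theorem of Fasel and {\O}stv{\ae}r; in both cases the substance is an explicit computation of the $\mathbb{G}_m$-loops of a strictly $\mathbb{A}^1$-invariant sheaf with transfers. This is the only non-formal ingredient and the step I expect to be the main obstacle; everything preceding it is soft. It is also the reason the statement is confined to $K^M_*$ and $K^{MW}_*$: no cancellation result of comparable generality is available for $W$- or $K^M/2$-correspondences, so the d\'evissage above has no base case there.
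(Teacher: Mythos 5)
Your argument is correct and coincides in substance with the paper's own proof, which simply cites Voevodsky's Cancellation Theorem \cite[Corollary 4.10]{V} for $K^M_*$ and the Milnor--Witt cancellation theorem \cite[Theorem 4.0.1, \S 4]{BCDFO} for $K^{MW}_*$; those references already assert full faithfulness of $-\otimes\mathbb{Z}(1)$ for arbitrary objects of the effective category, so your compact-generation d\'evissage is a correct but standard elaboration of what the citations subsume. The only non-formal input in both treatments is the cancellation theorem itself, exactly as you identify.
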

\begin{proof}
See \cite[Corollary 4.10]{V} and \cite[Theorem 4.0.1 \S 4]{BCDFO}.
\end{proof}
As a consequence, the infinite suspension functor \(\Sigma^{\infty}\) induces fully faithful embeddings \(DM^{eff}(pt)\subseteq DM(pt)\) and \(\widetilde{DM}^{eff}(pt)\subseteq\widetilde{DM}(pt)\) (see \cite[Proposition 5.3.25]{CD1}).

Suppose \(X\in Sm/k\) and \(E\) is a vector bundle of rank \(n\) over \(X\). Recall that the Euler class (see \cite[Chapitre 13]{F1}) \(e(E)\in\widetilde{CH}^n(X,det(E)^{\vee})\) is the image of \(1\) under the composite
\[\widetilde{CH}^0(X)\xrightarrow{z_*}\widetilde{CH}^n(E,p^*det(E)^{\vee})\xrightarrow{(p^*)^{-1}}\widetilde{CH}^n(X,det(E)^{\vee})\]
where \(p:E\longrightarrow X\) is the structure map and \(z\) is the zero section. Its image in \(H^n(X,W(det(E)^{\vee}))\) is also denoted by \(e(E)\).

A result we will frequently use is that for any \(X\in Sm/k\) and \(E\) being vector bundle on \(X\),
\[Th(E)\otimes Th(E^{\vee})=\mathbb{Z}(X)(2rkE)[4rkE]\]
in \(\widetilde{DM}(X)\) by \cite[Theorem 6.1]{Y1}. In particular, \(Th(E)\) is invertible.
\begin{proposition}\label{gysin}
Let \(S\in Sm/k\), \(X\in Sm/S\) and \(E\) be a vector bundle of rank \(n\) over \(X\). We have
\[Th(E)=Th(det(E))(n-1)[2n-2]\]
in \(\widetilde{DM}(S)\) and the composite
\[\pi:\mathbb{Z}(X)\xrightarrow{z}\mathbb{Z}(E)\longrightarrow Th(E)\cong Th(det(E))(n-1)[2n-2]\]
in \(\widetilde{DM}(X)\) is given by the Euler class \(e(E)\in\widetilde{CH}^n(X,det(E^{\vee}))\) where \(z\) is the zero section.
\end{proposition}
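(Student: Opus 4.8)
The plan is to deduce both assertions from a (twisted) Thom isomorphism, together with the explicit description of the Euler class recalled just above. For the first assertion, by Proposition \ref{zarsep} it suffices to produce a single morphism $\theta\colon Th(E)\to Th(det(E))(n-1)[2n-2]$ in $\widetilde{DM}(X)$ and to check that $\theta|_U$ is an isomorphism for every member $U$ of an open cover trivialising $E$ (over such a $U$ both sides become $\mathbb{Z}(U)(n)[2n]$, so this local check is just the Thom space of a trivial bundle); the isomorphism over $S$ then follows by applying $f_{\#}$ to the smooth structure morphism $f\colon X\to S$. To construct $\theta$ I would use that $Th(det(E))(n-1)[2n-2]=Th(det(E)\oplus\mathcal{O}_X^{n-1})$, that $Th$ sends direct sums to tensor products, and that $Th(det(E))$ is invertible with inverse $Th(det(E)^{\vee})(-2)[-4]$ (the rank-one instance of the formula $Th(F)\otimes Th(F^{\vee})=\mathbb{Z}(X)(2\,rk(F))[4\,rk(F)]$ recalled above); combining this with $Th(E)\otimes Th(E^{\vee})=\mathbb{Z}(X)(2n)[4n]$, the first assertion becomes equivalent to $Th(E\oplus det(E)^{\vee})\cong\mathbb{Z}(X)(n+1)[2n+2]$, i.e. to the claim that $Th(V)\cong\mathbb{Z}(X)(m)[2m]$ whenever $V$ is a rank-$m$ bundle equipped with a trivialisation of $det(V)$ (here $V=E\oplus det(E)^{\vee}$, whose determinant is canonically $\mathcal{O}_X$).

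This last claim is the special linear orientability of MW-motivic cohomology, and I expect it to be the main obstacle: the rest is formal once the Thom-space calculus and Proposition \ref{zarsep} are available, but one genuinely has to construct the global Thom class of a bundle with trivialised determinant --- equivalently, to see that the $SL_m$-valued transition cocycle of $V$ acts trivially on $\mathbb{Z}(X)(m)[2m]$ in $\widetilde{DM}$, so that the local identifications $Th(V)|_U\cong\mathbb{Z}(U)(m)[2m]$ glue. This can either be argued directly, using that $SL_m$ is generated by elementary transvections and that each transvection is $\mathbb{A}^1$-homotopic to the identity, or extracted from the twisted Thom isomorphism for Chow-Witt groups of \cite{F1} and promoted to $\widetilde{DM}$ along the lines of \cite[Theorem 6.1]{Y1}.

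For the second assertion I would unwind the definitions. Under the isomorphism of the first assertion, $Hom_{\widetilde{DM}(X)}(\mathbb{Z}(X),Th(det(E))(n-1)[2n-2])\cong\widetilde{CH}^n(X,det(E)^{\vee})$, the group in which $e(E)$ lives. The class $z_*(1)$ appearing in the definition of $e(E)$ is supported along the zero section $X\subset E$, hence lifts canonically to the cohomology with supports $\widetilde{CH}^n_X(E,p^*det(E)^{\vee})$, which is a group of maps out of the cofibre $\mathbb{Z}(E)/\mathbb{Z}(E^{\times})=Th(E)$ in $\widetilde{DM}(E)$ and where it is the tautological class; transporting it along $\mathbb{Z}(X)\xrightarrow{z}\mathbb{Z}(E)\to Th(E)$ and along the homotopy invariance isomorphism $p^*$ reproduces exactly the two-step recipe defining $e(E)$, so $\pi$ is classified by $e(E)$. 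The only delicate point here is that the isomorphism $Th(E)\cong Th(det(E))(n-1)[2n-2]$ of the first assertion must be normalised compatibly with the Gysin map $z_*$, i.e. built from the deformation to the normal cone; granting that, the identification $\pi=e(E)$ is a routine diagram chase with the projection formula.
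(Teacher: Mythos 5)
Your proposal is correct and follows essentially the same route as the paper: the first assertion is exactly reduced to the Thom isomorphism $Th(E\oplus det(E)^{\vee})\cong\mathbb{Z}(X)(n+1)[2n+2]$ for the determinant-trivialized bundle $E\oplus det(E)^{\vee}$, which the paper imports as \cite[Theorem 6.1]{Y} (with the deduction of $Th(E)\cong Th(det(E))(n-1)[2n-2]$ as in \cite[Theorem 6.2]{Y}), so the ``main obstacle'' you isolate is precisely the cited input rather than something to be reproved. The identification $\pi=e(E)$ is likewise carried out in the paper by the same unwinding you describe, via explicit commutative diagrams of Chow--Witt groups with supports tracing the image of $1\in\widetilde{CH}^0(X)$ through $z_*$ and $(p^*)^{-1}$.
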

\begin{proof}
We have the isomorphism
\[Th:Th(E\oplus det(E)^{\vee})\longrightarrow\mathbb{Z}(n+1)[2n+2]\]
in \(\widetilde{DM}(X)\) given by the Thom class of \(E\oplus det(E)^{\vee}\) by \cite[Theorem 6.1]{Y}. By the same proof as in \cite[Theorem 6.2]{Y}, we obtain the first statement.

For the second statement, there is a commutative diagram
\[\footnotesize
	\xymatrix
	{
		[Th(E\oplus det(E)^{\vee}),\mathbb{Z}(n+1)[2n+2]]\ar[r]^{\otimes Th(det(E))(-2)[-4]}\ar[d]_{\alpha}	&[Th(E),Th(det(E))(n-1)[2n-2]]\ar[d]_{\beta}\\
		[Th(det(E)^{\vee}),\mathbb{Z}(n+1)[2n+2]]\ar[r]^{\otimes Th(det(E))(-2)[-4]}						&[\mathbb{Z}(X),Th(det(E))(n-1)[2n-2]]
	}
\]
where \([-,-]\) means the Hom-group in \(\widetilde{DM}(X)\). Suppose \(z_1\) (resp. \(z_2\)) is the zero section of \(E\) (resp. \(det(E)^{\vee}\)). The upper left term in the diagram is identified with \(\widetilde{CH}^{n+1}_{z_1\times_Xz_2}(E\oplus det(E)^{\vee})\) and we have the identification
\[[Th(det(E)^{\vee}),\mathbb{Z}(n+1)[2n+2]]=\widetilde{CH}^{n+1}_{z_2}(det(E)^{\vee})=\widetilde{CH}^n(X,det(E)^{\vee}).\]

The \(\alpha\) and \(\beta\) come from the composites
\[Th(det(E)^{\vee})\xrightarrow{z}\mathbb{Z}(E)\otimes Th(det(E)^{\vee})\longrightarrow Th(E\oplus det(E)^{\vee})\]
and
\[\mathbb{Z}(X)\xrightarrow{z}\mathbb{Z}(E)\longrightarrow Th(E)\]
respectively. In terms of Chow-Witt groups, \(\alpha\) is the composite
\[\widetilde{CH}^{n+1}_{z_1\times_Xz_2}(E\oplus det(E)^{\vee})\longrightarrow\widetilde{CH}^{n+1}_{E\times_Xz_2}(E\oplus det(E)^{\vee})\xrightarrow[\cong]{(p^*)^{-1}}\widetilde{CH}^{n+1}_{z_2}(det(E)^{\vee})\]
where \(p:E\oplus det(E)^{\vee}\longrightarrow det(E)^{\vee}\) is the projection and \(p^*\) is an isomorphism by homotopy invariance. Finally, we have \(\beta(Th\otimes Th(det(E))(-2)[-4])=\pi\).

Now we look at the commutative diagram
\[
	\xymatrix
	{
																																&\widetilde{CH}^0(X)\ar[ld]\ar[d]\\
		\widetilde{CH}^{n+1}_{z_1\times_Xz_2}(E\oplus det(E)^{\vee})\ar[d]						&\widetilde{CH}^n_{z_2}(E,q^*det(E)^{\vee})\ar[l]_-{\cong}\ar[d]\\
		\widetilde{CH}^{n+1}_{E\times_Xz_2}(E\oplus det(E)^{\vee})\ar[d]_{(p^*)^{-1}}	&\widetilde{CH}^n(E,q^*det(E)^{\vee})\ar[l]_-{\cong}\ar[d]_{(q^*)^{-1}}\\
		\widetilde{CH}^{n+1}_{z_2}(det(E)^{\vee})															&\widetilde{CH}^n(X,det(E)^{\vee})\ar[l]_{\cong}
	}
\]
where \(q:E\longrightarrow X\) is the structure map and horizontal arrows are given by push forwards. So we see that \(\pi\in\widetilde{CH}^n(X,det(E)^{\vee})\) just comes from the image of \(1\in\widetilde{CH}^0(X)\) along the right vertical arrows. So it is equal to \(e(E)\) by definition.
\end{proof}
\section{Motivic Stable Homotopy Category and Slice Filtrations}\label{Motivic Stable Homotopy Category and Slice Filtrations}
The main references of this section are \cite{Mo1}, \cite{B} and \cite{DLORV}. Let \(sShv_{\bullet}(Sm/k)\) be the category of pointed simplicial sheaves over \(Sm/k\) for the Nisnevich topology. We localize it by the morphisms
\[F\wedge\mathbb{A}^1_+\longrightarrow F\]
for every \(F\in sShv_{\bullet}(Sm/k)\), obtaining the homotopy category \(\mathcal{H}_{\bullet}(pt)\).

Define \(\mathcal{SH}(pt)\) to be the homotopy category of the \(\mathbb{P}^1\) or \(\mathbb{G}_m-S^1\) spectra of \(sShv_{\bullet}(Sm/k)\) (see \cite[2.3]{DLORV}). It is triangulated and symmetric monoidal with respect to the smash product \(\wedge\). There is an adjunction
\[\Sigma^{\infty}:\mathcal{H}_{\bullet}(pt)\rightleftharpoons\mathcal{SH}(pt):\Omega^{\infty}.\]

For any \(E\in\mathcal{SH}(pt)\) and \(n,m\in\mathbb{Z}\), define \(\pi_n(E)_m\) to be the (Nisnevich) sheafification of the presheaf on \(Sm/k\)
\[X\longmapsto[\Sigma^{\infty}X_+\wedge S^n,E(m)[m]]_{\mathcal{SH}(pt)}.\]
Then a morphism \(E_1\longrightarrow E_2\) between spectra is a weak equivalence if and only if it induces an isomorphism
\[\pi_n(E_1)_m\cong\pi_n(E_2)_m\]
for every \(n,m\in\mathbb{Z}\).

Define
\[\mathcal{SH}_{\leq -1}=\{E\in\mathcal{SH}(pt)|\pi_n(E)_m=0, \forall n\geq 0,m\in\mathbb{Z}\},\]
\[\mathcal{SH}_{\geq 0}=\{E\in\mathcal{SH}(pt)|\pi_n(E)_m=0, \forall n<0,m\in\mathbb{Z}\}.\]
They give a \(t\)-structure (see \cite[5.2]{Mo1}) on \(\mathcal{SH}(pt)\) where \(\mathcal{SH}_{\geq 0}\) can be described as the smallest full subcategory of \(\mathcal{SH}(pt)\) being stable under suspension, extensions and direct sums, containing \(\Sigma^{\infty}X_+\wedge\mathbb{G}^{\wedge i}_m\) for every \(X\in Sm/k\) and \(i\in\mathbb{Z}\) (see \cite[Proposition 2.1.70]{A}). Its heart \(\mathcal{SH}^{\heartsuit}\) is equivalent to the category of homotopy modules (see \cite[Definition 5.2.4]{Mo1}), where the equivalence is given by
\[E\longmapsto\pi_0(E)_*.\]

Define \(\mathcal{SH}^{eff}(pt)\) to be smallest triangulated full subcategory on \(\mathcal{SH}(pt)\) containing \(\Sigma^{\infty}X_+\) for every \(X\in Sm/k\). The functor from \(\mathcal{SH}^{eff}(pt)\)
\[E\longmapsto\pi_*(E)_0\]
is conservative (see \cite[Proposition 4]{B}). Define
\[\mathcal{SH}^{eff}_{\leq_e-1}=\{E\in\mathcal{SH}^{eff}(pt)|\pi_n(E)_0=0, \forall n\geq 0\},\]
\[\mathcal{SH}^{eff}_{\geq_e0}=\{E\in\mathcal{SH}^{eff}(pt)|\pi_n(E)_0=0, \forall n<0\}.\]
They give a \(t\)-structure (see \cite[\S 3]{B}) of \(\mathcal{SH}^{eff}(pt)\) where \(\mathcal{SH}^{eff}_{\geq_e0}\) is the smallest full subcategory of \(\mathcal{SH}^{eff}(pt)\) being stable under suspension, extensions, direct sums and containing \(\Sigma^{\infty}X_+\) for every \(X\in Sm/k\). The functor \(\mathcal{SH}^{eff,\heartsuit}\longrightarrow Ab(Sm/k)\) sending \(E\) to \(\pi_0(E)_0\) is conservative (see \cite[Proposition 5]{B}). We further define
\[\mathcal{SH}^{eff}(pt)(n)=\mathcal{SH}^{eff}(pt)\wedge\mathbb{G}_m^{\wedge n}\]
for any \(n\in\mathbb{Z}\), which has a \(t\)-structure obtained by shifting that of \(\mathcal{SH}^{eff}(pt)\) by \(\mathbb{G}_m^{\wedge n}\).

There is an adjunction
\[i_n:\mathcal{SH}^{eff}(pt)(n)\rightleftharpoons\mathcal{SH}(pt):r_n\]
by \cite[Theorem 4.1]{N} where \(i_n\) is the inclusion. The functor \(r_n\) is \(t\)-exact and \(i_n\) is right \(t\)-exact. Moreover, we have a functor
\[i^{\heartsuit}:\mathcal{SH}(pt)^{eff,\heartsuit}\longrightarrow\mathcal{SH}(pt)^{\heartsuit},\]
whose essential image consists of effective homotopy modules. Define
\[f_n=i_n\circ r_n.\]
We have natural isomorphisms \(r_n\circ i_n\cong Id\) and \(f_{n+1}\circ f_n\cong f_{n+1}\), where the latter induces a natural transformation \(f_{n+1}\longrightarrow f_n\) (see \cite[\S 4]{DLORV}). We define \(s_n(E)\) for any \(E\in\mathcal{SH}(pt)\) by the functorial distinguished triangle
\[f_{n+1}(E)\longrightarrow f_n(E)\longrightarrow s_n(E)\longrightarrow f_{n+1}(E)[1].\]
Finally we have
\[f_n(E)\wedge\mathbb{G}_m=f_{n+1}(E\wedge\mathbb{G}_m)\]
by \cite[Lemma 8]{B}.
\begin{remark}\label{shadj}
%There is a series of Quillen adjunctions
%\[sShv.(Sm/k)\xrightleftharpoons[O]{\mathbb{Z}}sAbShv(Sm/k)\xrightleftharpoons[\Gamma]{N}C^{\leq 0}(Ab(Sm/k))\]
%where \(\mathbb{Z}\) is the (reduced) free abelian functor, \(O\) is the forgetful functor and \((N,\Gamma)\) is the Dold-Kan correspondence. Here the model structure of \(C^{\leq 0}(Ab(Sm/k))\) (cochain complexes) is inherited from \(Ab\mathfrak{N}\) by the equivalence pair \((N,\Gamma)\) (see \cite[4.1]{SS}), which is denoted by \(Ch\mathfrak{N}\). This induces Quillen adjunctions
%\[Sp(\mathfrak{N},S^1)\xrightleftharpoons[O]{\mathbb{Z}}Sp(Ab\mathfrak{N},S^1)\xrightleftharpoons[\Gamma]{N}Sp(Ch\mathfrak{N},[1])\]
%where \((N,\Gamma)\) is a Quillen equivalence. Moreover, we have a Quillen equivalence (see \cite[4.6]{J})
%\[S:Sp(Ch\mathfrak{N},[1])\rightleftharpoons C(Ab(Sm/k)):T\]
%where
%\[\begin{array}{cc}S(E)=\varinjlim_n E_n[-n],&T(C)_n=C_{\leq n}[n]\end{array}\]
%and \(C(Ab(Sm/k))\) is endowed with the model structure defined in \cite[Theorem 2.5]{CD}. So finally we obtain a Quillen adjunction
%\[S\circ N\circ\mathbb{Z}:Sp(\mathfrak{N},S^1)\rightleftharpoons C(Ab(Sm/k)):O\circ\Gamma\circ T.\]
%Localizing with respect to \(\mathbb{A}^1\)-homotopy relations before taking \(\mathbb{G}_m\)-spectra, we obtain an adjunction
%\[\mathcal{SH}(pt)\rightleftharpoons D_{\mathbb{A}^1}(pt).\]
%The latter has an adjunction (see \cite[3.3.6.a, \S 3]{BCDFO})
%\[D_{\mathbb{A}^1}(pt)\rightleftharpoons\widetilde{DM}(pt)\]
%so finally we obtain 
There are adjunctions
\[\mathcal{SH}(pt)\rightleftharpoons D_{\mathbb{A}^1}(pt)\rightleftharpoons\widetilde{DM}(pt)\]
by \cite[5.2.25 and 5.3.35]{CD1}, \cite[4.6]{J} and \cite[3.3.6.a, \S 3]{BCDFO}. So we obtain an adjunction
\[\gamma^*:\mathcal{SH}(pt)\rightleftharpoons \widetilde{DM}(pt):\gamma_*\]
where \(\gamma^*\) is monoidal and \(\gamma^*\Sigma^{\infty}X_+=\Sigma^{\infty}\mathbb{Z}(X)\) for every \(X\in Sm/k\).
\end{remark}
\section{Mapping Cone of the Hopf Map in MW-Motives}\label{Mapping Cone of the Hopf Map}
\begin{definition}
The morphism
\[\begin{array}{ccc}\mathbb{A}^2\setminus 0&\longrightarrow&\mathbb{P}^1\\(x,y)&\longmapsto&[x:y]\end{array}\]
induces a morphism
\[\mathbb{G}_m\wedge\mathbb{G}_m\wedge S^1\longrightarrow\mathbb{G}_m\wedge S^1.\]
It is the suspension of a (unique) morphism \(\eta\in[\mathbb{G}_m,\mathbbm{1}]\) (resp. \([\mathbb{Z}(1)[1],\mathbb{Z}]_{{K}}\)), which is called the Hopf map.
\end{definition}
\begin{definition}\label{cone}
Define
\[H\widetilde{\mathbb{Z}}=f_0{K}_*^{MW};H_{\mu}\mathbb{Z}=f_0{K}_*^M;H_W\mathbb{Z}=f_0{K}_*^W;H_{\mu}\mathbb{Z}/2=f_0{K}_*^{M/2}\]
as in \cite[remark before Lemma 12]{B}. Moreover, for any \(E\in\mathcal{SH}(pt)\), define \(E/\eta=E\wedge\mathbbm{1}/{\eta}\).
\end{definition}
We have adjunctions
\[\gamma^*:\mathcal{SH}(pt)\rightleftharpoons DM_{{K}}(pt):\gamma_*\]
by Proposition \ref{dmadj} and Remark \ref{shadj}.
\begin{proposition}\label{spec}
We have
\[\gamma_*(\mathbbm{1})=f_0{K}_*\]
if \({K_*}=K^{MW}_*, K^M_*, K^M_*/2\).
\end{proposition}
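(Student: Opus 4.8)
The plan is to compute the homotopy sheaves of $\gamma_*(\mathbbm 1)$ directly from the adjunction, recognise that they coincide with those of $f_0 K_*$, and conclude that the canonical comparison morphism is a weak equivalence, by the recognition principle recalled in Section \ref{Motivic Stable Homotopy Category and Slice Filtrations}. First, since $\gamma^*$ is monoidal with $\gamma^*\Sigma^\infty X_+=\Sigma^\infty\mathbb{Z}(X)$ and carries $\mathbb{G}_m$ to $\mathbb{Z}(1)[1]$ (Remark \ref{shadj}, Proposition \ref{dmadj}), for every $X\in Sm/k$ and all $n,m\in\mathbb{Z}$ the adjunction together with invertibility of the Tate twist in $DM_K(pt)$ gives
\[
	\pi_n(\gamma_*\mathbbm 1)_m(X)\;=\;Hom_{DM_K(pt)}\big(\mathbb{Z}(X)(-m)[n-m],\,\mathbb{Z}\big)\;\cong\;Hom_{DM_K(pt)}\big(\mathbb{Z}(X),\,\mathbb{Z}(m)[m-n]\big),
\]
that is, the $K$-motivic cohomology group $H^{m-n}_K(X,\mathbb{Z}(m))$. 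By the Rost--Schmid description of these groups recalled in Section \ref{Four Motivic Theories} (following \cite{Mo}, \cite{F1}, \cite{BCDFO}, \cite{Y1}), they vanish for $m<0$, while for $n=0$ and $m\geq 0$ their Nisnevich sheafification is the unramified sheaf attached to $K_m$; thus $\pi_0(\gamma_*\mathbbm 1)_*\cong K_*$ as homotopy modules, and more generally $\gamma_*\mathbbm 1$ represents $K$-motivic cohomology.

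Second, I would recall that $f_0 K_*$ represents the very same theory. For $K_*=K^{MW}_*$ this is exactly the structure theorem \cite[Theorem 17]{B} for $H\widetilde{\mathbb{Z}}=f_0K^{MW}_*$, which identifies its cohomology with MW-motivic cohomology; equivalently $Hom_{\mathcal{SH}(pt)}(\Sigma^\infty X_+,f_0K^{MW}_*(m)[p])\cong Hom_{\widetilde{DM}(pt)}(\mathbb{Z}(X),\mathbb{Z}(m)[p])$ naturally in $X$, $m$ and $p$. For $K_*=K^M_*$ (resp. $K^M_*/2$) the analogous statement, that $H_\mu\mathbb{Z}=f_0K^M_*$ (resp. $H_\mu\mathbb{Z}/2$) represents ordinary (resp. mod~$2$) motivic cohomology, is classical, going back to Voevodsky; see also \cite{B}. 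In particular $\pi_n(f_0K_*)_m(X)\cong H^{m-n}_K(X,\mathbb{Z}(m))$ for all $n,m,X$ as well.

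It then remains to upgrade this agreement of homotopy sheaves to an isomorphism of spectra. The quickest route, if one is willing to invoke it, is the module description $DM_K(pt)\simeq(f_0K_*)\text{-}\mathrm{mod}$, under which $\gamma_*\mathbbm 1=\gamma_*\big((f_0K_*)\wedge\mathbbm 1\big)=f_0K_*$ outright. To stay within the hypotheses of the paper I would instead appeal to the canonical comparison morphism $\varphi\colon f_0K_*\to\gamma_*\mathbbm 1$ intertwining the fundamental classes on the two sides (implicit in the representability statements above, and obtainable from the orientation of $f_0K_*$ and the unit $\mathbbm 1\to\gamma_*\gamma^*\mathbbm 1=\gamma_*\mathbbm 1$ of the adjunction), and check using the previous two paragraphs that $\varphi$ realises the identity on $H^\bullet_K(X,\mathbb{Z}(\bullet))$ for every $X$, hence induces an isomorphism on every homotopy sheaf $\pi_n(-)_m$, hence is a weak equivalence. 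The argument is uniform over the three homotopy modules in the statement. I expect the genuine obstacle to lie precisely in this last step for $K^{MW}_*$ — constructing the comparison morphism and matching the two Rost--Schmid (Chow--Witt) descriptions — which is exactly the content drawn from \cite[Theorem 17]{B}; everything else is formal bookkeeping with the adjunction.
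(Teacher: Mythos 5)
Your overall strategy --- identify the represented cohomology theories on both sides and upgrade the agreement to an equivalence of spectra --- is the right one, and your first paragraph (computing $\pi_n(\gamma_*\mathbbm{1})_m$ by adjunction and monoidality of $\gamma^*$, then feeding in the Rost--Schmid description) is sound. But the two steps you yourself flag as the crux are genuinely missing, and one of them is circular as written. Bachmann's Theorem 17 does \emph{not} say that $f_0K^{MW}_*$ represents MW-motivic cohomology in the sense of \cite{BCDFO}; it computes the homotopy sheaves of $f_0K^{MW}_*$ purely in terms of the homotopy modules $K^{MW}_*$, $K^M_*$, $2K^M_*$, etc. The identification of that spectrum with the one representing $\widetilde{DM}$-cohomology, i.e.\ with $\gamma_*\mathbbm{1}$, is precisely the content of the proposition you are proving, so invoking it at this point assumes the conclusion. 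Likewise, the comparison morphism $\varphi\colon f_0K_*\to\gamma_*\mathbbm{1}$ is never actually produced: the unit of the adjunction only gives a map out of $\mathbbm{1}$, and ``the orientation of $f_0K_*$'' does not by itself extend this over the effective cover.

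The route the paper takes (via \cite[Theorem 5.2 and Corollary 5.4, \S 7]{BCDFO} and \cite[Lemma 12]{B}) repairs both gaps at once and avoids any discussion of higher homotopy sheaves in positive weights (which, e.g.\ $\pi_1(H_\mu\mathbb{Z})_2$, are genuinely nonzero and painful to match one by one). The non-formal inputs are: (i) $\gamma_*\mathbbm{1}$ is an \emph{effective} spectrum; (ii) $\pi_n(\gamma_*\mathbbm{1})_0=0$ for $n\neq 0$, so that $\gamma_*\mathbbm{1}$ lies in the heart $\mathcal{SH}^{eff,\heartsuit}$ of the effective homotopy $t$-structure; and (iii) $\pi_0(\gamma_*\mathbbm{1})_*\cong K_*$ as homotopy modules --- these are what the BCDFO references supply, via the Rost--Schmid computation of $H^{p}_K(X,\mathbb{Z}(q))$ and the weight-zero concentration. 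Once (i)--(iii) are in place, \cite[Lemma 12]{B} finishes the argument with no hand-built comparison map: the heart $\mathcal{SH}^{eff,\heartsuit}$ is equivalent to the category of effective homotopy modules via $E\mapsto\pi_0(E)_*$, and $f_0K_*$ is by construction the object of the heart corresponding to $K_*$, so $\gamma_*\mathbbm{1}\cong f_0K_*$. If you want to keep your comparison-map formulation, you should at minimum prove (i) and (ii) explicitly --- effectivity of $\gamma_*\mathbbm{1}$ is not formal --- and obtain $\varphi$ from the equivalence of hearts rather than from an orientation.
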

\begin{proof}
This follows from \cite[Theorem 5.2 and Corollary 5.4, \S 7]{BCDFO} and \cite[Lemma 12]{B}.
\end{proof}
\begin{proposition}
We have
\begin{enumerate}
\item\[\begin{array}{cc}	
\pi_i(H\widetilde{\mathbb{Z}})_0=\left\{\begin{array}{cc}{K}_0^{MW}&i=0\\0&i\neq 0\end{array}\right.&
\pi_i(H\widetilde{\mathbb{Z}})_1=\left\{\begin{array}{cc}{K}_1^{MW}&i=0\\0&i\neq 0\end{array}\right.
\end{array};\]
\item\[\begin{array}{cc}	
\pi_i(H_{\mu}\mathbb{Z})_0=\left\{\begin{array}{cc}\mathbb{Z}&i=0\\0&i\neq 0\end{array}\right.&
\pi_i(H_{\mu}\mathbb{Z})_1=\left\{\begin{array}{cc}{K}_1^M&i=0\\0&i\neq 0\end{array}\right.
\end{array};\]
\item\[\begin{array}{cc}	
\pi_i(H_W\mathbb{Z})_0=\left\{\begin{array}{cc}{K}_0^W&i=0\\0&i\neq 0\end{array}\right.&
\pi_i(H_W\mathbb{Z})_1=\left\{\begin{array}{cc}{K}_1^W&i=0\\\mathbb{Z}/2&i=1\\0&i\neq 0,1\end{array}\right.
\end{array};\]
\item\[\begin{array}{cc}	
\pi_i(H_{\mu}\mathbb{Z}/2)_0=\left\{\begin{array}{cc}\mathbb{Z}/2&i=0\\0&i\neq 0\end{array}\right.&
\pi_i(H_{\mu}\mathbb{Z}/2)_1=\left\{\begin{array}{cc}K_1^M/2&i=0\\\mathbb{Z}/2&i=1\\0&i\neq 0,1\end{array}\right.
\end{array}.\]
\end{enumerate}
\end{proposition}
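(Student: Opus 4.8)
The plan is to reduce everything to the cohomology of fields and then assemble known computations. Recall that each homotopy sheaf \(\pi_i(E)_m\) is strictly \(\mathbb{A}^1\)-invariant, hence unramified in Morel's sense, so it is determined by its sections over finitely generated field extensions \(F/k\); moreover such a section is \(\pi_i(E)_m(F)=E^{m-i,m}(F)\), the represented cohomology group. Thus the whole statement is equivalent to computing, for each of the four spectra, the associated cohomology of fields in every bidegree \((p,q)\) with \(q\in\{0,1\}\); in particular a homotopy sheaf vanishes as soon as the corresponding cohomology of every field vanishes.

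For \(H_{\mu}\mathbb{Z}\) and \(H_{\mu}\mathbb{Z}/2\): by Definition \ref{cone} these are \(f_0{K}^M_*\) and \(f_0{K}^M_*/2\), which by Proposition \ref{spec} are Voevodsky's motivic cohomology spectra \(H\mathbb{Z}\) and \(H\mathbb{Z}/2\). The motivic cohomology of a field in weights \(0\) and \(1\) is classical: it vanishes in negative degrees and above the weight; \(H^{0,0}=\mathbb{Z}\) (resp. \(\mathbb{Z}/2\)); \(H^{1,1}={K}^M_1\) (resp. \({K}^M_1/2\)); and \(H^{0,1}=0\) for \(\mathbb{Z}\)-coefficients while \(H^{0,1}(-,\mathbb{Z}/2)=H^0_{et}(-,\mu_2)=\mathbb{Z}/2\) since \(char(k)\neq 2\). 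For \(H\widetilde{\mathbb{Z}}\), which represents MW-motivic cohomology, one uses that the MW-motivic complexes in weights \(0\) and \(1\) are known explicitly, namely \({K}^{MW}_0\) concentrated in degree \(0\) and \({K}^{MW}_1[-1]\); hence \(\widetilde{H}^{p,q}={K}^{MW}_q\) for \(p=q\in\{0,1\}\) and \(0\) otherwise in those weights. Rewriting via \(\pi_i(E)_m(F)=E^{m-i,m}(F)\) yields items (1), (2) and (4), the extra \(\mathbb{Z}/2\) in \(\pi_1(H_{\mu}\mathbb{Z}/2)_1\) being precisely \(H^{0,1}(-,\mathbb{Z}/2)\).

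It remains to treat \(H_W\mathbb{Z}\). Applying the effective cover functor \(f_0=i_0\circ r_0\) to the Cartesian square of homotopy modules at the beginning of Section \ref{Four Motivic Theories} produces a homotopy Cartesian square with vertices \(H\widetilde{\mathbb{Z}}\), \(H_W\mathbb{Z}\), \(H_{\mu}\mathbb{Z}\) and \(H_{\mu}\mathbb{Z}/2\): indeed \(r_0\) is a right adjoint, and \(i_0\) is exact and hence preserves homotopy pullbacks in the triangulated category. Feeding the resulting Mayer--Vietoris long exact sequences with items (1), (2), (4), and identifying the relevant comparison maps of homotopy sheaves (the rank \({K}^{MW}_0=GW\to\mathbb{Z}\) with its reduction modulo \(2\), and the surjection \({K}^{MW}_1\to{K}^M_1\)), one reads off \(\pi_i(H_W\mathbb{Z})_m\) for \(m\in\{0,1\}\): in twist \(0\) one gets \(\pi_0(H_W\mathbb{Z})_0={K}^W_0\) and all other groups vanish, and in twist \(1\) one gets \(\pi_0(H_W\mathbb{Z})_1={K}^W_1\) and \(\pi_1(H_W\mathbb{Z})_1=\mathbb{Z}/2\). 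Alternatively, all four computations can be extracted from Bachmann's analysis of the effective slice filtration around \cite[Lemma 12]{B}, or from the cofiber sequence \(H_W\mathbb{Z}(1)[1]\xrightarrow{\eta}H_W\mathbb{Z}\to s_0H_W\mathbb{Z}\) (valid because \(f_1H_W\mathbb{Z}=H_W\mathbb{Z}(1)[1]\), \(\eta\) being invertible on \({K}^W_*\)) together with \(s_0H_W\mathbb{Z}=H\mathbb{Z}/2\).

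The one genuinely delicate point is the extra \(\mathbb{Z}/2\) in \(\pi_1(H_W\mathbb{Z})_1\). The Mayer--Vietoris sequence in twist \(1\) only gives \(\pi_1(H_W\mathbb{Z})_1=\ker\!\bigl(\partial\colon\pi_1(H_{\mu}\mathbb{Z}/2)_1\to\pi_0(H\widetilde{\mathbb{Z}})_1\bigr)\), that is, the kernel of a morphism \(\underline{\mathbb{Z}/2}\to{K}^{MW}_1\); since \({K}^{MW}_1(F)\) can carry \(2\)-torsion, one must show that this connecting map vanishes, which is a statement about the maps in the sequence and not merely about its groups. This does not follow formally, and is where I expect the real work to sit: it requires either the explicit shape of the weight-one Witt-motivic complex, or Bachmann's identification of \(s_0H_W\mathbb{Z}\), from which the vanishing of \(\partial\) drops out.
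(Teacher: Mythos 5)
Your argument is correct and follows essentially the same route as the paper: items (2) and (4) from the classical motivic cohomology of fields in weights $0$ and $1$, item (1) from the known weight-one MW-motivic complex, and item (3) from the homotopy Cartesian square of the four spectra --- your derivation of that square by applying $f_0$ to the Cartesian square of homotopy modules is precisely the content of \cite[Theorem 17]{B}, which the paper cites instead of reproving. The one point you flag as genuinely delicate in fact closes formally: once one knows $\pi_0(H_W\mathbb{Z})_1=K^W_1$ with its canonical comparison map (this is \cite[Lemma 19]{B}, needed for the $i=0$ case anyway, and follows from $\pi_0(f_0E)_*=\pi_0(E)_*$ for effective homotopy modules as in the paper's Proposition \ref{W}), exactness of the Mayer--Vietoris sequence at $\pi_0(H\widetilde{\mathbb{Z}})_1$ forces the connecting map $\partial\colon\pi_1(H_{\mu}\mathbb{Z}/2)_1\to\pi_0(H\widetilde{\mathbb{Z}})_1$ to vanish, since its image is the kernel of the injection $K^{MW}_1\to K^W_1\oplus K^M_1$ (the square of homotopy modules being Cartesian), so neither $s_0H_W\mathbb{Z}$ nor the weight-one Witt-motivic complex is actually needed.
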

\begin{proof}
The calculation of \(\pi_i(-)_0\) is easy since all these spectra are in \(SH^{eff,\heartsuit}\). Let us compute \(\pi_i(-)_1\).
\begin{enumerate}
\item[(2)]This follows from the computation of motivic cohomology groups \(H^{p,1}(X,\mathbb{Z})\) for any \(X\in Sm/k\).
\item[(4)]This follows from the computation of motivic cohomology groups \(H^{p,1}(X,\mathbb{Z}/2)\) for any \(X\in Sm/k\).
\item[(1)]This follows from \cite[Theorem 1.4.1, \S 5]{BCDFO}, \cite[Theorem 17]{B} and (2).
\item[(3)]The case \(i\neq 0\) follows from \cite[Theorem 17]{B} and (4). If \(i=0\), the result follows from \cite[Lemma 19]{B}.
\end{enumerate}
\end{proof}
\begin{proposition}\label{W}
Suppose \(E\in\mathcal{SH}(pt)^{\heartsuit}\) is an effective homotopy module. For any \(X\in Sm/k\) and \(n,m\in\mathbb{Z}\), we have
\[[\Sigma^{\infty}X_+,f_0(E)(n)[m]]=H^{m-n}(X,\pi_0(E)_n)\]
if \(m\geq 2n-1\). In particular, we could pick \(E=K_{*+t}^W\).
\end{proposition}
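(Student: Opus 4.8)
The plan is to compute $[\Sigma^\infty X_+,f_0(E)(n)[m]]$ with the hypercohomology spectral sequence of the homotopy $t$-structure on $\mathcal{SH}(pt)$ and to check that for $m\ge 2n-1$ a single $E_2$-term survives; the real input is a description of the homotopy sheaves of the effective cover $f_0(E)$. Since $E\in\mathcal{SH}(pt)^{\heartsuit}$ and $r_0$ is $t$-exact, $r_0(E)$ lies in the effective heart, so $f_0(E)=i_0r_0(E)$ is $i_0$ applied to an effective homotopy module; in particular $f_0(E)\in\mathcal{SH}_{\ge 0}$ and $\pi_0(f_0(E))_j=\pi_0(E)_j$ for all $j$. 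From \cite[Theorem 17]{B} and the structure theory of effective homotopy modules one extracts, for every effective homotopy module $E$: the sheaf $\pi_i(f_0(E))_j$ vanishes for $i\ge 1$ and $i>j$, and for $1\le i\le j$ it is assembled by successive extensions from the sheaves $K^M_r/2$ with $0\le r\le j-i$, whence its Rost--Schmid complex has length $\le j-i$ and
\[
H^s_{Nis}\bigl(Y,\pi_i(f_0(E))_j\bigr)=0\qquad\text{for every }Y\in Sm/k\ \text{ and every }s>j-i .
\]
For $E=K^W_{*+t}$ this is \cite[Theorem 17]{B} verbatim ($\pi_0(H_W\mathbb{Z})_*=K^W_{*}$ and $\pi_i(H_W\mathbb{Z})_j=K^M_{j-i}/2$ for $i\ge1$), consistently with the value $\pi_1(H_W\mathbb{Z})_1=\mathbb{Z}/2$ computed above.

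Next I would run the spectral sequence. Write $F=f_0(E)(n)[m]=(f_0(E)\wedge\mathbb{G}_m^{\wedge n})[m-n]$, so that twisting and shifting only reindex homotopy sheaves: $\pi_t(F)_0=\pi_{t-m+n}(f_0(E))_n$, which vanishes unless $m-n\le t\le m$. Thus the hypercohomology spectral sequence $E_2^{s,t}=H^s_{Nis}(X,\pi_{t-m+n}(f_0(E))_n)\Rightarrow\pi_{t-s}\operatorname{Map}(\Sigma^\infty X_+,F)$ is concentrated in $0\le s\le\dim X$ and $m-n\le t\le m$, hence bounded and strongly convergent, and $[\Sigma^\infty X_+,F]=\pi_0\operatorname{Map}(\Sigma^\infty X_+,F)$ is filtered with graded pieces the $E_\infty^{s,s}$. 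Set $k=s-m+n$, so $k\ge 0$ by connectivity. If $k=0$, then $E_2^{m-n,m-n}=H^{m-n}_{Nis}(X,\pi_0(E)_n)$, the group predicted by the statement. If $k\ge 1$ and $k>n$, the coefficient sheaf is zero; if $1\le k\le n$, its Rost--Schmid complex has length $\le n-k$, so $E_2^{s,s}=0$ once $s>n-k$, and indeed
\[
s-(n-k)=2k+m-2n\ \ge\ 2+(2n-1)-2n\ =\ 1\ >\ 0 ,
\]
which is exactly where the hypothesis $m\ge 2n-1$ is used. Hence $E_2^{s,s}=0$ for every $s\ne m-n$; the same numerical estimate makes every differential into or out of $E_\bullet^{m-n,m-n}$ vanish (its source, resp.\ target, is of the type just shown to be zero, using connectivity of $f_0(E)$, resp.\ $m\ge 2n-1$), so $E_\infty^{m-n,m-n}=E_2^{m-n,m-n}$ and $[\Sigma^\infty X_+,f_0(E)(n)[m]]=H^{m-n}_{Nis}(X,\pi_0(E)_n)$. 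The final sentence of the statement is the case $E=K^W_{*+t}$, which is an effective homotopy module.

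The hard part is the homotopy-sheaf estimate of the first paragraph: one genuinely needs that the higher homotopy sheaves $\pi_i(f_0(E))_j$ $(i\ge 1)$ are $K^M_*/2$-type sheaves supported strictly below the diagonal, so that their Nisnevich cohomological dimension is $\le j-i$ \emph{independently of $\dim X$}; with only $\operatorname{cd}_{Nis}(X)\le\dim X$ the spectral sequence would collapse merely in an $X$-dependent range, not in the uniform range $m\ge 2n-1$. Extracting this sharp bound — in essence, that the effective cover of an effective homotopy module is built from mod-$2$ motivic Eilenberg--MacLane pieces strictly below the diagonal — from \cite[Theorem 17]{B} is where the real work lies.
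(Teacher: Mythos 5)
Your overall strategy---the Postnikov/hypercohomology spectral sequence
\(E_2^{s,t}=H^s(X,\pi_{t-m+n}(f_0(E))_n)\Rightarrow[\Sigma^{\infty}X_+,f_0(E)(n)[m]]\), connectivity of \(f_0(E)\) killing the terms below the diagonal, and a cohomological-dimension bound on the positive homotopy sheaves killing the terms above it---is exactly the paper's. The difference, and the gap, is in the coefficient-sheaf vanishing you feed into it. You assert that for \emph{every} effective homotopy module \(E\) the sheaves \(\pi_i(f_0(E))_j\) with \(i\geq 1\) are iterated extensions of \(K^M_r/2\) with \(r\leq j-i\), citing \cite[Theorem 17]{B}; but that theorem is a computation specific to \(H\widetilde{\mathbb{Z}}\) and \(H_W\mathbb{Z}\), not a structure theorem for arbitrary effective homotopy modules, and you yourself flag extracting this as ``where the real work lies.'' As written, the proof therefore rests on an unestablished claim.

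The good news is that the claim is much stronger than what your own numerology requires, and the weaker statement is soft. Since \(r_0\) is \(t\)-exact and \(E\) lies in the heart, \(r_0(f_0(E))=r_0(E)\in\mathcal{SH}^{eff,\heartsuit}\), so \(\pi_i(f_0(E))_0=0\) for \(i\geq 1\). Each \(\pi_i(f_0(E))_*\) is a homotopy module, so vanishing of its weight-\(0\) part forces vanishing of all non-positive weights by contraction; its Rost--Schmid (Gersten) complex in weight \(n\) therefore has terms \(\bigoplus_{y\in X^{(s)}}(\pi_i(f_0(E))_{n-s})(k(y))\) which vanish for \(s\geq n\), giving
\(H^s(X,\pi_i(f_0(E))_n)=0\) for \(s\geq n\) and \(i\geq 1\), uniformly in \(X\). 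Plugging this into your computation: a contributing off-diagonal term has \(s=i+m-n\geq 1+(2n-1)-n=n\), hence vanishes, and the same estimate kills the differentials out of \(E^{m-n,m-n}_{\bullet}\) (those into it die by connectivity). This is precisely the paper's argument, together with \(\pi_0(f_0(E))_*=\pi_0(E)_*\) as in \cite[Lemma 18]{B}. So the ``hard part'' you identify is a red herring: replace the \(K^M_*/2\)-extension claim by the contraction argument above and your proof closes.
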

\begin{proof}
We have the Postnikov spectral sequence (see \cite[Remark 4.3.9]{Mo1})
\[H^p(X,\pi_{-q}(f_0(E))_n)\Longrightarrow[\Sigma^{\infty}X_+,f_0(E)(n)[n+p+q]].\]
The \(r(E)\) (resp. \(f_0(E)\)) is an element in \(\mathcal{SH}(pt)^{eff,\heartsuit}\) (resp. \(\mathcal{SH}(pt)_{\geq 0}\)) by exactness of the functors \(i\) and \(r\). So we have \(\pi_{-q}(f_0(E))_n=0\) if \(q>0\). Furthermore,
\[H^p(X,\pi_{-q}(f_0(E))_n)=0\]
if \(p\geq n\) and \(q<0\) by using \(\pi_{-q}(f_0(E))_0=0\) and the Gersten complex of the homotopy module \(\pi_{-q}(f_0(E))_*\). Finally, we have
\[\pi_0(f_0(E))_*=\pi_0(E)_*\]
by the same arguments as in \cite[Lemma 18]{B}. So the first statement follows.

The second statement follows from \cite[Lemma 6]{B}.
\end{proof}
\begin{proposition}\label{trivial}
We have
\[\begin{array}{cc}H_{\mu}\mathbb{Z}/\eta=H_{\mu}\mathbb{Z}\oplus H_{\mu}\mathbb{Z}\wedge\mathbb{P}^1,&
(H_{\mu}\mathbb{Z}/2)/\eta=H_{\mu}\mathbb{Z}/2\oplus H_{\mu}\mathbb{Z}/2\wedge\mathbb{P}^1\end{array}.\]
\end{proposition}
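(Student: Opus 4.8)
The plan is to prove that the Hopf map acts by zero on each of $H_{\mu}\mathbb{Z}$ and $H_{\mu}\mathbb{Z}/2$; once this is known, the two splittings drop out formally. Write $E$ for $H_{\mu}\mathbb{Z}$ (the argument for $H_{\mu}\mathbb{Z}/2$ is identical, with $\mathbb{Z}$ replaced by $\mathbb{Z}/2$). By Definition \ref{cone}, $E/\eta=E\wedge(\mathbbm{1}/\eta)$, and $\mathbbm{1}/\eta$ is the cofibre of $\eta\colon\mathbb{G}_m\to\mathbbm{1}$, so $\mathbbm{1}/\eta$ sits in a distinguished triangle $\mathbb{G}_m\xrightarrow{\eta}\mathbbm{1}\to\mathbbm{1}/\eta\to S^1\wedge\mathbb{G}_m$ in $\mathcal{SH}(pt)$, where $S^1\wedge\mathbb{G}_m\simeq\mathbb{P}^1$. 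Smashing with $E$ yields the distinguished triangle
\[\mathbb{G}_m\wedge E\xrightarrow{\ \eta\wedge\mathrm{id}_E\ }E\longrightarrow E/\eta\longrightarrow\mathbb{P}^1\wedge E.\]
Since a distinguished triangle $A\xrightarrow{0}B\to C\to A[1]$ in a triangulated category splits as $C\cong B\oplus A[1]$, proving $\eta\wedge\mathrm{id}_E=0$ gives $E/\eta\cong E\oplus(\mathbb{G}_m\wedge E)[1]=E\oplus\mathbb{P}^1\wedge E=E\oplus E\wedge\mathbb{P}^1$, which is exactly the claim.

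It remains to show $\eta\wedge\mathrm{id}_E=0$. By Proposition \ref{spec}, $E=\gamma_*(\mathbbm{1})$ for the adjunction $\gamma^*\colon\mathcal{SH}(pt)\rightleftharpoons DM_M(pt)\colon\gamma_*$; as $\gamma^*$ is monoidal, $\gamma_*$ is lax monoidal, so $E$ is a commutative ring spectrum and $\mathbb{G}_m\wedge E$ is the free $E$-module on $\mathbb{G}_m$. The morphism $\eta\wedge\mathrm{id}_E$ is $E$-linear, so by the free--forgetful adjunction it is determined by a class in $[\mathbb{G}_m,E]_{\mathcal{SH}(pt)}$, namely the image of $\eta$ under the unit $\mathbbm{1}\to E$. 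Under the adjunction $(\gamma^*,\gamma_*)$ together with the identifications $\gamma^*\mathbb{G}_m=\mathbb{Z}(1)[1]$ and $\gamma^*\eta=\eta$ (compatibility of the two Hopf maps), this class corresponds to $\eta\in[\mathbb{Z}(1)[1],\mathbb{Z}]_{DM_M(pt)}$, which vanishes: this Hom-group is a motivic cohomology group of $k$ in weight $-1$, hence $0$. Thus $\eta\wedge\mathrm{id}_{H_{\mu}\mathbb{Z}}=0$, and the same computation with $\mathbb{Z}/2$-coefficients gives $\eta\wedge\mathrm{id}_{H_{\mu}\mathbb{Z}/2}=0$.

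The only real content is this vanishing; the splitting of the triangle and the identification $(\mathbb{G}_m\wedge E)[1]=E\wedge\mathbb{P}^1$ are formal. A more conceptual way to see the vanishing is that $H_{\mu}\mathbb{Z}$ and $H_{\mu}\mathbb{Z}/2$ are orientable ring spectra, on which $\eta$ is automatically nullhomotopic; the Hom-computation above is just the most self-contained argument within the framework of Section \ref{Four Motivic Theories}. I expect the only friction to be in making the $E$-linearity and the adjunction bookkeeping of the second step precise --- nothing substantive, but it is the one place where a little care is needed.
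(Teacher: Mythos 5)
Your proof is correct, but it takes a genuinely different route from the paper's. You split the defining cofibre sequence $\mathbb{G}_m\wedge E\xrightarrow{\eta\wedge\mathrm{id}}E\to E/\eta\to\mathbb{P}^1\wedge E$ by showing the first map is zero, reducing via the ring structure on $E=\gamma_*(\mathbbm{1})$ to the vanishing of $u\circ\eta\in[\mathbb{G}_m,E]=[\mathbb{Z}(1)[1],\mathbb{Z}]_M=0$ (negative-weight motivic cohomology of the point; cf.\ Propositions \ref{rhom} and \ref{negative}). The paper instead stays inside the slice formalism: from $f_1(H_{\mu}\mathbb{Z})=f_1(H_{\mu}\mathbb{Z}/2)=0$ it computes $f_1(E/\eta)\cong E\wedge\mathbb{P}^1$ by applying $f_1$ to the same triangle, and then splits the slice triangle $f_1(E/\eta)\to E/\eta\to s_0(E/\eta)$ using that $E\to E/\eta\to s_0(E/\eta)\cong s_0(E)\cong E$ is an isomorphism. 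Your argument is more elementary and self-contained, and the ``care needed'' you flag is easily discharged: you do not need a module category at all, since $\eta\wedge\mathrm{id}_E=m\circ((u\circ\eta)\wedge\mathrm{id}_E)$ by the unit axiom for the homotopy ring spectrum $E$, so the vanishing of $u\circ\eta$ suffices. What the paper's route buys is uniformity with the surrounding computations (Propositions \ref{s0}, \ref{slice} and \ref{triangles} all run on the same $f_1/s_0$ machinery, and the non-split case of $H\widetilde{\mathbb{Z}}/\eta$ is handled by exactly the same triangle), whereas your vanishing argument is special to the oriented theories $H_{\mu}\mathbb{Z}$ and $H_{\mu}\mathbb{Z}/2$ and would not generalize to that case.
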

\begin{proof}
We have \(f_1(H_{\mu}\mathbb{Z})=f_1(H_{\mu}\mathbb{Z}/2)=0\) hence
\[\begin{array}{cc}f_1(H_{\mu}\mathbb{Z}/\eta)\cong H_{\mu}\mathbb{Z}\wedge\mathbb{P}^1,&f_1((H_{\mu}\mathbb{Z}/2)/\eta)\cong H_{\mu}\mathbb{Z}/2\wedge\mathbb{P}^1\end{array}\]
by applying \(f_1\) to the distinguished triangle of \(\mathbbm{1}/{\eta}\). Then the statements follow from the commutative diagrams
\[\begin{array}{cc}
	\xymatrix
	{
		H_{\mu}\mathbb{Z}\ar[r]\ar[d]_{\cong}	&H_{\mu}\mathbb{Z}/\eta\ar[d]\\
		s_0(H_{\mu}\mathbb{Z})\ar[r]^-{\cong}	&s_0(H_{\mu}\mathbb{Z}/\eta)
	}&
	\xymatrix
	{
		H_{\mu}\mathbb{Z}/2\ar[r]\ar[d]_{\cong}	&(H_{\mu}\mathbb{Z}/2)/\eta\ar[d]\\
		s_0(H_{\mu}\mathbb{Z}/2)\ar[r]^-{\cong}	&s_0((H_{\mu}\mathbb{Z}/2)/\eta)
	}.
\end{array}
\]
\end{proof}
\begin{proposition}
We have
\begin{enumerate}
\item\[\pi_i(H\widetilde{\mathbb{Z}}/\eta)_0=\left\{\begin{array}{cc}\mathbb{Z}&i=0\\2{K}_1^M&i=1\\0&i\neq 0,1\end{array}\right..\]
\item\[\pi_i(H_{\mu}\mathbb{Z}/\eta)_0=\left\{\begin{array}{cc}\mathbb{Z}&i=0\\{K}_1^M&i=1\\0&i\neq 0,1\end{array}\right..\]
\item\[\pi_i(H_W\mathbb{Z}/\eta)_0=\left\{\begin{array}{cc}\mathbb{Z}/2&i=0,2\\0&i\neq 0,2\end{array}\right..\]
\item\[\pi_i((H_{\mu}\mathbb{Z}/2)/\eta)_0=\left\{\begin{array}{cc}\mathbb{Z}/2&i=0,2\\K_1^M/2&i=1\\0&i\neq 0,1,2\end{array}\right..\]
\end{enumerate}
\end{proposition}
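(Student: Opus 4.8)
The plan is to smash the defining cofiber sequence of $\mathbbm{1}/\eta$ with each of the four spectra and then analyse the resulting long exact sequence of homotopy sheaves, feeding in the input from the preceding proposition. Concretely, in $\mathcal{SH}(pt)$ one has the distinguished triangle $\mathbbm{1}(1)[1]\xrightarrow{\eta}\mathbbm{1}\longrightarrow\mathbbm{1}/\eta\longrightarrow\mathbbm{1}(1)[2]$; smashing it with $E\in\{H\widetilde{\mathbb{Z}},H_{\mu}\mathbb{Z},H_W\mathbb{Z},H_{\mu}\mathbb{Z}/2\}$ gives $E(1)[1]\xrightarrow{\eta}E\longrightarrow E/\eta\longrightarrow E(1)[2]$. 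Since $\pi_*(-)_0$ is a homological functor on $\mathcal{SH}(pt)$ and, directly from the definition of $\pi_*(-)_*$, $\pi_n(F(1)[1])_0=\pi_n(F)_1$ and $\pi_n(F(1)[2])_0=\pi_{n-1}(F)_1$, we obtain a long exact sequence of Nisnevich sheaves
\[\cdots\to\pi_i(E)_1\xrightarrow{\;\cdot\eta\;}\pi_i(E)_0\to\pi_i(E/\eta)_0\to\pi_{i-1}(E)_1\xrightarrow{\;\cdot\eta\;}\pi_{i-1}(E)_0\to\cdots.\]
Plugging in the values of $\pi_i(E)_0$ and $\pi_i(E)_1$ from the preceding proposition, all terms vanish outside a small window near $i=0,1,2$, so the computation of $\pi_i(E/\eta)_0$ reduces to identifying the kernel and cokernel of the single map $\cdot\eta\colon\pi_0(E)_1\to\pi_0(E)_0$, together with the observation that in the $W$ and $\mathbb{Z}/2$ cases $\pi_1(E)_1$ maps isomorphically onto $\pi_2(E/\eta)_0$ because $\pi_1(E)_0=0$.

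For parts (2) and (4) the map $\cdot\eta$ is zero, because $H_{\mu}\mathbb{Z}$ and $H_{\mu}\mathbb{Z}/2$ are modules over $H_{\mu}\mathbb{Z}=f_0K^M_*$, on which $\eta$ acts by $0$; equivalently one just invokes Proposition~\ref{trivial} to split $E/\eta\simeq E\oplus E\wedge\mathbb{P}^1$, so that $\pi_i(E/\eta)_0=\pi_i(E)_0\oplus\pi_{i-1}(E)_1$, and the asserted values are read off directly from parts (2) and (4) of the preceding proposition. For part (1) the map is multiplication by $\eta$ in Milnor--Witt $K$-theory, $\cdot\eta\colon K^{MW}_1\to K^{MW}_0$; using the Cartesian square $K^{MW}_*=K^M_*\times_{K^M_*/2}K^W_*$ from Section~\ref{Four Motivic Theories}, under which $\cdot\eta$ is $(a,c)\mapsto(0,\iota(c))$ with $\iota$ the canonical inclusion $K^W_1=\mathbf I\hookrightarrow\mathbf W=K^W_0$, one computes $\ker(\cdot\eta)=2K^M_1$ and $\operatorname{coker}(\cdot\eta)=K^{MW}_0/\eta K^{MW}_1=K^M_0=\mathbb{Z}$, which is exactly (1). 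For part (3), $K^W_0=\mathbf W$ (the unramified Witt sheaf) and $K^W_1=\mathbf I$ (the unramified fundamental ideal), with $\cdot\eta$ the inclusion $\mathbf I\hookrightarrow\mathbf W$, so $\ker(\cdot\eta)=0$ and $\operatorname{coker}(\cdot\eta)=\mathbf W/\mathbf I=\mathbb{Z}/2$; this yields the values in degrees $0$ and $1$, while the $\mathbb{Z}/2$ in degree $2$ is the image of $\pi_1(H_W\mathbb{Z})_1=\mathbb{Z}/2$ under the boundary isomorphism.

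The step I expect to be the main obstacle is the last one: one must be sure that the arrow labelled $\cdot\eta$ on homotopy sheaves genuinely is multiplication by the algebraic Hopf element in the relevant Milnor--Witt-type $K$-theory — i.e. the compatibility of $\eta\in[\mathbb{G}_m,\mathbbm{1}]$ with its avatar in $[\mathbb{Z}(1)[1],\mathbb{Z}]_K$ under $(\gamma^*,\gamma_*)$ — and one must invoke Morel's structure results for $K^{MW}_*$ and $K^W_*$ (equivalently, the Cartesian description above) to pin down the kernels and cokernels. Everything else is formal bookkeeping in the long exact sequence.
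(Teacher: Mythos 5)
Your proof is correct and takes essentially the same route as the paper: the long exact sequence in $\pi_*(-)_0$ induced by smashing with the cofiber sequence of $\eta$, fed with the homotopy-sheaf values from the preceding proposition (the paper disposes of (2) and (4) by citing Proposition~\ref{trivial} and of (3) by citing Bachmann's Lemma~20, whereas you also run (3) through the long exact sequence directly, which works since $\eta$ is the injection $\mathbf{I}\hookrightarrow\mathbf{W}$ with cokernel $\mathbb{Z}/2$ and the degree-$2$ class comes from $\pi_1(H_W\mathbb{Z})_1$). Your identification of $\ker$ and $\operatorname{coker}$ of $\eta\colon K_1^{MW}\to K_0^{MW}$ via the Cartesian square, giving $2K_1^M$ and $\mathbb{Z}$, is exactly the computation the paper leaves implicit in case (1).
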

\begin{proof}
The (2) and (4) follow from Proposition \ref{trivial} and (3) follows from \cite[Lemma 20]{B}. For (1) we apply the long exact sequence
\[\pi_i(E)_1\xrightarrow{\eta}\pi_i(E)_0\longrightarrow\pi_i(E/\eta)_0\longrightarrow\pi_{i-1}(E)_1\xrightarrow{\eta}\pi_{i-1}(E)_0\]
for any spectrum \(E\).
\end{proof}
\begin{proposition}\label{s0}
For any spectrum \(E\in SH^{eff}(pt)\), we have
\[s_0(E/\eta)=s_0(E).\]
\end{proposition}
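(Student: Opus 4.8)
The plan is to apply the triangulated functor \(s_0\) to the defining cofiber sequence of \(\mathbbm{1}/\eta\) smashed with \(E\), and to check that the \(\mathbb{G}_m\)-twisted term is annihilated by \(s_0\). Concretely, I would start from the cofiber sequence \(\Sigma^{\infty}\mathbb{G}_m\xrightarrow{\eta}\mathbbm{1}\to\mathbbm{1}/\eta\to\Sigma^{\infty}\mathbb{G}_m[1]\), smash it with \(E\), and obtain the distinguished triangle \(E\wedge\mathbb{G}_m\xrightarrow{\mathrm{id}_E\wedge\eta}E\to E/\eta\to E\wedge\mathbb{G}_m[1]\) in \(\mathcal{SH}(pt)\). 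Since \(f_0\) and \(f_1\) are exact functors (the composites \(i_n\circ r_n\) of the exact functors recalled in Section \ref{Motivic Stable Homotopy Category and Slice Filtrations}) and \(s_0\) is the cofiber of the natural transformation \(f_1\to f_0\), the functor \(s_0\) is triangulated; applying it gives a distinguished triangle
\[s_0(E\wedge\mathbb{G}_m)\longrightarrow s_0(E)\longrightarrow s_0(E/\eta)\longrightarrow s_0(E\wedge\mathbb{G}_m)[1]\]
in which the middle arrow is the one induced by the canonical map \(E\to E/\eta\).

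It then remains to prove \(s_0(E\wedge\mathbb{G}_m)=0\), and this is the step where effectivity of \(E\) enters. Because \(E\in\mathcal{SH}^{eff}(pt)\) we have \(f_0(E)=E\), so the identity \(f_n(F)\wedge\mathbb{G}_m=f_{n+1}(F\wedge\mathbb{G}_m)\) (see \cite[Lemma 8]{B}) with \(n=0\), \(F=E\) yields \(f_1(E\wedge\mathbb{G}_m)=E\wedge\mathbb{G}_m\). On the other hand \(E\wedge\mathbb{G}_m\) is again effective — \(\Sigma^{\infty}\mathbb{G}_m\) is the cofiber of \(\mathbbm{1}\to\Sigma^{\infty}\mathbb{G}_{m+}\) and \(\mathcal{SH}^{eff}(pt)\) is a triangulated monoidal subcategory — so also \(f_0(E\wedge\mathbb{G}_m)=E\wedge\mathbb{G}_m\). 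Under these two identifications the transition map \(f_1(E\wedge\mathbb{G}_m)\to f_0(E\wedge\mathbb{G}_m)\) is the identity of \(E\wedge\mathbb{G}_m\), being compatible with the two (isomorphic) counit maps down to \(E\wedge\mathbb{G}_m\); hence its cofiber \(s_0(E\wedge\mathbb{G}_m)\) vanishes. Substituting back into the triangle above forces \(s_0(E)\xrightarrow{\ \sim\ }s_0(E/\eta)\), which is the assertion. Conceptually this just reflects the fact, already visible in Proposition \ref{trivial}, that smashing with \(\mathbbm{1}/\eta\) amounts — as far as the zeroth slice is concerned — to adjoining a \(\mathbb{G}_m[1]\)-twisted summand, which \(s_0\) does not see.

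The only genuinely non-formal point is the vanishing \(s_0(E\wedge\mathbb{G}_m)=0\), i.e. that \(s_0\) annihilates \(\mathcal{SH}^{eff}(pt)(1)\); everything else is a mechanical use of the octahedral axiom and the structure of the slice tower. A secondary thing to keep track of is that the resulting isomorphism is the one induced by \(E\to E/\eta\) rather than an abstract one, but this is automatic from the naturality of the triangle defining \(s_0\).
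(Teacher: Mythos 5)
Your proof is correct and follows essentially the same route as the paper: apply the exact functor \(s_0\) to the \(\eta\)-cofiber sequence and kill the term \(s_0(E\wedge\mathbb{G}_m)\) using effectivity of \(E\) together with the compatibility \(f_n(-)\wedge\mathbb{G}_m=f_{n+1}(-\wedge\mathbb{G}_m)\). The paper phrases the vanishing as \(s_0(E\wedge\mathbb{G}_m)=s_{-1}(E)\wedge\mathbb{G}_m=0\), while you show the transition map \(f_1(E\wedge\mathbb{G}_m)\to f_0(E\wedge\mathbb{G}_m)\) is an isomorphism; these are the same observation.
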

\begin{proof}
This is because
\[s_0(E\wedge\mathbb{G}_m)=s_{-1}(E)\wedge\mathbb{G}_m=0\]
and \(s_0\) is an exact functor.
\end{proof}
\begin{proposition}\label{slice}
We have \(f_1(H\widetilde{\mathbb{Z}}/\eta)=\mathbb{P}^1\wedge H_{\mu}\mathbb{Z}\)
hence a distinguished triangle
\begin{equation}\label{a}\mathbb{P}^1\wedge H_{\mu}\mathbb{Z}\longrightarrow H\widetilde{\mathbb{Z}}/\eta\longrightarrow H_{\mu}\mathbb{Z}\oplus H_{\mu}\mathbb{Z}/2[2]\longrightarrow \mathbb{P}^1\wedge H_{\mu}\mathbb{Z}[1].\end{equation}
\end{proposition}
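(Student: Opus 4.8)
The plan is to identify the stated sequence with the generalised slice triangle $f_1\to f_0\to s_0$ of $H\widetilde{\mathbb{Z}}/\eta$, and to compute its two unknown terms by feeding the structure of $H\widetilde{\mathbb{Z}}$ through the functors $-\wedge\mathbbm{1}/\eta$, $f_1$ and $s_0$. First I would note that $\mathbbm{1}/\eta=C(\eta)$ is effective: $\Sigma^{\infty}\mathbb{G}_m$ is a retract of the effective spectrum $\Sigma^{\infty}(\mathbb{G}_m)_+$, so $C(\eta)$ is the cone of a map of effective spectra. Hence $H\widetilde{\mathbb{Z}}/\eta=H\widetilde{\mathbb{Z}}\wedge\mathbbm{1}/\eta\in SH^{eff}(pt)$, so $f_0(H\widetilde{\mathbb{Z}}/\eta)=H\widetilde{\mathbb{Z}}/\eta$, and the triangle defining $s_0$ reads
\[f_1(H\widetilde{\mathbb{Z}}/\eta)\longrightarrow H\widetilde{\mathbb{Z}}/\eta\longrightarrow s_0(H\widetilde{\mathbb{Z}}/\eta)\longrightarrow f_1(H\widetilde{\mathbb{Z}}/\eta)[1].\]
It then remains to prove $f_1(H\widetilde{\mathbb{Z}}/\eta)\cong\mathbb{P}^1\wedge H_\mu\mathbb{Z}$ and $s_0(H\widetilde{\mathbb{Z}}/\eta)\cong H_\mu\mathbb{Z}\oplus H_\mu\mathbb{Z}/2[2]$.

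For $f_1$, apply $-\wedge\mathbbm{1}/\eta$ and then the triangulated functor $f_1=i_1\circ r_1$ to the distinguished triangle $H\widetilde{\mathbb{Z}}\to H_\mu\mathbb{Z}\oplus H_W\mathbb{Z}\to H_\mu\mathbb{Z}/2\to H\widetilde{\mathbb{Z}}[1]$ provided by \cite[Theorem 17]{B} (the structure theorem of $H\widetilde{\mathbb{Z}}$), which reflects the Cartesian square $K^{MW}_*=K^M_*\times_{K^M_*/2}K^W_*$. Using Proposition \ref{trivial}, the vanishing $f_1H_\mu\mathbb{Z}=f_1H_\mu\mathbb{Z}/2=0$ recalled in its proof, and the identity $f_1(\mathbb{P}^1\wedge E)=\mathbb{P}^1\wedge f_0E$ (from \cite[Lemma 8]{B}), one gets $f_1(H_\mu\mathbb{Z}/\eta)=\mathbb{P}^1\wedge H_\mu\mathbb{Z}$ and $f_1((H_\mu\mathbb{Z}/2)/\eta)=\mathbb{P}^1\wedge H_\mu\mathbb{Z}/2$, while $f_1(H_W\mathbb{Z}/\eta)=0$ because $H_W\mathbb{Z}/\eta\cong H_\mu\mathbb{Z}/2\oplus H_\mu\mathbb{Z}/2[2]$ by \cite[Lemma 20]{B}. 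Hence $f_1(H\widetilde{\mathbb{Z}}/\eta)$ sits in a triangle $f_1(H\widetilde{\mathbb{Z}}/\eta)\to\mathbb{P}^1\wedge H_\mu\mathbb{Z}\xrightarrow{g}\mathbb{P}^1\wedge H_\mu\mathbb{Z}/2\to f_1(H\widetilde{\mathbb{Z}}/\eta)[1]$, where $g$ can only come from the $H_\mu\mathbb{Z}$-summand (the $H_W\mathbb{Z}$-contribution having been killed) and, by \cite[Lemma 8]{B} together with the naturality of the splitting in Proposition \ref{trivial}, equals $\mathbb{P}^1$ smashed with the reduction $\pi\colon H_\mu\mathbb{Z}\to H_\mu\mathbb{Z}/2$. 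Since $\mathrm{fib}(\pi)=H_\mu\mathbb{Z}$ (multiplication by $2$), this gives $f_1(H\widetilde{\mathbb{Z}}/\eta)=\mathbb{P}^1\wedge H_\mu\mathbb{Z}$.

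For $s_0$, apply $-\wedge\mathbbm{1}/\eta$ and then $s_0$ to the same triangle. By Proposition \ref{s0}, $s_0(E/\eta)=s_0(E)$ for effective $E$, so the three known corners become $s_0H_\mu\mathbb{Z}=H_\mu\mathbb{Z}$, $s_0H_\mu\mathbb{Z}/2=H_\mu\mathbb{Z}/2$ and $s_0(H_W\mathbb{Z}/\eta)=H_W\mathbb{Z}/\eta=H_\mu\mathbb{Z}/2\oplus H_\mu\mathbb{Z}/2[2]$ (\cite[Lemma 20]{B}); the induced map $H_\mu\mathbb{Z}\oplus H_\mu\mathbb{Z}/2\oplus H_\mu\mathbb{Z}/2[2]\to H_\mu\mathbb{Z}/2$ is $(\pi,\mathrm{id},0)$ — its third component is zero because $[H_\mu\mathbb{Z}/2[2],H_\mu\mathbb{Z}/2]=0$ — hence a split epimorphism with fibre $H_\mu\mathbb{Z}\oplus H_\mu\mathbb{Z}/2[2]$. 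Thus $s_0(H\widetilde{\mathbb{Z}}/\eta)=H_\mu\mathbb{Z}\oplus H_\mu\mathbb{Z}/2[2]$, equivalently $s_0H\widetilde{\mathbb{Z}}=H_\mu\mathbb{Z}\oplus H_\mu\mathbb{Z}/2[2]$ (which could alternatively be quoted from \cite[Theorem 17]{B} and Proposition \ref{s0}). Substituting the two computations into the slice triangle yields \eqref{a}.

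The step I expect to require the most care is the bookkeeping of the two connecting maps: checking that $g$ is exactly $\mathbb{P}^1\wedge\pi$, and that the $H_\mu\mathbb{Z}/2[2]$-summand of $s_0(H_W\mathbb{Z}/\eta)$ maps to zero in $H_\mu\mathbb{Z}/2$. Both rest on $f_1(H_W\mathbb{Z}/\eta)=0$ (so the Witt-theoretic part vanishes after truncation), on \cite[Lemma 8]{B} (to compute $f_1$ and $s_0$ of maps of the form $\phi\wedge\mathbb{G}_m$), and on the triviality of the bidegree $(-2,0)$ motivic cohomology operation $[H_\mu\mathbb{Z}/2[2],H_\mu\mathbb{Z}/2]=0$. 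If one is willing to take $s_0H\widetilde{\mathbb{Z}}=H_\mu\mathbb{Z}\oplus H_\mu\mathbb{Z}/2[2]$ directly from \cite[Theorem 17]{B}, then only the identification of $g$ remains.
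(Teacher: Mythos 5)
Your proposal is correct, and its overall strategy (compute \(f_1(H\widetilde{\mathbb{Z}}/\eta)\) and \(s_0(H\widetilde{\mathbb{Z}}/\eta)\) separately and plug them into the slice triangle \(f_1\to f_0\to s_0\), using Proposition \ref{trivial}, \cite[Lemma 20]{B} and Proposition \ref{s0}) is the same as the paper's. The one genuine difference is the choice of input triangle: the paper applies \(f_1\circ(-/\eta)\) to the ``hyperbolic'' triangle \(H_\mu\mathbb{Z}\xrightarrow{h}H\widetilde{\mathbb{Z}}\to H_W\mathbb{Z}\to H_\mu\mathbb{Z}[1]\), so that the third term dies outright (\(f_1(H_W\mathbb{Z}/\eta)=0\)) and \(f_1(H\widetilde{\mathbb{Z}}/\eta)\cong f_1(H_\mu\mathbb{Z}/\eta)=\mathbb{P}^1\wedge H_\mu\mathbb{Z}\) with no connecting map to analyze; you instead use the Mayer--Vietoris triangle \(H\widetilde{\mathbb{Z}}\to H_\mu\mathbb{Z}\oplus H_W\mathbb{Z}\to H_\mu\mathbb{Z}/2\) of the Cartesian square, which forces you to identify the residual map \(g\) as \(\mathbb{P}^1\wedge\pi\) (legitimate, by naturality of the identification \(f_1(E/\eta)\cong E\wedge\mathbb{P}^1\) when \(f_1E=0\)) and then take its fibre. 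Both routes work; the paper's choice of triangle simply eliminates the bookkeeping you correctly flag as the delicate step, while your version has the small virtue of making explicit that \(H\widetilde{\mathbb{Z}}/\eta\) is effective (so \(f_0\) acts as the identity), a point the paper leaves implicit. Your fallback of quoting \(s_0H\widetilde{\mathbb{Z}}=H_\mu\mathbb{Z}\oplus H_\mu\mathbb{Z}/2[2]\) directly from \cite[Theorem 17]{B} is exactly what the paper does via \cite[Proposition 23]{B}.
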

\begin{proof}
We have a distinguished triangle
\[H_{\mu}\mathbb{Z}\xrightarrow{h}H\widetilde{\mathbb{Z}}\longrightarrow H_W\mathbb{Z}\longrightarrow H_{\mu}\mathbb{Z}[1].\]
Hence the first statement follows from applying \(f_1\circ(-/\eta)\) on this triangle, \cite[Lemma 20]{B} and Proposition \ref{trivial}. The second statement follows from distinguished triangle
\[f_1(H\widetilde{\mathbb{Z}}/\eta)\longrightarrow f_0(H\widetilde{\mathbb{Z}}/\eta)\longrightarrow s_0(H\widetilde{\mathbb{Z}}/\eta)\longrightarrow f_1(H\widetilde{\mathbb{Z}}/\eta)[1],\]
\cite[Proposition 23]{B} and Proposition \ref{s0},
\end{proof}
\begin{remark}\label{non-split}
The triangle (\ref{a}) above does not split since by applying \(\pi_2(-)_0\) we obtain a non-split exact sequence
\[0\longrightarrow\mathbb{Z}/2\mathbb{Z}\longrightarrow{K}_1^M\longrightarrow2{K}_1^M\longrightarrow0.\]
\end{remark}
\begin{definition}
Suppose \(X\in Sm/k\), \(n\in\mathbb{N}\). We define
\[\eta_K^n(X,R)=Hom_{DM_K^{eff}(pt,R)}(R(X),R/{\eta}(n)[2n]).\]
If \(R=\mathbb{Z}\) and \(K_*=K_*^{MW}\), we have
\[\eta_K^n(X):=\eta_K^n(X,R)=Hom_{SH(pt)}(\Sigma^{\infty}X_+, H\widetilde{\mathbb{Z}}/\eta(n)[2n]).\]
\end{definition}
\begin{proposition}\label{triangles}
The functorial distinguished triangle \(f_1\longrightarrow f_0\longrightarrow s_0\longrightarrow f_1[1]\) induces commutative diagrams of distinguished triangles (rows and columns)
\[
	\xymatrix
	{
		H_{\mu}\mathbb{Z}\wedge\mathbb{P}^1\ar[r]\ar@{=}[d]	&H_{\mu}\mathbb{Z}/\eta\ar[r]\ar[d]_{h}		&H_{\mu}\mathbb{Z}\ar[r]\ar[d]_{a=2i_1}													&\\
		H_{\mu}\mathbb{Z}\wedge\mathbb{P}^1\ar[r]\ar[d]			&H\widetilde{\mathbb{Z}}/\eta\ar[r]\ar[d]	&H_{\mu}\mathbb{Z}\oplus H_{\mu}\mathbb{Z}/2[2]\ar[r]\ar[d]_{b=\pi\oplus id}	&\\
		0\ar[r]\ar[d]																&H_W\mathbb{Z}/\eta\ar[r]^-{\cong}\ar[d]	&H_{\mu}\mathbb{Z}/2\oplus H_{\mu}\mathbb{Z}/2[2]\ar[r]\ar[d]						&\\
																						&																&																														&
	}
\]
and
\[
	\xymatrix
	{
		H_{\mu}\mathbb{Z}\wedge\mathbb{P}^1\ar[r]\ar[d]_{d=2}	&H\widetilde{\mathbb{Z}}/\eta\ar[r]\ar[d]_p	&H_{\mu}\mathbb{Z}\oplus H_{\mu}\mathbb{Z}/2[2]\ar[r]\ar[d]_{e=p_1}\ar[r]	&\\
		H_{\mu}\mathbb{Z}\wedge\mathbb{P}^1\ar[r]\ar[d]		&H_{\mu}\mathbb{Z}/\eta\ar[r]\ar[d]			&H_{\mu}\mathbb{Z}\ar[r]\ar[d]_{0}																			&\\
		H_{\mu}\mathbb{Z}/2\wedge\mathbb{P}^1\ar[r]\ar[d]	&C(\tau[1])\ar[r]\ar[d]								&H_{\mu}\mathbb{Z}/2[3]\ar[r]\ar[d]															&\\
																					&																&																														&
	}
\]
where \(p\circ h=2\cdot id\), \(\tau:H_{\mu}\mathbb{Z}/2[1]\longrightarrow H_{\mu}\mathbb{Z}/2\wedge\mathbb{G}_m\) is the unique nonzero map (see \cite[5.1]{HKO}) and \(\pi\) is the modulo \(2\) map.
\end{proposition}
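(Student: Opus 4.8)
The two $3\times 3$ diagrams both arise by the same mechanism: one applies the \emph{functorial} slice triangle $f_1\to f_0\to s_0\to f_1[1]$ --- a string of exact functors and exact natural transformations, with $f_0=\mathrm{Id}$ on effective spectra --- to a single distinguished triangle. Since each $f_i$ sends distinguished triangles to distinguished triangles and the three natural transformations make all squares commute, the resulting grid automatically has distinguished rows (the slice triangles of the three objects) and distinguished columns ($f_1$, $f_0$, $s_0$ of the base triangle) and it commutes; the only work left is to identify the nine objects and the maps labelled $a,b,d,e$ together with the corner connecting maps.

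\textbf{The two base triangles.} Applying $f_0$ to the Cartesian square $K^{MW}_*=K^M_*\times_{K^M_*/2}K^W_*$ gives a homotopy Cartesian square $H\widetilde{\mathbb{Z}}=H_\mu\mathbb{Z}\times_{H_\mu\mathbb{Z}/2}H_W\mathbb{Z}$ with bottom map the reduction $H_\mu\mathbb{Z}\to H_\mu\mathbb{Z}/2$, so the homotopy fibre of $H\widetilde{\mathbb{Z}}\to H_W\mathbb{Z}$ coincides with that of $H_\mu\mathbb{Z}\xrightarrow{\cdot 2}H_\mu\mathbb{Z}$. This is the triangle $H_\mu\mathbb{Z}\xrightarrow{h}H\widetilde{\mathbb{Z}}\to H_W\mathbb{Z}\to H_\mu\mathbb{Z}[1]$ already used in the proof of Proposition~\ref{slice}, and the same comparison of fibres yields $p_0\circ h=2\cdot\mathrm{Id}$ for $p_0\colon H\widetilde{\mathbb{Z}}\to H_\mu\mathbb{Z}$ the left vertical map. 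Smashing with $\mathbbm{1}/\eta$ (exact) gives the first base triangle $H_\mu\mathbb{Z}/\eta\xrightarrow{h}H\widetilde{\mathbb{Z}}/\eta\to H_W\mathbb{Z}/\eta\to[1]$ with $p\circ h=2\cdot\mathrm{Id}$, $p:=p_0/\eta$; the cofibre sequence of $p$, written $H\widetilde{\mathbb{Z}}/\eta\xrightarrow{p}H_\mu\mathbb{Z}/\eta\to C(\tau[1])\to[1]$, is the second. The functorial slice triangle applied to the first (resp. second) produces the first (resp. second) diagram. The objects are then all known: $f_1(H_\mu\mathbb{Z}/\eta)=f_1(H\widetilde{\mathbb{Z}}/\eta)=H_\mu\mathbb{Z}\wedge\mathbb{P}^1$ (Propositions~\ref{slice},~\ref{trivial}), $f_1(H_W\mathbb{Z}/\eta)=0$ (\cite[Lemma 20]{B}); $s_0(E/\eta)=s_0(E)$ (Proposition~\ref{s0}) with $s_0(H_\mu\mathbb{Z})=H_\mu\mathbb{Z}$, $s_0(H\widetilde{\mathbb{Z}})=H_\mu\mathbb{Z}\oplus H_\mu\mathbb{Z}/2[2]$, $s_0(H_W\mathbb{Z})=H_\mu\mathbb{Z}/2\oplus H_\mu\mathbb{Z}/2[2]$ (\cite[Proposition 23, Lemma 20]{B}); $f_0=\mathrm{Id}$ throughout; and $f_1,s_0$ of the cofibre sequence of $p$ force $f_1(C(\tau[1]))=H_\mu\mathbb{Z}/2\wedge\mathbb{P}^1$ and $s_0(C(\tau[1]))=H_\mu\mathbb{Z}/2[3]$ once $d,e$ are computed.

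\textbf{The maps.} The isomorphism $f_1(H\widetilde{\mathbb{Z}}/\eta)\cong H_\mu\mathbb{Z}\wedge\mathbb{P}^1$ is, by its construction in Proposition~\ref{slice}, the one for which $f_1(h)=\mathrm{Id}$ (the ``$=$'' in the first column of the first diagram); hence $d:=f_1(p)$ satisfies $d=d\circ f_1(h)=f_1(p\circ h)=f_1(2)=2$. Writing $a:=s_0(h)=(a_1,a_2)$ into $H_\mu\mathbb{Z}\oplus H_\mu\mathbb{Z}/2[2]$, a short computation with the motivic Steenrod algebra gives $[H_\mu\mathbb{Z},H_\mu\mathbb{Z}/2(0)[2]]=0$, so $a_2=0$, while $\pi_0(-)_0$ (under which $h$ becomes multiplication by $2$, since $p_0$ induces the rank $GW(k)\to\mathbb{Z}$ and $p_0\circ h=2$) gives $a_1=2$; thus $a=2i_1$. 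Then $b$ is $s_0$ of $H\widetilde{\mathbb{Z}}/\eta\to H_W\mathbb{Z}/\eta$, i.e. the cofibre of $2i_1$, namely $\pi\oplus\mathrm{Id}$ onto $H_\mu\mathbb{Z}/2\oplus H_\mu\mathbb{Z}/2[2]$. Similarly $e:=s_0(p)=(e_1,e_2)$ has $e_2\in[H_\mu\mathbb{Z}/2(0)[2],H_\mu\mathbb{Z}]=0$ and $2e_1=e\circ a=s_0(p\circ h)=2$, so $e=p_1$; its cofibre gives the third column of the second diagram, the map $H_\mu\mathbb{Z}\to H_\mu\mathbb{Z}/2[3]$ vanishing as $p_1$ is a split epimorphism. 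Finally, the connecting map $\partial$ of the slice triangle of $C(\tau[1])$ lies in $[H_\mu\mathbb{Z}/2,H_\mu\mathbb{Z}/2(1)]\cong\mathbb{Z}/2$ with generator $\tau$ (\cite[5.1]{HKO}); it is the nonzero element --- equivalently $C(\tau[1])$ is the cone of $\tau[1]$ --- because $\pi_2(C(\tau[1]))_0=\ker\big(\pi_1(p)_0\colon 2K^M_1\hookrightarrow K^M_1\big)=0$, the injectivity of $\pi_1(p)_0$ coming from $f_1(p)=2$, Proposition~\ref{trivial} and the computation of $\pi_1(-)_0$ of the first row of the first diagram in Remark~\ref{non-split} (a splitting of $\partial$ would instead give $\pi_2(C(\tau[1]))_0=K^M_1/2\neq 0$). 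Commutativity of every square is the naturality of $f_1\to f_0\to s_0\to f_1[1]$, all chosen identifications being compatible with $h$ and $p$ by construction.

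\textbf{Main obstacle.} The $3\times 3$ formalism is routine; the real content is pinning the maps down \emph{on the nose} rather than merely up to the ambiguity inherent in a distinguished triangle --- in particular identifying the bottom slice connecting map with $\tau$. This forces one through the low-degree Hom-groups between $H_\mu\mathbb{Z}$ and $H_\mu\mathbb{Z}/2$ and, crucially, through the non-triviality of the extension of Remark~\ref{non-split}, which is what prevents $C(\tau[1])$ from splitting into slices.
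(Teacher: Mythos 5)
Your proof is correct, and the formal $3\times 3$ mechanism (rows $=$ the functorial slice triangles of the three terms, columns $= f_1$, $f_0=\mathrm{Id}$, $s_0$ applied to a base triangle) is exactly how the paper organizes the argument; the two base triangles you extract from the homotopy Cartesian square are also the paper's. Where you genuinely diverge is in how the maps and the bottom-right corner are pinned down. For $a_2=0$ and $e_2=0$ the paper never touches the motivic Steenrod algebra: the second component $v$ of the splitting of $s_0(H\widetilde{\mathbb{Z}})$ is \emph{defined} as (minus) the composite $s_0(H\widetilde{\mathbb{Z}})\to s_0(H_W\mathbb{Z})\to H_{\mu}\mathbb{Z}/2[2]$, so $a_2=v\circ s_0(h)$ vanishes simply because $h$ followed by $H\widetilde{\mathbb{Z}}\to H_W\mathbb{Z}$ is zero, and $e=p_1$ is immediate because the first component $u$ \emph{is} $s_0$ of the projection; your vanishing of $[H_{\mu}\mathbb{Z},H_{\mu}\mathbb{Z}/2[2]]$ and $[H_{\mu}\mathbb{Z}/2[2],H_{\mu}\mathbb{Z}]$ in weight $0$ is true but a heavier input. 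The substantive difference is the identification of $C(p)$ with $C(\tau[1])$: the paper notes $C(p)=C(q)$ for $q\colon H_W\mathbb{Z}/\eta\to (H_{\mu}\mathbb{Z}/2)/\eta$ (from the Cartesian square of homotopy modules) and then reads $C(q)=C(\tau[1])$ off the auxiliary diagram (\ref{eq}) built from the effective homotopy $t$-structure, importing $q_1=\tau[1]$ from the proof of \cite[Lemma 20]{B}; you instead compute the slice triangle of $C(p)$ directly, locate its connecting map in $[H_{\mu}\mathbb{Z}/2,H_{\mu}\mathbb{Z}/2(1)]=\mathbb{Z}/2\cdot\tau$ via \cite[5.1]{HKO}, and exclude the zero map by the $\pi_2(-)_0$ computation. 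This buys independence from Bachmann's identification $q_1=\tau[1]$, at the price of the Steenrod-algebra computation and the non-splitness argument of Remark \ref{non-split}. One small slip: $\pi_2(H_{\mu}\mathbb{Z}/2\wedge\mathbb{P}^1)_0=\pi_1(H_{\mu}\mathbb{Z}/2)_1=\mathbb{Z}/2$, not $K_1^M/2$ (the latter is $\pi_1(H_{\mu}\mathbb{Z}/2\wedge\mathbb{P}^1)_0$); since either group is nonzero, the contradiction with $\pi_2(C(p))_0=0$ stands and the conclusion is unaffected.
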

\begin{proof}
The middle column of the first diagram is the distinguished triangle induced by hyperbolic map. The map \(a\) is equal to the composite (see \cite[Proposition 23]{B})
\[\xymatrix{H_{\mu}\mathbb{Z}\ar[r]^-{\cong}&s_0(H_{\mu}\mathbb{Z})\ar[r]^{s_0(h)}&s_0(H\widetilde{\mathbb{Z}})\ar[r]^-{(u,v)}_-{\cong}&H_{\mu}\mathbb{Z}\oplus H_{\mu}\mathbb{Z}/2[2]}\]
where \(u\) is just the map
\[s_0(H\widetilde{\mathbb{Z}})\longrightarrow s_0(H_{\mu}\mathbb{Z})\cong H_{\mu}\mathbb{Z}\]
and \(v\) is (the minus of) the composite (see \cite[Lemma 20]{B})
\[s_0(H\widetilde{\mathbb{Z}})\longrightarrow s_0(H_W\mathbb{Z})\longrightarrow H_{\mu}\mathbb{Z}/2[2].\]
Hence it is clear that \(a=2i_1\) and \(e=p_1\). Now the distinguished triangles of \(_{\leq_e0},_{\geq_e1}\) of \(H_W\mathbb{Z}/\eta\) and \((H_{\mu}\mathbb{Z}/2)/\eta\) give us a commutative diagram of distinguished triangles (rows and columns)
\begin{equation}\label{eq}
	\xymatrix
	{
		H_{\mu}\mathbb{Z}/2[2]\ar[r]\ar[d]_{q_1=\tau[1]}	&H_W\mathbb{Z}/\eta\ar[r]\ar[d]_q			&H_{\mu}\mathbb{Z}/2\ar[r]\ar@{=}[d]	&\\
		H_{\mu}\mathbb{Z}/2\wedge\mathbb{P}^1\ar[r]\ar[d]	&(H_{\mu}\mathbb{Z}/2)/\eta\ar[r]\ar[d]	&H_{\mu}\mathbb{Z}/2\ar[r]\ar[d]		&\\
		C(\tau[1])\ar@{=}[r]\ar[d]									&C(\tau[1])\ar[r]\ar[d]						&0\ar[r]\ar[d]									&\\
																					&															&														&
	}.
\end{equation}
The \(q_1=\tau[1]\) follows from the proof of \cite[Lemma 20]{B}. Hence we see that \(b=\pi\oplus id\) and that the middle column of the second diagram since \(C(p)=C(q)\). Moreover, \(f_1(H_{\mu}\mathbb{Z}/2)=s_0(H_{\mu}\mathbb{Z}/2\wedge\mathbb{G}_m)=0\) hence \(f_1(C(\tau))=H_{\mu}\mathbb{Z}/2\wedge\mathbb{G}_m\) and \(s_0(C(\tau))=H_{\mu}\mathbb{Z}/2[2]\). Hence the second diagram is derived. Finally, the \(d=2\) since \(p\circ h=2\cdot id\).
\end{proof}
\begin{theorem}\label{eta}
Suppose \(X\in Sm/k\). There are commutative diagrams with exact rows and columns
\[
	\xymatrix
	{
		0\ar[r]	&CH^{n+1}(X)\ar[r]\ar@{=}[d]		&CH^{n+1}(X)\oplus CH^n(X)\ar[r]\ar[d]_{h}	&CH^n(X)\ar[r]\ar[d]_{2}&0\\
					&CH^{n+1}(X)\ar[r]^-u					&\eta_{MW}^n(X)\ar[r]^-v\ar[d]						&CH^n(X)\ar[d]_{\pi}\ar[r]&0\\
					&												&CH^n(X)/2\ar@{=}[r]\ar[d]							&CH^n(X)/2\ar[d]			&\\
					&												&0																&0									&
	},
\]
\[
	\xymatrix
	{
					&CH^{n+1}(X)\ar[r]^-u\ar[d]_{2}		&\eta_{MW}^n(X)\ar[r]^-v\ar[d]_{p}									&CH^n(X)\ar@{=}[d]\ar[r]&0\\
		0\ar[r]	&CH^{n+1}(X)\ar[r]\ar[d]_{\pi}	&CH^{n+1}(X)\oplus CH^n(X)\ar[r]\ar[d]_{a=(\pi,Sq^2\circ\pi)}	&CH^n(X)\ar[r]				&0\\
					&CH^{n+1}(X)/2\ar@{=}[r]\ar[d]	&CH^{n+1}(X)/2\ar[d]													&									&\\
					&0												&0																				&									&
	}.
\]
where \(p\circ h=2\cdot id\) and \(\pi\) is the modulo by \(2\). Consequently, if \(_2CH^{n+1}(X)=0\), we have a Cartesian square
\[
	\xymatrix
	{
		\eta^n_{MW}(X)\ar[r]\ar[d]		&CH^{n+1}(X)\ar[d]_{\pi}\\
		CH^n(X)\ar[r]^-{Sq^2\circ\pi}	&CH^{n+1}(X)/2
	}
\]
being natural with respect to \(X\in Sm/k\) satisfying \(_2CH^{n+1}(X)=0\).
\end{theorem}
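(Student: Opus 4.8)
The plan is to realise the group in question as a representable cohomology group, $\eta^n_{MW}(X) = [\Sigma^{\infty}X_+, H\widetilde{\mathbb{Z}}/\eta(n)[2n]]$ (as recalled just before the statement), and to apply the cohomological functor $[\Sigma^{\infty}X_+,-(n)[2n]]$ on $\mathcal{SH}(pt)$ to the two $3\times 3$ diagrams of distinguished triangles constructed in Proposition~\ref{triangles}. Every distinguished triangle there produces a long exact sequence of abelian groups, each $3\times 3$ diagram of triangles produces a $3\times 3$ diagram of long exact sequences, and, since all the objects, maps and triangles involved live in $\mathcal{SH}(pt)$ and are independent of $X$, the whole output is automatically natural in $X$. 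The two diagrams of the theorem should then drop out of those of Proposition~\ref{triangles} once the terms have been identified.

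First I would carry out that identification. Using $[\Sigma^{\infty}X_+, H_{\mu}\mathbb{Z}(q)[p]] = H^{p,q}(X,\mathbb{Z})$, the analogous statement with $\mathbb{Z}/2$ coefficients, and $\mathbb{P}^1 = \mathbbm{1}(1)[2]$, the relevant building blocks contribute in degree $(n)[2n]$ the groups $H^{2n,n}(X,\mathbb{Z}) = CH^n(X)$ (on $H_{\mu}\mathbb{Z}$), $H^{2n+2,n+1}(X,\mathbb{Z}) = CH^{n+1}(X)$ (on $H_{\mu}\mathbb{Z}\wedge\mathbb{P}^1$), $H^{2n,n}(X,\mathbb{Z}/2) = CH^n(X)/2$ (on $H_{\mu}\mathbb{Z}/2$) and $H^{2n+2,n+1}(X,\mathbb{Z}/2) = CH^{n+1}(X)/2$ (on $H_{\mu}\mathbb{Z}/2\wedge\mathbb{P}^1$). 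The crucial input is the vanishing $H^{p,q}(X,\mathbb{Z}) = H^{p,q}(X,\mathbb{Z}/2) = 0$ for $p>2q$ (motivic cohomology vanishes above the diagonal): this annihilates the contributions of the summands $H_{\mu}\mathbb{Z}/2[2]$ and $H_{\mu}\mathbb{Z}/2[3]$ in degree $(n)[2n]$, and collapses $[\Sigma^{\infty}X_+, C(\tau[1])(n)[2n]]$ to a single copy of $CH^{n+1}(X)/2$, accessed through the map $H_{\mu}\mathbb{Z}/2\wedge\mathbb{P}^1 \to C(\tau[1])$ of the bottom row of the second diagram. After this bookkeeping the long exact sequences degenerate to exactly the short shapes displayed in the theorem, the maps $a=2i_1$, $b=\pi\oplus\mathrm{id}$, $d=2$, $e=p_1$ and the relation $p\circ h = 2$ from Proposition~\ref{triangles} being carried along verbatim. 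The only terms that disappear from view are some higher motivic cohomology groups of $X$ in bidegrees $(2n\pm 1,n)$ and $(2n+1,n+1)$; they govern the kernels of $u$, $h$ and $p$, which are not asserted to vanish in the statement and will only matter in the last step.

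The crux, and the step I expect to be the main obstacle, is to identify the map $CH^n(X)\oplus CH^{n+1}(X) \to CH^{n+1}(X)/2$ induced by $H_{\mu}\mathbb{Z}/\eta \to C(\tau[1])$ (the middle column of the second diagram of Proposition~\ref{triangles}) with $(Sq^2\circ\pi)\oplus\pi$. Using the splitting $H_{\mu}\mathbb{Z}/\eta = H_{\mu}\mathbb{Z}\oplus H_{\mu}\mathbb{Z}\wedge\mathbb{P}^1$ of Proposition~\ref{trivial}, commutativity of that diagram shows the $H_{\mu}\mathbb{Z}\wedge\mathbb{P}^1$-component factors as $H_{\mu}\mathbb{Z}\wedge\mathbb{P}^1 \to H_{\mu}\mathbb{Z}/2\wedge\mathbb{P}^1 \to C(\tau[1])$, hence equals $\pi$; the delicate point is the $H_{\mu}\mathbb{Z}$-component, namely the composite $H_{\mu}\mathbb{Z}\xrightarrow{\pi}H_{\mu}\mathbb{Z}/2 \to C(\tau[1])$ in bidegree $(2n,n)$. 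I would pin it down by computing the group of bidegree-$(2,1)$ maps $H_{\mu}\mathbb{Z}/2 \to C(\tau[1])$ from the defining triangle $H_{\mu}\mathbb{Z}/2[2]\xrightarrow{\tau[1]}H_{\mu}\mathbb{Z}/2\wedge\mathbb{P}^1 \to C(\tau[1])\to H_{\mu}\mathbb{Z}/2[3]$ and invoking Voevodsky's computation of the motivic Steenrod algebra (in the form used in \cite{HKO} and \cite{B}): in that bidegree the only operation is $Sq^2$, and the secondary operation classifying $C(\tau)$ is $Sq^2$. This is where the genuine content lies; everything else is formal diagram chasing that reproduces both diagrams of the theorem, including the vertical map $(\pi, Sq^2\circ\pi)$.

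To extract the Cartesian square one specialises to $X$ with ${}_2CH^{n+1}(X) = 0$. Then multiplication by $2$ on $CH^{n+1}(X)$ is injective, so from $p\circ h = 2$ and $u = h|_{CH^{n+1}(X)}$ the map $u$ is injective and the middle row of the first diagram becomes a short exact sequence $0\to CH^{n+1}(X)\xrightarrow{u}\eta^n_{MW}(X)\xrightarrow{v}CH^n(X)\to 0$. The map $\eta^n_{MW}(X)\to CH^{n+1}(X)\times_{CH^{n+1}(X)/2}CH^n(X)$ whose components are the two components of $p$ (the second of which is $v$, by the commuting squares of the second diagram of Proposition~\ref{triangles}) lands in the fibre product, since $\operatorname{im}(p) = \ker\big((\pi, Sq^2\circ\pi)\big)$ by exactness of the middle column of the second diagram of the theorem, and is surjective onto it for the same reason. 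It then suffices to see that $p$ is injective. Its kernel is the image of $[\Sigma^{\infty}X_+, C(\tau[1])(n)[2n-1]] \cong H^{2n+1,n+1}(X,\mathbb{Z}/2)$ under the connecting map of that column; precomposed with the reduction $H^{2n+1,n+1}(X,\mathbb{Z})\to H^{2n+1,n+1}(X,\mathbb{Z}/2)$, this connecting map factors, by the commuting square relating the first column and the bottom row of the second diagram of Proposition~\ref{triangles}, through the composite of the two consecutive arrows $H_{\mu}\mathbb{Z}/\eta \to C(\tau[1])$ and $C(\tau[1]) \to H\widetilde{\mathbb{Z}}/\eta[1]$, hence is zero; and the coefficient sequence identifies the cokernel of the reduction with ${}_2H^{2n+2,n+1}(X,\mathbb{Z}) = {}_2CH^{n+1}(X) = 0$, so the reduction is surjective and the connecting map vanishes identically. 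Hence the square is Cartesian, and its naturality in $X$ is inherited from that of the entire construction.
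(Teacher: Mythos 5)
Your overall architecture coincides with the paper's: represent $\eta^n_{MW}(X)$ by $H\widetilde{\mathbb{Z}}/\eta$, apply $[\Sigma^{\infty}X_+,-(n)[2n]]$ to the two $3\times 3$ diagrams of Proposition~\ref{triangles}, kill the stray terms using the vanishing of motivic cohomology above the diagonal, and reduce everything to identifying the single composite $H_{\mu}\mathbb{Z}\to H_{\mu}\mathbb{Z}/\eta\to C(\tau[1])$ with $Sq^2\circ\pi$ on $CH^n(X)$. Your extraction of the Cartesian square from the injectivity of $p$ is also correct, if more roundabout than the paper's three-line chase ($p(x)=0\Rightarrow v(x)=0\Rightarrow x=u(y)\Rightarrow 2y=0\Rightarrow x=0$).

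The gap sits exactly at the step you flag as ``where the genuine content lies,'' and the argument you sketch there does not close it. First, the group of bidegree-$(2,1)$ stable operations on mod $2$ motivic cohomology is not $\mathbb{Z}/2\cdot Sq^2$: by Voevodsky's computation it is $\mathbb{Z}/2\{Sq^2\}\oplus H^{1,1}(k;\mathbb{Z}/2)\{Sq^1\}$, so over a field in which $-1$ is not a square one also has $\rho\, Sq^1$ and its relatives. (These vanish on $H^{2n,n}$, so they would not hurt the conclusion, but the claim as stated is false.) Second, and more seriously, computing the ambient group does not tell you which element the particular composite is; you must in particular rule out that the $Sq^2$-coefficient is $0$, and the assertion that ``the secondary operation classifying $C(\tau)$ is $Sq^2$'' is not a proof of anything --- $C(\tau[1])$ is by construction the cone of multiplication by $\tau\in H^{0,1}$, and nothing in that triangle alone produces $Sq^2$. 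The paper supplies the missing input by showing the composite is (reduction) $\circ$ (Bockstein of $0\to K^W_{*+1}\to K^W_*\to K^M_*/2\to 0$), identified on $X$-points via Proposition~\ref{W}, and then quoting \cite[Remark 10.5]{F} for the fact that this equals $Sq^2$ on $CH^n(X)/2$. Some such external computation (Fasel's, or Bachmann's identification of the attaching maps between the slices of $H_W\mathbb{Z}$) is unavoidable here; your proposal substitutes an assertion for it.
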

\begin{proof}
The diagrams directly follow from Proposition \ref{triangles}. Now we want to study the composite
\[H_{\mu}\mathbb{Z}\longrightarrow H_{\mu}\mathbb{Z}/\eta\longrightarrow C(\tau[1])\]
which is equal to
\[H_{\mu}\mathbb{Z}\longrightarrow H_{\mu}\mathbb{Z}/2\longrightarrow (H_{\mu}\mathbb{Z}/2)/\eta\longrightarrow C(\tau[1]).\]
By diagram (\ref{eq}), it suffices to study the composite
\[H_{\mu}\mathbb{Z}/2\longrightarrow H_W\mathbb{Z}/\eta\longrightarrow (H_{\mu}\mathbb{Z}/2)/\eta\longrightarrow H_{\mu}\mathbb{Z}/2\wedge\mathbb{P}^1.\]
It is equal to the composite by naturality
\[H_{\mu}\mathbb{Z}/2\longrightarrow H_W\mathbb{Z}/\eta\xrightarrow{\beta} H_W\mathbb{Z}\wedge\mathbb{P}^1\xrightarrow{\rho} H_{\mu}\mathbb{Z}/2\wedge\mathbb{P}^1\]
where \(\beta\) is the boundary map of \(H_W\mathbb{Z}\otimes\eta\). We have a commutative diagram of distinguished triangles
\[
	\xymatrix
	{
		H_W\mathbb{Z}\ar[r]\ar@{=}[d]	&H_W\mathbb{Z}/\eta\ar[r]^-{\beta}\ar[d]_a	&H_W\mathbb{Z}\wedge\mathbb{P}^1\ar[r]\ar[d]_b	&\\
		H_W\mathbb{Z}\ar[r]				&H_{\mu}\mathbb{Z}/2\ar[r]^-{\beta'}			&f_0(K_{*+1}^W)[1]\ar[r]									&
	}
\]
where the second row comes from applying \(f_0\) on the exact sequence
\[0\longrightarrow K_{*+1}^W\longrightarrow K_*^W\longrightarrow K_*^M/2\longrightarrow 0.\]
Define
\[X(\varphi)_n=[\Sigma^{\infty}X_+,\varphi(n)[2n]].\]
We see that \(X(a)_n\) and \(X(a[1])_n\) are isomorphisms, hence so does \(X(b)_n\) by Five lemma. By Proposition \ref{W}, \(X(\beta')_n\) is the Bockstein map (see \cite[2.3]{HW})
\[CH^n(X)/2\longrightarrow H^{n+1}(X,K^W_{n+1})\]
, hence so does \(X(\beta)_n\). Moreover the map \(X(\rho)_n\) is the reduction map (see \textit{loc. cit.})
\[H^{n+1}(X,K^W_{n+1})\longrightarrow CH^{n+1}(X)/2.\]
Hence we see that \(a=Sq^2\circ\pi\) on \(CH^n(X)\) by \cite[Remark 10.5]{F}. The composite
\[H_{\mu}\mathbb{Z}\wedge\mathbb{P}^1\longrightarrow H_{\mu}\mathbb{Z}/\eta\longrightarrow (H_{\mu}\mathbb{Z}/2)/\eta\longrightarrow C(\tau[1])\]
is equal to the composite
\[H_{\mu}\mathbb{Z}\wedge\mathbb{P}^1\longrightarrow H_{\mu}\mathbb{Z}/2\wedge\mathbb{P}^1\longrightarrow C(\tau[1])\]
so \(a=\pi\) on \(CH^{n+1}(X)\). Hence \(a=(\pi,Sq^2\circ\pi)\).

Now suppose \(_2CH^{n+1}(X)=0\). The statement we want is equivalent to \(p\) is injective. Suppose \(x\in\eta_{MW}^n(X)\), \(p(x)=0\) so \(v(x)=0\). Hence there is an \(y\in CH^{n+1}(X)\) such that \(u(y)=x\). So \(p(u(y))=2y=0\) hence \(y=0\) by \(_2CH^{n+1}(X)=0\). Hence we are done.
\end{proof}
\begin{coro}\label{etaq}
For any \(X\in Sm/k\), we have natural isomorphisms
\[CH^n(X)[\frac{1}{2}]\oplus CH^{n+1}(X)[\frac{1}{2}]\xrightarrow{\cong}\eta_{MW}^n(X)[\frac{1}{2}]\xrightarrow{\cong}\eta_{MW}^n(X,\mathbb{Z}[\frac{1}{2}]).\]
\end{coro}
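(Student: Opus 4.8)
The plan is to treat the two isomorphisms separately; neither needs a torsion hypothesis on $X$.

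The second isomorphism $\eta_{MW}^n(X)[\frac{1}{2}]\cong\eta_{MW}^n(X,\mathbb{Z}[\frac{1}{2}])$ I would get formally from the flat base-change property for coefficients established earlier, namely $Hom_{\widetilde{DM}^{eff}(pt,R')}(R'(X),\varphi^*C)=Hom_{\widetilde{DM}^{eff}(pt,R)}(R(X),C)\otimes_R R'$ for a flat ring map $\varphi:R\to R'$. Apply this to the (flat) map $\varphi:\mathbb{Z}\to\mathbb{Z}[\frac{1}{2}]$ and to $C=\mathbb{Z}/\eta(n)[2n]$. Since $\varphi^*$ is monoidal and triangulated and sends $\mathbb{Z}(X)$, $\mathbb{Z}(1)$ and the Hopf map $\eta$ to their $\mathbb{Z}[\frac{1}{2}]$-analogues, it carries $\mathbb{Z}/\eta(n)[2n]=\mathbb{Z}(n)[2n]\otimes C(\eta)$ to $\mathbb{Z}[\frac{1}{2}]/\eta(n)[2n]$; the base-change formula then reads $\eta_{MW}^n(X,\mathbb{Z}[\frac{1}{2}])=\eta_{MW}^n(X)\otimes_{\mathbb{Z}}\mathbb{Z}[\frac{1}{2}]=\eta_{MW}^n(X)[\frac{1}{2}]$, naturally in $X$.

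For the first isomorphism I would use only the maps
\[h\colon CH^{n+1}(X)\oplus CH^n(X)\longrightarrow\eta_{MW}^n(X),\qquad p\colon\eta_{MW}^n(X)\longrightarrow CH^{n+1}(X)\oplus CH^n(X)\]
from Theorem \ref{eta} (induced by the spectrum-level maps of Proposition \ref{triangles}), together with the relation $p\circ h=2\cdot\mathrm{id}$, which holds for every $X$. Tensoring the whole diagram of Theorem \ref{eta} with $\mathbb{Z}[\frac{1}{2}]$ is exact, so its exact rows and commuting squares survive; in particular $v=\mathrm{pr}_{CH^n}\circ p$, $p\circ u=(2,0)$, and the row $CH^{n+1}(X)[\frac{1}{2}]\xrightarrow{u}\eta_{MW}^n(X)[\frac{1}{2}]\xrightarrow{v}CH^n(X)[\frac{1}{2}]\to 0$ stays exact. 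From $p\circ(\frac{1}{2}h)=\mathrm{id}$ the map $\frac{1}{2}h$ is a split monomorphism with retraction $p$; to finish I would check $p$ is injective after inverting $2$: if $p(z)=0$ then $v(z)=\mathrm{pr}_{CH^n}(p(z))=0$, so $z=u(y)$ by exactness, and then $0=p(u(y))=(2y,0)$ forces $y=0$ once $2$ is invertible, whence $z=0$. Thus $p$ becomes an isomorphism with inverse $\frac{1}{2}h$, which (after reordering the two summands) is the asserted natural isomorphism $CH^n(X)[\frac{1}{2}]\oplus CH^{n+1}(X)[\frac{1}{2}]\xrightarrow{\cong}\eta_{MW}^n(X)[\frac{1}{2}]$.

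Naturality in $X$ is automatic, since $u,v,h,p$ are induced by fixed morphisms of motivic spectra and both inverting $2$ and the base-change isomorphism are natural. I do not anticipate a genuine obstacle here: everything needed is already contained in Theorem \ref{eta}, and the only point requiring attention is to extract the isomorphism after inverting $2$ directly from $p\circ h=2\cdot\mathrm{id}$ and the short exact row — rather than from the Cartesian square of Theorem \ref{eta}, whose derivation used $_2CH^{n+1}(X)=0$ — so that the corollary holds for all $X\in Sm/k$.
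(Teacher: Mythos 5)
Your proof is correct and follows essentially the route the paper intends: the corollary is stated without proof precisely because it drops out of the exact rows of Theorem \ref{eta} together with $p\circ h=2\cdot\mathrm{id}$ and the flat change-of-coefficients formula, which is exactly what you use. You are right to extract the splitting from $p\circ h=2\cdot\mathrm{id}$ and the exact row rather than from the Cartesian square (whose derivation needs $_2CH^{n+1}(X)=0$), which is the one point of care required for the statement to hold for all $X\in Sm/k$.
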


\section{The Projective Bundle Theorem}\label{The Projective Bundle Theorem}
Recall the Cartesian square of homotopy modules
\[
	\xymatrix
	{
		{K}_*^{MW}\ar[r]^a\ar[d]_c\ar[rd]^{\pi} 	&{K}_*^W\ar[d]_b\\
		{K}_*^M\ar[r]^d 										&{K}_*^M/2
	}
\]
where each arrow \(f\) in the square induces an adjoint pair \((f^*,f_*)\) between triangulated motivic categories by Proposition \ref{dmadj}.
\begin{proposition}\label{sep}
There is a distinguished triangle
\[\mathbb{Z}\longrightarrow a_*\mathbb{Z}\oplus c_*\mathbb{Z}\longrightarrow {\pi}_*\mathbb{Z}\longrightarrow \mathbb{Z}[1]\]
in \(\widetilde{DM}^{eff}(pt)\).
\end{proposition}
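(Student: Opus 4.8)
The plan is to descend to the level of homotopy modules, where the Cartesian square of Section \ref{The Projective Bundle Theorem} becomes a short exact sequence, and then transport that sequence into \(\widetilde{DM}^{eff}(pt)\) through the underlying categories of sheaves with transfers. First I would upgrade the Cartesian square to a short exact sequence of strictly \(\mathbb{A}^1\)-invariant Nisnevich sheaves: the identity \(K^{MW}_*=K^W_*\times_{K^M_*/2}K^M_*\) is the weight-graded form of the classical pullback presentation \(GW=W\times_{\mathbb{Z}/2}\mathbb{Z}\) (rank and reduction on one side, \(W\to W/I\) and the projection on the other), and since in each weight the reduction \(K^M_*\to K^M_*/2\) is already surjective on sheaves, the square is also coCartesian and yields
\[0\longrightarrow K^{MW}_*\xrightarrow{(a,c)}K^W_*\oplus K^M_*\xrightarrow{(b,-d)}K^M_*/2\longrightarrow 0\]
with \(\pi=ba=dc\). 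The point to verify carefully is that this is exact in the abelian category \(Sh_{K^{MW}}(pt)\) of sheaves with MW-transfers, not merely of Nisnevich sheaves: each of the four terms is an effective homotopy module and hence carries canonical MW-transfers, and \(a,b,c,d\) are morphisms of homotopy modules, hence MW-linear.

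Next I would identify the four terms, in weight \(0\), with the objects in the statement. The unit of \(\widetilde{DM}^{eff}(pt)\) is the sheaf \(K^{MW}_0\) with its MW-transfers, and likewise the units of \(DM_{K^W}^{eff}(pt)\), \(DM^{eff}(pt)\) and \(DM_{K^M/2}^{eff}(pt)\) are \(K^W_0\), \(K^M_0\) and \(K^M_0/2\); by Proposition \ref{dmadj} the functors \(a_*,c_*,\pi_*\) are, on the underlying sheaf categories, simply restriction of the transfer structure along \(a,c,\pi\), an exact operation that leaves the underlying Nisnevich sheaf (hence strict \(\mathbb{A}^1\)-invariance) untouched, so \(a_*\mathbb{Z}=K^W_0\), \(c_*\mathbb{Z}=K^M_0\) and \(\pi_*\mathbb{Z}=K^M_0/2\) with their MW-transfers, the structure maps being the weight-\(0\) parts of \(a,c\) and \(b,d\) (equivalently the units of the adjunctions \((a^*,a_*)\), \((c^*,c_*)\), using that \(a^*,c^*,\pi^*\) are monoidal). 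Thus the displayed sequence in weight \(0\) reads \(0\to\mathbb{Z}\to a_*\mathbb{Z}\oplus c_*\mathbb{Z}\to\pi_*\mathbb{Z}\to 0\) in \(Sh_{K^{MW}}(pt)\). A short exact sequence in this abelian category is a distinguished triangle in \(D_{K^{MW}}(pt)\), and as all three terms are strictly \(\mathbb{A}^1\)-invariant they are \(\mathbb{A}^1\)-local, so the triangle persists, with the same terms, after the triangulated localization \(D_{K^{MW}}(pt)\to\widetilde{DM}^{eff}(pt)\); this is the asserted triangle \(\mathbb{Z}\to a_*\mathbb{Z}\oplus c_*\mathbb{Z}\to\pi_*\mathbb{Z}\to\mathbb{Z}[1]\).

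The main obstacle is entirely in the first two paragraphs: confirming that the Cartesian square is genuinely bicartesian \emph{as sheaves with MW-transfers}, and that \(a_*,c_*,\pi_*\) (as functors of triangulated motivic categories) commute on the nose, with no derived-functor correction, with the passage to \(\widetilde{DM}^{eff}(pt)\) when applied to these fibrant, \(\mathbb{A}^1\)-local unit objects; the closing triangle-theoretic step is then formal. An alternative route would be to run the same argument one level up in \(\mathcal{SH}(pt)\), starting from the homotopy Cartesian square of \(H\widetilde{\mathbb{Z}}, H_W\mathbb{Z}, H_{\mu}\mathbb{Z}, H_{\mu}\mathbb{Z}/2\) contained in \cite[Theorem 17]{B} and applying \(\gamma^*\); but matching \(\gamma^*H\widetilde{\mathbb{Z}}\) with the unit of \(\widetilde{DM}^{eff}(pt)\) and \(\gamma^*H_W\mathbb{Z}\) with \(a_*\mathbb{Z}\) requires essentially the same identifications, so the sheaf-level argument seems cleanest.
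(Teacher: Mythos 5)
Your argument is correct and is essentially the paper's own proof: the paper disposes of this in one line ("By the Cartesian square above since for example \(\mathbb{Z}=K_0^{MW}\) and \(c_*\mathbb{Z}=K_0^M\)"), i.e.\ the Cartesian square of homotopy modules with \(K^M_*\twoheadrightarrow K^M_*/2\) gives a short exact sequence of strictly \(\mathbb{A}^1\)-invariant sheaves whose weight-zero parts are the unit objects pushed forward along \(a_*,c_*,\pi_*\), and this short exact sequence is the asserted triangle. Your write-up merely fills in the details (MW-transfer structure, exactness of the restriction functors, \(\mathbb{A}^1\)-locality) that the paper leaves implicit.
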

\begin{proof}
By the Cartesian square above since for example \(\mathbb{Z}=K_0^{MW}\) and \(c_*\mathbb{Z}=K_0^M\).
\end{proof}
\begin{proposition}\label{rhom}
We have
\[R\underline{Hom}(\mathbb{Z}(1)[1],\mathbb{Z})=0\]
in \(DM_K^{eff}(pt,\mathbb{Z})\) if \(K_*=K^M_*, K^M_*/2\).
\end{proposition}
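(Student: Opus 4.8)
The plan is to realize the Tate twist as a direct summand of the motive of $\mathbb{G}_m$ and to transport the question to weight-zero motivic cohomology. Recall the splitting $R(\mathbb{G}_m)\cong R\oplus R(1)[1]$ in $DM^{eff}_K(pt,R)$ induced by the unit point $1\colon pt\to\mathbb{G}_m$ and the structure map $p\colon\mathbb{G}_m\to pt$ (so that $p_*\circ 1_*=\mathrm{id}$). Applying the internal $R\underline{Hom}(-,R)$, which is additive in the first variable, yields
\[R\underline{Hom}(R(\mathbb{G}_m),R)\;\cong\;R\underline{Hom}(R,R)\oplus R\underline{Hom}(R(1)[1],R)\;=\;R\oplus R\underline{Hom}(R(1)[1],R),\]
and under this splitting the map $p^{\vee}\colon R=R\underline{Hom}(R(pt),R)\to R\underline{Hom}(R(\mathbb{G}_m),R)$ induced by $p$ is exactly the inclusion of the first summand. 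Hence it suffices to prove that $p^{\vee}$ is an isomorphism, for then the complementary summand $R\underline{Hom}(R(1)[1],R)$ vanishes.

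To see that $p^{\vee}$ is an isomorphism I would check that it becomes one after applying $[R(Y)[n],-]$ for every $Y\in Sm/k$ and $n\in\mathbb{Z}$, since these objects generate $DM^{eff}_K(pt,R)$ (Tate twists are not needed among the generators, as $R(1)$ is itself a retract of $R(\mathbb{G}_m)[-1]$; cf. \cite{CD1}). By Proposition \ref{derived}(1) and the defining adjunction of $R\underline{Hom}$, the map $[R(Y)[n],p^{\vee}]$ is identified with the pullback
\[[R(Y)[n],R]=H^{-n,0}(Y,R_0)\longrightarrow H^{-n,0}(Y\times\mathbb{G}_m,R_0)=[R(Y\times\mathbb{G}_m)[n],R]\]
along the projection $Y\times\mathbb{G}_m\to Y$, where $R_0$ denotes $\mathbb{Z}$ if $K_*=K^M_*$ and $\mathbb{Z}/2$ if $K_*=K^M_*/2$, and $H^{*,0}$ is weight-zero motivic cohomology. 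This is precisely where the hypothesis $K_*=K^M_*,K^M_*/2$ enters: the unit object $R$ of $DM^{eff}_K(pt,R)$ is then the constant Nisnevich sheaf $\underline{R_0}$, so $H^{m,0}(X,R_0)=H^m_{\mathrm{Nis}}(X,\underline{R_0})$ equals $R_0^{\pi_0(X)}$ for $m=0$ and vanishes for $m\neq 0$ (see e.g. \cite{MVW}), and the projection induces an isomorphism here because $\mathbb{G}_m$ is geometrically connected, whence $\pi_0(Y\times\mathbb{G}_m)=\pi_0(Y)$. Thus $[R(Y)[n],p^{\vee}]$ is an isomorphism for all $Y$ and $n$, so $p^{\vee}$ is an isomorphism and the proposition follows.

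I do not expect any serious obstacle: everything after the splitting of $R(\mathbb{G}_m)$ is formal, the only care needed being to track the idempotents $R(pt)\to R(\mathbb{G}_m)\to R(pt)$ through the contravariant functor $R\underline{Hom}(-,R)$ and through the identification $[R(Y)[n]\otimes R(\mathbb{G}_m),R]=[R(Y\times\mathbb{G}_m)[n],R]$. Note that the statement genuinely fails for $K_*=K^{MW}_*$ and $K_*=K^W_*$, where the unit is not a constant sheaf and $R\underline{Hom}(R(1)[1],R)$ is nonzero; this is exactly the source of the nontriviality of the projective bundle theorem discussed in the introduction.
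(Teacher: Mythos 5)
Your proposal is correct and follows essentially the same route as the paper: both reduce the vanishing of $R\underline{Hom}(\mathbb{Z}(1)[1],\mathbb{Z})$ to the statement that weight-zero motivic cohomology (with $\mathbb{Z}$ or $\mathbb{Z}/2$ coefficients, i.e.\ Nisnevich cohomology of the constant unit sheaf, as in \cite[Corollary 4.2]{MVW}) is unchanged by the projection $Y\times\mathbb{G}_m\to Y$, using the splitting of $R(\mathbb{G}_m)$ off the rational point. The only cosmetic difference is that the paper handles the $K^M_*/2$ case via universal coefficients rather than by identifying the unit directly with the constant sheaf $\underline{\mathbb{Z}/2}$.
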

\begin{proof}
For any \(X\in Sm/k\) and \(i\in\mathbb{Z}\), we have
\[[\mathbb{Z}(X)(1)[1],\mathbb{Z}[i]]_M=0\]
if \(i\neq 0\) by \cite[Corollary 4.2]{MVW}. If \(i=0\), since \(\mathbb{G}_m\) has a rational point, the equality above still holds. Using the universal coefficient theorem, we see that
\[[\mathbb{Z}(X)(1)[1],\mathbb{Z}[i]]_{M/2}=0.\]
The left hand side is just
\[[\mathbb{Z}(X),R\underline{Hom}(\mathbb{Z}(1)[1],\mathbb{Z})[i]]_K.\]
So we are done.
\end{proof}
\begin{proposition}\label{negative}
Suppose \(q\in\mathbb{N}\) and \(p\in\mathbb{Z}\). We have
\[[\mathbb{Z}(q)[p],\mathbb{Z}]_{MW}=\begin{cases} 0 & \text{if $p\neq q$} \\ W(k) & \text{if $p=q>0$}\\GW(k)&\text{if \(p=q=0\)} ,\end{cases}\]
\[[\mathbb{Z}(q)[p],\mathbb{Z}]_W=\begin{cases} 0 & \text{if $p\neq q$} \\ W(k) & \text{if $p=q$} ,\end{cases}\]
\[[\mathbb{Z}(q)[p],\mathbb{Z}]_M=\begin{cases} 0 & \text{if $p\neq 0$ or $q \neq 0$} \\\mathbb{Z}&\text{if \(p=q=0\)}, \end{cases}\]
\[[\mathbb{Z}(q)[p],\mathbb{Z}]_{M/2}=\begin{cases} 0 & \text{if $p\neq 0$ or $q \neq 0$} \\\mathbb{Z}/2\mathbb{Z}&\text{if \(p=q=0\)}. \end{cases}\]
\end{proposition}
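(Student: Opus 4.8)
The plan is to split according to whether $q=0$ or $q\geq 1$, and in the positive-weight case to bootstrap from the Chow-theoretic theories $K_*^M,K_*^M/2$ (where Proposition \ref{rhom} applies) first to $K_*^W$ and then to $K_*^{MW}$ via Proposition \ref{sep}. For $q=0$ one has $\mathbb{Z}(q)[p]=\mathbb{Z}[p]$, so $[\mathbb{Z}[p],\mathbb{Z}]_K=[\mathbb{Z},\mathbb{Z}[-p]]_K$. The unit object $\mathbb{Z}=a(c(pt))$ is, as a sheaf with $K$-transfers, the strictly $\mathbb{A}^1$-invariant sheaf $X\mapsto KCH^0(X)$, namely the unramified sheaf $K_0$ (that is $\underline{GW}$, $\underline{W}$, $\mathbb{Z}$, $\mathbb{Z}/2$ for $K_*=K_*^{MW},K_*^W,K_*^M,K_*^M/2$) placed in cohomological degree $0$; being a homotopy module it is already $\mathbb{A}^1$-local, so $[\mathbb{Z},\mathbb{Z}[-p]]_K$ computes the Nisnevich cohomology $H^{-p}(pt,K_0)$. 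Since $pt=\mathrm{Spec}(k)$ has trivial Nisnevich site, this group is $K_0(k)$ (i.e. $GW(k)$, $W(k)$, $\mathbb{Z}$, $\mathbb{Z}/2$ respectively) for $p=0$ and vanishes for $p\neq 0$, which is exactly the asserted table when $q=0$.

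Assume now $q\geq 1$. For $K_*=K_*^M$ and $K_*^M/2$ I would write $\mathbb{Z}(q)[p]=\mathbb{Z}(1)[1]\otimes\mathbb{Z}(q-1)[p-1]$ and combine the tensor--hom adjunction with Proposition \ref{rhom} to get
\[[\mathbb{Z}(q)[p],\mathbb{Z}]_K=[\mathbb{Z}(q-1)[p-1],R\underline{Hom}(\mathbb{Z}(1)[1],\mathbb{Z})]_K=0.\]
For $K_*=K_*^W$ the essential input is that the Hopf map $\eta\colon\mathbb{Z}(1)[1]\to\mathbb{Z}$ is an isomorphism in $DM_W^{eff}(pt)$ — equivalently, the Tate object satisfies $\mathbb{Z}(1)\cong\mathbb{Z}[-1]$ there — because the homotopy module $K_*^W$ is $\eta$-periodic. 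Granting this, $\mathbb{Z}(q)[p]\cong\mathbb{Z}[p-q]$, so by the $q=0$ computation $[\mathbb{Z}(q)[p],\mathbb{Z}]_W=[\mathbb{Z},\mathbb{Z}[q-p]]_W$ equals $W(k)$ for $p=q$ and $0$ otherwise.

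Finally, for $K_*=K_*^{MW}$ with $q\geq 1$, I would apply the cohomological functor $[\mathbb{Z}(q)[p],-]_{MW}$ to the distinguished triangle $\mathbb{Z}\to a_*\mathbb{Z}\oplus c_*\mathbb{Z}\to\pi_*\mathbb{Z}\to\mathbb{Z}[1]$ of Proposition \ref{sep}. By Proposition \ref{dmadj} the functors $a^*,c^*,\pi^*$ are monoidal with $f^*\mathbb{Z}(X)=\mathbb{Z}(X)$, hence fix $\mathbb{Z}(q)[p]$, and adjunction rewrites the long exact sequence entirely in terms of shifts of $[\mathbb{Z}(q)[p],\mathbb{Z}]_W$, $[\mathbb{Z}(q)[p],\mathbb{Z}]_M$ and $[\mathbb{Z}(q)[p],\mathbb{Z}]_{M/2}$. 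Since $q\geq 1$, the last two vanish in all degrees by the previous paragraph, so the sequence collapses to an isomorphism $[\mathbb{Z}(q)[p],\mathbb{Z}]_{MW}\cong[\mathbb{Z}(q)[p],\mathbb{Z}]_W$, equal to $W(k)$ for $p=q$ and $0$ otherwise. Combined with the case $q=0$ — where, in contrast, the surviving $K_*^M$-contribution in the same triangle is what upgrades $W(k)$ to $GW(k)$ in degree $0$ — this yields the full statement.

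The only step I expect to be genuinely non-formal is the $\eta$-periodicity of $DM_W^{eff}(pt)$, that is the identification $\mathbb{Z}(1)\cong\mathbb{Z}[-1]$ in that category (equivalently, invertibility of the Hopf map there); everything else is assembled from Propositions \ref{rhom}, \ref{sep}, \ref{dmadj} and the vanishing of higher Nisnevich cohomology of a field. A secondary point deserving care is verifying that the change-of-category functors $a^*,c^*,\pi^*$ really carry $\mathbb{Z}(q)[p]$ to $\mathbb{Z}(q)[p]$, for which one uses their monoidality together with $f^*\mathbb{Z}(X)=\mathbb{Z}(X)$.
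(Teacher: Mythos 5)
The overall architecture you propose (handle $K^M_*$ and $K^M_*/2$ by Proposition \ref{rhom}, treat $q=0$ by the Nisnevich cohomology of the point, and transfer information across the square of Proposition \ref{sep}) is reasonable, and your reduction of the $MW$-case to the $W$-case for $q\geq 1$ via the triangle $\mathbb{Z}\to a_*\mathbb{Z}\oplus c_*\mathbb{Z}\to\pi_*\mathbb{Z}$ is a legitimate (reversed) variant of what the paper does. But the step you yourself single out as the crux --- that $\eta\colon\mathbb{Z}(1)[1]\to\mathbb{Z}$ is invertible in $DM_W^{eff}(pt)$ because the homotopy module $K_*^W$ is $\eta$-periodic --- is false, and with it the computation of $[\mathbb{Z}(q)[p],\mathbb{Z}]_W$ collapses. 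Witt $K$-theory is not $\eta$-periodic: $K_n^W\cong I^n$ for $n\geq 0$ and multiplication by $\eta$ is the inclusion $I^{n+1}\hookrightarrow I^n$, whose cokernel is $K_n^M/2\neq 0$; you are conflating $K_*^W=K_*^{MW}/h$ with the genuinely $\eta$-periodic homotopy module $K_*^{MW}[\eta^{-1}]$, with which it agrees only in degrees $\leq 0$. Concretely, $[\mathbb{Z},\mathbb{Z}(1)[1]]_W=K_1^W(k)=I(k)$ while $[\mathbb{Z},\mathbb{Z}]_W=W(k)$, so $\mathbb{Z}(1)[1]\not\cong\mathbb{Z}$ there; and the paper itself relies on $H_W\mathbb{Z}/\eta\cong H_{\mu}\mathbb{Z}/2\oplus H_{\mu}\mathbb{Z}/2[2]\neq 0$ (Proposition \ref{triangles}, quoting \cite[Lemma 20]{B}), which would have to vanish were $\eta$ invertible. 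That your final answer for $[\mathbb{Z}(q)[p],\mathbb{Z}]_W$ agrees with the statement is an accident of the target being the unit, i.e.\ of weight $-q\leq 0$ where $K^W_{-q}=W$; the off-diagonal vanishing $p\neq q$ is precisely the part your argument does not establish.

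The paper's order of deduction is the opposite of yours and avoids this trap: it identifies $[\mathbb{Z}(q)[p],\mathbb{Z}]_K$ with $\pi_{p-q}(f_0K_*)_{-q}(pt)$ via Proposition \ref{spec} (which deliberately omits $K^W_*$ from its list), settles $K^M_*$ and $K^M_*/2$ by Proposition \ref{rhom}, settles $K^{MW}_*$ by Bachmann's structure theorem for $H\widetilde{\mathbb{Z}}$ (\cite[Theorem 17]{B}), and only then deduces the $K^W_*$ case from the other three using Proposition \ref{sep}. To repair your proof you would need either to run the triangle of Proposition \ref{sep} in that direction --- which requires an independent input for the $MW$-groups in positive weights, e.g.\ \cite[Theorem 17]{B} --- or to supply an actual computation of $\gamma_*\mathbbm{1}$ for Witt motives; the $\eta$-periodicity shortcut is not available.
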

\begin{proof}
We have
\[[\mathbb{Z}(q)[p],\mathbb{Z}]_K=\pi_{p-q}(f_0(K_*))_{-q}(pt)\]
by Proposition \ref{spec} if \(K_*=K^{MW}_*, K^M_*, K^M_*/2\). If \(K_*=K^M_*, K^M_*/2\), the statements follow from Proposition \ref{rhom}. For the case \(K_*=K^{MW}_*\), one uses \cite[Theorem 17]{B}. Finally, the case \(K_*=K^W_*\) follows from the other three cases and Proposition \ref{sep}.
\end{proof}
\begin{proposition}\label{ortho}
\begin{enumerate}
\item Suppose \(i,j,a,b\in\mathbb{N}\). We have
\[[\mathbb{Z}/{\eta}^{\otimes a}(i)[2i],\mathbb{Z}/{\eta}^{\otimes b}(j)[2j+1]]_{MW}=\begin{cases}W(k)&i=j+1,a=b=0\\0&\textrm{else}\end{cases}.\]
\item We have
\[\mathbb{Z}/{\eta}^{\otimes 2}=\mathbb{Z}/{\eta}\oplus \mathbb{Z}/{\eta}(1)[2]\]
in \(\widetilde{DM}^{eff}(pt)\).
\end{enumerate}
\end{proposition}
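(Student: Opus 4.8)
The common engine is the defining triangle
\[\mathbb{Z}(1)[1]\xrightarrow{\;\eta\;}\mathbb{Z}\longrightarrow\mathbb{Z}/\eta\longrightarrow\mathbb{Z}(1)[2]\]
together with the cancellation theorem (Proposition~\ref{cancellation}) and the Tate-motive computations of Proposition~\ref{negative}. Tensoring this triangle with $\mathbb{Z}/\eta^{\otimes(a-1)}(i)[2i]$, and similarly with $\mathbb{Z}/\eta^{\otimes(b-1)}(j)[2j+1]$, exhibits $\mathbb{Z}/\eta^{\otimes a}(i)[2i]$ and $\mathbb{Z}/\eta^{\otimes b}(j)[2j+1]$ as iterated cones on pure Tate motives $\mathbb{Z}(\ell)[2\ell]$; applying $\mathrm{Hom}$ then reduces the computation, through finitely many long exact sequences, to the groups $[\mathbb{Z}(p)[s],\mathbb{Z}(q)[t]]_{MW}$. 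By Proposition~\ref{cancellation}, when $p\geq q$ such a group equals $[\mathbb{Z}(p-q)[s-t],\mathbb{Z}]_{MW}$, which by Proposition~\ref{negative} is $W(k)$ if $s-t=p-q>0$, is $GW(k)$ if $p=q$ and $s=t$, and is $0$ otherwise; and when $p<q$ it equals $[\mathbb{Z},\mathbb{Z}(q-p)[t-s]]_{MW}$, which vanishes as soon as $t-s>q-p$ because $H\widetilde{\mathbb{Z}}=f_0K^{MW}_*$ is connective (it lies in $\mathcal{SH}_{\geq 0}$; here one invokes the fully faithful embedding $\widetilde{DM}^{eff}(pt)\subseteq\widetilde{DM}(pt)$ and Proposition~\ref{spec}). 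In particular the case $a=b=0$ of (1) is immediate: $[\mathbb{Z}(i)[2i],\mathbb{Z}(j)[2j+1]]_{MW}$ reduces to $[\mathbb{Z}(i-j)[2(i-j)-1],\mathbb{Z}]_{MW}$, which is $W(k)$ exactly when $i-j=1$.

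The general case of (1) follows by a finite reduction to these pure Tate groups, using the $C(\eta)$-triangle repeatedly: for $a\geq 1$ one resolves the source, and for $b\geq 1$ one resolves the target, one $\mathbb{Z}/\eta$-factor at a time, obtaining long exact sequences that express the group in terms of Hom-groups with fewer factors (of both shift parities, since a cone shifts the simplicial degree by one). The case $a=b=1$ must be handled first and independently — directly, by resolving the source and tracking the resulting Tate groups in the relevant shifts — because this case says precisely that $[\mathbb{Z}/\eta(1)[1],\mathbb{Z}/\eta]_{MW}=0$, i.e. $\mathrm{id}_{\mathbb{Z}/\eta}\otimes\eta=0$; granted that, $\eta$ acts as zero on every $\mathbb{Z}/\eta$-module (such as $\mathbb{Z}/\eta^{\otimes b}(\cdots)$ with $b\geq 1$), which by naturality of $(-)\otimes\eta$ forces the corresponding connecting maps to vanish and collapses most of the long exact sequences. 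The connecting maps that survive are multiplication-by-$\eta$ maps on Milnor--Witt $K$-theory of $k$, hence either the surjection $GW(k)\twoheadrightarrow W(k)$ or the isomorphism $W(k)\xrightarrow{\;\eta\;}W(k)$, and these annihilate exactly the stray $GW(k)$ and $W(k)$ summands, leaving the asserted value: a single $W(k)$ when $a=b=0$, $i=j+1$, and $0$ otherwise. Pinning down these connecting maps and the parities of the intervening shifts is the bulk of the work, and is the main obstacle.

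Finally (2) follows from (1). After shifting both arguments by $[1]$, the group $[\mathbb{Z}/\eta(1)[1],\mathbb{Z}/\eta]_{MW}$ is the instance $(a,b,i,j)=(1,1,1,0)$ of (1), hence $0$, so $\mathrm{id}_{\mathbb{Z}/\eta}\otimes\eta\colon\mathbb{Z}/\eta(1)[1]\to\mathbb{Z}/\eta$ is zero. Tensoring the defining triangle with $\mathbb{Z}/\eta$ then produces a distinguished triangle
\[\mathbb{Z}/\eta(1)[1]\xrightarrow{\;0\;}\mathbb{Z}/\eta\longrightarrow\mathbb{Z}/\eta^{\otimes 2}\longrightarrow\mathbb{Z}/\eta(1)[2]\]
whose first morphism vanishes, so it splits and $\mathbb{Z}/\eta^{\otimes 2}\cong\mathbb{Z}/\eta\oplus\mathbb{Z}/\eta(1)[2]$ in $\widetilde{DM}^{eff}(pt)$.
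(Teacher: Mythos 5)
Your proposal is correct and follows essentially the same route as the paper: both reduce $[\mathbb{Z}/\eta^{\otimes a}(i)[2i],\mathbb{Z}/\eta^{\otimes b}(j)[2j+1]]_{MW}$ to pure Tate groups via the long exact sequences of the $\eta$-triangle, compute the base case $a=b=0$ from cancellation, the vanishing $H^{p,q}_{MW}(pt)=0$ for $p>q$, and Proposition~\ref{negative}, identify the surviving connecting maps as $GW(k)\twoheadrightarrow W(k)$ or $W(k)\xrightarrow{\eta}W(k)$, and then induct on $(a,b)$; part (2) is in both cases the observation that the relevant instance of (1) kills the map $\mathrm{id}_{\mathbb{Z}/\eta}\otimes\eta$, splitting the tensored triangle. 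The only cosmetic difference is that the paper works out the cases $(a,b)=(0,1)$ and $(1,0)$ explicitly before inducting, whereas you single out $(1,1)$ and then invoke the vanishing of $\eta$ on $\mathbb{Z}/\eta$-modules to collapse the remaining sequences.
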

\begin{proof}
\begin{enumerate}
\item Set
\[A_{i,j,a,b}=[\mathbb{Z}/{\eta}^{\otimes a}(i)[2i],\mathbb{Z}/{\eta}^{\otimes b}(j)[2j+1]]_{MW}.\]

Suppose \(a=b=0\). If \(i<j\), the group vanishes by cancellation and \(H^{p,q}_{MW}(pt,\mathbb{Z})=0\) if \(p>q\), which is a consequence of hypercohomology spectral sequence. If \(i\geq j\), the statement follows from Proposition \ref{negative}.

Now suppose \(a=0, b=1\), we have a long exact sequence
\[[\mathbb{Z}(i)[2i],\mathbb{Z}(j+1)[2j+2]]_{MW}\xrightarrow{\eta}A_{i,j,0,0}\to A_{i,j,0,1}\to A_{i,j+1,0,0}\xrightarrow{\eta[1]}.\]
If \(i-j=2\), the fourth arrow is an isomorphism and \(A_{i,j,0,0}=0\), hence \(A_{i,j,0,1}=0\). If \(i-j=1\), the first arrow is surjective and \(A_{i,j+1,0,0}=0\), hence \(A_{i,j,0,1}=0\). Otherwise \(A_{i,j,0,0}=A_{i,j+1,0,0}=0\) hence \(A_{i,j,0,1}=0\).

Suppose \(a=1, b=0\), we have a long exact sequence
\[[\mathbb{Z}(i)[2i],\mathbb{Z}(j)[2j]]_{MW}\xrightarrow{\eta}A_{i+1,j,0,0}\to A_{i,j,1,0}\to A_{i,j,0,0}\xrightarrow{\eta[1]}.\]
If \(i=j\), \(A_{i,j,0,0}=0\) and the first arrow is surjective. Hence \(A_{i,j,1,0}=0\). If \(i-j=1\), \(A_{i+1,j,0,0}=0\) and the last arrow is an isomorphism. Hence \(A_{i,j,1,0}=0\). Otherwise \(A_{i+1,j,0,0}=A_{i,j,0,0}=0\) so \(A_{i,j,1,0}=0\).

Then the other cases follows trivially from the long exact sequence of \(\eta\) and induction on the pair \((a,b)\).
\item The long exact sequence of \(\eta\) splits after tensoring with \(\mathbb{Z}/{\eta}\) by (1).
\end{enumerate}
\end{proof}
%\begin{proposition}\label{basic}
%\[\mathbb{P}^n\cong C(\xymatrix{\mathbb{A}^n\setminus 0\ar[r]^{Hopf_n}&\mathbb{P}^{n-1}}).\]
%\end{proposition}
%\begin{proof}
%Suppose \((x_0:\cdots:x_n)\) is the coordinate system of \(\mathbb{P}^n\). Now use the MV-sequence of the covering \(\{D(x_0),D(x_1)\cup\cdots\cup D(x_n)\}\).
%\end{proof}
%\begin{proposition}\label{cone}
%Suppose \(q>1\), \(p\in\mathbb{Z}\) and \(p\neq q\). We have
%\[[\mathbb{Z}(q)[p], \mathbb{Z}/{\eta}]_{{K}}=0.\]
%\end{proposition}
%\begin{proof}
%Suppose \(f\in [\mathbb{Z}(q)[p], \mathbb{Z}/{\eta}]_{{K}}\). The composite
%\[\xymatrix{\mathbb{Z}(q)[p]\ar[r]^-f&\mathbb{Z}/{\eta}\ar[r]^-{\partial}&\mathbb{Z}(1)[2]\ar[r]^-{\eta[1]}&\mathbb{Z}[1]}\]
%is zero. So \(h=({\partial}\circ f)[-1]:\mathbb{Z}(q)[p-1]\longrightarrow\mathbb{Z}\{1\}\) is a morphism such that \(\eta\circ h=0\). Suppose \(h=h'\{1\}\) by Proposition \ref{cancellation}.
%
%We want to prove \(h=0\). If \({K_*}=K^M_*, K^M_*/2\), \(h=0\) by Proposition \ref{negative}. If \({K_*}=K^W_*\), \(h'=0\) since \(\eta\) corresponds to \(1\in W(pt)\) by Proposition \ref{rhom}. So \(h=0\). If \({K_*}=K^{MW}_*\), we see that \(a^*h'=0\) and \(c^*h'=0\) by the same arguments as before. Then \(h'\) factors through \({\pi}_*\mathbb{Z}[-1]\) by Proposition \ref{sep}. Hence it is zero by Proposition \ref{negative}.
%
%Hence \(f\) factors through \(\mathbb{Z}\), so it is zero again by Proposition \ref{negative}.
%\end{proof}
%Recall our convention that \(X\) means \(X/x\) for every pointed scheme \((X,x)\).
\begin{theorem}\label{splitting}
\begin{enumerate}
\item If \(n\) is odd, the canonical imbedding \(i:\mathbb{P}^{n-1}\longrightarrow\mathbb{P}^n\) splits in \(\widetilde{DM}^{eff}(pt)\) and we have
%\item Suppose \(n>0\). If \(n\) is odd, \(Hopf_n=0\) in \(DM_{K}^{eff}(pt)\) and we have
\[\mathbb{Z}(\mathbb{P}^n)\cong\mathbb{Z}\oplus\bigoplus_{i=1}^{\frac{n-1}{2}}\mathbb{Z}/{\eta}(2i-1)[4i-2]\oplus\mathbb{Z}(n)[2n]\]
in \(\widetilde{DM}^{eff}(pt)\).
\item If \(n\) is even, the canonical imbedding \(j:\mathbb{P}^{n-2}\longrightarrow\mathbb{P}^n\) splits in \(\widetilde{DM}^{eff}(pt)\) and we have
%If \(n\) is even, we have
%\[Hopf_n=(0,\eta(n-1)[2n-2]):\mathbb{A}^n\setminus 0\longrightarrow\mathbb{P}^{n-1}\cong\mathbb{P}^{n-2}\oplus\mathbb{Z}(n-1)[2n-2]\]
%and we have
\[\mathbb{Z}(\mathbb{P}^n)\cong\mathbb{Z}\oplus\bigoplus_{i=1}^{\frac{n}{2}}\mathbb{Z}/{\eta}(2i-1)[4i-2]\]
in \(\widetilde{DM}^{eff}(pt)\)
%\item Suppose \({K_*}=K^M_*, K^M_*/2\). Then \(Hopf_n=0\) for any \(n\) and we have
%\[\mathbb{P}^n\cong\oplus_{i=1}^{n}\mathbb{Z}(i)[2i].\]
\end{enumerate}
\end{theorem}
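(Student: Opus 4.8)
I would induct on $n$ in steps of two, using the localization triangle of the linear inclusion $\mathbb{P}^{n-2}\hookrightarrow\mathbb{P}^n$ together with two explicit low-dimensional computations. The cases $n\le 1$ are immediate: $\mathbb{P}^1$ has a rational point, so its structure map splits off $\mathbb{Z}$ and $\mathbb{Z}(\mathbb{P}^1)\cong\mathbb{Z}\oplus\mathbb{Z}(1)[2]$. For $n=2$ I would use the $\mathbb{A}^1$-homotopy pushout $\mathbb{P}^2\simeq\mathbb{P}^1\cup_{\eta}\mathbb{A}^2$ coming from deleting a rational point $p_0$ (so $\mathbb{P}^2\setminus\{p_0\}$ is a line bundle over $\mathbb{P}^1$ and $\mathbb{P}^2\setminus\mathbb{P}^1=\mathbb{A}^2$): $\widetilde{\mathbb{Z}}(\mathbb{P}^2)$ is then the cofibre of the map $\widetilde{\mathbb{Z}}(\mathbb{A}^2\setminus 0)=\mathbb{Z}(2)[3]\to\widetilde{\mathbb{Z}}(\mathbb{P}^1)=\mathbb{Z}(1)[2]$ induced by the Hopf map, which by the definition of $\eta$ is $\eta(1)[2]$, so $\widetilde{\mathbb{Z}}(\mathbb{P}^2)=\mathbb{Z}/\eta(1)[2]$. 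For $n\ge 3$, $\mathbb{P}^n\setminus\mathbb{P}^{n-2}$ is a rank-$(n-1)$ vector bundle over $\mathbb{P}^1$ (projection to $[x_{n-1}:x_n]$), so $\mathbb{Z}(\mathbb{P}^n\setminus\mathbb{P}^{n-2})\cong\mathbb{Z}(\mathbb{P}^1)$ by homotopy invariance, while the normal bundle of $\mathbb{P}^{n-2}$ in $\mathbb{P}^n$ is $\mathcal{O}(1)^{\oplus 2}$, whose determinant $\mathcal{O}(2)$ is a square, so by Proposition~\ref{gysin} (and the fact that Thom spaces of bundles of square determinant are untwisted) $M_{\mathbb{P}^{n-2}}(\mathbb{P}^n)\cong\mathbb{Z}(\mathbb{P}^{n-2})(2)[4]$. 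This produces a distinguished triangle in $\widetilde{DM}^{eff}(pt)$
\[
\mathbb{Z}(\mathbb{P}^1)\longrightarrow\mathbb{Z}(\mathbb{P}^n)\longrightarrow\mathbb{Z}(\mathbb{P}^{n-2})(2)[4]\xrightarrow{\ \Delta_n\ }\mathbb{Z}(\mathbb{P}^1)[1],
\]
and I would also keep the codimension-one triangle $\mathbb{Z}(\mathbb{P}^{n-1})\to\mathbb{Z}(\mathbb{P}^n)\to\mathbb{Z}(n)[2n]\xrightarrow{\partial_n}\mathbb{Z}(\mathbb{P}^{n-1})[1]$ from deleting a rational point.

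\textbf{Reducing the connecting map.} Assuming the theorem for all $\mathbb{P}^{m}$ with $m<n$, each term of the $\Delta_n$-triangle is an explicit sum of Tate twists of $\mathbb{Z}$ and of $\mathbb{Z}/\eta$. Decomposing $\Delta_n$ along this and evaluating each $\mathrm{Hom}$-group by Cancellation (Proposition~\ref{cancellation}), the vanishing range of Proposition~\ref{negative}, and the defining triangle of $\mathbb{Z}/\eta$ with Proposition~\ref{ortho}, one checks — uniformly in the parity of $n$ — that every component of $\Delta_n$ vanishes for weight reasons except the one from the bottom summand $\mathbb{Z}(2)[4]$ of the Thom space to the top summand $\mathbb{Z}(1)[3]$ of $\mathbb{Z}(\mathbb{P}^1)[1]$, and that this component lies in $[\mathbb{Z}(2)[4],\mathbb{Z}(1)[3]]_{MW}\cong[\mathbb{Z}(1)[1],\mathbb{Z}]_{MW}\cong W(k)$; write it as $v_n\cdot\eta$. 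Granting that $v_n$ is a unit (next paragraph), the triangle yields
\[
\mathbb{Z}(\mathbb{P}^n)\cong\mathbb{Z}\,\oplus\,\mathbb{Z}/\eta(1)[2]\,\oplus\,\big(\text{the remaining summands of }\mathbb{Z}(\mathbb{P}^{n-2})(2)[4]\big),
\]
and reindexing the last block — using that $\mathbb{Z}(\mathbb{P}^{n-2})$ has a top Tate summand $\mathbb{Z}(n-2)[2n-4]$ precisely when $n$ is odd — reproduces exactly the claimed decomposition. The splitting assertions follow from the codimension-one triangle: for $n$ odd the same $\mathrm{Hom}$-computations (using $\eta$-periodicity of the boundary groups) force $[\mathbb{Z}(n)[2n],\mathbb{Z}(\mathbb{P}^{n-1})[1]]_{MW}=0$, so $\partial_n=0$ and $i\colon\mathbb{P}^{n-1}\hookrightarrow\mathbb{P}^n$ splits; for $n$ even one checks that $\partial_n$ annihilates the summand $\mathbb{Z}(\mathbb{P}^{n-2})\subset\mathbb{Z}(\mathbb{P}^{n-1})$, which combined with the inductively known splitting of $\mathbb{P}^{n-2}\hookrightarrow\mathbb{P}^{n-1}$ shows $j\colon\mathbb{P}^{n-2}\hookrightarrow\mathbb{P}^n$ is a split monomorphism with cofibre $\mathbb{Z}/\eta(n-1)[2n-2]$.

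\textbf{The main obstacle: $v_n$ is a unit.} This is the only nonformal input, and I expect it to be the hardest part. I would deduce it from naturality of localization triangles under a linear embedding $\ell\colon\mathbb{P}^2\hookrightarrow\mathbb{P}^n$ chosen so that $\ell^{-1}(\mathbb{P}^{n-2})$ is a single rational point $\mathbb{P}^0\subset\mathbb{P}^2$ meeting $\mathbb{P}^{n-2}$ transversely, and so that $\ell$ restricts to the identity on the open parts $\mathbb{P}^2\setminus\mathbb{P}^0\simeq\mathbb{P}^1\simeq\mathbb{P}^n\setminus\mathbb{P}^{n-2}$. Transverse base change then gives a morphism from the $\Delta_2$-triangle to the $\Delta_n$-triangle which is the identity on the $\mathbb{Z}(\mathbb{P}^1)$-terms and, on the Thom-space terms, is the map $\mathbb{Z}(2)[4]\to\mathbb{Z}(\mathbb{P}^{n-2})(2)[4]$ classifying the restriction of $\mathcal{O}(1)^{\oplus2}$ to the point $\mathbb{P}^0$; since this last map is compatible with the splittings (the zero-section/fibre inclusion over $\mathbb{P}^0$ being split by the bundle projection, whence $p_*$ of it is the identity on $\mathbb{Z}(2)[4]$ for the structure map $p$ of $\mathbb{P}^{n-2}$), commutativity of the right-hand square forces $v_n\eta=\Delta_2=\eta$, i.e. $v_n=\langle 1\rangle$ in $W(k)$. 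Besides this comparison, the bulk of the remaining work is the (lengthy but routine) six-functor bookkeeping: the transverse-base-change identification of normal bundles, the absence of a $W$-twist in $Th(\mathcal{O}(1)^{\oplus2})$, and the explicit $\mathrm{Hom}$-vanishing computations of the previous step.
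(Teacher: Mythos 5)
Your plan is correct in substance, but it takes a genuinely different route through the inductive step than the paper does. The paper never has to identify a nonzero connecting map: for $n$ odd it uses the Gysin triangle of a rational point (whose complement retracts onto $\mathbb{P}^{n-1}$), giving $\mathbb{Z}(\mathbb{P}^{n-1})\to\mathbb{Z}(\mathbb{P}^n)\to\mathbb{Z}(n)[2n]\xrightarrow{\partial}$, and for $n$ even the Gysin triangle of a linearly embedded $\mathbb{P}^1$ (whose complement retracts onto $\mathbb{P}^{n-2}$), giving $\mathbb{Z}(\mathbb{P}^{n-2})\to\mathbb{Z}(\mathbb{P}^n)\to Th(O_{\mathbb{P}^1}(1))(n-2)[2n-4]\xrightarrow{\partial}$; in both cases $\partial=0$ by Proposition~\ref{ortho} plus induction, and the new $\mathbb{Z}/\eta$ summand appears ready-made as $Th(O_{\mathbb{P}^1}(1))=\mathbb{Z}/\eta(1)[2]$ (read off from the $n=2$ instance of the same triangle). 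You instead take the complementary decomposition, removing the closed $\mathbb{P}^{n-2}$ so that the cofibre is $\mathbb{Z}(\mathbb{P}^{n-2})(2)[4]$; then the connecting map $\Delta_n$ is genuinely nonzero, and you must prove its unique surviving component is $\eta$ up to a unit — your transverse-base-change comparison with $\mathbb{P}^2$ does this, though note you only get $v_n=\langle a\rangle$ for some unit (the two trivializations of $Th(N|_{pt})$ may differ by $\langle a\rangle$), which still suffices since $\langle a\rangle$ is invertible. So your route buys a parity-uniform inductive step and exhibits $\mathbb{Z}/\eta(1)[2]$ literally as a cone of $\eta$, at the cost of the extra purity/base-change input that the paper's choice of decomposition avoids entirely; your Hom-vanishing bookkeeping is the same as the paper's (Propositions~\ref{negative} and~\ref{ortho}) and checks out. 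The one place you should tighten is the splitting of $j\colon\mathbb{P}^{n-2}\hookrightarrow\mathbb{P}^n$ for $n$ even: $\partial_n$ is a map out of $\mathbb{Z}(n)[2n]$, so it does not "annihilate a summand of $\mathbb{Z}(\mathbb{P}^{n-1})$"; the clean argument is to produce a retraction of $j$ directly from the obtained decomposition of $\mathbb{Z}(\mathbb{P}^n)$ by checking $j$ is upper-triangular with invertible diagonal against the summands (or simply to use the paper's even-case triangle, where $j$ is the first map and the cofibre is $\mathbb{Z}/\eta(n-1)[2n-2]$, so splitness is immediate from $\partial=0$).
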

\begin{proof}
Let us prove the statements by induction on \(n\). The cases for \(n=0,1\) are trivial. The case for \(n=2\) follows from \cite[Lemma 6.2.1]{Mo1} and Remark \ref{shadj}. So suppose \(n>2\).
\begin{enumerate}
\item We have a commutative diagram
\[
	\xymatrix
	{
		\mathbb{P}^{n-1}\ar[r]^-z\ar[d]_i	&\mathbb{P}^n\setminus(0:\cdots:0:1)\ar[ld]\\
		\mathbb{P}^{n}								&
	}
\]
where \(z\) is the zero section of \(O_{\mathbb{P}^{n-1}}(1)\). So we have a distinguished triangle
\[\mathbb{Z}(\mathbb{P}^{n-1})\xrightarrow{i}\mathbb{Z}(\mathbb{P}^n)\longrightarrow\mathbb{Z}(n)[2n]\xrightarrow{\partial}\cdots[1]\]
by \cite[Theorem 6.3]{Y}. Now we use Proposition \ref{ortho} and induction hypothesis to conclude that \(\partial=0\).
\item We have a commutative diagram
\[
	\xymatrix
	{
		\mathbb{P}^{n-2}\ar[r]^-z\ar[d]_j	&\mathbb{P}^n\setminus\mathbb{P}^1\ar[ld]\\
		\mathbb{P}^{n}								&
	}
\]
where \(z\) is the zero section of \(O_{\mathbb{P}^{n-2}}(1)^{\oplus 2}\). Furthermore, we have \(det(N_{\mathbb{P}^1/\mathbb{P}^n})=O_{\mathbb{P}^1}(n-1)\). So we have a distinguished triangle by \cite[Theorem 6.3]{Y}
\[\mathbb{Z}(\mathbb{P}^{n-2})\xrightarrow{j}\mathbb{Z}(\mathbb{P}^n)\longrightarrow Th(O_{\mathbb{P}^1}(1))(n-2)[2n-4]\xrightarrow{\partial}\cdots[1].\]
Setting \(n=2\) in the above triangle we find that \(Th(O_{\mathbb{P}^1}(1))=\mathbb{Z}/\eta(1)[2]\). Now we use Proposition \ref{ortho} and induction hypothesis to conclude that \(\partial=0\).
\end{enumerate}
\end{proof}

Recall from Remark \ref{shadj} that we have adjunctions
\[\begin{array}{cc}(\gamma^*,\gamma_*):SH(pt)\leftrightharpoons\widetilde{DM}(pt),&(\gamma'^*,\gamma'_*):\widetilde{DM}(pt)\leftrightharpoons DM(pt).\end{array}\]
\begin{proposition}\label{orthogonality}
Suppose \(i,j\in\mathbb{N}\). We have
\[[\mathbb{Z}(i)[2i],\mathbb{Z}(j)[2j]]_{MW}=\begin{cases}0&\text{if \(i\neq j\)}\\GW(k)&\text{if \(i=j\)},\end{cases}\]
\[[\mathbb{Z}/{\eta}(i)[2i],\mathbb{Z}(j)[2j]]_{MW}=\begin{cases}0&\text{if \(j-i>1\) or \(j<i\)}\\\mathbb{Z}&\text{if \(j-i=1\)}\\2\mathbb{Z}&\text{if \(j=i\)},\end{cases}\]
\[[\mathbb{Z}(i)[2i],\mathbb{Z}/{\eta}(j)[2j]]_{MW}=\begin{cases}0&\text{if \(i-j>1\) or \(i<j\)}\\\mathbb{Z}&\text{if \(i=j\)}\\2\mathbb{Z}&\text{if \(i-j=1\)},\end{cases}\]
\[[\mathbb{Z}/{\eta}(i)[2i],\mathbb{Z}/{\eta}(j)[2j]]_{MW}=\begin{cases}0&\text{if \(|i-j|>1\)}\\2\mathbb{Z}&\text{if \(j=i-1\)}\\\mathbb{Z}\oplus 2\mathbb{Z}&\text{if \(j=i\)}\\\mathbb{Z}&\text{if \(j=i+1\)}\end{cases}.\]
\end{proposition}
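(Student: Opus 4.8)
The plan is to reduce all four formulas, by the cancellation theorem (Proposition \ref{cancellation}) and the resulting fully faithful embedding $\widetilde{DM}^{eff}(pt)\subseteq\widetilde{DM}(pt)$, to one family of computations carried out in the stable category $\widetilde{DM}(pt)$, where the Tate objects $\mathbb{Z}(n)[2n]$ are invertible and $\mathbb{Z}/\eta=C(\eta)$, being the cone of a morphism of invertible (hence strongly dualizable) objects, is itself strongly dualizable with $(\mathbb{Z}/\eta)^{\vee}\cong\mathbb{Z}/\eta(-1)[-2]$ (because $\eta$ is self-dual in the sense that $\eta^{\vee}$ is identified, up to an automorphism of the target, with $\eta(-1)[-1]$). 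Cancelling common twists, using this duality to slide a $\mathbb{Z}/\eta$ across $[-,-]_{MW}$, and invoking $\mathbb{Z}/\eta^{\otimes 2}\cong\mathbb{Z}/\eta\oplus\mathbb{Z}/\eta(1)[2]$ from Proposition \ref{ortho}(2), one rewrites
\[[\mathbb{Z}(i)[2i],\mathbb{Z}(j)[2j]]_{MW}=A_{j-i},\qquad [\mathbb{Z}(i)[2i],\mathbb{Z}/\eta(j)[2j]]_{MW}=E_{j-i},\]
\[[\mathbb{Z}/\eta(i)[2i],\mathbb{Z}(j)[2j]]_{MW}=E_{j-i-1},\qquad [\mathbb{Z}/\eta(i)[2i],\mathbb{Z}/\eta(j)[2j]]_{MW}=E_{j-i-1}\oplus E_{j-i},\]
where $A_m:=[\mathbb{Z},\mathbb{Z}(m)[2m]]_{MW}$ and $E_m:=[\mathbb{Z},\mathbb{Z}/\eta(m)[2m]]_{MW}$ for $m\in\mathbb{Z}$. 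Thus it suffices to prove $A_0=GW(k)$, $A_m=0$ for $m\neq 0$, $E_0\cong\mathbb{Z}$, $E_{-1}\cong 2\mathbb{Z}$, and $E_m=0$ for $m\notin\{0,-1\}$; the four case distinctions of the statement then drop out by setting $m=j-i$ or $m=j-i-1$.

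The groups $A_m$ are immediate: for $m<0$ one has $A_m=[\mathbb{Z}(-m)[-2m],\mathbb{Z}]_{MW}=0$ by Proposition \ref{negative} (the shift $-2m$ is not the twist $-m$), $A_0=GW(k)$, and for $m>0$, $A_m=H^{2m,m}_{MW}(pt)=\widetilde{CH}^m(pt)=0$ since $H^{p,q}_{MW}(pt)=0$ for $p>q$. For $E_m$ I apply $[\mathbb{Z},-]$ to the distinguished triangle $\mathbb{Z}(m+1)[2m+1]\xrightarrow{\eta}\mathbb{Z}(m)[2m]\to\mathbb{Z}/\eta(m)[2m]\to\mathbb{Z}(m+1)[2m+2]$ and evaluate the neighbouring groups $[\mathbb{Z},\mathbb{Z}(a)[b]]_{MW}=H^{b,a}_{MW}(pt)$ (using Proposition \ref{negative} when $a<0$). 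These all vanish when $m\notin\{0,-1\}$, forcing $E_m=0$; when $m=-1$ the long exact sequence collapses to $0\to E_{-1}\to GW(k)\xrightarrow{\eta_*}W(k)$, and when $m=0$ to $H^{1,1}_{MW}(pt)\xrightarrow{\eta_*}GW(k)\to E_0\to 0$, where I use the standard identification $H^{1,1}_{MW}(pt)\cong K^{MW}_1(k)$.

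What remains is to recognise both occurrences of $\eta_*$ as multiplication by $\eta$ in the graded ring $K^{MW}_*(k)=\bigoplus_n H^{n,n}_{MW}(pt)$. The map $K^{MW}_1(k)\xrightarrow{\cdot\eta}K^{MW}_0(k)=GW(k)$ carries the symbol $[u]$ to $\langle u\rangle-\langle 1\rangle$; since the symbols generate $K^{MW}_1(k)$ (via the relation $[uv]=[u]+[v]+\eta[u][v]$) and the elements $\langle u\rangle-\langle 1\rangle$ generate $\ker(\operatorname{rank}\colon GW(k)\to\mathbb{Z})$, we obtain $E_0\cong GW(k)/\ker(\operatorname{rank})\cong\mathbb{Z}$. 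The map $GW(k)=K^{MW}_0(k)\xrightarrow{\cdot\eta}K^{MW}_{-1}(k)=W(k)$ is the canonical surjection, with kernel the rank-$2$ subgroup generated by the hyperbolic form $h=\langle 1\rangle+\langle -1\rangle$; hence $E_{-1}\cong\mathbb{Z}\cdot h$, which is exactly the copy of $\mathbb{Z}$ that $\operatorname{rank}$ sends onto $2\mathbb{Z}$ --- this is the origin of every ``$2\mathbb{Z}$'' in the statement. A concluding bookkeeping step, tracking these identifications through the change-of-coefficients functor to Chow groups, makes the various ``$\mathbb{Z}$'' and ``$2\mathbb{Z}$'' canonical. (Alternatively, one can bypass the duality step by running the analogous long exact sequences directly on each of the four cases, at the cost of more case analysis and an appeal to Proposition \ref{ortho}(1).)

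I expect the main obstacle to be precisely this identification of the $\eta$-maps --- and of $\eta^{\vee}$ with $\eta(-1)[-1]$ up to an automorphism, needed for the duality reduction --- together with supplying the input $H^{1,1}_{MW}(pt)\cong K^{MW}_1(k)$: everything else is either Proposition \ref{cancellation}, Proposition \ref{negative}, Proposition \ref{ortho}, or formal triangulated-category manipulation, and it is exactly these ingredients that produce the asymmetry among $GW(k)$, $W(k)$, $\mathbb{Z}$ and $2\mathbb{Z}$ in the four cases.
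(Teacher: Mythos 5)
Your proof is correct, but it follows a genuinely different and somewhat more economical route than the paper's. The paper sets \(A_{i,j,a,b}=[\mathbb{Z}/\eta^{\otimes a}(i)[2i],\mathbb{Z}/\eta^{\otimes b}(j)[2j]]_{MW}\) and runs a separate long exact sequence in \(\eta\) for each of the four pairs \((a,b)\), reducing everything to Proposition \ref{negative}; in the hardest case (\(a=b=1\), \(i=j\)) it must additionally prove that \([\mathbb{Z}/\eta,\mathbb{Z}]_{MW}\to[\mathbb{Z}/\eta,\mathbb{Z}/\eta]_{MW}\) is injective by comparing with \(DM^{eff}(pt)\) via \(\gamma'^*\). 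You instead use the strong duality \((\mathbb{Z}/\eta)^{\vee}\cong\mathbb{Z}/\eta(-1)[-2]\) together with the splitting \(\mathbb{Z}/\eta^{\otimes 2}\cong\mathbb{Z}/\eta\oplus\mathbb{Z}/\eta(1)[2]\) of Proposition \ref{ortho}(2) to collapse all four formulas into the single family \(E_m=[\mathbb{Z},\mathbb{Z}/\eta(m)[2m]]_{MW}\), so only one long exact sequence is needed, and the direct-sum decomposition replaces the paper's injectivity argument for free; your identifications of the two \(\eta\)-multiplications (\(K^{MW}_1(k)\to GW(k)\) with image \(\ker(\mathrm{rank})\), and \(GW(k)\to W(k)\) with kernel \(\mathbb{Z}\cdot h\)) match what the paper uses. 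Two caveats. First, the duality of \(\mathbb{Z}/\eta\) is only established in the paper later (Proposition \ref{left}, via the strong dual of \(\mathbb{Z}(\mathbb{P}^2)\)); your shortcut via ``\(\eta^{\vee}=\eta(-1)[-1]\) up to a unit'' is standard and non-circular (Theorem \ref{splitting} and Proposition \ref{ortho} do not depend on the present statement), but you should make explicit that the symmetry isomorphism can alter \(\eta\) only by a unit of \(GW(k)\), which does not change the cone. Second, the labels \(\mathbb{Z}\) versus \(2\mathbb{Z}\) carry no information as abstract groups and only acquire meaning as images in the corresponding groups of \(DM^{eff}(pt)\) under \(\gamma'^*\) --- which is exactly how Proposition \ref{gen} later uses them --- so the ``concluding bookkeeping step'' you defer is not optional for the downstream applications, even though it suffices for the statement as literally written.
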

\begin{proof}
We set
\[A_{i,j,a,b}=[\mathbb{Z}/{\eta}^{\otimes a}(i)[2i],\mathbb{Z}/{\eta}^{\otimes b}(j)[2j]]_{MW}.\]

Suppose \(a=b=0\). If \(i<j\), the group vanishes by cancellation and \(H^{p,q}_{MW}(pt,\mathbb{Z})=0\) if \(p>q\). If \(i\geq j\), the statement follows from Proposition \ref{negative}.

Suppose \(a=1, b=0\), we have a long exact sequence
\[[\mathbb{Z}(i)[2i+1],\mathbb{Z}(j)[2j]]_{MW}\xrightarrow{\eta}A_{i+1,j,0,0}\to A_{i,j,1,0}\to A_{i,j,0,0}\xrightarrow{\eta[1]}.\]
If \(i=j\), \(A_{i+1,j,0,0}=0\) and the fourth arrow is \(K_0^{MW}(k)\longrightarrow W(k)\), hence \(A_{i,j,1,0}=2\mathbb{Z}\). If \(j-i=1\), \(A_{i,j,0,0}=0\) and the first arrow is \(K_1^{MW}(k)\xrightarrow{\eta}K_0^{MW}(k)\), hence \(A_{i,j,1,0}=\mathbb{Z}\). Otherwise \(A_{i+1,j,0,0}=A_{i,j,0,0}=0\) so \(A_{i,j,1,0}=0\).

Suppose \(a=0, b=1\), we have a long exact sequence
\[[\mathbb{Z}(i)[2i],\mathbb{Z}(j+1)[2j+1]]_{MW}\xrightarrow{\eta}A_{i,j,0,0}\to A_{i,j,0,1}\to A_{i,j+1,0,0}\xrightarrow{\eta[1]}.\]
If \(i=j\), the first arrow is \(K_1^{MW}(k)\xrightarrow{\eta}K_0^{MW}(k)\) and \(A_{i,j+1,0,0}=0\), hence \(A_{i,j,0,1}=\mathbb{Z}\). If \(i-j=1\), \(A_{i,j,0,0}=0\) and the fourth arrow is \(K_0^{MW}(k)\longrightarrow W(k)\), hence \(A_{i,j,0,1}=2\mathbb{Z}\). Otherwise \(A_{i,j,0,0}=A_{i,j+1,0,0}=0\) hence \(A_{i,j,0,1}=0\).

Suppose \(a=1, b=1\), we have a long exact sequence
\[[\mathbb{Z}/\eta(i)[2i],\mathbb{Z}(j+1)[2j+1]]_{MW}\xrightarrow{\eta}A_{i,j,1,0}\to A_{i,j,1,1}\to A_{i,j+1,1,0}\xrightarrow{\eta[1]}.\]
If \(j=i-1\), \(A_{i,j,1,0}=0\) and the fourth arrow is zero by Proposition \ref{ortho}. So \(A_{i,j,1,1}=A_{i,j+1,1,0}=2\mathbb{Z}\). If \(j=i+1\), \(A_{i,j+1,1,0}=0\) and the first term is zero since it is the cokernel of \(Id_{W(k)}\) by again using an exact sequence of \(\eta\). Hence \(A_{i,j,1,0}=A_{i,j,1,1}=\mathbb{Z}\). If \(i=j\), the second arrow is injective because we have the commutative diagram
\[
	\xymatrix
	{
		[\mathbb{Z}/\eta,\mathbb{Z}]_{MW}\ar[r]^-f\ar[d]_{\gamma'^*}^2	&[\mathbb{Z}/\eta,\mathbb{Z}/\eta]_{MW}\ar[d]^{\gamma'^*}\\
		[\mathbb{Z}/\eta,\mathbb{Z}]_M\ar[r]^-{g}							&[\mathbb{Z}/\eta,\mathbb{Z}/\eta]_M
	}
\]
where the \(g\) is injective, hence so does \(f\). The fourth arrow is zero by Proposition \ref{ortho}. Hence \(A_{i,j,1,1}=\mathbb{Z}\oplus2\mathbb{Z}\). Otherwise \(A_{i,j,1,1}\) clearly vanishes.
\end{proof}
\begin{coro}
(see \cite[Corollary 11.8]{F}) Suppose \(0\leq i\leq n\), we have
\[\widetilde{CH}^{i}(\mathbb{P}^n)=\begin{cases}GW(k)&\text{if \(i=0\) or \(i=n\) and \(n\) is odd}\\\mathbb{Z}&\text{if \(i>0\) is even}\\2\mathbb{Z}&\text{else}.\end{cases}\]
\end{coro}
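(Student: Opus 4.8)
The plan is to apply the additive functor $Hom_{\widetilde{DM}^{eff}(pt)}(-,\mathbb{Z}(i)[2i])$ to the decomposition of $\mathbb{Z}(\mathbb{P}^n)$ provided by Theorem \ref{splitting}, using the identification of $\widetilde{CH}^i(X)$ with $Hom_{\widetilde{DM}^{eff}(pt)}(\mathbb{Z}(X),\mathbb{Z}(i)[2i])$. Since the Hom-functor is additive, Theorem \ref{splitting} gives at once
\[\widetilde{CH}^i(\mathbb{P}^n)\cong[\mathbb{Z},\mathbb{Z}(i)[2i]]_{MW}\ \oplus\ \bigoplus_{\ell}[\mathbb{Z}/\eta(2\ell-1)[4\ell-2],\mathbb{Z}(i)[2i]]_{MW}\ \oplus\ T_i,\]
where $T_i=[\mathbb{Z}(n)[2n],\mathbb{Z}(i)[2i]]_{MW}$ when $n$ is odd and $T_i=0$ when $n$ is even, and where $\ell$ runs over $1,\dots,\tfrac{n-1}{2}$ (resp. $1,\dots,\tfrac{n}{2}$) when $n$ is odd (resp. even).

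The second step is to evaluate each summand by Proposition \ref{orthogonality}: the first summand equals $GW(k)$ if $i=0$ and vanishes otherwise; the summand $T_i$ (when present) equals $GW(k)$ if $i=n$ and vanishes otherwise; and the $\ell$-th middle summand equals $2\mathbb{Z}$ if $i=2\ell-1$, equals $\mathbb{Z}$ if $i=2\ell$, and vanishes in all other cases. In particular, for each fixed $i$ at most one of the summands is nonzero, so the computation reduces to identifying which one survives, in terms of the parity of $i$ and its position relative to $0$ and $n$.

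I would then carry out this short bookkeeping. For $i=0$ only the first summand contributes, giving $GW(k)$. For $i$ even with $0<i\le n$, the only surviving summand is the middle one with $\ell=i/2$, which lies in the admissible range since $i\le n$, yielding $\mathbb{Z}$; note this includes the case $i=n$ when $n$ is even, so the answer there is $\mathbb{Z}$ rather than $GW(k)$. For $i$ odd with $0<i<n$, the only surviving summand is the middle one with $\ell=(i+1)/2$, again admissible because $i+1\le n$, yielding $2\mathbb{Z}$. Finally, for $i=n$ with $n$ odd, all middle summands miss (the would-be index $\ell=(n+1)/2$ exceeds $\tfrac{n-1}{2}$), so only $T_n=[\mathbb{Z}(n)[2n],\mathbb{Z}(n)[2n]]_{MW}=GW(k)$ contributes. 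Collecting these cases gives exactly the stated formula.

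There is no real obstacle here beyond the index arithmetic; the only delicate point is the endpoint $i=n$, where the parity of $n$ dictates whether the contribution comes from the extra Tate summand $\mathbb{Z}(n)[2n]$ (odd $n$, giving $GW(k)$) or from a middle $\mathbb{Z}/\eta$-summand (even $n$, giving $\mathbb{Z}$) — which is precisely why the first clause of the formula carries the hypothesis ``and $n$ is odd''.
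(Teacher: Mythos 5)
Your proposal is correct and is exactly the argument the paper intends: the corollary is stated immediately after Proposition \ref{orthogonality} precisely so that one applies $Hom_{\widetilde{DM}^{eff}(pt)}(-,\mathbb{Z}(i)[2i])$ to the splitting of $\mathbb{Z}(\mathbb{P}^n)$ from Theorem \ref{splitting} and reads off the summands. Your case bookkeeping, including the endpoint $i=n$ and the parity discussion, matches the stated formula.
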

%\begin{definition}
%Suppose \(n\in\mathbb{N}\). Define
%\[c^k_n=\theta^k(c_1(O_{\mathbb{P}^n}(1))^k,c_1(O_{\mathbb{P}^n}(1))^{k+1})\in Hom_{\widetilde{DM}(k)}(\mathbb{P}^n_+,\mathbb{Z}/{\eta}(k)[2k]).\]
%If \(n\) is odd, define
%\[{\tau}_{n+1}=i_*(1)\in Hom_{\widetilde{DM}(k)}(\mathbb{P}^n_+,\mathbb{Z}(n)[2n])\]
%where \(i:pt\longrightarrow\mathbb{P}^n\) is a rational point.
%\end{definition}
\begin{proposition}\label{left}
Suppose \(X\in Sm/k\) and \(i,j\in\mathbb{N}\), we have a natural isomorphism
\[[\mathbb{Z}(X)/\eta(i)[2i],\mathbb{Z}(j)[2j]]_{MW}\cong\left\{\begin{array}{cc}\eta^{j-i-1}_{MW}(X)&j>i\\0&j<i\\2CH^0(X)&j=i\end{array}\right..\]
\end{proposition}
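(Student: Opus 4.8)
The plan is to reduce the group to the already-computed $\eta_{MW}^{j-i-1}(X)$ by cancellation and duality, and to handle the two boundary ranges by a single long exact sequence of $\eta$. Since $\mathbb{Z}(X)/\eta=\mathbb{Z}(X)\otimes C(\eta)$ is effective (a cofibre of a morphism of effective motives) and $\mathbb{Z}(j)[2j]$ is effective, Proposition \ref{cancellation} and the induced full faithfulness $\widetilde{DM}^{eff}(pt)\hookrightarrow\widetilde{DM}(pt)$ allow one to compute in the stable category, where the Tate twist is invertible; the group then depends only on $c:=j-i$ and equals $Hom_{\widetilde{DM}(pt)}(\mathbb{Z}(X)/\eta,\mathbb{Z}(c)[2c])$. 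The essential ingredient is the self-duality of the Hopf cofibre: $\mathbb{Z}/\eta=C(\eta)$ is strongly dualizable in $\widetilde{DM}(pt)$ with $R\underline{Hom}(\mathbb{Z}/\eta,\mathbb{Z})\cong\mathbb{Z}/\eta(-1)[-2]$. One obtains this by applying $R\underline{Hom}(-,\mathbb{Z})$ to the defining triangle $\mathbb{Z}(1)[1]\xrightarrow{\eta}\mathbb{Z}\to\mathbb{Z}/\eta\to\mathbb{Z}(1)[2]$: invertibility of $\mathbb{Z}(1)$ turns it into a triangle $\mathbb{Z}(-1)[-2]\to R\underline{Hom}(\mathbb{Z}/\eta,\mathbb{Z})\to\mathbb{Z}\xrightarrow{\eta^{\vee}}\mathbb{Z}(-1)[-1]$, and $\eta^{\vee}$ agrees with $\eta\otimes\mathbb{Z}(-1)[-1]$ up to a unit (self-duality of the Hopf map, which does not change the cone), so $R\underline{Hom}(\mathbb{Z}/\eta,\mathbb{Z})=\mathrm{fib}(\mathbb{Z}\xrightarrow{\eta}\mathbb{Z}(-1)[-1])\cong\mathbb{Z}/\eta(-1)[-2]$. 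By the $\otimes$–$R\underline{Hom}$ adjunction this yields, naturally in $X$,
\[Hom_{\widetilde{DM}(pt)}(\mathbb{Z}(X)/\eta,\mathbb{Z}(c)[2c])\cong Hom_{\widetilde{DM}(pt)}(\mathbb{Z}(X),\mathbb{Z}/\eta(c-1)[2(c-1)]),\]
and for $c\geq1$ the target is effective, so the right side is $\eta_{MW}^{c-1}(X)=\eta_{MW}^{j-i-1}(X)$; this settles $j>i$.

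For $c\leq0$, set $n=c-1\leq-1$ and apply $Hom_{\widetilde{DM}(pt)}(\mathbb{Z}(X),-)$ to $\mathbb{Z}(n+1)[2n+1]\xrightarrow{\eta}\mathbb{Z}(n)[2n]\to\mathbb{Z}/\eta(n)[2n]\to\mathbb{Z}(n+1)[2n+2]$, getting an exact sequence
\[\widetilde{CH}^{\,n}(X)\to Hom_{\widetilde{DM}(pt)}(\mathbb{Z}(X),\mathbb{Z}/\eta(n)[2n])\to\widetilde{CH}^{\,n+1}(X)\xrightarrow{\;\eta\;}Hom_{\widetilde{DM}(pt)}(\mathbb{Z}(X),\mathbb{Z}(n)[2n+1]),\]
where I write $\widetilde{CH}^{\,m}(X)=Hom_{\widetilde{DM}(pt)}(\mathbb{Z}(X),\mathbb{Z}(m)[2m])$. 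The only nontrivial point is that negative-codimension Chow–Witt groups vanish, $\widetilde{CH}^{\,m}(X)=0$ for $m<0$ (equivalently $[\mathbb{Z}(X)(\ell)[2\ell],\mathbb{Z}]_{MW}=0$ for $\ell\geq1$); this follows from the triangle of Proposition \ref{sep}, since the Milnor and $\bmod 2$ summands are killed by Proposition \ref{rhom} and the Witt summand vanishes by $\eta$-periodicity of Witt-motivic cohomology (which identifies $\mathbb{Z}(\ell)[2\ell]$ with a shift of the unit and reduces it to a negative-degree cohomology group). Granting this: when $c\leq-1$ (so $n\leq-2$) both $\widetilde{CH}^{\,n}(X)$ and $\widetilde{CH}^{\,n+1}(X)$ vanish and the middle group is $0$, giving $j<i$; when $c=0$ (so $n=-1$) the sequence reads $0\to Hom_{\widetilde{DM}(pt)}(\mathbb{Z}(X),\mathbb{Z}/\eta(-1)[-2])\to\widetilde{CH}^0(X)\xrightarrow{\eta}Hom_{\widetilde{DM}(pt)}(\mathbb{Z}(X),\mathbb{Z}(-1)[-1])$. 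Since $\widetilde{CH}^0(X)=H^0(X,\underline{K}^{MW}_0)$, $Hom_{\widetilde{DM}(pt)}(\mathbb{Z}(X),\mathbb{Z}(-1)[-1])=H^0(X,\underline{K}^{MW}_{-1})$, and $\eta$ here is induced by the projection $\underline{K}^{MW}_0\to\underline{K}^{MW}_{-1}$ whose kernel is the image of the hyperbolic map $\underline{\mathbb{Z}}\to\underline{K}^{MW}_0$ (injective as $\operatorname{char}k\neq2$), this kernel is $H^0(X,\underline{\mathbb{Z}})=CH^0(X)$, which sits inside $\widetilde{CH}^0(X)$ as $2CH^0(X)$; this settles $j=i$.

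The main obstacle is the self-duality $R\underline{Hom}(\mathbb{Z}/\eta,\mathbb{Z})\cong\mathbb{Z}/\eta(-1)[-2]$ — one must pin down $\eta^{\vee}$ precisely enough to recognize the cone — with the vanishing of negative-codimension Chow–Witt groups as a secondary technical ingredient; the remainder is long-exact-sequence bookkeeping with $\eta$, just as in Propositions \ref{ortho} and \ref{orthogonality}.
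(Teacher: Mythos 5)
Your proposal is correct and follows the same overall architecture as the paper (reduce everything to the strong duality $\mathbb{Z}/\eta^{\vee}\cong\mathbb{Z}/\eta(-1)[-2]$ plus the long exact sequence of $\eta$, with the $j=i$ case identified as the kernel of $GW(X)\to W(X)$, i.e.\ the hyperbolic part $2CH^0(X)$), but you establish the two supporting inputs differently. For the self-duality of $\mathbb{Z}/\eta$ the paper argues geometrically: the strong dual of $\Sigma^{\infty}\mathbb{Z}(\mathbb{P}^2)$ is $\Sigma^{\infty}Th(O_{\mathbb{P}^2}(1))(-3)[-6]$ by the Gysin/Thom machinery of \cite{Y}, and comparing this with the splitting $\mathbb{Z}(\mathbb{P}^2)\cong\mathbb{Z}\oplus\mathbb{Z}/\eta(1)[2]$ of Theorem \ref{splitting} forces the dual of $\mathbb{Z}/\eta$ to be $\mathbb{Z}/\eta(-1)[-2]$; you instead dualize the defining triangle of $C(\eta)$ and identify $\eta^{\vee}$ with a unit multiple of $\eta(-1)[-1]$ via the braiding $\langle-1\rangle$ on $\mathbb{G}_m^{\wedge 2}$. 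Your route is more formal and self-contained (no appeal to the projective-space computation), at the cost of having to pin down $\eta^{\vee}$ up to a unit — which you correctly flag and which is harmless for identifying the cone; the paper's route avoids that coherence discussion entirely by outsourcing it to an already-computed geometric dual. Likewise, for $j<i$ the paper simply cites \cite[Lemma 2.26]{Y}, whereas you rederive the vanishing from $\widetilde{CH}^{\,m}(X)=0$ for $m<0$; note that this vanishing is available more directly from Proposition \ref{W} (or from the Rost--Schmid complex being concentrated in nonnegative degrees) than from your slightly roundabout appeal to Propositions \ref{sep}, \ref{rhom} and $\eta$-periodicity of $K^W_*$, though that chain does also work. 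The $j=i$ case is the paper's argument verbatim up to a Tate twist.
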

\begin{proof}
The case \(j<i\) follows from \cite[Lemma 2.26]{Y}. Let us suppose \(j\geq i\) and work in \(\widetilde{DM}(pt)\). The \(\Sigma^{\infty}\mathbb{Z}(\mathbb{P}^2)\) has a strong dual \(\Sigma^{\infty}Th(O_{\mathbb{P}^2}(1))(-3)[-6]\) by \cite[Theorem 6.3]{Y} and \cite[Proposition 2.4.31]{CD1}, which is isomorphic to \(\Sigma^{\infty}\mathbb{Z}/\eta(-2)[-4]\oplus\Sigma^{\infty}\mathbb{Z}\) by Theorem \ref{splitting}. Hence the strong dual of \(\Sigma^{\infty}\mathbb{Z}/\eta\) is \(\Sigma^{\infty}\mathbb{Z}/\eta(-1)[-2]\). Hence we obtain the result when \(j>i\). If \(j=i\), we are going to consider the group
\[[\mathbb{Z}(X)(1)[2],\mathbb{Z}/\eta]_{MW}.\]
We have a long exact sequence induced by \(\eta\)
\[0\longrightarrow[\mathbb{Z}(X)(1)[2],\mathbb{Z}/\eta]_{MW}\longrightarrow[\mathbb{Z}(X),\mathbb{Z}]_{MW}\xrightarrow{X\otimes\eta}[\mathbb{Z}(X)(1)[1],\mathbb{Z}]_{MW}\]
where the last arrow is identified with \(GW(X)\longrightarrow W(X)\) by \cite[Lemma 3.1.8, \S 2]{BCDFO} and the second arrow is injective by \cite[Lemma 2.26]{Y}. So the statement follows.
\end{proof}

Note that we have \(Sq^2(s^i)=s^{i+1}\) for \(i\) being odd, \(s\in CH^1(X)/2\) and \(X\in Sm/k\) by \cite[Lemma 2.10]{HW}.
\begin{proposition}\label{gen}
\begin{enumerate}
\item We have a ring isomorphism
\[End_{\widetilde{DM}^{eff}(pt)}(\mathbb{Z}/{\eta})\cong\mathbb{Z}[x]/(x^2-2x).\]
\item Suppose \(j=2i-1\leq n-1\). The morphism
\[\begin{array}{cccc}\gamma'^*:&[\mathbb{Z}(\mathbb{P}^n),\mathbb{Z}/{\eta}(j)[2j]]_{MW}&\longrightarrow&[\mathbb{Z}(\mathbb{P}^n),\mathbb{Z}/{\eta}(j)[2j]]_M\\&&&=CH^j(\mathbb{P}^n)\oplus CH^{j+1}(\mathbb{P}^n)\end{array}\]
is injective with \(coker(\gamma'^*)=\mathbb{Z}/2\mathbb{Z}\). Moreover, we have
\[(c_1(O_{\mathbb{P}^n}(1))^j,c_1(O_{\mathbb{P}^n}(1))^{j+1})\in Im(\gamma'^*).\]
\end{enumerate}
\end{proposition}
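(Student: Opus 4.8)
The plan is to prove part~(1) first and then feed it, through Theorem~\ref{eta}, into part~(2).

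For part~(1): following the computation in the proof of Proposition~\ref{orthogonality} (the case $i=j=0$, $a=b=1$), the abelian group $End_{\widetilde{DM}^{eff}(pt)}(\mathbb{Z}/\eta)$ is free of rank two; more precisely, post-composition with the structure map $u\colon\mathbb{Z}\to\mathbb{Z}/\eta$ of the triangle $\mathbb{Z}(1)[1]\xrightarrow{\eta}\mathbb{Z}\xrightarrow{u}\mathbb{Z}/\eta\xrightarrow{\partial}\mathbb{Z}(1)[2]$ embeds $[\mathbb{Z}/\eta,\mathbb{Z}]_{MW}\cong\mathbb{Z}$ as a direct summand, and post-composition with $\partial$ identifies the complementary quotient with $[\mathbb{Z}/\eta,\mathbb{Z}(1)[2]]_{MW}\cong\mathbb{Z}$. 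I would fix a generator $\delta$ of $[\mathbb{Z}/\eta,\mathbb{Z}]_{MW}$, set $x=u\circ\delta$ (so $\mathbb{Z}\cdot x$ is that summand), and check that $\mathrm{id}\mapsto\partial$ is a generator of $[\mathbb{Z}/\eta,\mathbb{Z}(1)[2]]_{MW}$ by applying the monoidal functor $\gamma'^*\colon\widetilde{DM}^{eff}(pt)\to DM^{eff}(pt)$, where $\mathbb{Z}/\eta\cong\mathbb{Z}\oplus\mathbb{Z}(1)[2]$ (Proposition~\ref{trivial}) and $\partial$ becomes the projection, hence is indivisible; so $\{1,x\}$ is a $\mathbb{Z}$-basis. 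For the ring relation, $x^2=u\circ(\delta\circ u)\circ\delta$, and applying $[-,\mathbb{Z}]_{MW}$ to the defining triangle shows $\delta\circ u$ lies in, and (after possibly replacing $\delta$ by $-\delta$) equals, the hyperbolic class $h\in GW(k)=[\mathbb{Z},\mathbb{Z}]_{MW}$, i.e.\ the kernel of the surjection $\eta^*\colon GW(k)\to W(k)=[\mathbb{Z}(1)[1],\mathbb{Z}]_{MW}$. Morel's relation $\langle-1\rangle=1+\eta\cdot[-1]$ gives $h-2=\eta\circ[-1]$, so $u\circ h-2u=(u\circ\eta)\circ[-1]=0$ because $u\circ\eta=0$; hence $x^2=u\circ h\circ\delta=2u\circ\delta=2x$, and since $1,x$ freely generate the underlying group we conclude $End_{\widetilde{DM}^{eff}(pt)}(\mathbb{Z}/\eta)\cong\mathbb{Z}[x]/(x^2-2x)$.

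For part~(2): the source is $\eta_{MW}^{j}(\mathbb{P}^n)$, and since $CH^{j+1}(\mathbb{P}^n)=\mathbb{Z}$ has no $2$-torsion, Theorem~\ref{eta} presents it as the fibre product $CH^{j}(\mathbb{P}^n)\times_{CH^{j+1}(\mathbb{P}^n)/2}CH^{j+1}(\mathbb{P}^n)$ formed along $Sq^2\circ\pi$ and $\pi$; moreover, because that Cartesian square is built from $\gamma'^*$ together with the splitting $\mathbb{Z}/\eta\cong\mathbb{Z}\oplus\mathbb{Z}(1)[2]$ in $DM^{eff}(pt)$ (Proposition~\ref{trivial}), $\gamma'^*$ is precisely the tautological injection of this fibre product into the product $CH^{j}(\mathbb{P}^n)\oplus CH^{j+1}(\mathbb{P}^n)$. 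Now $CH^k(\mathbb{P}^n)=\mathbb{Z}\cdot c_1^{k}$ for $0\le k\le n$ with $c_1=c_1(O_{\mathbb{P}^n}(1))$, and since $j=2i-1$ is odd and $j,j+1\le n$, the displayed remark preceding the proposition gives $Sq^2(c_1^{j})=c_1^{j+1}$ in $CH^{j+1}(\mathbb{P}^n)/2\cong\mathbb{Z}/2$. Hence the fibre-product condition on $(b\,c_1^{j},a\,c_1^{j+1})$ reads $\bar b=\bar a$, i.e.\ $a\equiv b\pmod 2$, so $Im(\gamma'^*)=\{(b,a)\in\mathbb{Z}^2:a\equiv b\bmod 2\}$, an index-two subgroup; thus $\gamma'^*$ is injective with $coker(\gamma'^*)\cong\mathbb{Z}/2\mathbb{Z}$. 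Finally $(c_1^{j},c_1^{j+1})$ corresponds to $(b,a)=(1,1)$, which satisfies the congruence, hence lies in $Im(\gamma'^*)$.

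The step I expect to be the main obstacle is the bookkeeping in part~(2): verifying that the two maps out of $\eta_{MW}^{j}(X)$ in Theorem~\ref{eta} really are $\gamma'^*$ followed by the two summand-projections of $[\mathbb{Z}(X),\mathbb{Z}/\eta(j)[2j]]_M$ under the $DM$-splitting of $\mathbb{Z}/\eta$, rather than differing from them by an automorphism of the target --- this is a diagram chase through Proposition~\ref{triangles}, not a hard computation, but it is where the argument can go wrong. For part~(1) the only delicate point is to organize the endomorphism-ring computation around the identities $u\circ\eta=0$ and $\langle-1\rangle=1+\eta[-1]$, after which $x^2=2x$ is immediate. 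I would also stress that the hypothesis that $j$ is odd is essential: it is exactly what makes $Sq^2(c_1^{j})=c_1^{j+1}$ nonzero and forces the mod-$2$ congruence; for even $j$ the cited $Sq^2$-formula gives $0$ and the conclusion changes.
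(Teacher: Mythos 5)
Your proposal is correct, and it arrives at both statements by a route that differs from the paper's in the details, so a comparison is worth recording. For part (1) the paper starts from the same short exact sequence $0\to[\mathbb{Z}/\eta,\mathbb{Z}]_{MW}\to R_{MW}\to[\mathbb{Z}/\eta,\mathbb{Z}(1)[2]]_{MW}\to 0$, but it never touches Morel's relation: it shows $\gamma'^*\colon R_{MW}\to R_M\cong\mathbb{Z}\times\mathbb{Z}$ is injective (by comparing the two exact sequences, with the left vertical of index $2$ and the right an isomorphism) and then identifies $R_{MW}$ with the subring of $\mathbb{Z}\times\mathbb{Z}$ spanned by $(2,0)$ and $(1,1)$, from which $x^2=2x$ is read off externally. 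Your intrinsic computation --- $\delta\circ u=\pm h$ from the $\eta$-sequence together with $[\mathbb{Z}(1)[2],\mathbb{Z}]_{MW}=0$ (Proposition \ref{negative}), then $u\circ h=2u$ from $h=2+\eta[-1]$ and $u\circ\eta=0$ --- is a valid alternative that explains where the relation comes from (the hyperbolic form), at the cost of still needing the comparison with $DM^{eff}(pt)$ to see that $\partial$ generates the quotient and hence that $\{1,x\}$ is a basis. For part (2) the paper applies the five lemma to the map of extensions $0\to\widetilde{CH}^j(\mathbb{P}^n)\to F_{MW}\to\widetilde{CH}^{j+1}(\mathbb{P}^n)\to 0$ versus its $M$-analogue, using $\widetilde{CH}^j(\mathbb{P}^n)=2\mathbb{Z}$ for odd $0<j<n$ from Theorem \ref{splitting} and Proposition \ref{orthogonality}, and only invokes Theorem \ref{eta} for the membership of $(c_1^j,c_1^{j+1})$; you instead run everything through the Cartesian square of Theorem \ref{eta}, which is legitimate since $CH^{j+1}(\mathbb{P}^n)=\mathbb{Z}$ is torsion-free, and which makes both the index-two computation and the role of the parity of $j$ (via $Sq^2(c_1^j)=c_1^{j+1}$) completely transparent. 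The one point you flag as a risk --- that the two components of the fibre-product presentation really are $\gamma'^*$ followed by the summand projections --- is not an extra obstacle: the map $p$ in Theorem \ref{eta} is by construction (Proposition \ref{triangles}) the canonical map $H\widetilde{\mathbb{Z}}/\eta\to H_\mu\mathbb{Z}/\eta$, which represents $\gamma'^*$ by Proposition \ref{spec}, and exactness of the middle column there even gives $\mathrm{coker}(\gamma'^*)\cong CH^{j+1}(\mathbb{P}^n)/2$ without computing the image explicitly; the paper relies on the same identification when it deduces the last claim of (2) from Theorem \ref{eta}.
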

\begin{proof}
Let
\[\begin{array}{cc}R_M=End_{DM^{eff}(pt)}(\mathbb{Z}/{\eta})&R_{MW}=End_{\widetilde{DM}^{eff}(pt)}(\mathbb{Z}/{\eta});\\F_M=[\mathbb{Z}(\mathbb{P}^n),\mathbb{Z}/{\eta}(j)[2j]]_{M}&F_{MW}=[\mathbb{Z}(\mathbb{P}^n),\mathbb{Z}/{\eta}(j)[2j]]_{MW}.\end{array}\]
\begin{enumerate}
\item The long exact sequence of \(\eta\) gives a commutative diagram (*) with exact rows 
\[
	\xymatrixcolsep{11pt}
	\xymatrix
	{
		0\ar[r]	&[\mathbb{Z}/{\eta},\mathbb{Z}]_{MW}\ar[r]^-f\ar[d]_{\gamma'^*}^2	&R_{MW}\ar[r]^-h\ar[d]_{\gamma'^*}	&[\mathbb{Z}/{\eta},\mathbb{Z}(1)[2]]_{MW}\ar[r]\ar[d]^{\cong}_{\gamma'^*}	&0\\
		0\ar[r]	&[\mathbb{Z}/{\eta},\mathbb{Z}]_M\ar[r]															&R_M\ar[r]								&[\mathbb{Z}/{\eta},\mathbb{Z}(1)[2]]_M\ar[r]											&0
	},
\]
which implies that
\[\gamma'^*:R_{MW}\longrightarrow R_M\]
is injective. Here \(f\) is injective as stated in Proposition \ref{orthogonality} and \(h\) is surjective by Proposition \ref{ortho}. Since \(\mathbb{Z}/{\eta}=\mathbb{Z}\oplus\mathbb{Z}(1)[2]\) in \(DM^{eff}(pt)\), we see that
\[R_M\cong\mathbb{Z}\oplus\mathbb{Z}\]
as rings. Hence \(R_{MW}\) is a subring of \(\mathbb{Z}\oplus\mathbb{Z}\). It has a free \(\mathbb{Z}\)-basis \(\{a,b\}\) with \(a=(2,0)\) and it must contain \((1,1)\) so we may assume \(b=(1,1)\). Hence we see that there is a ring isomorphism
\[\begin{array}{ccc}\mathbb{Z}[x]/(x^2-2x)&\longrightarrow&R_{MW}\subseteq\mathbb{Z}\oplus\mathbb{Z}\\x&\longmapsto&(2,0)\end{array}.\]
\item 
We have a commutative diagram with exact rows
\[
	\xymatrix
	{
		0\ar[r]	&\widetilde{CH}^j(\mathbb{P}^n)\ar[r]\ar[d]^2_{\gamma'^*}	&F_{MW}\ar[r]\ar[d]_{\gamma'^*}	&\widetilde{CH}^{j+1}(\mathbb{P}^n)\ar[r]\ar[d]_{\gamma'^*}^{\cong}	&0\\
		0\ar[r]	&CH^j(\mathbb{P}^n)\ar[r]													&F_{M}\ar[r]									&CH^{j+1}(\mathbb{P}^n)\ar[r]						&0
	}
\]
by using Theorem \ref{splitting} and the diagram (*) above. Hence the first statement follows. The second statement follows from Theorem \ref{eta}.
%
%The group \(F_{MW}\) (resp. \(F_M\)) is an \(R_{MW}\) (resp. \(R_M\))-module. The morphism
%\[\gamma'^*:F_{MW}\longrightarrow F_M\]
%is an \(R_{MW}\)-morphism. By applying the functor \(\gamma'^*\) to the formulas in Theorem \ref{splitting}, we see that there exists a \(\varphi\in F_{MW}\) such that \(\gamma'^*(\varphi)\) is a free generator of \(F_M\) as an \(R_M\)-module by \cite[Lemma 5.3]{Y}. So we must have
%\[\gamma'^*(\varphi)=(\pm c_1(O_{\mathbb{P}^n}(1))^j,\pm c_1(O_{\mathbb{P}^n}(1))^{j+1}).\]
%Since we have shown in (1) that
%\[\gamma'^*(a-b)=(1,-1)\in R_M,\]
%so the second statement follows from using \(\pm\varphi\) and \(\pm(a-b)\cdot\varphi\).
\end{enumerate}
\end{proof}
%\begin{remark}
%The diagram
%\[
%	\xymatrix
%	{
%		[\mathbb{P}^n_+,\mathbb{Z}/{\eta}(j)[2j]]_{MW}\ar[r]^{\gamma'^*}\ar[d]_{\cong}	&[\mathbb{P}^n_+,\mathbb{Z}/{\eta}(j)[2j]]_{M}\ar[d]_{\cong}\\
%		[\mathbb{P}^n_+,H\widetilde{\mathbb{Z}}/\eta(j)[2j]]\ar[r]									&[\mathbb{P}^n_+,H_{\mu}\mathbb{Z}/\eta(j)[2j]]
%	}
%\]
%does not commute for \(j=2i-1\leq n-1\). Since the image of the upper horizontal map contains \((1,1)\) by Proposition \ref{gen} but that of the lower horizontal map consists of elements like \((x,2y)\) by Theorem \ref{eta}.
%\end{remark}
\begin{definition}
Suppose \(n\in\mathbb{N}\) and \(k\leq n-1\) is odd. Define
\[c^k_n=(c_1(O_{\mathbb{P}^n}(1))^k,c_1(O_{\mathbb{P}^n}(1))^{k+1})\in\eta_{MW}^k(\mathbb{P}^n).\]
If \(n\) is odd, define
\[{\tau}_{n+1}=i_*(1)\in\widetilde{CH}^n(\mathbb{P}^n)\]
where \(i:pt\longrightarrow\mathbb{P}^n\) is a rational point (after fixing an isomorphism \(i^*\omega_{\mathbb{P}^n}\cong k\)).
\end{definition}
The \({\tau}_{n+1}\) is independent of the choice of the rational point by \cite[Proposition 6.3]{F}.
\begin{theorem}\label{splitting1}
Suppose \(n\in\mathbb{N}\) and \(p:\mathbb{P}^n\longrightarrow pt\) is the structure map.
\begin{enumerate}
\item If \(n\) is odd, the morphism
\[\Theta:R(\mathbb{P}^n)\longrightarrow R\oplus\bigoplus_{i=1}^{\frac{n-1}{2}}R/{\eta}(2i-1)[4i-2]\oplus R(n)[2n]\]
is an isomorphism in \(\widetilde{DM}^{eff}(pt,R)\) where
\[\Theta=(p,c^{2i-1}_n\otimes R,{\tau}_{n+1}\otimes R).\]
\item If \(n\) is even, the morphism
\[\Theta:R(\mathbb{P}^n)\longrightarrow R\oplus\bigoplus_{i=1}^{\frac{n}{2}}R/{\eta}(2i-1)[4i-2]\]
is an isomorphism in \(\widetilde{DM}^{eff}(pt,R)\) where
\[\Theta=(p,c^{2i-1}_n\otimes R).\]
\end{enumerate}
\end{theorem}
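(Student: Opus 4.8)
The plan is to reduce to \(R=\mathbb{Z}\), then use the abstract splitting of Theorem \ref{splitting} to present \(\Theta\) as a diagonal endomorphism of its target, and finally check that each diagonal entry is a unit. For the reduction: given a ring map \(\varphi\colon\mathbb{Z}\to R\), the monoidal functor \(\varphi^*\) preserves isomorphisms, commutes with \(\gamma'^*\), and carries \(p,c^{2i-1}_n,\tau_{n+1}\) to \(p,c^{2i-1}_n\otimes R,\tau_{n+1}\otimes R\); hence it is enough to treat \(R=\mathbb{Z}\).

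Write the target as \(\bigoplus_a T_a\) with \(T_0=\mathbb{Z}\), \(T_a=\mathbb{Z}/\eta(2a-1)[4a-2]\) for \(1\le a\le\lfloor n/2\rfloor\), and, when \(n\) is odd, one further summand \(T_{\mathrm{top}}=\mathbb{Z}(n)[2n]\); let \(\sigma\colon\bigoplus_a T_a\xrightarrow{\sim}\mathbb{Z}(\mathbb{P}^n)\) be the isomorphism of Theorem \ref{splitting}. Comparing weights in Proposition \ref{orthogonality} gives \([T_a,T_b]_{MW}=0\) for \(a\ne b\), so \(End(\bigoplus_a T_a)=\prod_a End(T_a)\); moreover the off-diagonal components \(\Theta_a\circ\sigma_b\) of \(\Theta\circ\sigma\) lie in \([T_b,T_a]_{MW}=0\), so \(\Theta\circ\sigma\) is diagonal with \(a\)-th entry \(\phi_a:=\Theta_a\circ\sigma_a\). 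Here \(End(T_0)=End(T_{\mathrm{top}})=GW(k)\) and \(End(T_a)=End(\mathbb{Z}/\eta)=\mathbb{Z}[x]/(x^2-2x)\) by Propositions \ref{cancellation} and \ref{gen}(1). Thus \(\Theta\) is an isomorphism precisely when every \(\phi_a\) is a unit.

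The entry \(\phi_0\) is a unit: the structure map \(p\) is a split epimorphism (any rational point is a section), and since the components \(p\circ\sigma_b\) for \(b\ne 0\) lie in \([T_b,\mathbb{Z}]_{MW}=0\) one has \(p=\phi_0\circ pr_0\), which forces \(\phi_0\) to be a split epimorphism \(\mathbb{Z}\to\mathbb{Z}\), hence a unit of \(GW(k)\). For odd \(n\), the entry \(\phi_{\mathrm{top}}\) is the image of \(\tau_{n+1}\) under the isomorphism \(\widetilde{CH}^n(\mathbb{P}^n)\xrightarrow{\sim}End(T_{\mathrm{top}})=GW(k)\) (bijective again by Proposition \ref{orthogonality}); it is a unit because for odd \(n\) the class of a rational point freely generates \(\widetilde{CH}^n(\mathbb{P}^n)\) as a \(GW(k)\)-module (Fasel; cf.\ \cite[Corollary 11.8, Proposition 6.3]{F}). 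For \(1\le a\le\lfloor n/2\rfloor\): since the image of the ring embedding \(\mathbb{Z}[x]/(x^2-2x)\hookrightarrow\mathbb{Z}\times\mathbb{Z}\) of Proposition \ref{gen}(1) contains every unit of \(\mathbb{Z}\times\mathbb{Z}\), the element \(\phi_a\) is a unit iff \(\gamma'^*\phi_a\) is a unit of \(End_{DM^{eff}(pt)}(\mathbb{Z}/\eta)=\mathbb{Z}\times\mathbb{Z}\). Over \(DM^{eff}(pt)\) one has \(\mathbb{Z}/\eta=\mathbb{Z}\oplus\mathbb{Z}(1)[2]\), so \(\gamma'^*\bigoplus_a T_a=\bigoplus_{j=0}^n\mathbb{Z}(j)[2j]\), and by Proposition \ref{gen}(2) the components of \(\gamma'^*\Theta\) are exactly the powers \(c_1(O_{\mathbb{P}^n}(1))^j\), \(j=0,\dots,n\); hence \(\gamma'^*\Theta\) is the isomorphism of the classical projective bundle theorem \cite[Theorem 15.12]{MVW}. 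As weight-homogeneous summands again admit no cross maps over \(DM^{eff}(pt)\), \(\gamma'^*(\Theta\circ\sigma)\) is diagonal and invertible, so each \(\gamma'^*\phi_a\) — hence each \(\phi_a\) — is a unit, and \(\Theta\) is an isomorphism.

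The main obstacle is the \(\mathbb{Z}/\eta\)-entries: \(\gamma'^*\) is not conservative, so one cannot simply transport invertibility, and the argument hinges on the arithmetic fact that every unit of \(End_{DM^{eff}}(\mathbb{Z}/\eta)=\mathbb{Z}\times\mathbb{Z}\) already belongs to the subring \(End_{\widetilde{DM}^{eff}}(\mathbb{Z}/\eta)=\mathbb{Z}[x]/(x^2-2x)\), together with the precise description of \(\gamma'^*\Theta\) coming from Proposition \ref{gen}(2). The only genuinely external input is Fasel's computation of \(\widetilde{CH}^n(\mathbb{P}^n)\) for odd \(n\), used solely to treat the top cell.
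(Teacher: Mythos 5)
Your proposal is correct and follows essentially the same route as the paper: reduce to $R=\mathbb{Z}$, use Theorem \ref{splitting} together with the vanishing of cross-Homs from Proposition \ref{orthogonality} to diagonalize $\Theta$, handle the $\mathbb{Z}$ and $\mathbb{Z}(n)[2n]$ entries directly, and detect invertibility of the $\mathbb{Z}/\eta$-entries by passing to $DM^{eff}(pt)$ via $\gamma'^*$ and the classical projective bundle theorem. The only cosmetic difference is in that last step: where you observe that every unit of $\mathbb{Z}\times\mathbb{Z}$ already lies in the subring $\mathbb{Z}[x]/(x^2-2x)$, the paper runs an equivalent index-counting argument on the square comparing $End(\mathbb{Z}/\eta)$ with $\eta^{2i-1}_{MW}(\mathbb{P}^n)$ (both cokernels of $\gamma'^*$ being $\mathbb{Z}/2$), using the same input from Proposition \ref{gen}.
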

\begin{proof}
It suffices to do the case \(R=\mathbb{Z}\). By \cite[Lemma 5.3]{Y}, Theorem \ref{splitting} and Proposition \ref{orthogonality}, we only have to prove
\begin{enumerate}
\item[(a)] that \(p\) is a free generator of \([\mathbb{Z}(\mathbb{P}^n),\mathbb{Z}]_{MW}\) as an \(End_{\widetilde{DM}^{eff}(pt)}(\mathbb{Z})\)-module;

This is clear since \(p\) corresponds to \(1\in\widetilde{CH}^0(\mathbb{P}^n)\).
\item[(b)] that \(c^{2i-1}_n\) is a free generator of \(\eta^{2i-1}_{MW}(\mathbb{P}^n)\) as an \(End_{\widetilde{DM}^{eff}(pt)}(\mathbb{Z}/{\eta})\)-module if \(2i-1\leq n-1\);

We have a commutative diagram by Proposition \ref{gen}, (2)
\[
	\xymatrix
	{
		End_{\widetilde{DM}^{eff}(pt)}(\mathbb{Z}/{\eta})\ar[r]^{\gamma'^*}\ar[d]_{u=c^n_{2i-1}}&End_{DM^{eff}(pt)}(\mathbb{Z}/{\eta})\ar[d]_{v=c^n_{2i-1}}^{\cong}\\
		\eta_{MW}^{2i-1}(\mathbb{P}^n)\ar[r]^{\gamma'^*}			&\eta_M^{2i-1}(\mathbb{P}^n)
	}
\]
where all terms are isomorphic to \(\mathbb{Z}\oplus\mathbb{Z}\) as abelian groups. The \(v\) is an isomorphism by the free generator's statement in \(DM^{eff}(pt)\). The \(u\) is injective and \(coker(v\circ\gamma'^*)=\mathbb{Z}/2\mathbb{Z}\) by computation in Proposition \ref{gen}, (1). The lower horizontal map is injective but not surjective by Proposition \ref{gen}, (2). Hence we see that \(u\) must be an isomorphism. We are done.
% and all arrows respect components. 
%The \(w\) and \(r^*\) are like
%\[(x,y)\longmapsto (2x,y)\]
%by the diagrams in (*) and moreover
%\[v(1,1)=(1,1).\]
%Since \(v\) is also a \(\mathbb{Z}\oplus\mathbb{Z}\)-module morphism, we conclude that \(v=id_{\mathbb{Z}\oplus\mathbb{Z}}\). Then
%\[w((x,y))=(2x,y)=v((2x,y))=v(r^*(x,y))=w(u(x,y))\]
%implies that \(u=id_{\mathbb{Z}\oplus\mathbb{Z}}\), so \(u\) is an isomorphism. We're done.
%We resolute the (composite) of the right hand side column of (*) and (**) by the following commutative diagrams with splitting exact rows
%\[
%	\xymatrix
%	{
%		0\ar[r]	&[1/\eta,H_{\mu}\mathbb{Z}\wedge\mathbb{P}^1]\ar[r]\ar[d]_{a}	&[1/\eta,H\widetilde{\mathbb{Z}}/\eta]\ar[r]\ar[d]	&[1/\eta,H_{\mu}\mathbb{Z}\oplus H_{\mu}\mathbb{Z}/2[2]]\ar[d]_{b}\ar[r]	&0\\
%		0\ar[r]	&[\mathbb{P}^n_+,H_{\mu}\mathbb{Z}\wedge\mathbb{P}^1(j)[2j]]\ar[r]				&[\mathbb{P}^n_+,H\widetilde{\mathbb{Z}}/\eta(j)[2j]]\ar[r]	&[\mathbb{P}^n_+,(H_{\mu}\mathbb{Z}\oplus H_{\mu}\mathbb{Z}/2[2])(j)[2j]]\ar[r]	&0
%	}
%\]
%\[
%	\xymatrix
%	{
%		0\ar[r]	&[1/\eta,H_{\mu}\mathbb{Z}\wedge\mathbb{P}^1]\ar[r]\ar[d]_{a'}	&[1/\eta,H_{\mu}\mathbb{Z}/\eta]\ar[r]\ar[d]	&[1/\eta,H_{\mu}\mathbb{Z}]\ar[d]_{b'}\ar[r]	&0\\
%		0\ar[r]	&[\mathbb{P}^n_+,H_{\mu}\mathbb{Z}\wedge\mathbb{P}^1(j)[2j]]\ar[r]				&[\mathbb{P}^n_+,H_{\mu}\mathbb{Z}/\eta(j)[2j]]\ar[r]	&[\mathbb{P}^n_+,H_{\mu}\mathbb{Z}(j)[2j]]\ar[r]	&0
%	}.
%\]
\item[(c)] that \({\tau}_{n+1}\) is a free generator of \([\mathbb{Z}(\mathbb{P}^n),\mathbb{Z}(n)[2n]]_{MW}\) as an \(End_{\widetilde{DM}^{eff}(pt)}(\mathbb{Z}(n)[2n])\)-module if \(n\) is odd;

This is because \(p_*:\widetilde{CH}^n(\mathbb{P}^n)\longrightarrow\widetilde{CH}^0(pt)\) is an isomorphism by the end of the proof of \cite[Proposition 6.3]{F}, so does \(i_*\).
\end{enumerate}
\end{proof}
\begin{definition}
Let \(X\in Sm/k\). An acyclic covering of \(X\) is an open covering \(\{U_i\}\) of \(X\) such that \(CH^j(U_i)=0\) for any \(i\) and \(j>0\).
\end{definition}
Note that any refinement of an acyclic covering is again acyclic. Acyclic covering exists if \(X\) admits an open covering by \(\mathbb{A}^n\).
\begin{proposition}\label{global}
Suppose \(X\in Sm/k\), \(_2CH^*(X)=0\) and \(E\) is a vector bundle of rank \(n\) over \(X\). Assume that \(X\) admits an acyclic covering \(\{U_{\alpha}\}\) such that we have trivializations \(E|_{U_{\alpha}}=O_{U_{\alpha}}^{\oplus n}\). Then for any \(\sigma\in\eta_{MW}^i(\mathbb{P}^{n-1})\) where \(i\in\mathbb{N}\), there is a \(\sigma(E)\in\eta_{MW}^i(\mathbb{P}(E))\) such that
\[\sigma(E)|_{U_{\alpha}}=p^*\sigma\]
for every \(\alpha\) where \(p:\mathbb{P}^{n-1}\times U_{\alpha}\longrightarrow\mathbb{P}^{n-1}\) is the projection.
\end{proposition}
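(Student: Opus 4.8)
The plan is to use Theorem~\ref{eta} to replace $\eta_{MW}^i$ by a purely Chow-theoretic fibre product for every scheme that occurs, and then to build $\sigma(E)$ out of powers of the hyperplane class of $\mathbb{P}(E)$, its shape being forced by the requirement that it restrict to $p^*\sigma$ over each trivializing open. So the first step is to record the relevant vanishing of $2$-torsion. Since $_2CH^*(X)=0$, the projective bundle formula for Chow groups gives $_2CH^*(\mathbb{P}(E))=0$; also $_2CH^*(\mathbb{P}^{n-1})=0$ because its Chow ring is free, and $_2CH^*(\mathbb{P}^{n-1}\times U_\alpha)=0$ because $U_\alpha$ is acyclic, so $CH^{>0}(U_\alpha)=0$ and $CH^0(U_\alpha)$ is torsion free. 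Hence Theorem~\ref{eta} applies to each of $\mathbb{P}(E)$, $\mathbb{P}^{n-1}$ and $\mathbb{P}^{n-1}\times U_\alpha$, and, since the Cartesian square there is natural in $X$ along maps between schemes with $2$-torsion-free Chow groups, it applies naturally along the open immersions $\mathbb{P}(E)|_{U_\alpha}\hookrightarrow\mathbb{P}(E)$ and the projections $p:\mathbb{P}^{n-1}\times U_\alpha\to\mathbb{P}^{n-1}$. Thus for each such $Y$ we identify $\eta_{MW}^i(Y)$, functorially, with $\{(x,y)\in CH^i(Y)\times CH^{i+1}(Y):Sq^2(\pi x)=\pi y\}$, the fibre product of $Sq^2\circ\pi:CH^i(Y)\to CH^{i+1}(Y)/2$ and $\pi:CH^{i+1}(Y)\to CH^{i+1}(Y)/2$; in particular $\eta_{MW}^i(Y)$ embeds into $CH^i(Y)\oplus CH^{i+1}(Y)$, so equalities in $\eta_{MW}^i(Y)$ may be checked coordinatewise in Chow groups.

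Next I would make $\sigma(E)$ explicit. Writing $h=c_1(O_{\mathbb{P}^{n-1}}(1))$ one has $CH^j(\mathbb{P}^{n-1})=\mathbb{Z}h^j$, with the convention $h^j=0$ for $j\geq n$, so $\sigma=(ah^i,bh^{i+1})$ for integers $a,b$ with $Sq^2(ah^i)\equiv bh^{i+1}\pmod 2$. Since $CH^*(\mathbb{P}^{n-1})$ has no $2$-torsion the Bockstein $Sq^1$ vanishes on $CH^*(\mathbb{P}^{n-1})/2$, so the Cartan formula yields $Sq^2(h^j)\equiv jh^{j+1}\pmod 2$ and the congruence on $\sigma$ becomes $ai\equiv b\pmod 2$ whenever $h^{i+1}\neq 0$, i.e.\ whenever $i\leq n-2$. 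Set $\xi=c_1(O_{\mathbb{P}(E)}(1))$ and define
\[
\sigma(E):=\begin{cases}(a\xi^i,\,b\xi^{i+1})&\text{if }i\leq n-2,\\ (a\xi^i,\,ai\,\xi^{i+1})&\text{if }i\geq n-1,\end{cases}
\]
as an element of $CH^i(\mathbb{P}(E))\oplus CH^{i+1}(\mathbb{P}(E))$. The same Cartan computation, valid because $_2CH^*(\mathbb{P}(E))=0$ forces $Sq^1=0$ there too, gives $Sq^2(\xi^j)\equiv j\xi^{j+1}\pmod 2$, so the fibre-product congruence for $\sigma(E)$ reads $(ai-b)\xi^{i+1}\equiv 0\pmod 2$ in the first case, which holds since $ai\equiv b\pmod 2$, and holds by construction in the second. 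Hence $\sigma(E)\in\eta_{MW}^i(\mathbb{P}(E))$.

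It remains to check the restrictions. The trivialization $E|_{U_\alpha}\cong O_{U_\alpha}^{\oplus n}$ identifies $\mathbb{P}(E)|_{U_\alpha}$ with $\mathbb{P}^{n-1}\times U_\alpha$ so that $O_{\mathbb{P}(E)}(1)|_{U_\alpha}$ corresponds to $p^*O_{\mathbb{P}^{n-1}}(1)$; hence $\xi|_{U_\alpha}=p^*h$ and $\xi^j|_{U_\alpha}=p^*(h^j)$ for all $j$, and in particular $\xi^{i+1}|_{U_\alpha}=p^*(h^{i+1})=0$ when $i\geq n-1$, matching $p^*(bh^{i+1})=0$ there. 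Substituting $\xi|_{U_\alpha}=p^*h$ into the two coordinates of $\sigma(E)$ and using the naturality of the identifications from Theorem~\ref{eta} (for the open immersion $\mathbb{P}(E)|_{U_\alpha}\hookrightarrow\mathbb{P}(E)$ and for $p$), we get $\sigma(E)|_{U_\alpha}=(p^*(ah^i),p^*(bh^{i+1}))=p^*\sigma$ in $\eta_{MW}^i(\mathbb{P}^{n-1}\times U_\alpha)$, which is what we want.

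The one place where substantive input is needed is the reduction at the start: once every scheme in play is known to have $2$-torsion-free Chow groups, Theorem~\ref{eta} turns $\eta_{MW}^i$ into a Chow fibre product, after which the construction is essentially dictated by the identity $\xi|_{U_\alpha}=p^*h$ together with the Cartan formula for $Sq^2$. I expect the only mildly delicate point to be the edge cases $i\geq n-1$, where $h^{i+1}$ vanishes on $\mathbb{P}^{n-1}$ but $\xi^{i+1}$ need not vanish on $\mathbb{P}(E)$, which is why the second coordinate of $\sigma(E)$ must there be taken to be $ai\,\xi^{i+1}$ rather than a multiple of a power of a hyperplane class.
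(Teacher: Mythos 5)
Your proposal is correct and follows essentially the same route as the paper: apply Theorem~\ref{eta} to identify $\eta_{MW}^i$ with the fibre product of Chow groups along $Sq^2\circ\pi$ and $\pi$ (using $Sq^2(h^j)\equiv jh^{j+1}$, which the paper takes from Hornbostel--Wendt), write $\sigma=(ah^i,bh^{i+1})$ with the resulting parity constraint, and define $\sigma(E)$ by substituting $c_1(O_E(1))$ for $h$. Your extra bookkeeping (verifying $_2CH^*=0$ for $\mathbb{P}(E)$ and $\mathbb{P}^{n-1}\times U_\alpha$ so that the identification is natural, and fixing the second coordinate explicitly when $i\geq n-1$) only makes explicit what the paper leaves implicit.
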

\begin{proof}
By Theorem \ref{eta}, any \(\sigma\in\eta_{MW}^i(\mathbb{P}^{n-1})\) corresponds to an element \[(a\cdot c_1(O(1))^i,b\cdot c_1(O(1))^{i+1})\in CH^i(\mathbb{P}^{n-1})\oplus CH^{i+1}(\mathbb{P}^{n-1})\]
such that
\[\left\{\begin{array}{cc}a\equiv b\ (mod\ 2)&\textrm{if \(i\) is odd}\\b\equiv 0\ (mod\ 2)&\textrm{if \(i\) is even}\end{array}\right..\]
So the element
\[\sigma(E)=(a\cdot c_1(O_E(1))^i,b\cdot c_1(O_E(1))^{i+1})\in\eta_{MW}^i(\mathbb{P}(E))\]
is what we want.
\end{proof}
We denote the quotient map \(\mathbb{Z}\longrightarrow\mathbb{Z}/\eta\) by \(\pi\). By strong duality, it corresponds to a morphism \(\partial:\mathbb{Z}/\eta\longrightarrow\mathbb{Z}(1)[2]\). Since \(\pi\) is a generator of \([\mathbb{Z},\mathbb{Z}/\eta]_{MW}=\mathbb{Z}\) by the computation in Proposition \ref{orthogonality}, we see that \(\partial\) is, up to a sign, equal to the boundary map \(\partial'\) of \(\mathbb{Z}/\eta\), since both \(\partial\) and \(\partial'\) are generators of \([\mathbb{Z}/\eta,\mathbb{Z}(1)[2]]_{MW}=\mathbb{Z}\).

Now we are ready to formulate our projective bundle theorems.
\begin{theorem}\label{pbt}
Let \(S\in Sm/k\), \(X\in Sm/S\) and \(E\) be a vector bundle of rank \(n\) over \(X\). Suppose \(_2CH^*(X)=0\) and \(X\) admits an acyclic covering. Denote by \(p:\mathbb{P}(E)\longrightarrow X\) and \(q:X\longrightarrow S\) the structure maps.
\begin{enumerate}
\item We have isomorphisms
\[R(\mathbb{P}(E))/\eta\cong\bigoplus_{i=0}^{n-1}R(X)/\eta(i)[2i]\]
\[Th(E)/\eta\cong R(X)/\eta(n)[2n]\]
in \(\widetilde{DM}(S,R)\).
\item If \(n\) is odd, the morphism
\[\Theta:R(\mathbb{P}(E))\longrightarrow R(X)\oplus\bigoplus_{i=1}^{\frac{n-1}{2}}R(X)/{\eta}(2i-1)[4i-2]\]
is an isomorphism in \(\widetilde{DM}^{eff}(S,R)\) where
\[\Theta=(p,p\boxtimes c^{2i-1}_n(E)_R).\]
\item If \(n\) is even, we have a distinguished triangle
\begin{footnotesize}
\[Th(det(E)^{\vee})(n-2)[2n-4]\to R(\mathbb{P}(E))\xrightarrow{\Theta} R(X)\oplus\bigoplus_{i=1}^{\frac{n}{2}-1}R(X)/{\eta}(2i-1)[4i-2]\xrightarrow{\mu}\cdots[1]\]
\end{footnotesize}
in \(\widetilde{DM}(S,R)\) where
\[\Theta=(p,p\boxtimes c^{2i-1}_n(E)_R)\]
and \(\mu=0\) if \(e(E^{\vee})=0\in H^n(X,W(det(E)))\).
\end{enumerate}
\end{theorem}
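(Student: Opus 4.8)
The plan is to handle the three statements in the order (1), (2), (3), using the Zariski--Nisnevich separation property (Proposition \ref{zarsep}) to reduce the first two to computations over a point, and a localization triangle for the third; throughout one may assume $R=\mathbb{Z}$ and pass to general $R$ by base change. For (1) I would first record the twist--trivialization $Th(L)/\eta\cong R(X)/\eta(1)[2]$ for any line bundle $L$: since $L$ is locally trivial and $Th(\mathcal{O})=R(X)(1)[2]$, the comparison morphism that is the identity over any trivializing open is a local isomorphism, hence an isomorphism by Proposition \ref{zarsep}; combined with Proposition \ref{gysin} this yields $Th(E)/\eta\cong Th(\det E)/\eta(n-1)[2n-2]\cong R(X)/\eta(n)[2n]$. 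For the first isomorphism in (1) I would assemble $\Theta/\eta\colon R(\mathbb{P}(E))/\eta\to\bigoplus_{i=0}^{n-1}R(X)/\eta(i)[2i]$ from the structure map $p$ and the morphisms determined by $c_1(\mathcal{O}_E(1))^i$ after tensoring with $\mathbb{Z}/\eta$ (using Proposition \ref{global} and the quotient $R(\mathbb{P}(E))\to R(\mathbb{P}(E))/\eta$), restrict to a trivializing acyclic $U_\alpha$ where $\mathbb{P}(E)|_{U_\alpha}=\mathbb{P}^{n-1}\times U_\alpha$, and conclude by Proposition \ref{zarsep}, since over $U_\alpha$ the map is the base change of the isomorphism obtained from Theorem \ref{splitting} and Proposition \ref{ortho}(2) (which give $R(\mathbb{P}^{n-1})/\eta\cong\bigoplus_{j=0}^{n-1}\mathbb{Z}/\eta(j)[2j]$).

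For (2), the morphism $\Theta=(p,p\boxtimes c^{2i-1}_n(E))$ already makes sense globally because the classes $c^{2i-1}_n(E)\in\eta^{2i-1}_{MW}(\mathbb{P}(E))$ are produced by Proposition \ref{global} --- this is where $_2CH^*(X)=0$ and the acyclic trivializing covering are used. By Proposition \ref{zarsep} it suffices to check $\Theta$ is an isomorphism on each $U_\alpha$; there $\mathbb{P}(E)|_{U_\alpha}=\mathbb{P}^{n-1}\times U_\alpha$, and since $c^{2i-1}_n(E)|_{U_\alpha}=p^*c^{2i-1}_{n-1}$ and $\boxtimes$ is compatible with base change (Proposition \ref{derived}), $\Theta|_{U_\alpha}$ is the base change of the isomorphism of Theorem \ref{splitting1} for $\mathbb{P}^{n-1}$ ($n-1$ being even), hence an isomorphism.

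For (3), since $n$ is even $E\oplus\mathcal{O}$ has odd rank $n+1$, so part (2) applied to it (the same covering trivializes $E\oplus\mathcal{O}$) gives $R(\mathbb{P}(E\oplus\mathcal{O}))\cong R(X)\oplus\bigoplus_{i=1}^{n/2}R(X)/\eta(2i-1)[4i-2]$ via $\Theta_{E\oplus\mathcal{O}}=(p,p\boxtimes c^{2i-1}_{n+1}(E\oplus\mathcal{O}))$. The zero section $z\colon X\hookrightarrow\mathbb{P}(E\oplus\mathcal{O})$ (image $\mathbb{P}(\mathcal{O})$) is closed with normal bundle $E^\vee$, and its open complement is the total space of a line bundle over $\mathbb{P}(E)$ whose zero section realizes the closed immersion $\iota\colon\mathbb{P}(E)\hookrightarrow\mathbb{P}(E\oplus\mathcal{O})$ at infinity; purity (\cite[Theorem 6.3]{Y}) together with Proposition \ref{gysin} gives a distinguished triangle
\[
R(\mathbb{P}(E))\xrightarrow{R(\iota)}R(\mathbb{P}(E\oplus\mathcal{O}))\xrightarrow{q}Th(\det E^\vee)(n-1)[2n-2]\xrightarrow{\partial}R(\mathbb{P}(E))[1].
\]
Since $p\circ\iota=p_E$ and $\iota^*\mathcal{O}_{\mathbb{P}(E\oplus\mathcal{O})}(1)=\mathcal{O}_{\mathbb{P}(E)}(1)$, so $\iota^*c^{2i-1}_{n+1}(E\oplus\mathcal{O})=c^{2i-1}_n(E)$ by naturality of Proposition \ref{global}, the composite of $R(\iota)$ with the projection onto $A':=R(X)\oplus\bigoplus_{i=1}^{n/2-1}R(X)/\eta(2i-1)[4i-2]$ is precisely $\Theta$. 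Applying the octahedral axiom to $R(\mathbb{P}(E))\xrightarrow{R(\iota)}A'\oplus B\xrightarrow{\mathrm{pr}_{A'}}A'$, where $B=R(X)/\eta(n-1)[2n-2]$ is the top summand, yields a distinguished triangle $C\to R(\mathbb{P}(E))\xrightarrow{\Theta}A'\xrightarrow{\mu}C[1]$ and a comparison triangle relating $C[1]$, $Th(\det E^\vee)(n-1)[2n-2]$ and $B[1]$; identifying its connecting maps with the canonical $\eta$-maps and using the twist--trivialization (so that $B[1]\cong Th(\det E^\vee)/\eta(n-2)[2n-3]$) forces $C\cong Th(\det E^\vee)(n-2)[2n-4]$, which is the required triangle.

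The hard part will be showing $\mu=0$ when $e(E^\vee)=0\in H^n(X,W(\det E))$. By Proposition \ref{gysin}, taking $z$ for the $R(X)$-summand inclusion of the part-(2) splitting of $R(\mathbb{P}(E\oplus\mathcal{O}))$, the composite $R(X)\xrightarrow{z}R(\mathbb{P}(E\oplus\mathcal{O}))\xrightarrow{q}Th(\det E^\vee)(n-1)[2n-2]$ equals the Euler class $e(E^\vee)$; I would trace through the octahedron to express $\mu$ entirely in terms of this composite, so that its vanishing --- and only its vanishing in the Witt-cohomology group matters, $\eta$ being invertible there --- kills $\mu$. Making this precise, namely verifying that the remaining components of $q$ feed only into the identification $C\cong Th(\det E^\vee)(n-2)[2n-4]$ and not into $\mu$, while simultaneously pinning down the Chow--Witt twists ($\det E$ versus $\det E^\vee$ and the line-bundle twist inside $Th$), is where the genuine care lies; the remainder is formal manipulation of distinguished triangles and base change.
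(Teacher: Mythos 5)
Your overall architecture --- glue the splitting of $R(\mathbb{P}^{n-1})$ over an acyclic trivializing cover via Propositions \ref{global} and \ref{zarsep} for (1) and (2), then compare $\mathbb{P}(E)$ with the odd-rank completion $\mathbb{P}(E\oplus O_X)$ and chase a $3\times 3$ diagram (your octahedron) for (3), with the Euler class of $E^{\vee}$ controlling the splitting --- is the paper's proof. However, two steps have genuine gaps. First, your derivation of $Th(E)/\eta\cong R(X)/\eta(n)[2n]$ rests on ``the comparison morphism $Th(L)/\eta\to R(X)/\eta(1)[2]$ that is the identity over any trivializing open,'' but Proposition \ref{zarsep} only upgrades an \emph{already constructed} global morphism that is locally an isomorphism; it does not glue local morphisms into a global one, and morphisms in a triangulated category do not satisfy descent. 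Even the agreement on overlaps is not free: a transition unit $u$ acts on $Th(O_X)=R(X)(1)[2]$ by $\langle u\rangle=1+\eta[u]\neq 1$, and one must first observe (e.g.\ from the splitting in Proposition \ref{ortho}(2), which forces $\eta\wedge\mathbb{Z}/\eta=0$) that this becomes the identity after $-/\eta$; a gluing obstruction still remains afterwards. The paper avoids this entirely by realizing $Th(E)$ as $R(\mathbb{P}(E^{\vee}\oplus O_X))/R(\mathbb{P}(E^{\vee}))$ and producing the trivialization of $Th(E)/\eta$ from the already-glued classes of Proposition \ref{global} (whose global existence is exactly what Theorem \ref{eta}, $_2CH^*(X)=0$ and acyclicity buy), together with the identification of $\tau_n/\eta$ with a component of $c^n_{n-1}/\eta$. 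Since your part (3) uses this twist-trivialization to pin down the cone $C$, the gap propagates there.

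Second, in (3) the vanishing of $\mu$ cannot be phrased as ``the remaining components of $q$ feed only into the identification of $C$'': the components of $\mu$ on the summands $R(X)/\eta(2i-1)[4i-2]$ with $2i-1<n-1$ must be shown to vanish outright, and the paper does this by computing $[\mathbb{Z}(X)/\eta(2i-1)[4i-2],C]\cong[\mathbb{Z}(X),\mathbb{Z}/\eta(n-2i-1)[2n-4i-1]]=0$ using part (1), the self-duality of $\mathbb{Z}/\eta$, and Proposition \ref{slice}. Your identification of the remaining component, on the $R(X)$ summand, with $\eta$ composed with the Euler class $e(E^{\vee})$ --- so that only its image in $H^n(X,W(\det E))$ matters --- is correct and matches the paper's step (c), but the vanishing on the $\eta$-summands is a separate computation that your outline acknowledges without supplying, and it is not a formality: it is where Proposition \ref{slice} (the slice computation of $H\widetilde{\mathbb{Z}}/\eta$) enters the argument.
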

\begin{proof}
It suffices to do the case \(R=\mathbb{Z}\). Let \(\{U_{\alpha}\}\) be a finite acyclic covering of \(X\) such that \(E|_{U_{\alpha}}\) is trivial for any \(i\). Suppose at first \(S=X\), so \(\mathbb{Z}(X)=\mathbb{Z}\). For the general cases, just apply \(q_{\#}\).
\begin{enumerate}
\item We have
\[[\mathbb{Z}(\mathbb{P}(E))/\eta,\mathbb{Z}/\eta(i)[2i]]_{MW}=\eta_{MW}^{i-1}(\mathbb{P}(E))\oplus\eta_{MW}^i(\mathbb{P}(E))\]
by the strong duality of \(\mathbb{Z}/\eta\) for any \(i\in\mathbb{N}\). So by Proposition \ref{global}, for every \(\tau\in[\mathbb{Z}(\mathbb{P}^{n-1})/\eta,\mathbb{Z}/\eta(i)[2i]]_{MW}\), we could find an
\[\tau(E)\in[\mathbb{Z}(\mathbb{P}(E))/\eta,\mathbb{Z}/\eta(i)[2i]]_{MW}\]
such that \(\tau(E)|_{U_{\alpha}}\) is a pullback of \(\tau\) along projection, after fixing a trivialization of \(E|_{U_{\alpha}}\).

Now tensoring the equation in Theorem \ref{splitting1} with \(\mathbb{Z}/\eta\) we obtain an isomorphism
\[(\sigma_a):\mathbb{Z}(\mathbb{P}^{n-1})/\eta\longrightarrow\bigoplus_{i=0}^{n-1}\mathbb{Z}/\eta(i)[2i]\]
by Proposition \ref{ortho}. Then
\[(\sigma_a(E)):\mathbb{Z}(\mathbb{P}(E))/\eta\longrightarrow\bigoplus_{i=0}^{n-1}\mathbb{Z}/\eta(i)[2i]\]
is an isomorphism in \(\widetilde{DM}(X)\) by Proposition \ref{zarsep}.

For the second statement, let us look at the canonical embedding \(j:\mathbb{P}^{n-1}\longrightarrow\mathbb{P}^n\). If \(n\) is odd, for any \(2i-1\leq n-1\), we have
\[c_{2i-1}^n\circ j=c_{2i-1}^{n-1}.\]
If \(n\) is even, the above equation also holds for \(2i-1<n-1\). By Proposition \ref{ortho} and Theorem \ref{splitting1}, there is a morphism \(\pi\in[\mathbb{Z},\mathbb{Z}/\eta]_{MW}\) such that we have a commutative diagram in \(\widetilde{DM}(pt)\) by Proposition \ref{orthogonality}
\[
	\xymatrixcolsep{5pc}
	\xymatrix
	{
		\mathbb{Z}(\mathbb{P}^{n-1})\ar[r]^j\ar[d]_{\tau_n}			&\mathbb{Z}(\mathbb{P}^n)\ar[d]_{c_{n-1}^n}\\
		\mathbb{Z}(n-1)[2n-2]\ar[r]^{\pi(n-1)[2n-2]}	&\mathbb{Z}/\eta(n-1)[2n-2]
	}.
\]
Furthermore, there is a distinguished triangle
\[\mathbb{Z}\xrightarrow{\pi}\mathbb{Z}/\eta\longrightarrow\mathbb{Z}(1)[2]\longrightarrow\cdots[1]\]
since \(C(j)=\mathbb{Z}(n)[2n]\). Applying \([\mathbb{Z},-]_{MW}\) to the triangle above we conclude by Proposition \ref{orthogonality} that \(\pi\) is just the quotient map, that is, a generator of \([\mathbb{Z},\mathbb{Z}/\eta]_{MW}\). So we see that \(\tau_n/\eta\) is equal to the composite
\begin{footnotesize}
\[\mathbb{P}^{n-1}/\eta\xrightarrow{j/\eta}\mathbb{P}^n/\eta\xrightarrow{c_{n-1}^n/\eta}\mathbb{Z}/\eta(n-1)[2n-2]\oplus\mathbb{Z}/\eta(n)[2n]\longrightarrow\mathbb{Z}/\eta(n-1)[2n-2].\]
\end{footnotesize}
So for any \(n\), we have a commutative diagram
\[
	\xymatrix
	{
		\mathbb{Z}(\mathbb{P}^{n-1})/\eta\ar[r]^{j/\eta}\ar[d]_{\cong}^{(\mu_i)}		&\mathbb{Z}(\mathbb{P}^n)/\eta\ar[d]_{\cong}^{(\nu_i)}\\
		\bigoplus_{i=0}^{n-1}\mathbb{Z}/\eta(i)[2i]\ar[r]\ar@{=}[rd]			&\bigoplus_{i=0}^{n}\mathbb{Z}/\eta(i)[2i]\ar[d]_{r}\\
																											&\bigoplus_{i=0}^{n-1}\mathbb{Z}/\eta(i)[2i]
	}
\]
where \(r\) is the projection. Hence there is a commutative diagram in \(\widetilde{DM}(X)\)
\[
	\xymatrix
	{
		\mathbb{Z}(\mathbb{P}(E^{\vee}))/\eta\ar[r]\ar[d]_{\cong}^{(\mu_i(E^{\vee}))}			&\mathbb{Z}(\mathbb{P}(E^{\vee}\oplus O_X))/\eta\ar[d]_{\cong}^{(\nu_i(E^{\vee}\oplus O_X))}	\\
		\bigoplus_{i=0}^{n-1}\mathbb{Z}/\eta(i)[2i]\ar[r]\ar@{=}[rd]	&\bigoplus_{i=0}^{n}\mathbb{Z}/\eta(i)[2i]\ar[d]_r\\
																									&\bigoplus_{i=0}^{n-1}\mathbb{Z}/\eta(i)[2i]
	}
\]
which gives the statement since \(Th(E)=\mathbb{Z}(\mathbb{P}(E^{\vee}\oplus O_X))/\mathbb{Z}(\mathbb{P}(E^{\vee}))\).
\item On each \(U_{\alpha}\), the statement follows from pulling back the equation in Theorem \ref{splitting1}, (1) to \(U_{\alpha}\) and fix an isomorphism \(E|_{U_{\alpha}}=O_{U_{\alpha}}^{\oplus n}\). Then the statement holds on \(X\) by Proposition \ref{zarsep}. Here we have to use \(_2CH^*(U_{\alpha})=0\) to ensure the naturality of \(c^{2i-1}_n(E)\) by Theorem \ref{eta}.
\item We have a commutative diagram in \(\widetilde{DM}(X)\) with rows and columns being distinguished triangles
\[\footnotesize\xymatrixcolsep{0.8pc}
	\xymatrix
	{
		0\ar[r]\ar[d]												&\mathbb{Z}(X)/\eta(n-1)[2n-2]\ar@{=}[r]\ar[d]_{\theta}			&\mathbb{Z}(X)/\eta(n-1)[2n-2]\ar[d]\\
		\mathbb{Z}(\mathbb{P}(E))\ar[r]\ar@{=}[d]	&\mathbb{Z}(\mathbb{P}(E\oplus O_X))\ar[r]_-t\ar[d]_{\Theta}	&Th(det(E)^{\vee})(n-1)[2n-2]\ar[d]\\
		\mathbb{Z}(\mathbb{P}(E))\ar[r]^-{\Theta}	&\mathbb{Z}(X)\oplus\bigoplus_{i=1}^{\frac{n}{2}-1}\mathbb{Z}(X)/{\eta}(2i-1)[4i-2]\ar[r]^-{\mu}	&C
	}
\]
where the middle column follows from (2) and splits. Now we are going to show:
\begin{enumerate}
\item\textit{We have \(C=Th(det(E)^{\vee})(n-2)[2n-3]\)}.

We claim the diagram
\[\xymatrix{Th(det(E)^{\vee})/\eta(n-2)[2n-4]\ar[r]_-{\cong}^-{\varphi(-1)[-2]}\ar[rd]_{Th(det(E)^{\vee})(n-2)[2n-4]\otimes\partial}&\mathbb{Z}(X)/\eta(n-1)[2n-2]\ar[d]_{t\circ{\theta}}\\&Th(det(E)^{\vee})(n-1)[2n-2]}\]
commutes, where the \(\varphi\) is given by (1) and \(\partial\) is the boundary map of \(\mathbb{Z}/\eta\).

To prove this, we adopt the last diagram used in (1), obtaining a commutative diagram whose rows are distinguished triangles
\[\footnotesize\xymatrixcolsep{0.55pc}
	\xymatrix
	{
		\mathbb{Z}(\mathbb{P}(E))/\eta\ar[r]\ar[d]					&\mathbb{Z}(\mathbb{P}(E\oplus O_X))/\eta\ar[r]^-{t/\eta}\ar[d]_{\Theta/\eta}	&Th(det(E^{\vee}))/\eta(n-1)[2n-2]\ar[d]_{\varphi}\\
		\bigoplus_{i=0}^{n-1}\mathbb{Z}(X)/\eta(i)[2i]\ar[r]	&\bigoplus_{i=0}^n\mathbb{Z}(X)/\eta(i)[2i]\ar[r]^{\rho}		&\mathbb{Z}(X)/\eta(n)[2n]
	}.
\]
%This shows that there is a commutative diagram
%\[\footnotesize
%		\xymatrix
%		{
%			\mathbb{Z}(X)(n)[2n]\ar[r]^-{(c_n^{n-1}(E)/\eta)\circ\theta'}\ar[d]_{(t/\eta)\circ\theta'}\ar[rd]^{\rho\circ(\Theta/\eta)\circ\theta'}	&\mathbb{Z}(X)(n-1)[2n-2]\otimes(\mathbb{Z}/\eta)^2\ar[d]\\
%			Th(det(E)^{\vee})/\eta(n-1)[2n-2]\ar[r]^-{\varphi}													&\mathbb{Z}(X)/\eta(n)[2n]
%		}
%\]
%where
Suppose
\[\theta':\mathbb{Z}(X)(n)[2n]\longrightarrow\mathbb{Z}(\mathbb{P}(E\oplus O_X))/\eta\]
is the map obtained from \(\theta\) by the strong duality of \(\mathbb{Z}/\eta\). It suffices to show that the composite \(f\)
\begin{footnotesize}
\[
	\xymatrix
	{
		\mathbb{Z}(X)/\eta(n)[2n]\ar[r]^-{\theta'/\eta}				&\mathbb{Z}(\mathbb{P}(E\oplus O_X))\otimes(\mathbb{Z}/\eta)^2\ar[d]_{Id\otimes\mu}\\
		Th(det(E)^{\vee})/\eta(n-1)[2n-2]									&\mathbb{Z}(\mathbb{P}(E\oplus O_X))/\eta\ar[l]_-{t/\eta}
	}
\]
\end{footnotesize}
is the inverse of \(\varphi\) where \(\mu:(\mathbb{Z}/\eta)^2\longrightarrow\mathbb{Z}/\eta\) is given by Proposition \ref{ortho}, (2). By the diagram before, \(\varphi\circ f\) is equal to the composite
\[
	\xymatrix
	{
		\mathbb{Z}(X)/\eta(n)[2n]\ar[r]^-{\theta'/\eta}			&\mathbb{Z}(\mathbb{P}(E\oplus O_X))\otimes(\mathbb{Z}/\eta)^2\ar[d]_{Id\otimes\mu}\\
		\oplus_{i=0}^n\mathbb{Z}(X)/\eta(i)[2i]\ar[d]_{\rho}	&\mathbb{Z}(\mathbb{P}(E\oplus O_X))/\eta\ar[l]_-{\Theta/\eta}\\
		\mathbb{Z}(X)/\eta(n)[2n]											&
	}.
\]
But \({\theta}\) is a section of \(c_n^{n-1}(E)\), so the claim follows from the strong duality of \(\mathbb{Z}/\eta\), which gives the computation of \(C\). Hence we obtain the triangle in the theorem.
\item\textit{We have
\[[\mathbb{Z}(X)/\eta(2i-1)[4i-2],C]=0\]
in \(\widetilde{DM}(X)\) where \(2i-1<n-1\).}

We have an isomorphism
\[[\mathbb{Z}(X)/\eta(2i-1)[4i-2],C]\cong[\mathbb{Z}(X),\mathbb{Z}/\eta(n-2i-1)[2n-4i-1]]\]
in \(\widetilde{DM}(X)\) by (1) and strong duality of \(\mathbb{Z}/\eta\). But the latter is zero by Proposition \ref{slice}.
\item\textit{The composite
\[\mathbb{Z}(X)\xrightarrow{s}\mathbb{Z}(\mathbb{P}(E\oplus O_X))\xrightarrow{t}Th(det(E)^{\vee})(n-1)[2n-2]\xrightarrow{\eta}C\]
is given by the Euler class \(e(E^{\vee})\in H^n(X,W(det(E)))\).}

Since \(s\) factors through the inclusion \(E^{\vee}\subseteq\mathbb{P}(E\oplus O_X)\), we see that \(t\circ s\) is given by the Euler class \(e(E^{\vee})\in\widetilde{CH}^n(X,det(E))\) by Proposition \ref{gysin}. We have a commutative diagram
\[\xymatrixcolsep{1pc}
	\xymatrix
	{
		[\mathbb{Z}(X),Th(det(E)^{\vee})(n-1)[2n-2]]\ar[r]^-{\cong}\ar[d]_{\eta}	&H^n(X,K_n^{MW}(det(E)))\ar[d]_{\eta}\\
		[\mathbb{Z}(X),Th(det(E)^{\vee})(n-2)[2n-3]]\ar[r]^-{\cong}						&H^n(X,K_{n-1}^{MW}(det(E)))
	}
\]
by \cite[Remark 4.2.7, \S 4]{BCDFO}, where the \([-,-]\) is in \(\widetilde{DM}(X)\). Then we use the isomorphisms
\begin{footnotesize}
\[H^n(X,K_{n-1}^{MW}(det(E)))=H^n(X,K_{n-1}^W(det(E)))=H^n(X,W(det(E)))\]
\end{footnotesize}
which follow from the exact sequences
\[0\longrightarrow 2K_{n-1}^M\longrightarrow K_{n-1}^{MW}(det(E))\longrightarrow K_{n-1}^W(det(E))\longrightarrow 0\]
\[0\longrightarrow K_n^W(det(E))\longrightarrow K_{n-1}^W(det(E))\longrightarrow K_{n-1}^M/2\longrightarrow 0.\]
\end{enumerate}
Now we have completed the proof.
\end{enumerate}
\end{proof}
\begin{coro}\label{pbtint}
Let \(S\in Sm/k\), \(X\in Sm/S\) being quasi-projective and \(E\) be a vector bundle of rank \(n\) over \(X\).
\begin{enumerate}
\item We have isomorphisms
\[R(\mathbb{P}(E))/\eta\cong\bigoplus_{i=0}^{n-1}R(X)/\eta(i)[2i]\]
\[Th(E)/\eta\cong R(X)/\eta(n)[2n]\]
in \(\widetilde{DM}(S,R)\).
\item If \(n\) is odd, we have an isomorphism
\[R(\mathbb{P}(E))\cong R(X)\oplus\bigoplus_{i=1}^{\frac{n-1}{2}}R(X)/{\eta}(2i-1)[4i-2]\]
in \(\widetilde{DM}^{eff}(S,R)\).

In particular, we have (\(k=min\{\lfloor\frac{i+1}{2}\rfloor,\frac{n-1}{2}\}\))
\[\widetilde{CH}^{i}(\mathbb{P}(E))=\widetilde{CH}^i(X)\oplus\bigoplus_{j=1}^{k}\eta_{MW}^{i-2j}(X).\]
\item If \(n\) is even, we have a distinguished triangle
\begin{footnotesize}
\[Th(det(E)^{\vee})(n-2)[2n-4]\to R(\mathbb{P}(E))\longrightarrow R(X)\oplus\bigoplus_{i=1}^{\frac{n}{2}-1}R(X)/{\eta}(2i-1)[4i-2]\xrightarrow{\mu}\cdots[1]\]
\end{footnotesize}
in \(\widetilde{DM}(S,R)\) where
\(\mu=0\) if \(e(E^{\vee})=0\in H^n(X,W(det(E)))\). In this case, we have
\[\widetilde{CH}^{i}(\mathbb{P}(E))=\widetilde{CH}^i(X)\oplus\bigoplus_{j=1}^{\frac{n}{2}-1}\eta_{MW}^{i-2j}(X)\oplus\widetilde{CH}^{i-n+1}(X,det(E)^{\vee}).\]
\end{enumerate}
\end{coro}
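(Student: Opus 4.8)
The plan is to bootstrap from Theorem \ref{pbt} by replacing $X$ with a scheme that \emph{does} satisfy its two hypotheses, namely a Grassmannian. First I would reduce to the case $S=X$ exactly as at the start of the proof of Theorem \ref{pbt}: by Proposition \ref{derived}(3) the functor $q_{\#}$ on $\widetilde{DM}^{eff}(X,R)$ (resp. $\widetilde{DM}(X,R)$) is exact, commutes with Tate twists, shifts, $(-/\eta)$ (the last by the projection formula, since $C(\eta)$ over $X$ is $q^{*}$ of $C(\eta)$ over $S$) and with Thom spaces, carries the $X$-motive of any $Z\in Sm/X$ to its $S$-motive, and the hypothesis $e(E^{\vee})=0$ is intrinsic to $X$ and $E$; so it transports every assertion over $X$ to the corresponding one over $S$.

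Working over the base $X$, I would next treat the projective bundle. Since $X$ is quasi-projective it carries an ample line bundle, so $E\otimes L$ is globally generated for a suitable $L$; as $\mathbb{P}(E)=\mathbb{P}(E\otimes L)$ this replacement is harmless. A surjection $\mathcal{O}_{X}^{\oplus N}\twoheadrightarrow E\otimes L$ gives a morphism $f\colon X\to G$ to a Grassmannian of rank-$n$ quotients with $f^{*}\mathcal{E}\cong E\otimes L$ for the tautological bundle $\mathcal{E}$. Now $G\in Sm/k$, $CH^{*}(G)$ is a free $\mathbb{Z}$-module (on Schubert classes) so ${}_{2}CH^{*}(G)=0$, and $G$ is covered by affine spaces, so it admits an acyclic covering; hence Theorem \ref{pbt} applies to $G$ (with $S=X=G$). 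Pulling its conclusions back along the monoidal, exact functor $f^{*}$ of Proposition \ref{derived}(2) — for which $f^{*}R(\mathbb{P}(\mathcal{E}))=R(\mathbb{P}(E))$, $f^{*}\bigl(R(G)/\eta(j)[2j]\bigr)=R(X)/\eta(j)[2j]$ and $f^{*}Th(\mathcal{E})=Th(E\otimes L)$ — yields part (2) and the first isomorphism of part (1); applied to $E^{\vee}$, $E^{\vee}\oplus\mathcal{O}_{X}$ and $E\oplus\mathcal{O}_{X}$ it supplies all the $\mathbb{P}(-)/\eta$ and odd-rank splittings needed below.

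For the remaining assertions I would re-run the internal arguments of the proof of Theorem \ref{pbt}, which now depend only on those splittings together with hypothesis-free results. Thus $Th(E)/\eta\cong R(X)/\eta(n)[2n]$ follows from the $\mathbb{P}(E^{\vee}\oplus\mathcal{O}_{X})/\eta$ and $\mathbb{P}(E^{\vee})/\eta$ splittings and the identity $Th(E)=R(\mathbb{P}(E^{\vee}\oplus\mathcal{O}_{X}))/R(\mathbb{P}(E^{\vee}))$, as in the proof of Theorem \ref{pbt}(1); and, for $E$ of even rank, part (3) follows from part (2) for $E\oplus\mathcal{O}_{X}$ and the Gysin triangle of $\mathbb{P}(E)\hookrightarrow\mathbb{P}(E\oplus\mathcal{O}_{X})$ — whose cofiber is $Th(det(E)^{\vee})(n-1)[2n-2]$ by \cite[Theorem 6.3]{Y} and Proposition \ref{gysin} — with the connecting map $\mu$ computed by the Euler class $e(E^{\vee})$ via Proposition \ref{gysin} (hence $\mu=0$ when $e(E^{\vee})=0$) and the orthogonality of Propositions \ref{ortho} and \ref{slice} giving the requisite vanishings, exactly as in the proof of Theorem \ref{pbt}(3). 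Finally the Chow--Witt formulas follow by applying $[R(-),\mathbb{Z}(i)[2i]]_{MW}$ to these decompositions, using $\widetilde{CH}^{i}(Y)=[R(Y),\mathbb{Z}(i)[2i]]_{MW}$, Proposition \ref{left} to identify $[R(X)/\eta(j)[2j],\mathbb{Z}(i)[2i]]_{MW}$ with $\eta_{MW}^{i-j-1}(X)$, and the Thom isomorphism for $[Th(det(E)^{\vee})(n-2)[2n-4],\mathbb{Z}(i)[2i]]_{MW}=\widetilde{CH}^{i-n+1}(X,det(E)^{\vee})$; when $\mu=0$ the triangle splits off a summand, so the long exact sequence breaks into short exact ones.

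The hard part is the even-rank case. A naive pullback from $G$ controls only $\mathbb{P}(E)=\mathbb{P}(E\otimes L)$, whereas the determinant twist $det(E\otimes L)^{\vee}=det(E)^{\vee}\otimes L^{-n}$ changes the Thom space in the triangle and is not repaired by invertibility of Thom spaces, so Theorem \ref{pbt}(3) cannot be transported directly; one must instead recover part (3) from parts (1) and (2) over $X$. In that argument the single place where the auxiliary hypothesis ${}_{2}CH^{*}(X)=0$ enters is the identification of the section ``$\theta$'' of the part-(2) splitting of $R(\mathbb{P}(E\oplus\mathcal{O}_{X}))$ with a section of a globally-defined class produced by Proposition \ref{global} (which needs ${}_{2}CH^{*}(X)=0$). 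I would circumvent this by noting that $\theta$, together with its interaction with the Gysin map and every relation used in the argument, is a pullback — obtained by classifying $E\oplus\mathcal{O}_{X}$, after a twist, together with the subbundle $E$, over a smooth cellular parameter variety $\mathcal{F}$ — of the corresponding data over $\mathcal{F}$, on which Theorem \ref{pbt} holds with all hypotheses. Making this last reduction precise is the crux.
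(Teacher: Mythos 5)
Your treatment of parts (1) and (2) — reduce to $S=X$ via $q_{\#}$, twist $E$ by an ample $L$ to make it globally generated, classify by a map $f\colon X\to Gr(n,r)$ to which Theorem \ref{pbt} applies (free Chow groups, affine-space covering), and apply $q_{\#}f^{*}$ — is exactly the paper's argument, and the derivation of the Chow--Witt formulas via Proposition \ref{left} and the Thom isomorphism is also as intended.

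The problem is part (3). You declare that the determinant twist $det(E\otimes L)^{\vee}=det(E)^{\vee}\otimes L^{-n}$ blocks a direct transport of Theorem \ref{pbt}(3) and is ``not repaired,'' and you therefore propose to re-derive part (3) internally over $X$, which forces you to replace the use of Proposition \ref{global} (where $_{2}CH^{*}(X)=0$ enters) by a classifying argument over a flag-type parameter variety $\mathcal{F}$ — a step you explicitly leave unproved (``Making this last reduction precise is the crux''). That is a genuine gap: the one assertion of the corollary that does not follow from your completed steps is exactly the one left hanging. Moreover, the claimed obstruction dissolves: the paper chooses $L$ ample \emph{and equal to a square} (and in part (3) one even has $n$ even, so $L^{-n}$ is a square for any $L$), so $det(E\otimes L)^{\vee}$ differs from $det(E)^{\vee}$ by the square of a line bundle; Thom spaces of line bundles and Witt-sheaf cohomology are insensitive to twisting by squares, and the Euler class satisfies $e(E^{\vee})=e(E^{\vee}\otimes L^{\otimes 2})$ in $H^{n}(X,W(det(E)))$ by \cite[Theorem 9.1]{L}. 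With that observation the whole of Theorem \ref{pbt}(3) — the triangle, the identification of its third term, and the criterion $\mu=0$ — transports along $q_{\#}f^{*}$ verbatim (the components of $\mu$ on the $R(X)/\eta$-summands vanish because they already vanish over the Grassmannian, and the remaining component is the natural pullback of the Euler class). So the fix is a one-line twist bookkeeping, not a new moduli construction; as written, your proof of part (3) is incomplete.
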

\begin{proof}
It suffices to do the case \(R=\mathbb{Z}\). Denote by \(q:X\longrightarrow S\) the structure map. We may assume \(E\) is generated by its \(r\) global sections since \(\mathbb{P}(E)=\mathbb{P}(E\otimes L)\) for any \(L\in Pic(X)\) and \(L\) could be chosen ample and equal to a square. Then we obtain a morphism \(f:X\longrightarrow Gr(n,r)\) such that \(E=f^*\mathscr{U}\) where \(\mathscr{U}\) is the tautological bundle. Then applying \(q_{\#}f^*\) on the equations in Theorem \ref{pbt} gives the statements. Note that for any \(L\in Pic(X)\), \(e(E^{\vee})=e(E^{\vee}\otimes L^{\otimes 2})\in H^n(X,W(det(E)))\) by \cite[Theorem 9.1]{L}.
\end{proof}
%
%\begin{coro}\label{pbtcw}
%Let \(X\in Sm/k\) be quasi-projective and \(E\) be a vector bundle of odd rank over \(X\). We have (\(k=min\{\lfloor\frac{i+1}{2}\rfloor,\frac{n-1}{2}\}\))
%\[\widetilde{CH}^{i}(\mathbb{P}(E))=\widetilde{CH}^i(X)\oplus\bigoplus_{j=1}^{k}\eta_{MW}^{i-2j}(X).\]
%\end{coro}
%\begin{proof}
%By Proposition \ref{left}.
%\end{proof}
%\begin{theorem}\label{pbtq}
%Let \(S\in Sm/k\), \(X\in Sm/S\) and \(E\) be a vector bundle of rank \(n\) over \(X\). Suppose \(2\in R^{\times}\). Denote by \(p:\mathbb{P}(E)\longrightarrow X\) and \(q:X\longrightarrow S\) the structure map.
%\begin{enumerate}
%\item If \(n\) is odd, the morphism
%\[\Theta:R(\mathbb{P}(E))\longrightarrow R(X)\oplus\bigoplus_{i=1}^{\frac{n-1}{2}}R(X)/{\eta}(2i-1)[4i-2]\]
%is an isomorphism in \(\widetilde{DM}^{eff}(S,R)\) where
%\[\Theta=(p,p\boxtimes c(E)^{2i-1}_R).\]
%\item If \(n\) is even and \(E\) is a projective orientable bundle, the morphism
%\[R(\mathbb{P}(E))\xrightarrow{\Theta} R(X)\oplus\bigoplus_{i=1}^{\frac{n}{2}-1}R(X)/{\eta}(2i-1)[4i-2]\oplus Th(det(E))(n-2)[2n-4]\]
%is an isomorphism in \(\widetilde{DM}(S,R)\) where
%\[\Theta=(p,p\boxtimes c(E)^{2i-1}_R,q_{\#}({\tau}(E))_R).\]
%
%In particular, we have
%\[\widetilde{CH}^{i}(\mathbb{P}(E))=\widetilde{CH}^i(X)\oplus\bigoplus_{j=1}^{n-2}CH^{i-j}(X)\oplus\widetilde{CH}^{i-n+1}(X,det(E))\]
%after inverting \(2\).
%\end{enumerate}
%\end{theorem}
%\begin{proof}
%The same as Theorem \ref{pbt}.
%\end{proof}
\begin{proposition}\label{oriinv}
Suppose \(X\in Sm/k\) and \(\varphi\in GL_n(O_X^{\oplus n})\). Then
\[\mathbb{P}(\varphi):X\times\mathbb{P}^{n-1}\longrightarrow X\times\mathbb{P}^{n-1}\]
is the identity in \(\widetilde{DM}^{eff}(pt)\) if one of the following conditions holds:
\begin{enumerate}
\item The matrix \(\varphi\) is elementary (see \cite[Definition 3]{An});
\item The \(n\) is odd and \(_2CH^*(X)=0\) ;
\item The \(n\) is even and we have \(\varphi=\left(\begin{array}{cc}g^2&\\&Id\end{array}\right)\) where \(g:X\longrightarrow\mathbb{G}_m\) is an invertible function.
\end{enumerate}
\end{proposition}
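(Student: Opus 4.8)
Here is how I would approach the three cases. All of them hinge on the facts that $\mathbb{P}(\varphi)$ is a morphism over $X$, so $\mathrm{pr}_X\circ\mathbb{P}(\varphi)=\mathrm{pr}_X$, and that $\mathbb{P}(\varphi)^*\mathcal{O}(1)\cong\mathcal{O}(1)$; what changes is the tool used to pin $\mathbb{P}(\varphi)$ down from this data. For \textbf{case (1)} I would connect $\varphi$ to the identity by a naive $\mathbb{A}^1$-homotopy: writing $\varphi=\mathrm{Id}+aE_{ij}$ with $i\neq j$ and $a\in\mathcal{O}_X(X)$, the family $\Phi(x,t)=\mathrm{Id}+t\,a(x)\,E_{ij}$ is a morphism $X\times\mathbb{A}^1\to GL_n$ (each $\Phi(x,t)$ is unipotent), and the associated action is a morphism $X\times\mathbb{A}^1\times\mathbb{P}^{n-1}\to X\times\mathbb{P}^{n-1}$ restricting to $\mathrm{Id}$ at $t=0$ and to $\mathbb{P}(\varphi)$ at $t=1$. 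Since the projection $\mathbb{Z}(X\times\mathbb{A}^1\times\mathbb{P}^{n-1})\to\mathbb{Z}(X\times\mathbb{P}^{n-1})$ is an isomorphism in $\widetilde{DM}^{eff}(pt)$ and the two inclusions $t=0,1$ are sections of it, the two restrictions agree, so $\mathbb{P}(\varphi)=\mathrm{Id}$ (a product of elementaries is then handled by composition).

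\textbf{Case (2).} I would tensor the splitting of $\mathbb{Z}(\mathbb{P}^{n-1})$ from Theorem~\ref{splitting1} (the even case, as $n-1$ is even) with $\mathbb{Z}(X)$ to get an isomorphism $\Theta:\mathbb{Z}(X\times\mathbb{P}^{n-1})\xrightarrow{\ \sim\ }\mathbb{Z}(X)\oplus\bigoplus_i\mathbb{Z}(X)/\eta(2i-1)[4i-2]$ with components $\mathrm{pr}_X$ and the classes $c^{2i-1}_n(\mathcal{O}_X^{\oplus n})\in\eta^{2i-1}_{MW}(X\times\mathbb{P}^{n-1})$ of Proposition~\ref{global}, each a polynomial in $c_1(\mathcal{O}(1))\in CH^1(X\times\mathbb{P}^{n-1})$ via Theorem~\ref{eta}. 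It then suffices to check $\Theta\circ\mathbb{P}(\varphi)=\Theta$. The first component is fixed because $\mathbb{P}(\varphi)$ is over $X$; for the others, note $CH^{*}(X\times\mathbb{P}^{n-1})=CH^{*}(X)[h]/(h^n)$, so $_2CH^{*}(X\times\mathbb{P}^{n-1})=0$, whence by Theorem~\ref{eta} the natural map $\eta^{2i-1}_{MW}(X\times\mathbb{P}^{n-1})\hookrightarrow CH^{2i-1}\oplus CH^{2i}$ is injective; as $\mathbb{P}(\varphi)^*$ fixes $c_1(\mathcal{O}(1))$ and all its powers, it fixes each $c^{2i-1}_n(\mathcal{O}_X^{\oplus n})$, and hence $\mathbb{P}(\varphi)=\mathrm{Id}$.

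\textbf{Case (3).} Now no hypothesis on $CH^{*}(X)$ is available, so I would instead exploit the geometry of $\varphi=\mathrm{diag}(g^2,\mathrm{Id}_{n-1})$: it fixes the hyperplane $H=\{x_0=0\}\cong\mathbb{P}^{n-2}$ pointwise and preserves the affine chart $U_0=\{x_0\neq0\}\cong\mathbb{A}^{n-1}$, scaling the coordinates there by $g^{-2}$. Put $Y=X\times\mathbb{P}^{n-1}$, $Z=X\times H$. The localization (purity) triangle used in the proof of Theorem~\ref{splitting} reads $\mathbb{Z}(X\times U_0)\xrightarrow{\alpha}\mathbb{Z}(Y)\xrightarrow{\beta}Th(N_{Z/Y})\to[1]$ with $N_{Z/Y}=\mathcal{O}_{X\times H}(1)$, and $\mathbb{P}(\varphi)$ induces an endomorphism of it since it preserves $Z$ and $U_0$. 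The $\mathbb{A}^1$-equivalence $\mathbb{Z}(X\times U_0)\cong\mathbb{Z}(X)$ identifies $\alpha$ with the constant section at $q_0=[1:0:\cdots:0]$, which $\mathrm{pr}_X$ splits; so the triangle splits, $\mathbb{Z}(Y)=\mathbb{Z}(X)\oplus Th(\mathcal{O}_{X\times H}(1))$, with $\mathrm{pr}_X=(\mathrm{Id},0)$ the projection onto the first summand. On $\mathbb{Z}(X\times U_0)\cong\mathbb{Z}(X)$ the map $\mathbb{P}(\varphi)$ is $\mathrm{Id}$ (again because it is over $X$), and on $Th(N_{Z/Y})$ it is induced by $\mathbb{P}(\varphi)|_Z=\mathrm{Id}_Z$ twisted by the action on the normal bundle, namely multiplication by $g^{2}$ along the line bundle $\mathcal{O}_{X\times H}(1)$; as $g^2$ is a square this is $\langle g^2\rangle=\langle1\rangle=\mathrm{Id}$ (the endomorphism ring of a Thom space being $GW$, cf.\ the invertibility statement before Proposition~\ref{gysin}). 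Thus $\mathbb{P}(\varphi)=\mathrm{Id}+\psi$ with $\psi:Th(\mathcal{O}_{X\times H}(1))\to\mathbb{Z}(X)$ the off-diagonal block, and $\mathrm{pr}_X\circ\mathbb{P}(\varphi)=\mathrm{pr}_X$ together with $\mathrm{pr}_X=(\mathrm{Id},0)$ forces $\psi=0$, i.e.\ $\mathbb{P}(\varphi)=\mathrm{Id}$.

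\textbf{Where the difficulty lies.} The substantive step is (3): without control of $CH^{*}(X)$ one cannot split $\mathbb{Z}(X\times\mathbb{P}^{n-1})$ into Tate-type pieces and argue via the injectivity of Theorem~\ref{eta} (in fact the relevant Hom-groups need not vanish, e.g.\ when $X$ is an abelian variety), so everything depends on choosing the right localization sequence. Removing the \emph{hyperplane} $\{x_0=0\}$ — rather than a point, or the full projective bundle formula — is precisely what makes the base term of the split triangle equal the image of the structure section, so that the residual off-diagonal term dies by the tautological identity $\mathrm{pr}_X\circ\mathbb{P}(\varphi)=\mathrm{pr}_X$. The only genuinely technical inputs are then functoriality of purity under $\mathbb{P}(\varphi)$, the identification of the induced self-map of $Th(N_{Z/Y})$, and the fact that scaling a line bundle by a square induces the identity on its Thom space; correctly tracking this last twist is the part I would be most careful about.
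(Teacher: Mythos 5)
Your proofs of cases (1) and (2) are correct and coincide with the paper's: for (1) the paper simply cites Ananyevskiy's Lemma~1, which is exactly the naive $\mathbb{A}^1$-homotopy through unipotent matrices you write out, and for (2) the paper's one-line argument (``by Theorem~\ref{splitting1} and Theorem~\ref{eta} since $O(1)$ is preserved'') is precisely your expanded version, including the observation that $_2CH^*(X)=0$ propagates to $X\times\mathbb{P}^{n-1}$ so that the injectivity from Theorem~\ref{eta} applies. Case (3), however, is where you genuinely diverge. The paper factors $\mathbb{P}(\varphi)$ as $(\mathrm{Id}_X\times u)\circ(-^2)\circ\Gamma_g$ through $X\times\mathbb{G}_m\times\mathbb{P}^{n-1}$, observes via Proposition~\ref{ortho} that the deviation of the scaling action $u$ from the projection is governed by a single class in $[\mathbb{Z}(n)[2n-1],\mathbb{Z}(n-1)[2n-2]]_{MW}=W(k)$, and kills it by showing the squaring map on $\mathbb{G}_m$ is the hyperbolic form $h$ (whose $W(k)$-component vanishes). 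You instead split $\mathbb{Z}(X\times\mathbb{P}^{n-1})$ by the Gysin triangle of the invariant hyperplane, using the structural projection as the retraction, and read off the $2\times 2$ endomorphism matrix directly. Both arguments are valid. Your route is more geometric and avoids the Hom-group computations over the point, but it consumes two inputs the paper never has to invoke here: the functoriality of the purity isomorphism for an automorphism preserving the closed subscheme (so that the induced self-map of $Th(N_{Z/Y})$ is the Thom map of the normal differential --- available from the Gysin-triangle formalism of \cite{Y}/\cite{D} but not recorded in this paper), and the identification of multiplication by a unit $u$ on a line bundle with $\langle u\rangle$ on its Thom space (which the paper only uses implicitly, via Morel's Lemma~6.3.4, in the counterexample following Definition~\ref{slc}). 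The paper's route trades these for the single computation that $-^2$ on $\mathbb{G}_m$ is hyperbolic. You correctly flag these as the delicate points; with them supplied, your proof is complete.
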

\begin{proof}
\begin{enumerate}
\item By \cite[Lemma 1]{An}.
\item By Theorem \ref{splitting1} and Theorem \ref{eta} since \(O(1)\) is preserved under \(\mathbb{P}(\varphi)\).
\item The \(\mathbb{P}(\varphi)\) is equal to the composite
\[X\times\mathbb{P}^{n-1}\xrightarrow{\Gamma_{g}}X\times\mathbb{G}_m\times\mathbb{P}^{n-1}\xrightarrow{-^2}X\times\mathbb{G}_m\times\mathbb{P}^{n-1}\xrightarrow{Id_{X}\times u}X\times\mathbb{P}^{n-1}\]
where \(\Gamma_{g}\) is the graph of \(g\) and \(u\) is the morphism
\[\begin{array}{ccccc}\mathbb{G}_m&\times&\mathbb{P}^{n-1}&\longrightarrow&\mathbb{P}^{n-1}\\t&,&(x_0:\cdots:x_{n-1})&\longmapsto&(tx_0:x_1:\cdots:x_{n-1})\end{array}.\]
It is determined by an element in \([\mathbb{Z}(n)[2n-1],\mathbb{Z}(n-1)[2n-2]]_{MW}=W(k)\) by Proposition \ref{ortho}. On the other hand, consider the squaring map \(-^2\) on \(\mathbb{G}_m\), which is determined by an element \(s\in[\mathbb{Z}(1)[1],\mathbb{Z}(1)[1]]_{MW}=GW(k)\). Its image in \(\mathbb{Z}\) is \(2\) since the \(-^2\) induces the twice map on \(O^*(\mathbb{G}_m)/k^*=\mathbb{Z}\). Its image in \(W(k)\) is zero since the fiber of \(1\in\mathbb{G}_m\) under \(-^2\) is the principle divisor of \(<t^2-1>\), where \(t\) is the parameter of \(\mathbb{G}_m\). So we see that \(s\) is the hyperbolic quadratic form \(h\). Hence the composite
\[X\times\mathbb{G}_m\times\mathbb{P}^{n-1}\xrightarrow{-^2}X\times\mathbb{G}_m\times\mathbb{P}^{n-1}\xrightarrow{Id_{X}\times u}X\times\mathbb{P}^{n-1}\]
is equal to the projection map in \(\widetilde{DM}^{eff}(pt)\), which implies that \(\mathbb{P}(\varphi)=Id\).
\end{enumerate}
\end{proof}
\begin{definition}\label{slc}
Suppose \(E\) is a vector bundle over \(X\in Sm/k\) and \(\varphi\in Aut_{O_X}(E)\). We say that \(\varphi\) is \(SL^c\) if \(det(\varphi)\in Aut_{O_X}(det(E))=O_X(X)^*\) lies in \((O_X(X)^*)^2\).
\end{definition}

Suppose \(X\in Sm/k\), \(\varphi\in Aut_{O_X}(O_X^{\oplus n})\) and \(n\) is even. If \(\varphi\) is not \(SL^c\), we couldn't expect \(\mathbb{P}(\varphi)=Id\). An easy example is when \(X=pt\), \(n=2\) and \(\varphi=\left(\begin{array}{cc}g&\\&1\end{array}\right)\), the \(\mathbb{P}(\varphi)\) induces a multiplication by \(<g>\) on \(\widetilde{CH}^*(\mathbb{P}^1)\) by \cite[Lemma 6.3.4]{Mo1}. If \(\varphi\in SL^c(O_X^{\oplus n})\), the \(\varphi\) could be locally written as a composite of elementary matrices and \(\left(\begin{array}{cc}g^2&\\&Id\end{array}\right)\) where \(g\) is invertible.
\begin{theorem}\label{oriinv2}
Let \(S\in Sm/k\) and \(X\in Sm/S\) be quasi-projective with \(_2CH^*(X)=0\), \(E\) be a vector bundle of rank \(n\) over \(X\) and \(\varphi\in Aut_{O_X}(E)\).
\begin{enumerate}
\item If \(n\) is odd, the morphism \(\mathbb{P}(\varphi)=Id\) in \(\widetilde{DM}^{eff}(S,R)\).
\item If \(n\geq dim(X)+2\) is even and \(e(E^{\vee})=0\in H^n(X,W(det(E)))\), the morphism \(\mathbb{P}(\varphi)=Id\) in \(\widetilde{DM}(S,R)\) if \(\varphi\) is \(SL^c\).
\end{enumerate}
\end{theorem}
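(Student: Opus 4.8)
For part (1) the plan is to feed the automorphism into the splitting of Corollary \ref{pbtint}(2). Write
\(\Theta=(p,\,p\boxtimes c^{2i-1}_n(E))\colon R(\mathbb{P}(E))\xrightarrow{\ \sim\ } R(X)\oplus\bigoplus_{i=1}^{(n-1)/2}R(X)/\eta(2i-1)[4i-2]\);
since \(\Theta\) is an isomorphism it suffices to check \(\Theta\circ R(\mathbb{P}(\varphi))=\Theta\). The structure-map component is immediate because \(p\circ\mathbb{P}(\varphi)=p\). For the components \(c^{2i-1}_n(E)\), note that an automorphism of \(E\) furnishes a canonical isomorphism \(\mathbb{P}(\varphi)^*O_E(1)\cong O_E(1)\), hence \(\mathbb{P}(\varphi)^*c_1(O_E(1))=c_1(O_E(1))\) in \(CH^*(\mathbb{P}(E))\); as \({}_2CH^*(X)=0\) forces \({}_2CH^*(\mathbb{P}(E))=0\), Theorem \ref{eta} identifies \(c^{2i-1}_n(E)\) with the pair \((c_1(O_E(1))^{2i-1},c_1(O_E(1))^{2i})\) \emph{naturally} in \(\mathbb{P}(E)\), so \(\mathbb{P}(\varphi)^*c^{2i-1}_n(E)=c^{2i-1}_n(E)\). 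Thus \(\mathbb{P}(\varphi)=\mathrm{Id}\); this argument uses neither the rank bound nor the \(SL^c\) hypothesis.

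For part (2), first reduce to \(S=X\) by applying \(q_{\#}\) exactly as in the proof of Corollary \ref{pbtint}. The strategy is to rerun the invariance argument of part (1) for the splitting \(R(\mathbb{P}(E))\cong M\oplus N\) of Corollary \ref{pbtint}(3), where \(M=R(X)\oplus\bigoplus_{i=1}^{n/2-1}R(X)/\eta(2i-1)[4i-2]\) and \(N=Th(\det E^{\vee})(n-2)[2n-4]\); here the hypothesis \(e(E^{\vee})=0\) is what makes the distinguished triangle split. Just as in (1), the projection \(\Theta_M=(p,\,p\boxtimes c^{2i-1}_n(E))\) satisfies \(\Theta_M\circ\mathbb{P}(\varphi)=\Theta_M\), so \(\mathbb{P}(\varphi)-\mathrm{Id}\) factors as \(\iota_N\circ g\) with \(g:=\pi_N\circ(\mathbb{P}(\varphi)-\mathrm{Id})\colon R(\mathbb{P}(E))\to N\), and it remains to prove \(g=0\). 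Decomposing the source as \(M\oplus N\): the component \(g\circ\iota_N\in\mathrm{End}(N)=H^0(X,K_0^{MW})\) is \(\langle\det\varphi\rangle-1=\langle u^2\rangle-1=0\) because \(\varphi\) is \(SL^c\) (write \(\det\varphi=u^2\) and use that \(N\) is an invertible Thom space on which \(\varphi\) acts through \(\det\varphi\)); the component \(g\circ\iota_{R(X)}\) lies in a Milnor--Witt cohomology group of \(X\) in cohomological degree \(2n-2\), which vanishes since \(\dim X\le n-2<2n-2\). By strong duality of \(R(X)/\eta\) and the identification \(Th(\det E^\vee)/\eta\cong R(X)/\eta(1)[2]\), the remaining components lie in \(\eta_{MW}^{\,n-2i-1}(X)\) for \(1\le i\le n/2-1\).

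To kill these last components the plan is to compute \(\mathbb{P}(\varphi)\) locally. Choose a sufficiently fine affine open cover \(\{U_{\alpha}\}\) of \(X\) trivializing \(E\); the hypothesis \(n\ge\dim X+2\ge\dim U_{\alpha}+2\) together with Bass's stability theorem lets us write \(\varphi|_{U_{\alpha}}=\mathrm{diag}(u^2,1,\dots,1)\cdot\psi_{\alpha}\) with \(\psi_{\alpha}\) a product of elementary matrices, so Proposition \ref{oriinv}(1) and (3) give \(\mathbb{P}(\varphi)|_{U_{\alpha}}=\mathrm{Id}\) in \(\widetilde{DM}(U_{\alpha})\), whence \(g|_{U_{\alpha}}=0\); the same decomposition persists on every finite intersection \(U_{\alpha_0\cdots\alpha_p}\), since the \(SL^c\) condition and the dimension bound are inherited by opens. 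One then concludes \(g=0\) by Zariski (equivalently Nisnevich) descent: \(R(\mathbb{P}(E))\) is the homotopy colimit over the \v{C}ech nerve of \(\{\mathbb{P}(E)|_{U_{\alpha}}\}\), \(N\) is compact, a finite cover suffices as \(X\) is quasi-projective, and one checks that the higher \v{C}ech terms do not obstruct the passage from ``locally zero'' to ``zero''. I expect this final globalization --- rather than the local computation, which is routine given Proposition \ref{oriinv} --- to be the main obstacle, precisely because Hom-groups in \(\widetilde{DM}(X)\) do not satisfy naive Mayer--Vietoris separation; the point will be to exploit that \(g\) restricts to zero on \emph{all} strata of the cover, so that only the contributions of \(\eta_{MW}^{*}\)-type groups on the intersections have to be controlled.
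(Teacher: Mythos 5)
Your part (1) and the overall skeleton of part (2) --- reduce to \(S=X\), split \(R(\mathbb{P}(E))\) as \(M\oplus N\) via Corollary \ref{pbtint}, note that the \(M\)-valued components of \(\mathbb{P}(\varphi)\) are controlled by Theorem \ref{eta} together with the invariance of \(c_1(O_E(1))\), and reduce to showing that \(\mathbb{P}(\varphi)^*\) acts trivially on \(Hom_{\widetilde{DM}(X)}(\mathbb{Z}(\mathbb{P}(E)),N)\) --- coincide with the paper's proof. The component landing in \(\widetilde{CH}^{n-1}(X,\det E)\) (this is cohomological degree \(n-1\), not \(2n-2\), but your vanishing reason \(\dim X\le n-2\) is the right one) and the component in \(End(N)=GW(X)\) are also handled essentially as in the paper; for the latter the paper does not assert that \(\varphi\) acts through \(\langle\det\varphi\rangle\) (which you state without proof), but uses exactly your fallback: the equality holds locally by Proposition \ref{oriinv}, and \(GW\) is a Zariski sheaf, so this particular Hom group genuinely satisfies ``locally zero implies zero.''

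The gap is in the components valued in \(\eta_{MW}^{n-2i-1}(X)\). Your plan to globalize ``locally zero'' by a \v{C}ech descent is the step that fails: Proposition \ref{zarsep} only allows one to check that a morphism is an \emph{isomorphism} locally, not that it is zero, and the groups \(\eta_{MW}^{*}(X)\) are not global sections of a sheaf, so no separation statement is available --- the higher \v{C}ech terms you propose to ``control'' are exactly the obstruction and there is no reason for them to vanish. The paper's resolution is the mechanism you already used in part (1) but did not redeploy here: since \(_2CH^*(X)=0\), Theorem \ref{eta} identifies \(\eta_{MW}^{n-2i-1}(X)\) with a fiber product of ordinary Chow groups, in particular it injects into \(CH^{n-2i-1}(X)\oplus CH^{n-2i}(X)\); the images of \(\xi_i\) and of \(\mathbb{P}(\varphi)^*\xi_i\) there agree because \(\mathbb{P}(\varphi)\) is the identity on the level of Chow motives (classical projective bundle theorem plus \(\mathbb{P}(\varphi)^*c_1(O_E(1))=c_1(O_E(1))\)), hence \(\xi_i=\xi_i'\) with no local-to-global passage at all. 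With that substitution your argument closes; the \(SL^c\) hypothesis and the local elementary-matrix decomposition are needed only for the \(End(N)\) component (and over a local ring \(SL_n=E_n\) for \(n\ge 2\), so Bass stability is not needed).
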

\begin{proof}
We only prove the second statement, the first one follows by essentially the same method. We may as well assume \(S=X\). We have an isomorphism
\[\mathbb{Z}(\mathbb{P}(E))=\mathbb{Z}(X)\oplus\bigoplus_{i=1}^{\frac{n}{2}-1}\mathbb{Z}(X)/\eta(2i-1)[4i-2]\oplus Th(det(E)^{\vee})(n-2)[2n-4]\]
in \(\widetilde{DM}(X)\) by Corollary \ref{pbtint}. By Theorem \ref{eta}, we only have to prove \(\mathbb{P}(\varphi)^*\) acts trivially on the group
\[Hom_{\widetilde{DM}(X)}(\mathbb{Z}(\mathbb{P}(E)),Th(det(E)^{\vee})(n-2)[2n-4]).\]
Suppose \(f\) belongs to the group above. The \(f\) (resp. \(\mathbb{P}(\varphi)^*(f)\)) can be written as \((a,\xi_i,u)\) (resp. \((a',\xi'_i,u')\)) according to the decomposition of \(\mathbb{Z}(\mathbb{P}(E))\). We have
\[Hom_{\widetilde{DM}(X)}(\mathbb{Z}(X)/\eta(2i-1)[4i-2],Th(det(E)^{\vee})(n-2)[2n-4])=\eta_{MW}^{n-2i-1}(X)\]
by Corollary \ref{pbtint} and strong duality of \(\mathbb{Z}/\eta\), so \(\xi_i=\xi'_i\) by Theorem \ref{eta}. Moreover, since \(n\geq dim(X)+2\), we have
\[Hom_{\widetilde{DM}(X)}(\mathbb{Z}(X),Th(det(E)^{\vee})(n-2)[2n-4])=\widetilde{CH}^{n-1}(X,det(E))=0\]
by \cite[Remark 4.2.7, \S 3]{BCDFO}, hence \(a=a'=0\). Finally, we have \(u=u'\) locally holds by Proposition \ref{oriinv} and
\[End_{\widetilde{DM}(X)}(Th(det(E)^{\vee})(n-2)[2n-4])=GW(X)\]
is a sheaf with respect to \(X\). Hence \(u=u'\). So we have completed the proof.
\end{proof}

If \(n<dim(X)+2\) and \(n\) is even, it might be true that \(\mathbb{P}(\varphi)\neq Id\) even if \(\varphi\) is \(SL^c\). Let us give an example. Suppose \(X=SL_2=\{(a,b,c,d)\in\mathbb{A}^4|ad-bc=1\}\). Then there is a canonical automorphism \(\varphi\) of \(O_X^{\oplus 2}\) defined by the action of \(SL_2\) on \(k^{\oplus 2}\). There is an isomorphism
\[\widetilde{CH}^1(X\times\mathbb{P}^1)=\widetilde{CH}^1(X)\oplus GW(X).\]
We have a composite
\[f:X\xrightarrow{(id,(1:0))}X\times\mathbb{P}^{1}\xrightarrow{\mathbb{P}(\varphi)}X\times\mathbb{P}^{1}\xrightarrow{p}\mathbb{P}^{1}\]
where \(s:=p^*({\tau}_2)=(0,1)\), \(\mathbb{P}(\varphi)^*(s)=(t,1)\) and \(f^*({\tau}_2)=t\). But in our setting \(f\) is equal to the composite
\[\begin{array}{ccccc}SL_2&\longrightarrow&\mathbb{A}^2\setminus 0&\xrightarrow{Hopf}&\mathbb{P}^1\\\left(\begin{array}{cc}a&b\\c&d\end{array}\right)&\longmapsto&(a,c)&&\end{array}\]
where the first arrow is an \(\mathbb{A}^1\)-bundle. So
\[f^*:\widetilde{CH}^1(\mathbb{P}^1)\longrightarrow\widetilde{CH}^1(SL_2)\]
can be identified with the map \(GW(k)\longrightarrow W(k)\). Hence \(t\neq 0\).

Now we compute the MW-motives of blow-ups as an application of the results above. Suppose \(S\in Sm/k\), \(Z, X\in Sm/S\) with \(Z\) being closed in \(X\), \(X'=Bl_Z(X)\) and \(Z'=X'\times_XZ\cong\mathbb{P}(N_{Z/X})\). Thus we have a Cartesian square
\[
	\xymatrix
	{
		Z'\ar[r]^{i'}\ar[d]_{f_Z}	&X'\ar[d]_f\\
		Z\ar[r]^{i}						&X
	}.
\]
%\begin{proposition}
%The complex of sheaves of MW-transfers
%\[0\longrightarrow\mathbb{Z}(Z')\xrightarrow{(i,f_Z)}\mathbb{Z}(X')\oplus\mathbb{Z}(Z)\xrightarrow{f-i}\mathbb{Z}(X)\]
%is exact.
%\end{proposition}
%\begin{proof}
%We prove the corresponding statement for \(\widetilde{c}_S\) at every \(U\in Sm/S\). The exactness at \(\widetilde{c}_S(Z')\) is clear since \(Z'\longrightarrow X'\) is a closed immersion. Suppose we have
%\[s\in\widetilde{CH}^{d_{X'}}_T(U\times_SX',\omega_{U\times_SX'/U})\]
%and
%\[t\in\widetilde{CH}^{d_{Z}}_W(U\times_SZ,\omega_{U\times_SZ/U})\]
%such that \(f_*(s)=i_*(t)\), where \(T\in\mathscr{A}_S(U,X')\) and \(W\in\mathscr{A}_S(U,Z)\) (see \cite[2.1]{Y}). We could write \(T=T_1\cup T_2\), where \(T_1\subseteq U\times_SZ'\) and none of component of \(T_2\) is contained in \(U\times_SZ'\). Hence \(s=s_1+s_2\), \(f_*(s_1)=i_*(t)\) and \(f_*(s_2)=0\) where \(s_i\) is supported on \(T_i\) (\(i=1,2\)). Since \(f\) is an isomorphism outside of \(Z\), \(f_*(s_2)=0\) implies \(s_2=0\). Hence \(s=i'_*(s')\) for a unique \(s'\). So \((f_z)_*(s')=t\) since \(i_*\) is injective. We are done.
%\end{proof}
\begin{proposition}\label{blowuptri}
We have a distinguished triangle
\[\mathbb{Z}(Z')\xrightarrow{(i,f_Z)}\mathbb{Z}(X')\oplus\mathbb{Z}(Z)\xrightarrow{f-i}\mathbb{Z}(X)\longrightarrow\mathbb{Z}(Z')[1]\]
in \(\widetilde{DM}^{eff}(S)\).
\end{proposition}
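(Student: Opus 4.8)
The plan is to recognize the asserted triangle as the statement that, after applying $\mathbb{Z}(-)$, the Cartesian square displayed above becomes homotopy cocartesian in $\widetilde{DM}^{eff}(S)$. By a formal argument with the octahedral axiom this amounts to showing that the morphism induced by $i'$ and $i$ on the cones of the two vertical maps,
\[C\big(\mathbb{Z}(Z')\xrightarrow{f_Z}\mathbb{Z}(Z)\big)\longrightarrow C\big(\mathbb{Z}(X')\xrightarrow{f}\mathbb{Z}(X)\big),\]
is an isomorphism; I would prove this by computing both cones and comparing them.

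First I would compute $C(f)$. Put $U=X\setminus Z=X'\setminus Z'$; since $f$ is an isomorphism over $U$ restricting to $\mathrm{id}_U$, the localization (Gysin) triangles of \cite[Theorem 6.3]{Y} for the smooth closed pairs $(X,Z)$ and $(X',Z')$,
\[\mathbb{Z}(U)\to\mathbb{Z}(X)\to Th(N_{Z/X})\to\mathbb{Z}(U)[1],\qquad \mathbb{Z}(U)\to\mathbb{Z}(X')\to Th(N_{Z'/X'})\to\mathbb{Z}(U)[1],\]
are linked by a morphism of triangles that is the identity on $\mathbb{Z}(U)$. Passing to the cones of the vertical maps (the cone of $\mathrm{id}_{\mathbb{Z}(U)}$ being zero) gives $C(f)\cong C(\bar f)$, where $\bar f\colon Th(N_{Z'/X'})\to Th(N_{Z/X})$ is the induced map of Thom spaces. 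Since $Z'\cong\mathbb{P}(N_{Z/X})$ and the purity isomorphism is realized by the deformation to the normal cone, hence is natural in the pair, $\bar f$ is the map of Thom spaces induced by the blow-down $Bl_Z(N_{Z/X})\to N_{Z/X}$ of total spaces; in particular $C(\bar f)$ depends only on $N:=N_{Z/X}$.

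Next I would compute $C(f_Z)$ by the same argument applied to the linearized blow-up: replace $X$ by the total space of $N$, $Z$ by its zero section, $X'$ by $Bl_Z(N)$ — the total space of a line bundle over $\mathbb{P}(N)$ — and $Z'$ by $\mathbb{P}(N)$. Homotopy invariance gives $\mathbb{Z}(N)\cong\mathbb{Z}(Z)$ and $\mathbb{Z}(Bl_Z(N))\cong\mathbb{Z}(\mathbb{P}(N))=\mathbb{Z}(Z')$, under which the two zero sections become the identity and the blow-down becomes $f_Z\colon\mathbb{Z}(Z')\to\mathbb{Z}(Z)$. The localization triangles for the zero sections $\mathbb{P}(N)\hookrightarrow Bl_Z(N)$ and $Z\hookrightarrow N$, over their common open $N\setminus Z=Bl_Z(N)\setminus\mathbb{P}(N)$, are again linked by the blow-down, so just as before $C(f_Z)\cong C(\bar f)$. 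Combining, $C(f_Z)\cong C(\bar f)\cong C(f)$.

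Finally I would check that this isomorphism is the canonical comparison map of the first paragraph. Both of $C(f_Z)\xrightarrow{\cong}C(\bar f)$ and $C(f)\xrightarrow{\cong}C(\bar f)$ are realized through the Gysin quotients $\mathbb{Z}(Z)\to Th(N)$ and $\mathbb{Z}(X)\to Th(N)$, so the compatibility reduces to the equality of the two maps $\mathbb{Z}(Z)\to Th(N)$ given by $i$ followed by $\mathbb{Z}(X)\to Th(N_{Z/X})$, respectively by the zero section followed by $\mathbb{Z}(N)\to Th(N)$; both of these equal the Euler class $e(N)$ by Proposition \ref{gysin}. This last compatibility is the point requiring genuine care: one must track the excision/purity isomorphisms and the functoriality of the deformation to the normal cone — concretely, that the blow-down $Bl_{Z\times\mathbb{A}^1}(D_{Z/X})\to D_{Z/X}$ of the deformation space restricts to the general blow-down over $1\in\mathbb{A}^1$ and to the linearized one over $0$ — carefully enough to glue the two computations over the square. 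The rest is formal manipulation of distinguished triangles together with homotopy invariance.
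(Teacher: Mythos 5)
Your argument is correct in outline, but it takes a genuinely different route from the paper. The paper's proof is a one-line import: Morel--Voevodsky show (in the reference cited at [MV, \S 3, Remark 2.30]) that the blow-up square is homotopy cocartesian in the pointed $\mathbb{A}^1$-homotopy category, and the monoidal left adjoint of Remark \ref{shadj} (which preserves homotopy pushouts and sends $X_+$ to $\mathbb{Z}(X)$) turns this into the stated distinguished triangle in $\widetilde{DM}^{eff}(S)$. You instead re-derive the cocartesianness internally to $\widetilde{DM}^{eff}$, using the MW-Gysin triangles of [Y, Theorem 6.3] to identify both $C(f)$ and $C(f_Z)$ with the cone of a single map of Thom spaces, and deformation to the normal cone (blowing up $D_{Z/X}$ along $Z\times\mathbb{A}^1$) to reduce the general blow-down to the linearized one. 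This is essentially Morel--Voevodsky's own proof transplanted into the MW-motivic category, so it buys self-containedness at the cost of redoing their hardest step: the compatibility of the purity isomorphisms for $(X,Z)$ and $(X',Z')$ with the blow-down, which you correctly flag, is not a formal ``naturality of purity'' (the two normal bundles even have different ranks), but precisely the statement that $Bl_{Z\times\mathbb{A}^1}(D_{Z/X})\to D_{Z/X}$ restricts over $0$ and $1$ to the two blow-downs and induces compatible excision isomorphisms on all three fibers. Two smaller points to tighten if you carry this out: (i) ``the induced map on cones'' is not unique in a triangulated category, so either work with explicit complexes of sheaves or phrase the reduction via the octahedron applied to $\mathbb{Z}(Z')\to\mathbb{Z}(X')\oplus\mathbb{Z}(Z)\to\mathbb{Z}(X)$; (ii) your final compatibility check (both composites $\mathbb{Z}(Z)\to Th(N)$ being $e(N)$, via Proposition \ref{gysin}) is necessary but not quite sufficient --- the analogous compatibility on the $X'$/$Bl_Z(N)$ side of the square must be tracked through the same deformation space. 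Given the paper's citation, your route is more labor than needed, but it is a viable alternative.
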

\begin{proof}
See \cite[Section 3, Remark 2.30, page 118]{MV} by using Remark \ref{shadj}.
%Denote by \(p:X\longrightarrow S\) the structure map. Suppose at first \(S=X\). We prove \(coker((i,f_Z))\cong\mathbb{Z}(X)\) in \(\widetilde{DM}^{eff}(S)\). It suffices to prove the statement on an open covering of \(X\) by Proposition \ref{zarsep}. Hence we may assume by \cite[Proposition 2.4.9]{SGA} that there is a Cartesian square
%\[
%	\xymatrix
%	{
%		Z\ar[r]^{i}\ar[d]					&X\ar[d]_q\\
%		\mathbb{A}_S^{d_Z}\ar[r]^{j}	&\mathbb{A}_S^{d_X}
%	}
%\]
%such that \(q\) is \'etale and \(j(s)=(s,0)\). By the same argument as in \cite[Lemma 3.2.28]{MV}, we have
%\[\mathbb{Z}(X)/\mathbb{Z}(X\setminus Z)\cong\mathbb{Z}(Z\times\mathbb{A}^{d_X-d_Z})/\mathbb{Z}(Z\times(\mathbb{A}^{d_X-d_Z}\setminus 0))\]
%as sheaves. So we may replace \((X, Z, i)\) by \((\mathbb{A}_S^{d_X}, \mathbb{A}_S^{d_Z}, j)\). In this case, \(i'\) becomes a section of the \(\mathbb{A}^1\)-bundle
%\[X'=Bl_{\mathbb{A}_S^{d_X-d_Z}}(\mathbb{A}^{d_X}_S)\longrightarrow\mathbb{A}^{d_X-d_Z}_S\times\mathbb{P}^{d_Z-1}=Z'\]
%so
%\[i':\mathbb{Z}(Z')\longrightarrow\mathbb{Z}(X')\]
%is an isomorphism in \(\widetilde{DM}^{eff}(S)\). Hence
%\[coker((i,f_Z))\cong\mathbb{Z}(Z)\]
%in \(\widetilde{DM}^{eff}(S)\) since \(coker((i,f_Z))\) is a homotopy pushout (see \cite[Lemma 21]{B}). So the statement follows since \(i\) also induces an isomorphism in \(\widetilde{DM}^{eff}(S)\). Finally we apply \(p_{\#}\).
\end{proof}
\begin{theorem}\label{blowup}
Suppose \(n:=codim_X(Z)\) is odd and \(Z\) is quasi-projective. We have
\[R(X')\cong R(X)\oplus\bigoplus_{i=1}^{\frac{n-1}{2}}R(Z)/{\eta}(2i-1)[4i-2]\]
in \(\widetilde{DM}^{eff}(S,R)\).

In particular, we have (\(k=min\{\lfloor\frac{i+1}{2}\rfloor,\frac{n-1}{2}\}\))
\[\widetilde{CH}^{i}(X')=\widetilde{CH}^i(X)\oplus\bigoplus_{j=1}^{k}\eta_{MW}^{i-2j}(Z).\]
\end{theorem}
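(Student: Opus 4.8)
The plan is to feed the odd-rank projective bundle theorem, applied to the exceptional divisor $Z'=\mathbb{P}(N_{Z/X})$, into the blow-up distinguished triangle of Proposition \ref{blowuptri}; it suffices to treat the case $R=\mathbb{Z}$, the rest following by change of coefficients. Since $n=\mathrm{codim}_X(Z)$ is odd and $Z$ is quasi-projective, Corollary \ref{pbtint}(2) gives an isomorphism $\sigma\colon R(Z')\xrightarrow{\sim}R(Z)\oplus K$ in $\widetilde{DM}^{eff}(S)$, where $K=\bigoplus_{i=1}^{(n-1)/2}R(Z)/\eta(2i-1)[4i-2]$; as the first component of the splitting morphism there is the structure map $\mathbb{P}(N_{Z/X})\to Z$, we may choose $\sigma$ so that $R(f_Z)\colon R(Z')\to R(Z)$ is the projection onto the first summand. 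Substituting $\sigma$ into the triangle
\[R(Z')\xrightarrow{(R(i'),\,R(f_Z))}R(X')\oplus R(Z)\xrightarrow{R(f)-R(i)}R(X)\longrightarrow R(Z')[1]\]
of Proposition \ref{blowuptri}, the second component of the first map is a split epimorphism whose section lands in the $R(Z)$ summand; a unipotent change of basis on the middle term (equivalently, the octahedral axiom applied to $R(Z)\hookrightarrow R(Z')\to R(X')\oplus R(Z)$) then cancels the two copies of $R(Z)$ and yields a distinguished triangle
\[K\xrightarrow{\ g\ }R(X')\xrightarrow{\ R(f)\ }R(X)\xrightarrow{\ \partial\ }K[1],\qquad g=R(i')\circ\sigma^{-1}|_{K}.\]

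The key remaining point is that this triangle splits, i.e.\ $R(X')\cong R(X)\oplus K$. I would prove this by exhibiting a retraction $r\colon R(X')\to K$ with $r\circ g=\mathrm{id}_{K}$ (equivalently, by showing $\partial=0$), and here the geometry of the exceptional divisor is essential: the tautological line bundle $\mathcal{O}_{\mathbb{P}(N_{Z/X})}(1)$ on $Z'$ is the restriction of $\mathcal{O}_{X'}(-Z')$, while the normal bundle of the codimension one regular embedding $i'\colon Z'\hookrightarrow X'$ is $\mathcal{O}_{X'}(Z')|_{Z'}=\mathcal{O}_{\mathbb{P}(N_{Z/X})}(-1)$. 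One assembles $r$ from the Gysin morphism $R(X')\to Th(N_{Z'/X'})$ attached to $i'$, the invertibility of the Thom space of a line bundle over $Z'$ (Proposition \ref{gysin} and the Thom isomorphism), cup products with powers of $c_1(\mathcal{O}_{X'}(-Z'))$ to correct twists, and the projection $\mathrm{pr}_K\circ\sigma\colon R(Z')\to K$; the identity $r\circ g=\mathrm{id}_K$ then reduces to the self-intersection formula $i'^{*}i'_{*}=(\,\cdot\,)\cup e(N_{Z'/X'})$ together with the projective bundle computation of Theorem \ref{splitting1} on $Z'$. Zariski descent (Proposition \ref{zarsep}) localizes the problem to opens over which $N_{Z/X}$ is trivial, but since a blow-up need not be Zariski-locally a product one still needs this Gysin-theoretic input.

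Granting $R(X')\cong R(X)\oplus K$, the Chow-Witt statement follows by applying $\widetilde{CH}^{i}(-)=[R(-),R(i)[2i]]_{MW}$ to the decomposition: the cancellation theorem (Proposition \ref{cancellation}) rewrites the $j$-th summand as $[R(Z)/\eta,\,R(i-2j+1)[2i-4j+2]]_{MW}$, which by Proposition \ref{left} is $\eta_{MW}^{i-2j}(Z)$ when $i-2j+1>0$ and vanishes once $2j>i+1$, leaving the indices $1\le j\le k=\min\{\lfloor(i+1)/2\rfloor,(n-1)/2\}$. The main obstacle is precisely the splitting step: unlike in Theorem \ref{pbt} we do not assume $_2CH^{*}(X)=0$, so Theorem \ref{eta} is unavailable for lifting Chern-type classes from $CH^{*}(X')$ to $\eta_{MW}^{*}(X')$, and the retraction must instead be constructed and verified directly on the exceptional divisor.
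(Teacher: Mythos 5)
The first half of your argument — feeding Corollary \ref{pbtint}(2) for the exceptional divisor $Z'=\mathbb{P}(N_{Z/X})$ into the blow-up triangle of Proposition \ref{blowuptri}, cancelling the two copies of $R(Z)$, and reducing everything to the splitting of a triangle $K\to R(X')\to R(X)\to K[1]$ with $K=\bigoplus_{i}R(Z)/\eta(2i-1)[4i-2]$ — matches the paper exactly, and your deduction of the Chow--Witt formula from the motivic decomposition via Proposition \ref{left} is fine. The problem is the splitting step, which you correctly identify as the crux but only sketch, and the sketch as given does not go through. Your plan is the classical one: build a retraction $r\colon R(X')\to K$ from the Gysin map of $i'\colon Z'\hookrightarrow X'$ and powers of $c_1(\mathcal{O}_{X'}(-Z'))$, and verify $r\circ g=\mathrm{id}$ via $i'^{*}i'_{*}=(\,\cdot\,)\cup e(N_{Z'/X'})$. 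In the MW-setting this runs into exactly the obstruction the whole paper is organized around: $N_{Z'/X'}=\mathcal{O}_{Z'}(-1)$ is a nontrivial line bundle, so its Euler class lives in the \emph{twisted} group $\widetilde{CH}^{1}(Z',\mathcal{O}_{Z'}(1))$ and the Gysin map lands in $Th(\mathcal{O}_{Z'}(-1))$, not in $R(Z')(1)[2]$. Cup product with this class shifts each summand $R(Z)/\eta(2i-1)[4i-2]$ of $R(Z')$ into a $(2i)[4i]$-twisted, line-bundle-twisted object that is \emph{not} one of the summands of the decomposition of $R(Z')$, so the ``shift the basis by one and project'' triangularity argument of the classical proof has no direct analogue; making it work would require a twisted projective bundle theorem for $Th(\mathcal{O}(-1))$ over $\mathbb{P}(N_{Z/X})$ and a verification that the resulting matrix of the composite is invertible, none of which you carry out (and single powers of $c_1$ of a line bundle do not even define untwisted operations on $\widetilde{CH}^{*}$).

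The paper avoids all of this with a deformation-to-the-normal-cone trick that requires no Gysin computation: set $X''=Bl_{Z\times 0}(X\times\mathbb{A}^2)$ and $Z''=(Z\times 0)\times_{X\times\mathbb{A}^2}X''$, and map the blow-up triangle for $(X,Z)$ to the one for $(X\times\mathbb{A}^2,Z\times 0)$. The section of $X\times\mathbb{A}^2\to X$ at $1\in\mathbb{A}^2$ misses $Z\times 0$, hence lifts to $X''$, and is $\mathbb{A}^1$-homotopic to the zero section $\beta$; therefore $\beta$ factors through $\mathbb{Z}(X'')$ and the lower triangle splits. Since $Z'=\mathbb{P}(N_{Z/X})\to Z''=\mathbb{P}(N_{Z/X}\oplus\mathcal{O}^{2})$ admits a retraction $p$ by Corollary \ref{pbtint} (both ranks $n$ and $n+2$ are odd), a retraction $w$ of the lower triangle transports to the retraction $p\circ w\circ\gamma$ of the upper one. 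You should either adopt this argument or substantially expand your Gysin-theoretic route, spelling out how the twists by $\mathcal{O}_{Z'}(\pm 1)$ are resolved; as written, the splitting — the only nontrivial point of the theorem — is not proved.
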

\begin{proof}
It suffices to do the case \(R=\mathbb{Z}\). By Corollary \ref{pbtint}, \(f_Z\) splits hence the triangle in Proposition \ref{blowuptri} induces a distinguished triangle
\[\bigoplus_{i=1}^{\frac{n-1}{2}}\mathbb{Z}(Z)/{\eta}(2i-1)[4i-2]\longrightarrow\mathbb{Z}(X')\xrightarrow{f}\mathbb{Z}(X)\longrightarrow\cdots[1],\]
where the splitting of the former implies the splitting of the latter.
Denote by \(X''=Bl_{Z\times 0}(X\times\mathbb{A}^2)\) and \(Z''=(Z\times 0)\times_{X\times\mathbb{A}^2}X''\). We have a morphism between distinguished triangles
\[
	\xymatrix
	{
		\mathbb{Z}(Z')\ar[r]\ar[d]_{\alpha}	&\mathbb{Z}(X')\oplus\mathbb{Z}(Z)\ar[r]\ar[d]_{\gamma}	&\mathbb{Z}(X)\ar[r]\ar[d]_{\beta}^{\cong}	&\\
		\mathbb{Z}(Z'')\ar[r]^-v					&\mathbb{Z}(X'')\oplus\mathbb{Z}(Z\times 0)\ar[r]^-u	&\mathbb{Z}(X\times\mathbb{A}^2)\ar[r]				&
	}
\]
where \(\beta\) is the zero section and \(\alpha\) has a section \(p\) by Corollary \ref{pbtint}. Now \(\beta\) factors through \(\mathbb{Z}(X'')\) by construction (by using the section constantly equal to \(1\), which is homotopic to \(\beta\)). Hence the lower triangle splits. Denote by \(w\) a section of \(v\), then \(p\circ w\circ\gamma\) splits the upper triangle. The second statement is clear.
\end{proof}
{}
\Addresses
\end{document}